\documentclass{article}
\usepackage[a4paper,margin=0.75in]{geometry}
\usepackage{graphicx} 
\usepackage{amsmath,amssymb,amsthm,xcolor, array,mathdots,comment} 
\usepackage{dsfont}
\usepackage[font=small]{caption}
\usepackage{subcaption}
\usepackage{hyperref}
\hypersetup{
    colorlinks,
    linkcolor={red!80!black},
    citecolor={blue!80!black},
    urlcolor={blue!80!black}
}
\usepackage{cleveref}
\usepackage{enumitem}
\usepackage{thmtools}
\usepackage{thm-restate}
\newtheorem{definition}{Definition}[section]
\newtheorem{lemma}[definition]{Lemma}
\newtheorem{theorem}[definition]{Theorem}
\newtheorem{cor}[definition]{Corollary}

\newtheorem{prop}[definition]{Proposition}
\theoremstyle{remark}
\newtheorem{remark}[definition]{Remark}
\newtheorem{example}[definition]{Example}
\newtheorem*{remark*}{Remark}
\usepackage{tikz,float}
\usepackage{tikz-cd}
\usetikzlibrary{patterns}
\usetikzlibrary{arrows.meta}
\usetikzlibrary{decorations.markings} 
\usetikzlibrary{shapes.geometric}
\usetikzlibrary{decorations.pathmorphing}

\tikzset{snake it/.style={decorate, decoration=snake}}

\usepackage[all]{xy}
\usepackage{mathdots}
\usepackage{comment}

\usepackage{xcolor}
\definecolor{pea}{RGB}{68, 168, 50}
\definecolor{plum}{RGB}{125, 50, 168}
\definecolor{Dgreen}{RGB}{2,100,64}

\newcommand{\Orb}{\mathcal{O}}
\newcommand{\canakci}{{\c{C}}anak{\c{c}}{\i} }
\newcommand{\calP}{\mathcal{P}}
\newcommand{\calG}{\mathcal{G}}
\newcommand{\gb}{\mathbf{g}}

\newcommand{\GGen}{\mathcal{G}^{\text{gen}}}
\newcommand{\Match}{\mathrm{Match}}
\newcommand{\cross}{\mathrm{cross}}

\newcommand{\close}{\circlearrowright\! }

\newcommand{\rank}{\mathcal{W}}
\newcommand{\rmm}{\mathcal{M}}
\newcommand{\drm}{\mathcal{M}}
\newcommand{\lloop}{\mathrm{loop}}
\newcommand{\rloop}{\mathrm{loop}_{t}}
\newcommand{\mup}{\mathrm{U}}
\newcommand{\tf}{T_\updownarrow}

\newcommand{\mdo}{\mathrm{D}}
\newcommand{\mupm}[1]{\begin{bmatrix}#1 &1\\0&1 \end{bmatrix}}
\newcommand{\mdom}[1]{\begin{bmatrix} 1+ #1 & -#1\\1&0 \end{bmatrix}}

\newcommand{\boldp}{\mathbf{p}}

\newdimen\R
\usepackage{biblatex}
\newcommand{\highlight}[1]{\colorbox{gray!15}{\small{#1}}}

\title{Cluster Expansions from Punctured Orbifolds}
\author{Esther Banaian, Wonwoo Kang,\\
Elizabeth Kelley, Ezgi Kantarcı Oğuz, Emine Yıldırım}
\date{}

\begin{document}
\maketitle

\abstract{
    We provide multiple combinatorial expansion formulas - in terms of snake graphs, labelled posets, matrices, and $T$-walks - for elements in generalized cluster algebras associated to arcs on punctured orbifolds and illustrate their equivalence. This work generalizes and unifies existing work on combinatorial expansion formulas from surfaces and unpunctured orbifolds.}
\tableofcontents

\section{Introduction}

Fomin and Zelevinsky introduced \emph{cluster algebras} in 2002 in their study of dual canonical bases~\cite{FZ2002}. 
Cluster algebras have subsequently attracted significant interest, in part due to their deep connections to many branches of mathematics, including total positivity, quiver representations, Teichm\"{u}ller theory, tropical geometry, Lie theory, and Poisson geometry. 

An important class of cluster algebras is that of \emph{surface-type}. This subtype was formalized by Fomin, Shapiro, and Thurston \cite{FST-I}, building on previous work by Fock and Goncharov \cite{fock2006moduli, FG09} and Gekhtman, Shapiro, and Vainshtein \cite{GSV03}. Cluster algebras of surface-type tend to be well-behaved; for example, most cluster algebras of finite mutation-type arise from surfaces \cite{felikson2012-b}. Through the machinery of quivers with potential, these cluster algebras also have interesting connections to representation theory~\cite{geiss2020generic,geiss2023bangle}.

In follow-up work to \cite{FST-I}, Fomin and Thurston showed that surface-type cluster algebras have a ``geometric realization'' in terms of the decorated Teichm{\"u}ller space of the associated surface \cite{fomin2018cluster}. Motivated by this, Chekhov and Shapiro later introduced \emph{generalized cluster algebras}, a special class of which are associated to decorated Teichm{\"u}ller spaces of orbifolds \cite{Chekhov-Shapiro}. In a generalized cluster algebra, the binomial exchange relation for cluster variables is replaced by an arbitrary polynomial relation. Note that these orbifold-type generalized cluster algebras are similar to, but distinct from, the ordinary cluster algebras from orbifolds studied in \cite{Ciliberti2,Ciliberti1,felikson2012-a}. Generalized cluster algebras share many properties with ordinary cluster algebras, including the Laurent phenomenon \cite{Chekhov-Shapiro} and  positivity~\cite{BLM25}. They also have an associated upper generalized cluster algebra, which enjoys similar properties to the ordinary case \cite{BCDX20}. These algebras have applications in representation theory and number theory \cite{BanaianSen,banaian2025snake, GSV18,GyodaMatsushita,labardini2019family}. 

In their initial paper, Fomin and Zelevinsky conjectured that all cluster variables can be written as Laurent polynomials with positive coefficients in an arbitrary cluster \cite{FZ2002}. This conjecture turned out to be difficult, inspiring many partial results for over a decade before being finally settled in the skew-symmetrizable case by Lee and Schiffler \cite{LeeSchiffler} and in general by Gross, Hacking, Keel, and Kontsevich \cite{GHKK18}. 

Prior to either of these works, Musiker, Schiffler, and Williams gave a combinatorial proof of the positivity of cluster algebras of surface type \cite{musiker2011positivity}, building on results by the first two authors for unpunctured surfaces \cite{musiker2010cluster}. Specifically, this combinatorial proof shows that every cluster variable can be expressed as a dimer partition function for a family of graphs called \emph{snake graphs}. Positivity follows as an immediate corollary. Giving an explicit formula for cluster variables is a fortuitous by-product of this proof; these formulas can be useful, for instance, in studying bases and structure coefficients \cite{musiker2013bases}.

Two of the authors of this current article extended the Musiker-Schiffler-Williams snake graph construction to unpunctured orbifolds in \cite{banaian2020snake} by introducing hexagonal tiles. The goal of this paper is to further generalize the setting to include generalized cluster algebras from punctured orbifolds. In the presence of punctures, modelling the dynamics of the associated generalized cluster algebra requires one to work with tagged arcs and tagged triangulations, as in \cite{FST-I, fomin2018cluster}. To construct graphs that produce expansion formulas for the generalized cluster variables associated with tagged arcs (following Wilson's \emph{loop graphs} \cite{wilson2020surface}), we must recast the hexagonal tile construction and introduce the notion of \emph{auxiliary tiles}. The overall idea is to construct a local lift of an arc on a triangulated orbifold, producing a surface with partial triangulation, and then completing the triangulation. The auxiliary tiles correspond to the added arcs. This construction is based on a choice of ``triangulation completion''; see Section \ref{sec:conclusion} for more details, including a discussion as to why the final output does not depend on this choice. Auxiliary tiles allow us to leverage the well-understood combinatorics of surface snake graphs.

\begin{theorem}\label{thm:Correctness}
Let $\mathcal{O}$ be an orbifold with a tagged triangulation $T$. Let $\gamma$ be an arc on $\mathcal{O}$ and let $x_\gamma^T$ be the associated cluster variable, expressed in terms of the cluster indexed by $T$. If $\gamma$ is doubly-notched, then suppose $\mathcal{O}$ is not a twice-punctured closed orbifold. Let $\mathcal{G}_{\gamma,T}$ be the loop with auxiliary tiles associated to $(\gamma,T)$. Then, we have \[
x_\gamma^T = \frac{1}{\mathrm{cross}(\gamma,T)} \sum_M x(M)y(M)
\]
where we sum over all good matchings of $\mathcal{G}_{\gamma,T}$.
\end{theorem}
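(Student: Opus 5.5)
Our plan is to reduce the statement to the known surface case (Musiker--Schiffler--Williams for plain and singly-notched arcs, and Wilson's loop-graph formula for the doubly-notched case) by a local lifting argument, following the construction of auxiliary tiles described in the introduction. Fix $(\gamma,T)$. We first pass to the universal-cover-style local lift: take a small neighborhood of $\gamma$ together with the triangles of $T$ it crosses, and unfold every orbifold point of order $p$ encountered along $\gamma$ into $p$ sheets, producing a genuine (possibly punctured) surface $\Sigma$ with a partial triangulation $\widetilde T_0$ consisting of the lifts of the arcs of $T$ crossed by $\gamma$ and a lift $\widetilde\gamma$ of $\gamma$. We then complete $\widetilde T_0$ to a tagged triangulation $\widetilde T$ of $\Sigma$; the added arcs are exactly those indexing the auxiliary tiles. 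The key point is that the snake/loop graph of $(\widetilde\gamma,\widetilde T)$ in the sense of \cite{musiker2011positivity,wilson2020surface} coincides, as a weighted graph, with $\mathcal{G}_{\gamma,T}$ after the specialization $\phi$ sending the variables of auxiliary arcs to the appropriate monomials (or to the generalized exchange coefficients) in the cluster of $T$.

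The steps, in order: (1) make the lift and completion precise and check that the tagging of $\gamma$ at its endpoints (plain, notched, doubly notched) lifts to the same tagging of $\widetilde\gamma$, so that good matchings of $\mathcal{G}_{\gamma,T}$ biject with (good) perfect matchings of the surface loop graph; (2) apply the surface expansion theorem to write $x_{\widetilde\gamma}^{\widetilde T}$ as $\frac{1}{\cross(\widetilde\gamma,\widetilde T)}\sum_M x(M)y(M)$; (3) show that the specialization $\phi$ is compatible with the generalized exchange relations, i.e. that $\phi(x_{\widetilde\gamma}^{\widetilde T})=x_\gamma^T$. For (3) we would argue via Ptolemy/skein relations: the orbifold generalized cluster algebra's exchange polynomials at an orbifold point of order $p$ are exactly obtained from iterated Ptolemy relations on the $p$-fold unfolding with the auxiliary arcs specialized (the ``$2\cos(\pi/p)$'' type coefficient arising as a ratio of auxiliary lengths), as in the hexagonal-tile argument of \cite{banaian2020snake}. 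Since both sides satisfy the same recursion under flips along a path of triangulations, one can alternatively prove (3) by induction on the number of crossings, matching the standard MSW induction step by step, with crossing monomials matching since $\phi(\cross(\widetilde\gamma,\widetilde T))=\cross(\gamma,T)$ by construction.

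The main obstacle we expect is step (3) in the presence of punctures and tags, especially for doubly-notched arcs and arcs both of whose ends meet punctures adjacent to orbifold points: the specialization must respect the identification $x_{\gamma^{\bowtie}} = x_\gamma \cdot(\text{loop})$ style relations used by Wilson, and the lift of a loop around a puncture near orbifold points may create extra self-overlaps. We would first treat the plain case, then obtain the notched cases via the relation between notched arcs and the loop cutting out the puncture (as in \cite{musiker2011positivity}), checking that the loop graph's symmetric good matchings correspond under $\phi$. The exclusion of twice-punctured closed orbifolds is precisely where this identification fails (as in the surface case), so no extra work is needed there. Finally, independence from the choice of completion would follow since any two completions differ by flips of auxiliary arcs not crossed by $\widetilde\gamma$, which do not change the specialized expansion.
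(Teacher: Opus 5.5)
Your reduction stands or falls with step (3), the claim that the specialization $\phi$ sends $x^{\widetilde T}_{\widetilde\gamma}$ to $x^T_\gamma$. Here $\phi$ sends lifts of $\tau$ to $x_\tau$, auxiliary $k$-diagonals to $U_k(\lambda_{\mathbf p})x_\rho$, and applies some rule to the $y$'s. That step carries the whole content of the theorem, and neither mechanism you offer proves it.

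\textbf{The flip recursion.} ``Both sides satisfy the same recursion under flips'' is the statement to be proved, not a tool. $\Sigma$ and $\widetilde T$ are rebuilt from $(\gamma,T)$ after every flip. A flip of a pending arc changes all $\mathbf p$ of its lifts at once. The exchange polynomial $1+\lambda_{\mathbf p}u+u^2$ only appears after the auxiliary diagonal $U_1(\lambda_{\mathbf p})\rho$ has been substituted.

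\textbf{An MSW-style induction inside $\Sigma$.} An induction such as a fan of arcs from $s(\widetilde\gamma)$ breaks down once $\widetilde\gamma$ enters a $\mathbf p$-gon. The intermediate diagonals ending at vertices of that $\mathbf p$-gon, other than the endpoints of the first lift of $\rho$, project to corner arcs with nonzero winding. These are self-intersecting generalized arcs, so their Ptolemy relations in $\Sigma$ are not exchange relations of $\mathcal A_{\mathcal O}$. You would need skein relations for generalized arcs on orbifolds, which the paper explicitly defers to a sequel.

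\textbf{Coefficients.} You never say how the $y$'s of distinct lifts and of auxiliary arcs are specialized, nor why principal coefficients on $\Sigma$ would become principal coefficients on $\mathcal O$. A lamination argument does not transfer: the elementary lamination of a pending arc ends at the orbifold point, and its lift is a $\mathbf p$-pronged star, not a lamination of $\Sigma$. A decorated Teichm\"uller argument would recover the coefficient-free identity, but not the $y$-monomials. That argument uses three facts: the cover is a local isometry; the $k$-diagonals of the $\mathbb Z/\mathbf p$-symmetric $\mathbf p$-gon have lambda length $U_k(\lambda_{\mathbf p})x_\rho$; and the $x_\tau$, $\tau\in T$, are algebraically independent.

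\textbf{What the paper does instead.} It never proves a specialization statement of this kind; Lemma \ref{lem:SnakeGraphFromLift} only compares two combinatorial formulas.
\begin{itemize}
\item For plain arcs, it uses the hexagonal-tile theorem of \cite{banaian2020snake}, whose MSW-type proof is said to carry over to punctures. It then passes to auxiliary tiles via the purely combinatorial Proposition \ref{prop:NonzeroWindingAuxiliary}. There, $U_{k-1}U_{k+1}+1=U_k^2$ and the convention $h_\sigma=1$ are exactly what make the auxiliary tiles' $x$- and $y$-weights come out right.
\item For singly-notched arcs, it uses $x_\ell=x_\gamma x_{\gamma^{(q)}}$ together with \cite[Corollary 2]{banaian2024skein}.
\item For doubly-notched standard arcs, it uses the relation of \cite[Theorem 12.9]{musiker2011positivity} together with \cite[Corollary 3]{banaian2024skein}.
\item For doubly-notched pending arcs, it does a case analysis of generalized exchange relations involving $x_{\gamma^{(p,p)}}$ in well-chosen tagged triangulations. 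These are matched on the poset side by Lemma \ref{lem:SkeinRelationWindAtBeginning}.
\end{itemize}
Theorem \ref{thm:MainExpansion} then transfers everything to $\mathcal G_{\gamma,T}$.

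\textbf{Two smaller corrections.}
\begin{itemize}
\item The twice-punctured hypothesis enters only through \cite[Theorem 12.9]{musiker2011positivity}. Your $\Sigma$ is a punctured polygon, never a closed surface, so a completed version of your argument would remove that hypothesis rather than explain it (compare the paper's remark on \cite{pilaud2023posets}).
\item Two completions can differ by flips of auxiliary diagonals that $\widetilde\gamma$ does cross; Example \ref{ex:tricover} has graphs with one and with two auxiliary tiles. So your stated reason for invariance is wrong, although invariance would follow automatically if (3) held for every completion.
\end{itemize}
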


We highlight that the proof of Theorem \ref{thm:Correctness} appears in Section \ref{sec:main}. 
A second goal of this project is to connect snake and loop graphs from orbifolds to other combinatorial gadgets used for cluster algebra computations, extending the work of two other authors \cite{ezgieminecluster2024} who did this for punctured surfaces. One such framework is through the lens of posets. Propp observed that the set of perfect matchings of a snake graph can be endowed with a partial order which turns out to be a distributive lattice \cite{Propp25}. By the Fundamental Theorem of Finite Distributive Lattices, this means the data of this lattice is packaged into its poset of join-irreducibles. These lattices were described explicitly for snake graphs by Musiker, Schiffler, and Williams \cite{musiker2013bases}, and for loop graphs by \cite{wilson2020surface}. In \cite{ezgieminecluster2024} and independently in \cite{HuangLattice,pilaud2023posets}, a method of computing a cluster variable from such a poset was provided. 

Translating this expansion formula into the language of posets has several advantages. The posets that appear in \cite{ezgieminecluster2024, pilaud2023posets} are either fence posets or result from adding one extra relation to a fence poset, meaning their structure is quite simple. Indeed, using the language of posets allowed three of the present authors to extend \canakci and Schiffler's snake graph calculus \cite{CS13,CS15} to tagged arcs \cite{banaian2024skein}. Posets also closely resemble diagrams associated with \emph{strings}, words that parametrize a set of indecomposable representations in a \emph{string algebra}. This perspective has proven useful, as in \cite{geiss2022schemes, geiss2023bangle}. Fence posets have appeared in~\cite{weng2023f} in the study of certain Donaldson--Thomas $F$-polynomials.

Fence posets also recently appeared in the construction of \emph{$q$-rationals} by Morier-Genoud and Ovsienko \cite{MGO20}. These $q$-analogues of rational numbers can be seen as coming from a rank-generating function for the lattice of order ideals of a fence poset. In the same work, Morier-Genoud and Ovsienko conjectured that this rank-generating function is always unimodal. There has been much work on the topic, showing the conjecture in special cases and also exploring interesting dynamics of fence posets; see, for example, \cite{AL25, elizalde2023partial,elizalde2023rowmotion, mcconville2025hyperbinary,mcconville2021rank}.  Morier-Genoud–Ovsienko's conjecture was shown in full generality by one of the present authors in joint work with Ravichandran \cite{ouguz2023rank}. Another author, in joint work with Lee and Lim, further established the unimodality property for a class of closely related posets known as \emph{loop fence posets}~\cite{KLL25}.  

The language of fence posets brings in the machinery of Kantarcı Oğuz's matrices introduced in~\cite{ouguz2025oriented}, which were later used in~\cite{ezgieminecluster2024} to easily compute elements of surface-type cluster algebras. Such matrices are called rank matrices. Matrix formulas are nothing new in the study of cluster algebras. Indeed, a critical step to the proof of several bases for surface-type cluster algebras in \cite{musiker2013bases} was the development of matrix formulas for both arcs on a surface and snake graphs in \cite{musiker2013}, building on work by Fock and Goncharov \cite{fock2006moduli, FG09, FG07}. Chekhov and Shapiro give a family of matrices in the spirit of Fock-Goncharov in their work on generalized cluster algebras \cite{Chekhov-Shapiro} that were further studied in the unpunctured orbifold case in \cite{banaian2020snake}. It would be interesting to precisely understand the relationship between those matrix formulas and the matrix formulas that appear in this work. One interesting feature of the matrices in this paper, first pointed out in \cite{ouguz2025oriented}, is their resemblance to Cohn matrices, used in the study of Markov numbers. This idea was further explored in \cite{banaian2025cluster,evans2025q}.

Along with posets and matrices, we also present a reformulation of our expansion formula in terms of \emph{$T$-walks}. Schiffler used it to prove positivity for type $A$ cluster algebras \cite{SchifflerTypeA}. The construction was then extended to all unpunctured surfaces in \cite{ST09} and to include coefficients, still on unpunctured surfaces, in \cite{S10}. Gunawan and Musiker further extended the definition for polygons with a single puncture, which provides a geometric model for type $D$ cluster algebras, in \cite{GM15}. Generalizations of $T$-paths have also appeared in other settings \cite{BCKZ22,ccanakcci2021friezes}.

In the unpunctured case, there is a bijection between $T$-paths associated to a pair of an arc and a triangulation and perfect matchings of the associated snake graph; this is, for example, explained in depth in Claussen's thesis \cite{claussen2020expansion}. In~\cite{ezgieminecluster2024}, two of the present authors generalized the concept of $T$-paths to \emph{$T$-walks} associated with generalized arcs on arbitrary surfaces, possibly with punctures, and showed how these relate to loop graphs. We take this one step further, now linking $T$-walks to orbifolds. While $T$-paths (specifically those in \cite{ccanakcci2021friezes}) were an inspiration in the previous work for unpunctured orbifolds (see \cite[Section 11.1]{banaian2020snake}), a precise construction was not provided previously even in this case.

Our second goal is summarized as follows: the notation will be described in detail in Section \ref{sec:Expansion}. In brief, for a generalized arc or a closed curve $\gamma$, we consider the following objects:
\begin{itemize}
    \item $\chi(\mathcal{G}_{\gamma,T})$ is a weighted sum of good matchings of the graph $\mathcal{G}_{\gamma,T}$;
    \item $x_{\min}^{\gamma}$ is a minimal term associated with $\gamma$;
    \item $\mathcal{W}(\calP_\gamma^T,w)$ is a weighted sum of order ideals of a poset $\calP_\gamma$ equipped with a specific weight function $w$;
    \item $\rmm_w(\calP_\gamma^T\!\searrow)$ is a rank matrix of a poset;
    \item $\mathfrak{W}(\gamma,T)$ is a weighted sum of $T$-walks.
\end{itemize}
Table \ref{table:whole} provides a comparative overview of all the combinatorial languages under consideration.

\begin{restatable}{theorem}{Expan}\label{thm:MainExpansion}
    Let $\mathcal{O}$ be an orbifold, possibly with punctures. Let $\gamma$ be a (possibly generalized) arc or closed curve on $\mathcal{O}$, and let $T$ be a triangulation of $\mathcal{O}$. Then $\mathcal{W}(\calP_\gamma^T,w)$ is equal to the top-left entry of the rank matrix $\rmm_w(\calP_\gamma^T\!\searrow)$ when $\gamma$ is an arc, and is equal to the trace of $\rmm_w(\calP_\gamma\!\searrow)$ when $\gamma$ is a closed curve. Moreover,
\[
 \chi(\mathcal{G}_{\gamma,T}) = x_{\min}^{\gamma} \mathcal{W}(\calP_\gamma^T, w) = \mathfrak{W}(\gamma,T).
\]
\end{restatable}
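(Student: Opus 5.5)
We plan to prove Theorem~\ref{thm:MainExpansion} by reducing each equality to the corresponding statement for punctured surfaces from \cite{ezgieminecluster2024}, applied to a local lift. The lift is the surface-with-completed-triangulation used to build $\mathcal{G}_{\gamma,T}$: lift $\gamma$ locally through each orbifold point, then complete the partial triangulation using auxiliary arcs. The orbifold data only enters through edge weights. Crossings with a pending arc at an orbifold point of order $p$ become a hexagonal tile, or equivalently a tile together with auxiliary tiles. The auxiliary arcs carry weights $\lambda_p = 2\cos(\pi/p)$, or the generalized-exchange coefficients. So the poset $\calP_\gamma^T$ is exactly the fence (or loop fence) poset of the lifted curve, and only the weight function $w$ is modified.

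\textbf{Rank matrix identity.} This is purely poset-theoretic and should go through verbatim from \cite{ouguz2025oriented,ezgieminecluster2024}. Decompose $\calP_\gamma^T\!\searrow$ into its sequence of up and down steps. Then $\rmm_w$ is a product of the elementary matrices $\mup$ and $\mdo$ (with entries in the weights), and an induction on the number of elements adding one covering relation at a time shows the following:
\begin{itemize}
\item the top-left entry records the weighted sum over order ideals for a fence;
\item for a closed curve, the loop fence poset identifies the two ends, so the order-ideal sum splits by whether the wrap-around relation is respected, and this is the trace.
\end{itemize}
Nothing here uses that weights are monomials, so arbitrary $w$ is allowed.

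\textbf{Matching--poset equality.} Here we use Propp's distributive lattice structure on good matchings. Using the descriptions of \cite{musiker2013bases,wilson2020surface} transported to our graphs with auxiliary tiles, we identify the poset of join-irreducibles with $\calP_\gamma^T$. Each order ideal $I$ corresponds to the matching obtained from the minimal matching by twisting the tiles in $I$. We then check tile by tile that $x(M_I)y(M_I)/x_{\min}^\gamma = \prod_{i\in I} w(i)$. For an ordinary tile this is the usual ratio of diagonal-adjacent edge weights. For tiles adjacent to auxiliary tiles we must verify that twisting multiplies by exactly the prescribed weight, including the $\lambda_p$ factors. This weight check, especially near orbifold points where consecutive tiles share an auxiliary arc, and in the loop case where the closing tile is glued, is the step I expect to be the main obstacle. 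I would first handle it via a local computation on a single pending arc, showing it reproduces the generalized exchange polynomial, and then argue locality.

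\textbf{$T$-walks.} Give a weight-preserving bijection between $T$-walks on $\mathcal{O}$ and good matchings, or equivalently order ideals. Lift each $T$-walk to the lifted surface, where \cite{ezgieminecluster2024} supplies a bijection with matchings. Then check that the orbifold rules, such as walks that wind around an orbifold point and use the pending arc, correspond exactly to the extra auxiliary-tile choices, with matching weights. Composing the three equalities gives the theorem.
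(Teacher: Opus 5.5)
\textbf{The gap: there is no bijection between $T$-walks and good matchings.} Your third step claims a weight-preserving bijection between $T$-walks and good matchings of the auxiliary-tile graph, or equivalently order ideals of $\calP_\gamma^T$. This bijection does not exist.

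Already in the running example of Table \ref{table:whole}, $\calP_\gamma^T$ is the fence $v_{\rho_L}\prec v_{\rho^*}\succ v_{\rho_R}\succ v_a$. It has $7$ order ideals, whereas there are exactly $6$ valid $T$-walks. The walk $\vec v_6=(1,1,1)$ has weight $U_1x_ax_dy_ay_\rho^2/x_\rho$. It is matched by \emph{two} ideals:
\begin{itemize}
\item $\{v_{\rho_L},v_{\rho_R},v_a\}$, contributing $\frac{U_0U_2}{U_1}\cdot\frac{x_ax_d}{x_\rho}y_ay_\rho^2$;
\item $\{v_{\rho_L},v_{\rho^*},v_{\rho_R},v_a\}$, contributing $\frac{1}{U_1}\cdot\frac{x_ax_d}{x_\rho}y_ay_\rho^2$.
\end{itemize}
These sum to the walk's weight only because $U_0U_2+1=U_1^2$ (Lemma \ref{lem:ChebyshevLogConcave}). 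In general, an ideal containing $v_{\rho_L}$ and $v_{\rho_R}$, taken with and without the auxiliary element, collapses to a single walk.

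This is also why your lifting argument breaks. On the completed tile cover, $\widetilde\gamma$ additionally crosses the auxiliary arc, so a $\widetilde T$-walk has an extra $\alpha$-step there. The orbifold walk instead crosses the $\mathbf p$-gon with a single $\beta$-step along a diagonal of weight $U_\ell(\lambda_{\mathbf p})x_\rho$. The lift is therefore only a weak $T$-path in the sense of \cite{ccanakcci2021friezes}, and the surface bijection of \cite{ezgieminecluster2024} does not apply to it. Relating the two requires the frieze identity for the regular $\mathbf p$-gon, as in the proof of Proposition \ref{prop:Invariance}, and that identity is exactly where bijectivity is lost.

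\textbf{What is actually needed.} The correct statement is a weight-preserving surjection from ideals onto walks, whose fibres are merged using Chebyshev identities. The paper achieves this by routing through hexagonal tiles:
\begin{itemize}
\item Theorem \ref{thm:SnakeTWalks} is a genuine bijection $\Match(\GGen_{\gamma,T})\leftrightarrow TW(\gamma,T)$. It is proved by induction on the pending arcs around which $\gamma$ winds nontrivially, using Lemmas \ref{lem:BreakUpTWalk}, \ref{lem:BreakUpPMSOnFirstSquareTile} and \ref{lem:BreakUpPMsOnHexagon}.
\item Proposition \ref{prop:NonzeroWindingAuxiliary} compares hexagonal and auxiliary graphs non-bijectively, via $x_{B'}x_{C'}+x_\rho^2=x_\sigma^2$.
\end{itemize}

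Notched corner arcs have no hexagonal model and need the separate argument of Theorem \ref{prop:TWalkPosetNotchedCorner}. There the order ideals are split into six classes matching the six compound steps of Table \ref{tab:TwalkLoop}. Classes are merged via $1+U_{k-1}U_{k+1}=U_k^2$, for example ideals with and without $U_k\rho$. The two admissible hook concatenations of Definition \ref{def:twalkNOTCHED} are identified. Your plan supplies no mechanism for any of this.

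\textbf{Two smaller omissions in the other steps.}
\begin{itemize}
\item Since $x_{\min}^\gamma$ is defined from the peaks and valleys of $\widehat{\calP}_\gamma$, you need Lemma \ref{lem:MinTermAgreeAux}: $x_{\min}^\gamma=x(M_-)/\mathrm{cross}(\mathcal{G}_{\gamma,T})$. Your tile-by-tile ratio should then be $x(M_I)y(M_I)/x(M_-)$, not $x(M_I)y(M_I)/x_{\min}^\gamma$.
\item For notched arcs, the matrix statement needs the loop operations $\lloop_\searrow,\lloop_\nearrow,\rloop$, together with the separate formula for doubly-notched pending arcs. Products of $\mup,\mdo$ and a trace alone do not cover these cases.
\end{itemize}
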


\begin{table}[H]
\begin{center}
\begin{tabular}{|c|c|c|}
\hline
\begin{tikzpicture}[scale=0.5, transform shape]

    \draw[thick] (0,0) circle (3cm);

    \node[below, scale=1.5] at (0,-3) {$v_1$}; 
    \node[above, scale=1.5] at (0,3) {$v_2$};  
    \node[left, scale=1.5] at (-3,0) {$v_3$};  

    \draw[thick] (-3,0) .. controls (3,4) and (2,-2.5) .. (0,-3);

    \draw[thick] (0,-3) .. controls (-2.5,2) and (2.5,2) .. (0,-3);

    \draw[in=-90,out=-20,looseness=1, orange, line width = 1.2pt] (-3,0) to (0.5,0);
    \draw[in=90,out=90,looseness=1.5, orange, line width = 1.2pt] (0.5,0) to (-0.5,0);
    \draw[in=-100,out=-90,looseness=1, orange, line width = 1.2pt,  ->] (-0.5,0) to (2,0);
    \draw[in=-40,out=80,looseness=1, orange, line width = 1.2pt] (2,0) to (0,3);

    \filldraw (0,-3) circle (3pt); 
    \filldraw (0,3) circle (3pt);  
    \filldraw (-3,0) circle (3pt); 

    \node[scale=2, thick] at (0,0) {$\times$};
    \node[scale=1.5] at (-1.4,1.2) {$a$};
    \node[scale=1.5] at (1,0) {$\rho$};
    \node[scale=1.5] at (-2.3,-2.6) {$b$};
    \node[scale=1.5] at (-2.3,2.6) {$c$};
    \node[scale=1.5] at (3.2,0) {$d$};

   \node[red, scale=1.5] at (1.3,1.4) {$\gamma$};

\end{tikzpicture}

&

\begin{tikzpicture}[scale = 1.25]
    \draw[gray,thick,dashed] (-0.7,1.7) to node[midway,below,xshift=-2pt,yshift=2pt]{$\rho$} (1,1);
    \draw[gray,thick,dashed] (0,2.4) to node[midway,below,xshift=-2pt,yshift=2pt]{$\rho$} (1.7,1.7);
    \draw[gray,thick,dashed] (1,2.4) to node[midway,below]{$a$} (2.4,2.4);
    \draw[thick] (-0.7,1.7) to node[midway,above,xshift=-20pt, yshift=-1pt]{$U_2\rho$} (1.7,1.7);
    \draw[ultra thick,white] (0,2.4) to (1,1);
    \draw[thick] (0,2.4) to node[midway,right, xshift=6pt,yshift=-10pt]{$U_0\rho$} (1,1);
    \draw[thick] (1,1) to node[midway,below,xshift=3pt]{$b$} (0,1);
    \draw[thick] (0,1) to node[midway,below,,xshift=-1pt]{$a$} (-0.7,1.7) to node[midway,above,xshift=-4pt]{$U_1\rho$} (0,2.4) to node[midway,above,yshift=-1pt]{$a$} (1,2.4) to node[midway,right,xshift=-2pt,yshift=2pt]{$b$}(1.7,1.7) to node[midway,below,xshift=7pt]{$U_1\rho$} (1,1);
    \draw[thick] (1.7,1.7) to node[midway,below,xshift=2pt]{$\rho$} (2.4,2.4) to node[midway,above,xshift=2pt]{$d$}(1.7,3.1) to node[midway,left,xshift=-2pt]{$c$}(1,2.4);
\end{tikzpicture}

&
\begin{tikzpicture}[scale = 1.25]
\draw[thick] (0,0) to node[below]{$b$} (1,0) to node[below]{$\rho$} (2,0) to node[below, scale = 0.7]{$U_1\rho$} (3,0) to node[right]{$a$} (3,1) to node[above]{$b$} (2,1) to node[above]{$\rho$} (1,1) to node[above]{$U_1\rho$} (0,1) to node[left]{$a$} (0,0);
\draw[thick] (1,0) to node[right, scale = 0.7, yshift = -15pt]{$U_{0}\rho$} (1,1);
\draw[thick] (2,0) to node[right, scale = 0.7,yshift = -15pt]{$U_{2}\rho$} (2,1);
\draw[gray,dashed] (0,1) to node[right]{$\rho$} (1,0);
\draw[gray,dashed] (1,1) to node[right,  scale = 0.6]{$U_1\rho$} (2,0);
\draw[gray,dashed] (2,1) to node[right]{$\rho$} (3,0);
\draw[gray,dashed] (2,2) to node[right]{$a$} (3,1);
\draw[thick] (2,1) to (3,1) to node[right]{$c$}  (3,2) to node[above]{$d$} (2,2) to node[left]{$\rho$} (2,1);
\end{tikzpicture}

\\
(I) & (II) & (III)\\
    \hline
\begin{tikzpicture}[scale=0.5, transform shape]
    \draw[thick] (5,0) to (7,2);
    \draw[thick] (7,2) to (9,0);
    \draw[thick] (9,0) to (11,-2);

    \filldraw[] (5,0) circle (3pt);
    \filldraw[] (7,2) circle (3pt);
    \filldraw[] (9,0) circle (3pt);
    \filldraw[] (11,-2) circle (3pt);

    \node[scale=2] at (5,-0.5) {$w_1$};
    \node[scale=2] at (7,2.5) {$w_2$};
    \node[scale=2] at (8.5,-0.5) {$w_3$};
    \node[scale=2] at (11,-2.5) {$w_4$};

\end{tikzpicture}

&

\begin{tikzpicture}[scale=0.5, transform shape]
    \draw[thick] (5,0) to (7,2);
    \draw[thick] (7,2) to (9,0);
    \draw[thick] (9,0) to (11,-2);
    \draw[->, blue, dashed,thick] (11,-2)--(12,-3);

    \filldraw[] (7,2) circle (3pt);
    \filldraw[] (9,0) circle (3pt);
    \filldraw[blue] (11,-2) circle (4pt);

    \fill[white] (5,0) circle(.2) ;
    \fill[red] (5,0) circle(.1) ;
    \draw[red] (5,0)circle(.2);

    \node[scale=2] at (5,-0.8) {$U(w_1)$};
    \node[scale=2] at (7,2.5) {$D(w_2)$};
    \node[scale=2] at (8.3,-0.7) {$D(w_3)$};
    \node[scale=2] at (11.5,-1.2) {$D(w_4)$};

\end{tikzpicture}

&
\begin{tikzpicture}[scale=0.5, transform shape]

    \draw[thick] (0,0) circle (3cm);

    \node[below, scale=2] at (0,-3.2) {$v_1$}; 
    \node[above, scale=2] at (0,3.2) {$v_2$};  
    \node[left, scale=2] at (-3.2,0) {$v_3$};  


    \draw[very thick, postaction={decorate, decoration={markings, mark=at position 0.15 with {\arrow[scale=1.5, black!30!green]{<}}}}] (-3,0) .. controls (3,4) and (2,-2.5) .. (0,-3);

    \draw[very thick, postaction={decorate, decoration={markings, mark=at position 0.15 with {\arrow[scale=1.5, black!30!green]{>}}, mark=at position 0.88 with {\arrow[scale=1.5, black!30!green]{<}}}}] (0,-3) .. controls (-2.5,2) and (2.5,2) .. (0,-3);

    \draw[in=-90,out=-20,looseness=1, orange, line width = 1.2pt] (-3,0) to (0.5,0);
    \draw[in=90,out=90,looseness=1.5, orange, line width = 1.2pt] (0.5,0) to (-0.5,0);
    \draw[in=-100,out=-90,looseness=1, orange, line width = 1.2pt,  ->] (-0.5,0) to (2,0);
    \draw[in=-40,out=80,looseness=1, orange, line width = 1.2pt] (2,0) to (0,3);

    \filldraw (0,-3) circle (3pt); 
    \filldraw (0,3) circle (3pt);  
    \filldraw (-3,0) circle (3pt); 

    \node[scale=2, thick] at (0,0) {$\times$};
    \node[scale=1.5] at (-1.4,1.2) {$a$};
    \node[scale=1.5] at (1,0) {$\rho$};
    \node[scale=1.5] at (-2.3,-2.6) {$b$};
    \node[scale=1.5] at (-2.3,2.6) {$c$};
    \node[scale=1.5] at (3.2,0) {$d$};

   \node[red, scale=1.5] at (1.3,1.4) {$\gamma$};

    \node[scale=1.5] at (0,-4.5) {$(c,b,U_1b,b,b,a,d)$};
\end{tikzpicture}
\\
(IV) & (V) & (VI)\\
\hline

\end{tabular}
\end{center}
\caption{An example of each of the different combinatorial gadgets for representing a generalized cluster algebra element: (I) surface triangulation, (II) snake graph of $\gamma$ with a hexagonal tile, (III) snake graph of $\gamma$ with an auxiliary tile, (IV) poset of $\gamma$, (V) poset of $\gamma$ for the matrix computation, (VI) one possible $T$-walk for $\gamma$.}
\label{table:whole}
\end{table}

We emphasize that Theorem \ref{thm:MainExpansion} is more than a combinatorial exercise; each formulation has a benefit. The snake graph formula is our most prominent link to other works; crucially, our proof of Theorem \ref{thm:Correctness} relies on first showing how, in the unpunctured case, hexagonal tiles can be replaced with sequences of square tiles, including auxiliary tiles (and hence, these results will be proven in the opposite order as they appeared in the introduction). Connecting to posets allows us to use the poset skein relations given in \cite{banaian2024skein,Tsironis25}. Meanwhile, the matrix formulas are computationally the easiest and also most readily given to Sage~\cite{sage} or other computer programs.

The connection to $T$-walks is perhaps the most interesting. Here, we do not rely on auxiliary tiles. This in particular shows that the choices present in our other constructions do not affect the output (see Proposition \ref{prop:Invariance}).

We prove Theorem \ref{thm:MainExpansion} by exhibiting a series of combinatorial maps in Section \ref{sec:Equivalence}, as are summarized below.

\begin{center}
\begin{tikzcd}
  \parbox{40mm}{\centering Loop graphs with \\hexagonal tiles} 
  \arrow[leftrightarrow, swap, dashed]{dd}{\text{\Cref{prop:CanChooseAuxiliaryTiles}}}
  \arrow[leftrightarrow, dashed]{rr}{\text{\Cref{thm:SnakeTWalks}}}   &\hspace{0.5cm}&T\text{- walks}\arrow[leftrightarrow]{dd}{\text{\Cref{prop:TWalkPosetNotchedCorner}}}&& \\
  \\
  \parbox{40mm}{\centering Loop graphs with \\auxiliary tiles}\arrow[leftrightarrow]{rr}[pos=0.5]{\text{\Cref{prop:AuxAndPoset}}}  & &\text{Posets}\arrow[leftrightarrow]{rr}[pos=0.5]{\text{\Cref{prop:FormulasAgree}}} &\hspace{0.5cm}& \text{Matrices}
\end{tikzcd}
\end{center}

The structure of the paper is as follows. \Cref{sec:background} provides a concise overview of cluster algebras from surfaces and generalized cluster algebras from orbifolds. In this work, we are combining the tagged arc construction from \cite{FST-I, fomin2018cluster} with the construction of generalized cluster algebras from orbifolds from \cite{Chekhov-Shapiro}. We also collect a few of these structural results.
In~\Cref{sec:Expansion} and~\Cref{sec:Twalk}, we establish our four expansion formulas, with many examples throughout. Then in \Cref{sec:Equivalence} we prove the equivalences among these approaches. In~\Cref{sec:main}, we show that these expansion formulas yield the correct Laurent expressions for cluster variables on punctured orbifolds. Finally, \Cref{sec:conclusion} discusses our insights regarding the projection to the orbifold $\mathcal{O}$, demonstrating that it is independent of the choice of lift. We also outline future directions, specifically on the \emph{skein relations} in this generalized setting.

\section*{Acknowledgements}
W.K and E.Y. were supported by the Bulgarian Science Fund with grant no KP-06- N92/5, the Ministry of Education, and Science of the Republic of Bulgaria, grant DO1-239/10.12.2024 and the Simons Foundation, grant SFI-MPS-T-Institutes-00007697. E.K.O. was supported by TÜBİTAK 1001 Grant 123F121. E.B. was supported by the German Research Foundation SFB-TRR 358/1 2023 – 491392403. The authors also thank Emily Gunawan for asking inspiring questions and Jon Wilson for providing valuable feedback on an earlier draft. 

\section{Background}\label{sec:background}

\subsection{Generalized Cluster Algebras}

Many of the basic definitions for generalized cluster algebras follow the same structure as the corresponding definitions for ordinary cluster algebras, with the differences arising from the fact that the exchange relations are now allowed to be polynomial expressions rather than strictly required to be binomial expressions. We will briefly record the most useful notation here.

In the following, we let $\mathbb{P} = \mathrm{Trop}(u_1,\ldots,u_m)$ denote a \emph{tropical semifield}, i.e., a multiplicative abelian group freely generated by the $u_i$ and equipped with the tropical addition $\oplus$ defined by $\left(\prod_{j}u_j^{a_j}\right) \oplus \left(\prod_{j}u_j^{b_j}\right) := \prod_j u_j^{\min(a_j,b_j)}.$  Let $\mathbb{Z}\mathbb{P}$ be the group ring of $\mathbb{P}$, and let $\mathbb{Q}\mathbb{P}$ denote its field of fractions. We define the ambient field $\mathcal{F}$ as the field of rational functions in $n$ algebraically independent variables $\{x_1, \dots, x_n\}$ with coefficients in $\mathbb{Q}\mathbb{P}$, i.e., $\mathcal{F} = \mathbb{Q}\mathbb{P}(x_1, \dots, x_n)$.  By defining $\mathbb{P}$ to be a tropical semifield instead of a general semifield, we are restricting to the common setting of a \emph{geometric generalized cluster algebra}.

\begin{definition}\label{def:gca}
    A \emph{seed} for a generalized cluster algebra is a quadruple $( B,\mathbf{x}, \mathbf{y}, \mathbf{Z})$ where 
    \begin{itemize}
     \item $B=(b_{ij})_{1\leq i \leq n+m,1\leq j\leq n}$ is an $n \times n$ skew-symmetrizable exchange matrix with integer entries;
        \item $\mathbf{x} = \{ x_1, \dots, x_n \}$ is  free generating set of $\mathcal{F}$;
        \item $\mathbf{y} = \{ y_1, \dots, y_n \}$ is an $n$-tuple of elements from $\mathbb{P}$ satisfying $y_k = \prod_{i=n+1}^m u_i^{b_{ik}}$; 
         and
        \item $\mathbf{Z} = \{ z_1, \dots, z_n \}$ is a collection of monic, palindromic  polynomials.
    \end{itemize}
    The set $\mathbf{x}$ is called a cluster, and any $x_i$ appearing in a cluster is called a generalized cluster variable.
\end{definition}

If for all $i$, $z_i = 1 + u$, then the above recovers the definition of a seed for an ordinary cluster algebra, and similarly for the following definitions.

Let $[a]_{+} := \textrm{max}(a,0)$. Given a seed $(B,\mathbf{x}, \mathbf{y},  \mathbf{Z})$, let $\hat{y}_i = y_i\prod_{j=1}^{n} x_j^{b_{ji}}$ and let $d_i$ denote the degree of the $i$-th exchange polynomial $z_i$. In particular, we will write $z_i(u) = \sum_{s=0}^{d_i} z_{i,s}u^s$ where $z_{i,0} = z_{i,d_i}=1$ and $z_{i,s} = z_{i,d_i-s}$. With this helpful notation, we next define the notion of \emph{mutation}, which produces a new seed from an old one.

\begin{definition}
   Let $\Sigma = ( B, \mathbf{x}, \mathbf{y},\mathbf{Z})$ be a seed in a generalized cluster algebra. The \emph{generalized seed mutation} $\mu_k$ in direction $k \in \{1, \dots, n\}$ transforms the seed into $\Sigma' = (B',\mathbf{x}', \mathbf{y}', \mathbf{Z})$, defined by:

    \begin{itemize}
        \item The exchange matrix entries $b_{ij}'$ are given by
        \[ b_{ij}' = \begin{cases}  
            b_{ij} + d_k \left( [-b_{ik}]_+ b_{kj} + b_{ik} [b_{kj}]_+ \right) & \text{if } i,j \neq k
            \\ 
            -b_{ij} & \text{if } i=k \text{ or } j=k . 
        \end{cases} \]
        \item The cluster variables $x_i'$ are given by
        \[ x_i' = \begin{cases} 
            x_i & \text{if } i \neq k \\ 
            \displaystyle x_k^{-1} \left( \prod_{j=1}^n x_j^{[-b_{jk}]_+} \right)^{d_k} \frac{\sum_{s=0}^{d_k} z_{k,s} \hat{y}_k^s}{\bigoplus_{s=0}^{d_k} z_{k,s} y_k^s} & \text{if } i = k .
        \end{cases} \]
        
        \item The coefficient variables $y_i'$ are given by
        \[ y_i' = \begin{cases} 
            y_i \left( y_k^{[b_{ki}]_+} \right)^{d_k} \left( \bigoplus_{s=0}^{d_k} z_{k,s} y_k^s \right)^{-b_{ki}} & \text{if } i \neq k \\ 
            y_k^{-1} & \text{if } i = k .
        \end{cases} \]
    \end{itemize}
\end{definition}

Now, we define the generalized cluster algebra associated to a given seed $\Sigma$. This is given by considering all seeds which are related to $\Sigma$ via mutation.

\begin{definition}
\label{def:generalized_cluster_algebra}
Let $\mathbb{T}_n$ be an $n$-regular tree whose edges are labelled by the integers \(1, \ldots, n\), such that each edge incident to a given vertex has a distinct label. A \emph{generalized cluster pattern} is an assignment of labelled seeds \(\Sigma_t = (B_t,\mathbf{x}_t, \mathbf{y}_t, \mathbf{Z})\) to each vertex \(t \in \mathbb{T}_n\) such that the seeds assigned to adjacent endpoints $t \stackrel{k}{\text{---}} t'$ are related by mutation in direction $k$, i.e., such that $\mu_k(\Sigma_t) = \Sigma_{t'}$.

For a given generalized cluster pattern, let  
\[
X = \bigcup_{t \in \mathbb{T}_n} \mathbf{x}_t
\]  
be the union of all clusters across the seeds in the pattern. The \emph{generalized cluster algebra} \(\mathcal{A}\) associated with this generalized cluster pattern is the \(\mathbb{ZP}\)-subalgebra of the ambient field \(\mathcal{F}\) generated by all cluster variables, \(\mathcal{A} = \mathbb{ZP}[X]\).

\end{definition}

We say that a generalized cluster algebra \emph{has principal coefficients} if $B$ is a $2n \times n$ matrix where the bottom $n \times n$ portion is the identity matrix. In particular, this implies $\mathbb{P} = \mathrm{Trop}(y_1,\ldots,y_n)$.

In their paper introducing these algebras, Chekhov and Shapiro showed that generalized cluster algebras retain the Laurent Phenomenon.

\begin{theorem}[\cite{Chekhov-Shapiro}]
    Let $\mathcal{A}$ be a generalized cluster algebra. If $\mathbf{x}$ is a cluster in $\mathcal{A}$ and let $x'$ is a generalized cluster variable, then $x'$ can be expressed as a Laurent polynomial in the cluster $\mathbf{x}$ with coefficients in $\mathbb{Z}\mathbb{P}[z_{i,s}]$
\end{theorem}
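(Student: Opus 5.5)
The statement immediately above is the Laurent phenomenon of Chekhov and Shapiro. I would adapt the Fomin--Zelevinsky ``caterpillar'' argument to polynomial exchange relations.

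\textbf{Reduction.} Every generalized cluster variable $x'$ is reached from the seed $\Sigma_{t_0}$ carrying $\mathbf{x}$ by a finite path in $\mathbb{T}_n$. So it suffices to prove that for every vertex $t$ of $\mathbb{T}_n$, every entry of $\mathbf{x}_t$ lies in the Laurent ring
\[
\mathcal{L}=\mathbb{Z}\mathbb{P}[z_{i,s}][x_1^{\pm1},\ldots,x_n^{\pm1}].
\]
I would argue by induction on the distance from $t_0$ to $t$. Following Fomin--Zelevinsky, the induction is carried out on a rank-two-type configuration: a path $t_0 \stackrel{k}{\text{---}} t_1 \stackrel{\ell}{\text{---}} t_2 \stackrel{k}{\text{---}} t_3$ that ends in a longer path. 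The goal is to show that the variable produced at the end of a caterpillar is Laurent in the initial cluster, given that it is Laurent in the clusters at $t_1$ and $t_3$ by induction.

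\textbf{Coprimality and the three-step computation.} The inductive step needs two facts.

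First, the three clusters $\mathbf{x}_{t_0},\mathbf{x}_{t_1},\mathbf{x}_{t_2}$ each generate $\mathcal{L}$-type Laurent rings. The relevant exchange numerators must be pairwise coprime in the polynomial ring, and coprime to the variables. These numerators are the homogenized polynomials $P_k=\sum_s z_{k,s}\hat{y}_k^s\cdot(\text{monomial})$, normalized by the tropical denominator. Coprimality holds because $z_{k,0}=z_{k,d_k}=1$, so $P_k$ has nonzero constant term in each variable direction and is not divisible by any $x_j$. Moreover $P_k$ and $P_\ell$, after the substitution coming from one mutation, remain coprime. This uses that $z_k$ is monic and palindromic, together with the generic coefficients $z_{i,s}$ treated as indeterminates.

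Second, I do the direct computation for a $k,\ell,k$ sequence. Write $x_k''=\bigl(P_k|_{x_\ell\mapsto x_\ell'}\bigr)/x_k'$. Substituting $x_k'=P_k/x_k$ and $x_\ell'=P_\ell/x_\ell$ shows that $P_k|_{x_\ell\mapsto P_\ell/x_\ell}$ is divisible by $P_k$ modulo powers of $x_\ell$. This uses the generalized $B$-mutation rule, in which the factor $d_k$ compensates for the degree $d_k$ of $z_k$.

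\textbf{Conclusion of the induction.} The variable in question lies in $\mathcal{L}_{t_1}\cap\mathcal{L}_{t_3}$, and this intersection is contained in $\mathcal{L}_{t_0}$ by the coprimality established above. Its coefficients lie in $\mathbb{Z}\mathbb{P}[z_{i,s}]$ because every exchange step only introduces the $z_{i,s}$ polynomially. The tropical denominators $\bigoplus_s z_{k,s}y_k^s$ are monomials in $\mathbb{P}$, hence invertible in $\mathbb{Z}\mathbb{P}$, and the $z_{k,s}$ appear only as coefficients.

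\textbf{Main obstacle.} I expect the hardest step to be the divisibility and coprimality check in the $k,\ell,k$ computation, specifically the interaction between $d_k$ in the mutation rule and the degrees of $z_k$ and $z_\ell$. My first attempt would be to reduce to rank two by freezing all other variables, and then verify the divisibility by an explicit expansion using palindromicity.
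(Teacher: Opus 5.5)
The paper does not prove this statement; it is quoted from Chekhov--Shapiro, whose argument adapts the Fomin--Zelevinsky caterpillar proof, so your overall route is the right one. However, your coprimality claim is false as stated, and with it the assertion $\mathcal{L}_{t_1}\cap\mathcal{L}_{t_3}\subseteq\mathcal{L}_{t_0}$ on which your inductive step rests.

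When $b_{k\ell}(t_0)=0$, mutation at $k$ leaves column $\ell$ unchanged, and the exchange polynomial at $t_1$ in direction $\ell$ does not involve $x_k$. So the substitution does nothing, and you need $P_k$ and $P_\ell$ themselves to be coprime. Nothing forces this. Treating the $z_{i,s}$ as indeterminates does not help when $d_k=d_\ell=1$, since $1+u$ has no free coefficients; the definition explicitly allows this case, which recovers ordinary cluster algebras.

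Here is a concrete counterexample. Take $y_k=1$ (zero coefficient rows), all $z_i=1+u$, $n=3$, $b_{31}=b_{32}=1$ and $b_{12}=0$. Then $P_1=P_2=1+x_3$ and $\mathbf{x}_{t_1}=\{(1+x_3)/x_1,\,x_2,\,x_3\}$. Along $t_0\stackrel{1}{\text{---}}t_1\stackrel{2}{\text{---}}t_2\stackrel{1}{\text{---}}t_3$ one finds $\mathbf{x}_{t_3}=\{x_1,\,(1+x_3)/x_2,\,x_3\}$. The element $x_1x_2/(1+x_3)$ equals $x_2\cdot x_1(t_1)^{-1}$ and also $x_1\cdot x_2(t_3)^{-1}$. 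Hence it lies in $\mathcal{L}_{t_1}\cap\mathcal{L}_{t_3}$ but not in $\mathcal{L}_{t_0}$.

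The missing idea is to reduce to coefficients that separate the exchange polynomials, and then specialize.
\begin{enumerate}
\item Add frozen rows so that $\tilde B$ has full rank $n$, for instance principal coefficients. Generalized mutation is ordinary mutation of $\tilde BD$ with $D=\mathrm{diag}(d_i)$, so full rank persists at every seed.
\item Take the $z_{i,s}$ to be indeterminates, so that $\mathcal{L}_{t_0}$ is a UFD.
\item After tropical normalization, $P_k$ is a unit times $z_k(\tilde{\mathbf{x}}^{\tilde b_k})$. Here $\tilde b_k$ is the $k$-th column of $\tilde B$ and $\tilde{\mathbf{x}}$ includes the frozen variables.
\item If $b_{k\ell}=0$: since $z_k(0)=z_\ell(0)=1$ and $\tilde b_k,\tilde b_\ell$ are linearly independent, $P_k$ and $P_\ell$ are coprime.
\item If $b_{k\ell}\neq0$: the exchange polynomial at $t_1$ in direction $\ell$, with $x_k$ set to $0$, is a unit times a monomial. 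This is where $z_{\ell,0}=z_{\ell,d_\ell}=1$ is really used.
\item Finally, set the added frozen variables to $1$ and the $z_{i,s}$ to their values. Deleting rows of $\tilde B$ commutes with generalized mutation, and the normalized exchange relations specialize to those of $\mathcal{A}$. So by induction along $\mathbb{T}_n$, the specialized Laurent polynomials are the cluster variables of $\mathcal{A}$.
\end{enumerate}

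Separately, your three-step check is misstated. The numerator of $x_k(t_3)$ is the exchange polynomial at $t_2$ in direction $k$, not $P_k$. It is evaluated at $x_\ell(t_2)$, whose own exchange polynomial involves $x_k(t_1)$. What must be shown is that this numerator is divisible by $P_k$ in $\mathcal{L}_{t_0}$. That is the generalized rank-two computation, and it is where palindromicity enters.
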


For more detailed exposition about the structure of generalized cluster algebras, the interested reader may consult \cite{Chekhov-Shapiro,MR4563311, Nakanishi15}.

\subsection{Triangulated Orbifolds}

In this section, we introduce \emph{triangulated orbifolds} and present some definitions and features that will be important in our work. An orbifold is a generalization of a manifold where the local structure is given by quotients of open subsets of $\mathbb{R}^n$ under finite group actions. For the purposes of this paper, we will think about orbifolds in the following very concrete way.

\begin{definition}
    An \emph{orbifold} $\mathcal{O}$ is a triple $(S,M,Q)$ where $S$ is a bordered surface, $M$ is a finite set of \emph{marked points}, and $Q$ is a finite set of \emph{orbifold points}, such that $M \cap Q = \emptyset$ (i.e., a point cannot be both a marked point and an orbifold point); orbifold points cannot appear on the boundary of $S$; and each boundary of $S$ contains at least one marked point.
\end{definition}

Note that when $Q = \emptyset$, i.e., when there are no orbifold points, this definition reduces to the definition of a marked surface, $\mathcal{S} = (S,M)$. Marked points that appear in the interior of $S$ are referred to as \emph{punctures}. If $M$ does not contain any punctures, then we refer to $\mathcal{O}$ (or $\mathcal{S}$) as \emph{unpunctured}.

As is standard~\cite{FST-I,fomin2018cluster}, we will omit a few small cases of orbifolds. Let $p$ be the number of punctures and $o = \vert Q \vert$. Throughout the article, we will assume $\mathcal{O}$ is not a sphere with $p + o < 4$, a monogon with $p+o < 2$, or a bigon or triangle with $p+o = 0$.

Each orbifold point has an associated \emph{order}, $\mathbf{p} \in \mathbb{Z}_{\geq 2}$, and constant $\lambda_{\mathbf{p}} = 2\cos(\pi/\mathbf{p})$. For clarity of exposition, we will require in this paper that all orbifold points have order $\geq 3$ as orbifold points of order 2 require special treatment. However, one could combine the constructions here with the snake graphs from \cite{ccanakcci2019bases} to extend our results to orbifold points with at least one orbifold point of order 2.

\begin{remark}
    Our definition of the order of an orbifold point differs from the one used by Felikson, Shapiro, and Tumarkin in their work on ordinary cluster algebras from orbifolds \cite{felikson2012-a, felikson2012-b, FT17}. In their language, orbifold points have weight either $2$ or $\frac{1}{2}$. Our orbifold points of order 2 coincide with their weight $\frac{1}{2}$ points, and we do not work with orbifold points analogous to those with weight 2.
\end{remark}

\begin{definition}
    An \emph{arc} on an orbifold $\mathcal{O} = (S, M, Q)$ is a curve in $S$ with endpoints in $M$ with no self-crossing. The interior of the arc must be disjoint from $M$, the set of orbifold points $Q$, and the boundary $\partial S$. Furthermore, the arc must not be isotopic (relative to its endpoints) to a segment of $\partial S$. Arcs are considered up to isotopy relative to $M$.
\end{definition}

We refer to arcs that cut out an unpunctured monogon containing an orbifold point as \emph{pending} arcs and to all other arcs as \emph{standard} arcs. We will often use the variable $\rho$ to represent pending arcs and $\tau$ to represent standard arcs. Both pending and standard arcs are referred to as \emph{ordinary} arcs. A curve that contains a self-intersection while satisfying the other conditions of this definition is called a \emph{generalized} arc. We will see later that an important subtype of generalized arcs is those that wind non-trivially around an orbifold point before their first intersection with an arc in the triangulation (or, by reverse orientation, after all such intersections). We refer to such arcs as \emph{corner arcs} and an example appears in the last row of \Cref{table:PuzzlePiecesWinding}.

Orbifold points are the singular points of the local finite group actions that define the orbifold. This means there are many equivalent ways to represent an arc $\gamma$ that winds around an orbifold point. For the following definition, at each pending arc $\rho$, we draw a dashed line from the unique marked point incident to $\rho$ to the orbifold point it encloses, denoted $\overline{\rho}$. Any generalized arc or closed curve $\gamma$ which intersects $\rho$ nontrivially will also intersect $\overline{\rho}$.  If the interior of $\gamma$ intersects $\overline{\rho}$ consecutively $\kappa$ times , keeping the orbifold point to the left (right), we say $\gamma$ has ``signed intersection number'' $\kappa$ ($-\kappa$). Moreover, if $\gamma$ shares an endpoint with $\overline{\rho}$ and the orbifold point lies to the left (right) of the direction of travel, then we add (subtract) $\frac12$ to the intersection number. The definition of an orbifold point of order $\mathbf{p}$ implies that an arc with signed intersection number  $\kappa$  is isotopic to one with signed intersection number $\mathbf{p}-\kappa$. Having an intersection number $\kappa$ satisfying $\vert \kappa \vert < 1$ should be interpreted as being homotopic to an arc which does not intersect $\overline{\rho}$.

\begin{center}
    \begin{tikzpicture}
        \draw[out=25,in=0,looseness=1.25] (0,0) to (0,1.5);
        \draw[out=155,in=180,looseness=1.25] (0,0) to (0,1.5);
        \filldraw (0,0) circle[radius=2pt];
        \draw[densely dashed] (0,1) to (0,0);
        \node at (0,1) {$\times$};
        \draw[line width = 1.5pt, orange,->,out=-25,in=205] (-0.75,0.5) to (0.75,0.5);

        \node at (0,-0.5) {\highlight{$\kappa=1$}};
    \end{tikzpicture}
    \qquad\qquad
    \begin{tikzpicture}
        \draw[out=25,in=0,looseness=1.25] (0,0) to (0,1.5);
        \draw[out=155,in=180,looseness=1.25] (0,0) to (0,1.5);
        \filldraw (0,0) circle[radius=2pt];
        \draw[densely dashed] (0,1) to (0,0);
        \node at (0,1) {$\times$};
        \draw[line width = 1.5pt, orange,->,out=205,in=-25] (0.75,0.5) to (-0.75,0.5);

        \node at (0,-0.5) {\highlight{$\kappa=-1$}};
    \end{tikzpicture}
    \qquad\qquad
    \begin{tikzpicture}
        \draw[out=25,in=0,looseness=1.25] (0,0) to (0,1.5);
        \draw[out=155,in=180,looseness=1.25] (0,0) to (0,1.5);
        \filldraw (0,0) circle[radius=2pt];
        \draw[densely dashed] (0,1) to (0,0);
        \node at (0,1) {$\times$};
        \draw[line width = 1.5pt, orange,out=0,in=-90] (-0.75,0.25) to (0.2,1);
        \draw[line width = 1.5pt, orange, out = 90, in = 90,looseness=1.5] (0.2,1) to (-0.2,1);
        \draw[line width = 1.5pt, orange, out = -90, in = -90,looseness=1.5] (-0.2,1) to (0.35,1);
        \draw[line width = 1.5pt, orange, out = 90, in = 90, looseness=1.5] (0.35,1) to (-0.35,1);
        \draw[line width = 1.5pt, orange, out = -90, in = 180,->,looseness=1.5] (-0.35,1) to (0.75,0.3);

        \node at (0,-0.5) {\highlight{$\kappa = 3$}};
    \end{tikzpicture}
    \qquad\qquad
    \begin{tikzpicture}
        \draw[out=25,in=0,looseness=1.25] (0,0) to (0,1.5);
        \draw[out=155,in=180,looseness=1.25] (0,0) to (0,1.5);
        \filldraw (0,0) circle[radius=2pt];
        \draw[densely dashed] (0,1) to (0,0);
        \node at (0,1) {$\times$};
        \draw[line width = 1.5pt, orange, out = 180, in = -90] (0.75,0.25) to (-0.3,1);
        \draw[line width = 1.5pt, orange, out = 90, in = 90, looseness = 1.5] (-0.3,1) to (0.3,1);
        \draw[line width = 1.5pt, orange, out = -90, in = 0,->] (0.3,1) to (-0.75,0.5);

        \node at (0,-0.5) {\highlight{$\kappa = -2$}};
    \end{tikzpicture}
    \qquad\qquad
        \begin{tikzpicture}
        \draw[out=25,in=0,looseness=1.25] (0,0) to (0,1.5);
        \draw[out=155,in=180,looseness=1.25] (0,0) to (0,1.5);
        \draw[densely dashed] (0,1) to (0,0);
        \node at (0,1) {$\times$};
        \draw[line width = 1.5pt, orange,out=60,in=-90] (0,0) to (0.2,1);
        \draw[line width = 1.5pt, orange, out = 90, in = 90,looseness=1.5] (0.2,1) to (-0.2,1);
        \draw[line width = 1.5pt, orange, out = -90, in = -90,looseness=1.5] (-0.2,1) to (0.35,1);
        \draw[line width = 1.5pt, orange, out = 90, in = 90, looseness=1.5] (0.35,1) to (-0.35,1);
        \draw[line width = 1.5pt, orange, out = -90, in = 180,->,looseness=1.5] (-0.35,1) to (0.8,0.5);
        \node at (0,-0.5) {\highlight{$\kappa = 2+\frac12$}};
        \filldraw (0,0) circle[radius=2pt];
    \end{tikzpicture}
\end{center}

Our convention will be to choose a representative of an arc which has a signed intersection number strictly between 0 and $\mathbf{p}-1$. This, in particular, implies that arcs will always wind around orbifold points in a counterclockwise direction. If the signed intersection number $\kappa$ of such an arc is an integer, we will say its \emph{winding number} is $k:= \kappa - 1$. If $\kappa$ is fractional (i.e., if $\gamma$ is a corner arc), we set $k := \kappa - \frac12$.

This modular winding behaviour makes the following definition useful when we associate generalized cluster algebra elements to such arcs.  Let $U_k(x)$ denote the $k$-th normalized Chebyshev polynomial of the second kind, defined by the recurrence:
\[
U_{-1}(x) = 0, \quad U_0(x) = 1, \quad U_k(x) = xU_{k-1}(x) - U_{k-2}(x) \quad \text{for } k \geq 1.
\]

With our indexing, $\lambda_{\mathbf{p}}$ is a root of $U_{\mathbf{p}-1}(x)$.
Throughout this work, we often use $U_k$ as a shorthand for $U_k(\lambda_{\mathbf{p}})$, when $\mathbf{p}$, the order of the corresponding orbifold point, is understood. This will primarily occur in figures. The normalized Chebyshev polynomials satisfy the following Casini-type identity, which will be essential in subsequent sections.

\begin{lemma}\label{lem:ChebyshevLogConcave}
For all $i \geq 0$,  $U_{i-1}(x)U_{i+1}(x) + 1 = U_i^2(x)$.
\end{lemma}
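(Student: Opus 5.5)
We prove the identity $U_{i-1}(x)U_{i+1}(x)+1=U_i(x)^2$ by induction on $i$, using only the three-term recurrence $U_k = xU_{k-1}-U_{k-2}$. The natural quantity to track is
\[
D_i := U_i(x)^2 - U_{i-1}(x)U_{i+1}(x),
\]
which is the determinant of the $2\times 2$ matrix $\begin{bmatrix} U_i & U_{i+1}\\ U_{i-1} & U_i\end{bmatrix}$. The claim is exactly that $D_i = 1$ for all $i\ge 0$.

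For the base case $i=0$, we have $U_{-1}=0$, $U_0=1$ and $U_1 = x$. Hence $D_0 = 1 - 0\cdot x = 1$.

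For the inductive step, substitute $U_{i+1} = xU_i - U_{i-1}$ into $D_i$:
\[
D_i = U_i^2 - xU_{i-1}U_i + U_{i-1}^2 .
\]
Next, rewrite $U_{i-1}$ via $U_{i-1} = xU_{i-2} - U_{i-3}$ in the form $xU_{i-1} - U_i = U_{i-2}$. Substituting gives
\[
D_i = U_{i-1}^2 - U_i\,(xU_{i-1}-U_i) = U_{i-1}^2 - U_iU_{i-2} = D_{i-1}.
\]
This step is valid for $i\ge 1$, with $U_{-1}=0$ used when $i=1$. The recurrence extends consistently to $U_{-2}=-1$, but it is not needed. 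Therefore $D_i = D_0 = 1$ for all $i\ge 0$.

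Equivalently, one can note that $\begin{bmatrix} U_{i+1} & -U_i\\ U_i & -U_{i-1}\end{bmatrix} = \begin{bmatrix} x & -1\\ 1 & 0\end{bmatrix}^{i+1}$, and take determinants. There is no real obstacle here; the only care needed is the index bookkeeping at $i=0$ and $i=1$, where $U_{-1}=0$ enters.
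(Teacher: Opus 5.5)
Your proof is correct. The paper gives no proof of its own: it states this as a standard Cassini-type identity, so your induction on $D_i=U_i^2-U_{i-1}U_{i+1}$ supplies exactly the argument one would expect. Your alternative, taking determinants in $\begin{bmatrix} U_{i+1} & -U_i\\ U_i & -U_{i-1}\end{bmatrix}=\begin{bmatrix} x & -1\\ 1 & 0\end{bmatrix}^{i+1}$, is an equally complete one-line proof.

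One slip in the write-up needs fixing. The relation you actually use, $xU_{i-1}-U_i=U_{i-2}$, is the recurrence at index $i$, namely $U_i=xU_{i-1}-U_{i-2}$. It is not the instance $U_{i-1}=xU_{i-2}-U_{i-3}$ that you cite. The index-$i$ recurrence holds for every $i\ge 1$ directly from the definition, with $U_{-1}=0$ entering at $i=1$. With the correct citation, the remark about $U_{-2}$ is unnecessary.

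For comparison: at $x=\lambda_{\mathbf{p}}$, the only case the paper uses, the identity also follows from Ptolemy's relation. Apply it to the quadrilateral on vertices $0,1,i+1,i+2$ of a regular $\mathbf{p}$-gon, reading off diagonal lengths via Lemma \ref{lem:ChebyshevFact}. Your algebraic argument is stronger, since it proves the polynomial identity for all $x$ and all $i\ge 0$ with no constraint relating $i$ to $\mathbf{p}$.
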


The connection between these polynomials and our combinatorial construction arises from the geometry of a particularly useful covering space, which we refer to as the $\mathbf{p}$-fold covering space. To construct this cover, we choose an orbifold point of order $\mathbf{p}$, take an equilateral $\mathbf{p}$-gon, and attach to each edge a copy of the triangulated orbifold where the pending arc around the chosen orbifold point has been identified with the edge of the $\mathbf{p}$-gon. Because our constructions only require us to consider the local geometry, it suffices to consider the $\mathbf{p}$-fold covering space for a single orbifold point at a time.

The following diagram contains an example of the 6-fold covering space for a triangulated orbifold containing a single orbifold point.
\begin{center}
    \begin{tikzpicture}
        \draw[out=25,in=0,looseness=1.25] (0,0) to (0,1.5);
        \draw[out=155,in=180,looseness=1.25] (0,0) to (0,1.5);
        \draw[out=175,in=180,looseness=1.5] (0,0) to (0,2);
        \draw[out=5,in=0,looseness=1.5] (0,0) to (0,2);
        \filldraw (0,0) circle[radius=2pt];
        \node at (0,1) {$\times$};
        \filldraw (0,2) circle[radius=2pt];
        \draw[line width = 1.5pt,orange,out=0,in=-90] (-1,0.25) to (0.25,1);
        \draw[line width = 1.5pt, orange, out=90,in=90,looseness=1.25] (0.25,1) to (-0.25,1);
        \draw[line width = 1.5pt, orange, out=-90,in=-90,looseness=1.25] (-0.25,1) to (0.4,1);
        \draw[line width = 1.5pt, orange, out=90,in=90,looseness=1.5] (0.4,1) to (-0.4,1);
        \draw[line width = 1.5pt, orange,out=-90,in=180,->] (-0.4,1) to (1,0.35);
        \node at (-1.1,1) {$\alpha$};
        \node at (1.1,1) {$\beta$};
        \node at (0,1.6) {$\rho$};
        \node[white] at (0,-0.5) {3};
    \end{tikzpicture}
    \qquad\qquad
    \begin{tikzpicture}[scale=1.5]
        \def\r{1}

        \draw (0,0) circle (\r);

        \foreach \i in {0,...,5} {
            \coordinate (V\i) at ({60*\i}:\r);
        }

        \draw (V0) -- node[midway,left]{$\rho$} (V1) -- node[midway,below]{$\rho$} (V2) -- node[midway,right]{$\rho$} (V3) -- node[midway,right]{$\rho$} (V4) -- node[midway,above]{$\rho$} (V5) -- node[midway,left]{$\rho$} (V0);

        \foreach \i in {0,...,5} {
            \fill (V\i) circle (0.04);
        }

        \foreach \i in {0,...,5} {
            \coordinate (M\i) at ({60*\i + 30}:\r);
            \fill (M\i) circle (0.04);
        }

        \foreach \i in {0,...,5} {
            \node[scale=0.75] at ({60*\i + 15}:1.1) {$\beta$};
        }

        \foreach \i in {0,...,5} {
            \node[scale=0.75] at ({60*\i + 45}:1.1) {$\alpha$};
        }

        \draw[line width = 1.5pt, orange,out=-25,in=-155,looseness=1,->] (-1.1,0.2) to (1.2,0.2);

    \end{tikzpicture}
\end{center}

To obtain a genuine triangulation of this $\mathbf{p}$-fold cover, we must add edges within the central $\mathbf{p}$-gon. We refer to any diagonal that separates $k$ vertices from the remaining vertices as a \emph{$k$-diagonal}. Under this convention, an edge of the polygon is a $0$-diagonal, and a $k$-diagonal in a $\mathbf{p}$-gon is equivalent to a $(\mathbf{p}-k-2)$-diagonal. If we regard the edge labels as lengths, then the geometry of regular polygons provides a canonical way to assign labels to these additional edges.

\begin{lemma}\label{lem:ChebyshevFact}
The length of a $k$-diagonal in a regular $\mathbf{p}$-gon with side length $s$ is given by $U_k(\lambda_{\mathbf{p}})s$.
\end{lemma}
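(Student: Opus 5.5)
The plan is a direct Euclidean-geometry computation that reduces the claim to the defining recurrence for $U_k$. Place the regular $\mathbf{p}$-gon on its circumscribed circle of radius $R$, with vertices $v_0,\dots,v_{\mathbf{p}-1}$ in cyclic order. Each side subtends a central angle $2\pi/\mathbf{p}$, so the chord rule gives $s = 2R\sin(\pi/\mathbf{p})$. A $k$-diagonal separates $k$ vertices from the rest, so up to rotation it is the segment $v_0v_{k+1}$. Under this convention the $0$-diagonal $v_0v_1$ is a side, as required. The chord $v_0v_{k+1}$ subtends central angle $2\pi(k+1)/\mathbf{p}$, so its length is
\[
d_k = 2R\sin\!\big((k+1)\pi/\mathbf{p}\big) = s\,\frac{\sin((k+1)\theta)}{\sin\theta},\qquad \theta = \pi/\mathbf{p}.
\]

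It remains to show that $\sin((k+1)\theta)/\sin\theta = U_k(2\cos\theta) = U_k(\lambda_{\mathbf{p}})$. I would argue by induction on $k$, using the recurrence given before the statement.
\begin{itemize}
\item For the base cases, $k=-1$ gives $0 = U_{-1}$ and $k=0$ gives $1 = U_0$.
\item For the inductive step, the sum-to-product identity gives
\[
\sin((k+1)\theta) + \sin((k-1)\theta) = 2\cos\theta\,\sin(k\theta).
\]
So the quotients $f_k = \sin((k+1)\theta)/\sin\theta$ satisfy $f_k = \lambda_{\mathbf{p}} f_{k-1} - f_{k-2}$. This is exactly the Chebyshev recurrence with the same initial values, so $f_k = U_k(\lambda_{\mathbf{p}})$ for all $k\ge -1$.
\end{itemize}

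As a consistency check, the formula respects the symmetry between a $k$-diagonal and a $(\mathbf{p}-k-2)$-diagonal. This is because $\sin((k+1)\theta) = \sin((\mathbf{p}-k-1)\theta)$. It also recovers the earlier remark that $\lambda_{\mathbf{p}}$ is a root of $U_{\mathbf{p}-1}$, since $\sin(\mathbf{p}\theta)=0$.

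There is no real obstacle in this argument. The only points needing care are the indexing, so that a $k$-diagonal corresponds to $U_k$ rather than $U_{k+1}$, and the fact that $\sin\theta \neq 0$ because $\mathbf{p}\ge 2$.
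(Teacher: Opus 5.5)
Your proof is correct and complete. The normalization $s = 2R\sin(\pi/\mathbf{p})$ and the chord length $|v_0v_{k+1}| = 2R\sin((k+1)\pi/\mathbf{p})$ reduce the claim to $\sin((k+1)\theta)/\sin\theta = U_k(2\cos\theta)$. Your induction establishes that identity from the paper's recurrence, with $f_{-1}=0$ and $f_0=1$. Your indexing (a $k$-diagonal is $v_0v_{k+1}$) also matches the paper's convention that an edge is a $0$-diagonal.

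The paper does not prove this lemma; it states it as a standard fact about regular polygons, so there is no argument of the paper's to compare against.

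If you want a route closer in spirit to the rest of the paper, apply Ptolemy's theorem to the cyclic quadrilateral $v_0v_1v_2v_{k+1}$ for $2\le k\le \mathbf{p}-2$. This gives $d_1 d_{k-1} = s\,d_{k-2} + s\,d_k$. The isosceles triangle $v_0v_1v_2$ gives $d_1 = 2s\cos(\pi/\mathbf{p}) = \lambda_{\mathbf{p}}s$. Together these are exactly the Chebyshev recurrence. This is the exchange-relation viewpoint behind the frieze specialization the paper invokes in the proof of Proposition~\ref{prop:Invariance}.

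Your trigonometric route has a different advantage: it yields the closed form $U_k(\lambda_{\mathbf{p}}) = \sin((k+1)\pi/\mathbf{p})/\sin(\pi/\mathbf{p})$. From it, the $k \leftrightarrow \mathbf{p}-k-2$ symmetry and $U_{\mathbf{p}-1}(\lambda_{\mathbf{p}})=0$ follow at once, as you observed.
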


This relationship between diagonal lengths and Chebyshev polynomials allows us to assign weights to the variables in the expansion of generalized arcs on orbifolds.

Next, we turn to a discussion of triangulations, which will be our geometric model of a cluster.

\begin{definition}
    For any two arcs $\gamma$ and $\gamma'$ on $S$, their \emph{crossing number} $e(\gamma,\gamma')$ is defined as the minimal number of intersections between representatives $\alpha$ and $\alpha'$ of the isotopy classes of $\gamma$ and $\gamma'$, where $\alpha$ and $\alpha'$ are allowed to range over the entire isotopy class. If there exist representatives $\alpha$ and $\alpha'$ with no intersections, so $e(\gamma,\gamma') = 0$, we say that $\gamma$ and $\gamma'$ are \emph{compatible}.
\end{definition}

\begin{definition}
    An \emph{ideal triangulation} is a maximal collection of pairwise compatible arcs on $S$, together with the boundary arcs.
\end{definition}

As shown below, there are three types of triangles that may appear in an ideal triangulation of an orbifold: those with three distinct sides, those that are cut out by a single pending arc, and those with only two distinct sides, which are referred to as \emph{self-folded} triangles. Self-folded triangles always consist of an arc $\ell$ that encircles a single puncture and a radius $r$ from the basepoint of $\ell$ to the encircled puncture.

\begin{center}
    \begin{tikzpicture}
        \draw (0,0) to (2,0) to (1,1.5) to (0,0);
        \filldraw (0,0) circle[radius=2pt];
        \filldraw (2,0) circle[radius=2pt];
        \filldraw (1,1.5) circle[radius=2pt];

        \draw[out=25,in=0,looseness=1.25] (4,0) to (4,1.5);
        \draw[out=155,in=180,looseness=1.25] (4,0) to (4,1.5);
        \filldraw (4,0) circle[radius=2pt];
        \node at (4,1) {$\times$};

        \draw[out=25,in=0,looseness=1.25] (6.5,0) to (6.5,1.5);
        \draw[out=155,in=180,looseness=1.25] (6.5,0) to (6.5,1.5);
        \draw (6.5,0) to node[right]{$r$} (6.5,1);
        \node at (6.5,1.7) {$\ell$};
        \filldraw (6.5,0) circle[radius=2pt];
        \filldraw (6.5,1) circle[radius=2pt];
    \end{tikzpicture}
\end{center}

\begin{remark}
    Although the second type of face - i.e., the one cut out by a single pending arc - does not actually have three sides, it is useful to still refer to this as a ``triangle''. Indeed, since all of our orbifold points have order at least three, pending arcs will lift in a cover to the boundary of a regular polygon. With this intuition in mind, we will henceforth refer to this type of monogon as a \emph{pending triangle}.
\end{remark}

Any ideal triangulation of an orbifold can be formed by gluing together copies of these three types of triangles, where we forbid gluing a self-folded triangle on its folded edge (``$r$'' above). Ideal triangulations are related via sequences of \emph{flips}. Each flip removes a single arc $\gamma$ from a triangulation $T$ and replaces $\gamma$ with the unique distinct arc $\gamma'$ such that $(T \cup \{ \gamma' \})\backslash \{ \gamma \}$ forms a new ideal triangulation of the surface. Flips correspond to mutation relations in the associated generalized cluster algebra. One example of an ideal triangulation and the flip of a pending arc is shown in \Cref{fig:Orbi}. One can readily see that the flip of a standard arc is always a standard arc and likewise that a flip of a pending arc is always a pending arc.

\begin{figure}[H]
    \centering
    \begin{tikzpicture}[transform shape, scale=2]
\tikzset{->-/.style={decoration={
  markings,
  mark=at position #1 with {\arrow{>}}},postaction={decorate}}}


\draw[out=30,in=-30,looseness=1.5] (0,0.3) to (0,1.5);
\draw[out=150,in=-150,looseness=1.5] (0,0.3) to (0,1.5);
\draw[out=-10,in=0,looseness=1.8] (0,1.5) to (0,0);
\draw[out=-170,in=180,looseness=1.8] (0,1.5) to (0,0);

\draw[blue,in=0,out=45,looseness =1] (0,0.3) to (0,1.2);
\draw[blue,in=180,out=135,looseness=1] (0,0.3) to (0,1.2);

\draw[fill=black] (0,0.3) circle [radius=1pt];
\draw[fill=black] (0,1.5) circle [radius=1pt];
\draw[thick] (0,1) node {$\mathbf{\times}$};

\node[scale=0.5] at (-0.55,1) {$\alpha$};
\node[scale=0.5] at (0.55,1) {$\beta$};
\node[scale=0.5, blue] at (-0.17,1.2) {$\rho$};
\node[scale=0.5] at (0,0.1) {$\epsilon$};

    \end{tikzpicture}
    \qquad
    \begin{tikzpicture}[transform shape, scale=2]
\tikzset{->-/.style={decoration={
  markings,
  mark=at position #1 with {\arrow{>}}},postaction={decorate}}}


\draw[out=30,in=-30,looseness=1.5] (0,0.3) to (0,1.5);
\draw[out=150,in=-150,looseness=1.5] (0,0.3) to (0,1.5);
\draw[out=-10,in=0,looseness=1.8] (0,1.5) to (0,0);
\draw[out=-170,in=180,looseness=1.8] (0,1.5) to (0,0);

\draw[red,in=0,out=-45,looseness =1] (0,1.5) to (0,0.8);
\draw[red,in=-180,out=-135,looseness=1] (0,1.5) to (0,0.8);

\draw[fill=black] (0,0.3) circle [radius=1pt];
\draw[fill=black] (0,1.5) circle [radius=1pt];
\draw[thick] (0,1) node {$\mathbf{\times}$};

\node[scale=0.5] at (-0.55,1) {$\alpha$};
\node[scale=0.5] at (0.55,1) {$\beta$};
\node[scale=0.5, red] at (-0.25,0.9) {$\rho'$};
\node[scale=0.5] at (0,0.1) {$\epsilon$};

    \end{tikzpicture}

    \caption{An example of an ideal triangulation of an orbifold and the unique flip $\rho'$ of its pending arc $\rho$.}
    \label{fig:Orbi}
\end{figure}

Notice that it is not possible, however, to flip the radius in a self-folded triangle - there is no distinct arc that can be added to create a new ideal triangulation. In the context of punctured surfaces, this issue motivated Fomin, Shapiro, and Thurston to introduce  the slightly more general notion of \emph{tagged arcs} \cite{FST-I}. This notion is also useful in the orbifold setting.

\begin{definition}
    A \emph{tagged arc} is obtained from any ordinary arc $\gamma$ that does not cut out a once-punctured monogon by ``tagging'' each end of $\gamma$ either \emph{plain} or \emph{notched} (denoted by decorating that end with $\bowtie$), such that any endpoints of $\gamma$ on $\partial \mathcal{O}$ are tagged plain and both ends of $\gamma$ have the same tagging if they are incident to the same marked point.
\end{definition}
For a tagged arc $\gamma$, let $\gamma^{0}$ denote the underlying plain arc obtained by forgetting the decorations at both endpoints. An arc with at least one end notched is sometimes called a notched arc. If $\gamma$ has a notched endpoint at puncture $p$, we may stress this by writing $\gamma^{(p)}$.

\begin{definition}
    Two tagged arcs $\alpha$ and $\beta$ are \emph{compatible} if and only if: the arcs $\alpha^0$ and $\beta^0$ are compatible; if $\alpha^0 = \beta^0$, then at least one end of $\alpha$ has the same tagging as the corresponding end of $\beta$; and if $\alpha^0 \neq \beta^0$ but $\alpha$ and $\beta$ share an endpoint, then $\alpha$ and $\beta$ have the same tagging at their shared endpoint.
\end{definition}

\begin{definition}
    A \emph{tagged triangulation} is a maximal collection of pairwise compatible tagged arcs.
\end{definition}

One can also define \emph{flips} for tagged triangulations. In this setting, every arc can be uniquely flipped.
In \cite[Theorem 4.2]{felikson2012-a}, Felikson, Shapiro, and Tumarkin show that, outside of a few extreme cases, flips act transitively on the set of tagged triangulations of an orbifold.  

Ideal triangulations can be translated into tagged triangulations by representing each ordinary arc $\gamma$ in the ideal triangulation as a tagged arc $\iota(\gamma)$. If $\gamma$ does not cut out a once-punctured monogon, then $\iota(\gamma)$ is simply $\gamma$ with both ends tagged plain. If $\gamma$ is the encircling arc in a self-folded triangle with radius $r$, then $\iota(\gamma)$ is obtained by tagging $r$ notched at the encircled puncture and plain at its other endpoint.

\begin{center}
    \begin{tikzpicture}
        \draw[out=25,in=0,looseness=1.25] (6.5,0) to (6.5,1.5);
        \draw[out=155,in=180,looseness=1.25] (6.5,0) to (6.5,1.5);
        \draw (6.5,0) to node[right]{$r$} (6.5,1);
        \node at (6.5,1.7) {$\ell$};
        \filldraw (6.5,0) circle[radius=2pt];
        \filldraw (6.5,1) circle[radius=2pt];

       \node[] at (7.5,0.6){$\to$};
       \draw[out=15,in=0,looseness=1.25] (8.5,0) to (8.5,1);
        \draw (8.5,0) to (8.5,1);
        \filldraw (8.5,0) circle[radius=2pt];
        \filldraw (8.5,1) circle[radius=2pt];
        \node[rotate=40] at (8.7,.9) {$\bowtie$};

    \end{tikzpicture}
\end{center}

It is occasionally useful to consider a \emph{tag-changing transformation} on a tagged triangulation. Each puncture has an associated tag-changing transformation that switches the decorations at that endpoint of every arc incident to the puncture. Notice that this operation preserves the compatibility of the set of arcs. By applying a sequence of tag-changing transformations, one can convert any triangulation into an entirely plain triangulation.

The coefficient variables of $\mathcal{A}$ correspond to laminations on $\mathcal{O}$. We use the definition of lamination given by Felikson, Shapiro, and Tumarkin in \cite{felikson2012-a}; although they work in a slightly different setting with an object called an \emph{associated orbifold}, the definition transfers into our setting.

\begin{definition}[\cite{felikson2012-a}, Definition 6.1]\label{def:lamination}
    Let $\mathcal{O} = (S,M,Q)$ be an orbifold. An \emph{integral unbounded lamination} (which we will henceforth refer to as just a \emph{lamination}) on $\mathcal{O}$ is a finite collection of non-self-intersecting and pairwise non-intersecting curves on $S$ such that:
    \begin{itemize}
        \item each curve is either a closed curve, a non-closed curve where each endpoints lies on an unmarked point on the boundary of $\mathcal{O}$, a spiral (either clockwise or counterclockwise) around a puncture contained in $M$, or a curve winding around an orbifold point in $Q$;
        \item no curve bounds an unpunctured disk or a disk containing a unique point of $M \cup Q$;
        \item no curve with both endpoints on the boundary of $\mathcal{O}$ is isotopic to a portion of the boundary containing either no or one marked point(s); and
        \item  no two curves begin at the same orbifold point.
    \end{itemize}
Finally, a \emph{multi-lamination} $\mathbf{L} = (L_1,\ldots,L_m)$ is a finite multi-set of laminations. 
\end{definition}

For surface-type ordinary cluster algebras, the shear coordinates introduced by W. Thurston provide a coordinate system for these laminations. Fomin and Thurston updated the definition to account for tagged arcs \cite{fomin2018cluster}, and in \cite{banaian2020snake}, two of the authors extended this coordinate system to accommodate the introduction of pending arcs.

\begin{definition}[\cite{banaian2020snake}, Definition 3.8]\label{def:shearCoordinates}
    Let $\mathcal{O}$ be an orbifold with triangulation $T$ and $L$ be a lamination on $\mathcal{O}$. For each arc $\gamma \in T$, the \emph{shear coordinate} of $L$ with respect to $\tau$ is $b_{\gamma}(T, L)$, which is the result of summing up all contributions from individual curves in $L$. When all of the arcs in $T$ are plain, the shear coordinates $b_{\gamma}(T,L_i)$ are defined as:

    \begin{figure}[H]
    \centering
    \begin{tikzpicture}[transform shape, scale=0.4]

    \draw (-3,-1) to (0,-4);
    \draw (0,-4) to (3,0);
    \draw (0,-4) to (-1,3);
    \draw (3,0) to (-1,3);
    \draw (-3,-1) to (-1,3);
    
    \draw[out=0,in=-110,looseness=1,thick,dashed, red] (-4,-2) to (3,2);
    
    \node[scale=2] at (0,0.5) {$\gamma$};
    \node[scale=2,red] at (0.5,-1.5) {$L_i$};
    \node[scale=2] at (-3,3) {$+1$};
    
    \end{tikzpicture}
    \qquad
    \begin{tikzpicture}[transform shape, scale=0.4]
    
    \draw (-3,-1) to (0,-4);
    \draw (0,-4) to (3,0);
    \draw (0,-4) to (-1,3);
    \draw (3,0) to (-1,3);
    \draw (-3,-1) to (-1,3);
    \draw[out=0,in=140,looseness=1,thick, dashed,red] (-4,1) to (3,-2);
    
    \node[scale=2] at (-1,-2) {$\gamma$};
    \node[scale=2, red] at (0.5,0.5) {$L_i$};
    \node[scale=2] at (-3,3) {$-1$};
    
    \end{tikzpicture}
    \qquad
    \begin{tikzpicture}[transform shape, scale=2]
    
    \tikzset{->-/.style={decoration={
      markings,
      mark=at position #1 with {\arrow{>}}},postaction={decorate}}}
    
    
    \draw[out=30,in=-30,looseness=1.5] (0,0.3) to (0,1.5);
    \draw[out=150,in=-150,looseness=1.5] (0,0.3) to (0,1.5);
    
    \draw[in=0,out=45,looseness =1] (0,0.3) to (0,1.2);
    \draw[in=180,out=135,looseness=1] (0,0.3) to (0,1.2);
    
    \draw[in=10,out=40,looseness =1, dashed, red, thick] (-0.2,0.3) to (0,1.1);
    \draw[in=-170,out=90,looseness=1, dashed, red,thick] (-0.3,0.2) to (0,1.1);
            
    \draw[fill=black] (0,0.3) circle [radius=1pt];
    \draw[fill=black] (0,1.5) circle [radius=1pt];
    \draw (0,1) node {$\times$};
    
    \node[scale=0.4] at (0,0.2) {$p$};
    \node[scale=0.4] at (0,1.6) {$q$};
    \node[scale=0.4] at (-0.55,1) {$\alpha$};
    \node[scale=0.4] at (0.55,1) {$\beta$};
    \node[scale=0.4] at (0.27,0.8) {$\rho$};
    \node[scale=0.4] at (-0.6,1.6) {$+1$};
    \node[scale=0.3,red] at (-0.4,0.4) {$L_i$};

    \end{tikzpicture}
    \qquad
    \begin{tikzpicture}[transform shape, scale=2]
    
    \tikzset{->-/.style={decoration={
      markings,
      mark=at position #1 with {\arrow{>}}},postaction={decorate}}}
    
    
    \draw[out=30,in=-30,looseness=1.5] (0,0.3) to (0,1.5);
    \draw[out=150,in=-150,looseness=1.5] (0,0.3) to (0,1.5);
    
    \draw[in=0,out=45,looseness =1] (0,0.3) to (0,1.2);
    \draw[in=180,out=135,looseness=1] (0,0.3) to (0,1.2);
    
    \draw[in=10,out=-100,looseness =1, dashed, red, thick] (0.3,1.4) to (0,0.8);
    \draw[in=-170,out=-160,looseness=1, dashed, red,thick] (0.3,1.5) to (0,0.8);
            
    \draw[fill=black] (0,0.3) circle [radius=1pt];
    \draw[fill=black] (0,1.5) circle [radius=1pt];
    \draw (0,1) node {$\times$};
    
    \node[scale=0.4] at (0,0.2) {$p$};
    \node[scale=0.4] at (0,1.6) {$q$};
    \node[scale=0.4] at (-0.55,1) {$\alpha$};
    \node[scale=0.4] at (0.55,1) {$\beta$};
    \node[scale=0.4] at (0.27,0.8) {$\rho$};
    \node[scale=0.4] at (-0.6,1.6) {$-1$};
    \node[scale=0.3,red] at (0.4,1.3) {$L_i$};

    \end{tikzpicture}
   
    \end{figure}

When some of the arcs in $T$ are notched, one can use tag-changing transformations and the rules in \cite[Definition 13.1]{fomin2018cluster}.
\end{definition}

Unless stated otherwise, i.e., multi-lamination, we will be using \emph{principal coefficients}. For a given ideal triangulation $T$, this corresponds to a multi-lamination consisting of the elementary laminations associated with each arc of $T$.

\begin{definition}
    The \emph{elementary lamination} \(L_\gamma\) associated with an arc \(\gamma\) is a lamination defined by winding counterclockwise (resp. clockwise) into each plain (resp. notched) endpoint of $\gamma$ when the endpoint is a puncture, and by perturbing each endpoint of $\gamma$ on $\partial S$ in the clockwise direction. Given a triangulation $T = \{\tau_1, \ldots, \tau_n\}$, the associated \emph{elementary multi-lamination} is denoted as $\mathbf{L}_T := \{L_{\tau_1}, \ldots, L_{\tau_n}\}$.
\end{definition}

By construction, given $\mathbf{L}_T:= \{L_{\tau_1},\ldots,L_{\tau_n}\}$,  $b_{\tau_i}(T,L_{\tau_j})$ is 1 if $i = j$ and is 0 otherwise. Sometimes we will abbreviate $L_i = L_{\tau_i}$.

\subsection{Orbifold-Type Generalized Cluster Algebras}

It is well known that a subset of ordinary cluster algebras has a geometric realization in terms of triangulated surfaces. There is an analogous subset of the \emph{generalized cluster algebras} introduced by Chekhov and Shapiro in \cite{Chekhov-Shapiro} that can be realized geometrically by triangulated orbifolds. We will refer to this subset, on which our work focuses, as \emph{orbifold-type generalized cluster algebras}.

In the unpunctured setting, orbifold-type generalized cluster algebras have been explored in recent studies, including \cite{banaian2020snake,banaian2025snake, labardini2019family}. Felikson, Shapiro, and Tumarkin investigated ordinary, \emph{skew-symmetrizable} cluster algebras from orbifolds with different types of orbifold points in \cite{felikson2012-a,felikson2012-b}.  However, there are few results concerning orbifolds with orbifold points of order $\mathbf{p} \geq 3$ which also include punctures. Here, we address this larger set of generalized cluster algebras and show how punctured orbifolds provide a model for their structure.

Given an orbifold $\mathcal{O}$ equipped with a tagged triangulation $T = \{\tau_1,\ldots,\tau_n\}$ and a multi-lamination $\mathbf{L} = \{L_1,\ldots, L_m\}$, we build a seed as follows. Let $\mathbf{x} = \{x_1,\ldots,x_n\}$ and $\mathbf{y} = \{y_1,\ldots,y_n\}$. The matrix $B_{T,\mathbf{L}}$ be the $(n+m) \times n$ integer matrix where, for $1 \leq i \leq  n$, entry $b_{i,j}$ is the number of triangles in which $\tau_j$ follows $\tau_i$ in clockwise order minus the number of triangles in which $\tau_i$ follows $\tau_j$ in clockwise order and for $1 \leq i \leq m$, $b_{n+i,j} = b_{\tau_j}(T,L_i)$. Finally, we set $z_i = 1+\lambda_{\mathbf{p}} u + u^2$ is $\tau_i$ is a pending arc enclosing an orbifold point of order $\mathbf{p}$, and otherwise, if $\tau_i$ is standard, we set $z_i = 1 + u$. Let $\mathcal{A}_{\mathcal{O},\mathbf{L}}$ be the generalized cluster algebra associated to this data. If $\mathbf{L}$ is the elementary multi-lamination, our usual setting, we will just denote this as $\mathcal{A}_{\mathcal{O}}$. Notice that, in this case, the cluster algebra has principal coefficients. 

We will discuss here how, in the context of tagged triangulations, the dictionary between triangulations and seeds as for cluster algebras from surfaces discussed in~\cite{FST-I, fomin2018cluster} holds in the punctured orbifold setting. In particular, this justifies the lack of reference to $T$ in the notation $\mathcal{A}_{\mathcal{O},\mathbf{L}}$. The most important step in this verification is comparing the $B$-matrix mutation with flipping arcs and the effect on shear coordinates.

\begin{lemma}\label{lem:FlipAndMutate}
Let $T = \{\tau_1,\ldots,\tau_n\}$ be a triangulation of an orbifold $\mathcal{O}$ and let $\mathbf{L} = \{L_{1},\ldots,L_{m}\}$ be a multi-lamination. Let extended exchange matrix $\widetilde{B}_{T,\mathbf{L}}$ be the $(n+m) \times n$ matrix as defined above. Let $T'$ be the triangulation resulting from flipping $\tau_k$ and let $\widetilde{B}_{T'\mathbf{L}}$ be its extended exchange matrix, again with respect to $\mathbf{L}$. Then, $\mu_k(\widetilde{B}_T) = \widetilde{B}_{T',\mathbf{L}}$.  
\end{lemma}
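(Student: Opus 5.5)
Our plan is to split $\widetilde{B}_{T,\mathbf{L}}$ into its principal part (the top $n\times n$ block, computed from triangles) and its coefficient rows $b_{\tau_j}(T,L_i)$. We then treat each part by reducing to results already established for surfaces and for unpunctured orbifolds. In the generalized mutation rule, $d_k=2$ when $\tau_k$ is pending and $d_k=1$ otherwise. So when $\tau_k$ is standard the rule is the usual Fomin--Zelevinsky matrix mutation. When $\tau_k$ is pending, the entries $b_{ik}$ and $b_{kj}$ are counted only from triangles adjacent to $\tau_k$, and the factor $2$ is exactly what must be matched.

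\textbf{Principal part.} Every entry $b_{ij}$ is a sum of contributions from individual triangles, and a flip of $\tau_k$ only changes the (at most two) triangles containing $\tau_k$. This lets us reduce to local configurations: a quadrilateral, possibly with sides identified, self-folded, or containing a pending triangle, or a digon containing a pending arc as in \Cref{fig:Orbi}.
\begin{itemize}
\item If $\tau_k$ is standard and neither adjacent triangle is pending, the check is the one in \cite{FST-I}.
\item If a side of the quadrilateral is a pending arc, the pending arc enters $b_{ij}$ with weight one, like any other arc. The verification is therefore identical, since $d_k=1$.
\item If $\tau_k=\rho$ is pending, the unique adjacent non-pending triangle has sides $\alpha,\beta,\rho$, and after the flip it becomes the triangle $\alpha,\beta,\rho'$ seen from the other marked point. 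We check directly that $b'_{\alpha\beta}=b_{\alpha\beta}+2([-b_{\alpha\rho}]_+b_{\rho\beta}+b_{\alpha\rho}[b_{\rho\beta}]_+)$. Here the factor $2$ accounts for $\alpha$ and $\beta$ swapping their clockwise order around the digon.
\end{itemize}
This pending case is precisely the computation carried out in the unpunctured case \cite{banaian2020snake}. Punctures do not interfere, since self-folded triangles cannot be glued to a pending triangle along its monogon side.

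To handle tagged arcs, we apply tag-changing transformations to make $T$ plain. Tag changes preserve both triangle adjacency data, via the map $\iota$, and the flip relation, so the tagged case reduces to the plain case. The one exception is the flip of a radius, which is handled exactly as in \cite[Section 9]{FST-I}.

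\textbf{Coefficient rows.} We must show $b_{\tau_j}(T',L_i)=b_{\tau_j}(T,L_i)+d_k([-b_{\tau_j\ldots}]\ldots)$, that is, that shear coordinates transform under the generalized mutation rule. Since shear coordinates are additive over the curves of $L_i$ and are computed locally, it suffices to check each curve segment crossing the quadrilateral or digon around $\tau_k$.
\begin{itemize}
\item For standard $\tau_k$, this is \cite[Theorem 13.5]{fomin2018cluster}, applied after tag changes; their argument is purely local.
\item For pending $\tau_k$, we enumerate the possible segments of a lamination through the digon with sides $\alpha,\beta$ containing $\rho$. There are essentially the segments crossing $\alpha$ and $\beta$, and the segments winding to the orbifold point from either side, using the definition in \Cref{def:shearCoordinates}. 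For each, we compare coordinates before and after the flip to $\rho'$.
\end{itemize}

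The main obstacle is this last enumeration: confirming that the coefficient $d_k=2$ appears correctly in the shear-coordinate transformation for laminations ending at the orbifold point. The paper's conventions for such curves ($\pm1$ on $\rho$) differ slightly from Felikson--Shapiro--Tumarkin's. Our first approach is to lift to the $\mathbf{p}$-fold covering space, where $\rho$ becomes polygon sides and the relevant curve segments become ordinary laminations. We would then count how a segment crossing the digon sees $\alpha$ and $\beta$ twice through the two lifts adjacent to the central polygon, which should produce the factor $2$.
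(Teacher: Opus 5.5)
Your overall architecture matches the paper's: split the principal part from the coefficient rows, use \cite[Theorem 13.5]{fomin2018cluster} for standard flips, use the unpunctured orbifold computation for pending flips, and use tag changes for notched arcs. However, the proposal has a real hole exactly where the lemma is new.

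\textbf{Punctures and self-folded triangles.} Tag changes cannot make $T$ plain when it contains a plain/notched pair $\tau_{j_1},\tau_{j_2}$, i.e.\ a self-folded triangle in $T^\circ$. Such a pair can border the non-pending triangle next to a pending arc $\tau_k$, with the loop as a side of that triangle. Your remark that a self-folded triangle cannot be glued to the monogon side of $\tau_k$ is true but does not rule this out.
\begin{itemize}
\item For the principal part this is harmless: the radius's row and column copy the loop's, and mutation at $k$ preserves that.
\item For the coefficient rows it is not. Here $b_{kj_1}=b_{kj_2}=1$, and with $d_k=2$ the flip of $\tau_k$ must shift the shear coordinates of \emph{both} arcs of the pair. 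In the configuration the paper computes, $b_{\tau_{j_1}}(\cdot,L_i)$ goes from $m_2$ to $2m_1+m_2$ and $b_{\tau_{j_2}}(\cdot,L_i)$ from $0$ to $2m_1$.
\item The radius of the self-folded triangle is not a side of the digon around $\tau_k$, so your segment enumeration through that digon never sees it.
\end{itemize}
These mixed pending/self-folded configurations are precisely what the paper checks by hand, using the tagged shear coordinates of \cite[Definition 13.1]{fomin2018cluster}. A clean way to close this: tag-changing at the enclosed puncture $q$ fixes $T$ as a set, swaps the two arcs of the pair, and reverses spiralling at $q$. It leaves $b_{\tau_k}$ unchanged because $\tau_k$ is not incident to $q$. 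Hence the radius's coordinate for $L$ equals the loop's coordinate for the reversed lamination, so the digon computation for the loop transfers to the radius.

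\textbf{The pending-flip coefficient rows.} The step you flag as the main obstacle is left unproved. This is the transformation of shear coordinates under a pending flip. It also includes standard flips whose quadrilateral has a pending side, which \cite[Theorem 13.5]{fomin2018cluster} does not cover: the pending rule of \Cref{def:shearCoordinates} and curves ending at orbifold points are outside its scope. The paper does not reprove this either. Away from self-folded triangles it cites \cite[Proposition 9.1]{banaian2020snake}, which you already invoke for the principal part and should invoke here.

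Your fallback of lifting to the $\mathbf{p}$-fold cover would not work as described, for two reasons:
\begin{itemize}
\item Upstairs the flip of $\rho$ is not a flip: the untriangulated $\mathbf{p}$-gon bounded by lifts of $\rho$ is replaced by the one bounded by lifts of $\rho'$.
\item A curve ending at the orbifold point lifts to $\mathbf{p}$ arcs ending at an unmarked interior point, which is not a lamination.
\end{itemize}
If you want a self-contained argument, do the enumeration directly in the digon, including corner segments and spirals at its vertices.
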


\begin{proof}
Let $\mu_k(\widetilde{B}_{T,\mathbf{L}}) = \{b_{ij}'\}_{\substack{1 \leq i \leq n+m\\ 1 \leq j \leq n}}$. Let $T' = \{\tau_1',\ldots,\tau_n'\}$. From the definition of flipping an arc, we have $\tau_i' = \tau_i$ for $i \neq k$. 
We want to show that for $i \leq n$, $b'_{ij}$ records adjacencies in $T'$ and for $i > n$, $b'_{ij} = b_{\tau_j'}(T',L_i)$. When $i = k$ or $j = k$, the statements follow directly from definitions. If all the arcs involved are standard, then this follows from \cite[Theorem 13.5]{fomin2018cluster}. In general, the statement for $i \leq n$ is implicit from the definitions in \cite{Chekhov-Shapiro}. While the interactions of pending arcs and notched arcs are not explicitly discussed in the latter article, verification of the statement is straightforward.

Now we consider entries $b_{ij}'$ where $i > n$. Laminations and coefficient systems were not discussed in \cite{Chekhov-Shapiro}. However, if none of the arcs involved are in a self-folded triangle, the desired statement follows from \cite[Proposition 9.1]{banaian2020snake}, possibly after changing tagging at one or several punctures as in \cite[Definition 13.1.i]{fomin2018cluster}. Thus, in our present setting of punctured orbifolds, it suffices to check several specific situations involving both pending arcs and pairs of plain and notched arcs inside a bigon. One such scenario is drawn below, where $T$ is drawn on the left, $T'$ is drawn on the right, and $L_{i}$ is a lamination which contains $\ell_1$ with multiplicity $m_1$ and $\ell_2$ with multiplicity $m_2$.

\begin{figure}[H]
\centering
\begin{tikzpicture}[transform shape, scale=2.8]
\tikzset{->-/.style={decoration={
  markings,
  mark=at position #1 with {\arrow{>}}},postaction={decorate}}}

\draw[out=10,in=0,looseness=1.8] (0,0) to (0,1.3);
\draw[out=170,in=180,looseness=1.8] (0,0) to (0,1.3);

\draw[out=100,in=-5,looseness=1.2] (0,0) to (-0.3,0.6);
\draw[out=140,in=-100,looseness=1.2] (0,0) to (-0.3,0.6);
\draw (-0.2,0.5) to (-0.2,0.65);
\draw (-0.25,0.55) to (-0.15,0.6);
\draw (-0.2,0.5) to (-0.25,0.55);
\draw (-0.15,0.6) to (-0.2,0.65);

\draw[out=70,in=160,looseness=1.3] (0,0) to (0.4,0.7);
\draw[out=40,in=-30,looseness=1.3] (0,0) to (0.4,0.7);

\draw[fill=black] (0,0) circle [radius=1pt];
\draw[fill=black] (-0.3,0.6) circle [radius=1pt];
\draw[scale=0.8,thick] (0.4,0.7) node {$\mathbf{\times}$};

\node[scale=0.5] at (0,-0.15) {$p$};
\node[scale=0.4] at (-0.4,0.4) {$\tau_{j_1}$};
\node[scale=0.4] at (-0.12,0.35) {$\tau_{j_2}$};
\node[scale=0.4] at (0.45,0.75) {$\tau_{k}$};

\draw[out=-60,in=120,densely dotted, thick, teal, looseness=1] (-0.8,1) to (-0.45,0.6);
\draw[out=-80,in=-90,densely dotted, thick, teal, looseness=1.2] (-0.45,0.6) to (-0.23,0.6);
\draw[out=90,in=90,densely dotted, thick, teal, looseness=1.2] (-0.23,0.6) to (-0.37,0.6);
\node[scale=0.4, teal] at (-0.8,1.1) {$\ell_2$};

\draw[out=30,in=-90,densely dotted, thick, red, looseness=0.8] (-0.15,0) to (0.43,0.55);
\draw[out=30,in=180,densely dotted, thick, red, looseness=1.4] (-0.2,0) to (0.3,0.65);
\draw[out=0,in=90,densely dotted, thick, red, looseness=0.5] (0.3,0.65) to (0.43,0.55);
\node[scale=0.4, red] at (-0.3,0) {$\ell_1$};

\end{tikzpicture}
    \qquad
\begin{tikzpicture}[transform shape, scale=2.8]
\tikzset{->-/.style={decoration={
  markings,
  mark=at position #1 with {\arrow{>}}},postaction={decorate}}}

\draw[out=10,in=0,looseness=1.8] (0,0) to (0,1.3);
\draw[out=170,in=180,looseness=1.8] (0,0) to (0,1.3);

\draw[out=100,in=-5,looseness=1.2] (0,0) to (-0.3,0.6);
\draw[out=140,in=-100,looseness=1.2] (0,0) to (-0.3,0.6);
\draw (-0.2,0.5) to (-0.2,0.65);
\draw (-0.25,0.55) to (-0.15,0.6);
\draw (-0.2,0.5) to (-0.25,0.55);
\draw (-0.15,0.6) to (-0.2,0.65);

\draw[fill=black] (0,0) circle [radius=1pt];
\draw[fill=black] (-0.3,0.6) circle [radius=1pt];
\draw[scale=0.8,thick] (0.4,0.7) node {$\mathbf{\times}$};

\draw[out=150,in=180,looseness=1.4] (0,0) to (-0.35,0.75);
\draw[out=160,in=180,looseness=1.5] (0,0) to (-0.35,0.95);
\draw[out=0,in=0,looseness=1.6] (-0.35,0.95) to (0.3,0.4);
\draw[out=0,in=180,looseness=1] (-0.35,0.75) to (0.3,0.4);

\node[scale=0.5] at (0,-0.15) {$p$};
\node[scale=0.4] at (-0.4,0.5) {$\tau_{j_1}$};
\node[scale=0.4] at (-0.12,0.35) {$\tau_{j_2}$};
\node[scale=0.4] at (0.45,0.75) {$\tau_{k}'$};

\draw[out=-60,in=120,densely dotted, thick, teal, looseness=1] (-0.8,1) to (-0.45,0.6);
\draw[out=-80,in=-90,densely dotted, thick, teal, looseness=1.2] (-0.45,0.6) to (-0.23,0.6);
\draw[out=90,in=90,densely dotted, thick, teal, looseness=1.2] (-0.23,0.6) to (-0.37,0.6);
\node[scale=0.4, teal] at (-0.8,1.1) {$\ell_2$};

\draw[out=30,in=-90,densely dotted, thick, red, looseness=0.8] (-0.15,0) to (0.43,0.55);
\draw[out=30,in=180,densely dotted, thick, red, looseness=1.4] (-0.2,0) to (0.3,0.65);
\draw[out=0,in=90,densely dotted, thick, red, looseness=0.5] (0.3,0.65) to (0.43,0.55);
\node[scale=0.4, red] at (-0.3,0) {$\ell_1$};

\end{tikzpicture}
\caption{Flip of an arc $\tau_k$ when $\tau_k$ is a pending arc. Note that $\ell_1$ is an elementary lamination of $\tau_k$ and $\ell_2$ is an elementary lamination of $\tau_{j_1}$.}
\end{figure}

In this case, we have $b_{kj_1} = b_{kj_2} = 1$. From Definition \ref{def:shearCoordinates}, we have $b_{i+n,k} = m_1$. Following the definition of shear coordinates with respect to tagged triangulations in \cite[Chapter 13]{fomin2018cluster}, with respect to $T$, we have $b_{\tau_{j_1}}(T,L_i) = m_2$ and $b_{\tau_{j_1}}(T,L_i) = 0$. Then, with respect to $T'$, we calculate \[
b_{\tau_{j_1}}(T',L_i) = 2m_1 + m_2 = b_{\tau_{j_1}}(T,L_i) + 2b_{kj_1} b_{\tau_k}(T,L_i) 
\]
and
 \[
b_{\tau_{j_2}}(T',L_i) = 2m_1 = b_{\tau_{j_2}}(T,L_i) + 2 b_{kj_2} b_{\tau_k}(T,L_i).
\]

Note that the factor of 2 in the shear coordinate update corresponds to the degree $d_k = 2$ of the exchange polynomial $z_k$ associated with the pending arc $\tau_k$.
All other cases can be checked similarly.

\end{proof}

With the previous lemma in hand, it is straightforward to check that the seeds of $\mathcal{A}_{\mathcal{O},\mathbf{L}}$ are in correspondence with tagged triangulations. We direct a reader to \cite{fomin2018cluster} for a description of the laminated, decorated Teichm{\"u}ller space of  surface and \cite{Chekhov-Shapiro,felikson2012-a} for the adaptation to an orbifold.  

\begin{theorem}\label{thm:GeometryOfGenCA}
Let $\mathcal{O}$ be an orbifold with a tagged triangulation $T = (\tau_1,\ldots,\tau_n)$ and a multi-lamination $\mathbf{L} = (L_1,\ldots, L_m)$. Let $\mathcal{A}_{\mathcal{O},\mathbf{L}}$ be the associated generalized cluster algebra. If $\mathcal{O}$ is not a once-punctured closed surface, then $\mathcal{A}_{\mathcal{O},\mathbf{L}}$ satisfies the following.
\begin{enumerate}
    \item The cluster variables of $\mathcal{A}_{\mathcal{O},\mathbf{L}}$  are indexed by tagged arcs in $\mathcal{O}$. 
    \item The seeds of $\mathcal{A}_{\mathcal{O}}$ are indexed by tagged triangulations of $\mathcal{O}$ in such a way that the seed indexed by $T$ contains the cluster variables $\{x_\gamma\}_{\gamma \in T}$.
\end{enumerate}

If $\mathcal{O}$ is a once-punctured closed surface, then the cluster variables (seeds) of $\mathcal{A}_{\mathcal{O},\mathbf{L}}$ correspond to either plain arcs (triangulations) or notched arcs (triangulations), depending on the initial data.

Moreover, the \emph{laminated, decorated Teichm{\"u}ller space} of $\mathcal{O}$ with respect to the elementary lamination gives a positive realization (in the sense of \cite{fomin2018cluster}) of the exchange pattern of $\mathcal{A}_{\mathcal{O}}$.
\end{theorem}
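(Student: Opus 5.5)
The plan is to follow the strategy of Fomin--Shapiro--Thurston and Fomin--Thurston for surfaces, with Lemma \ref{lem:FlipAndMutate} supplying the combinatorial input. The argument produces a map from the exchange graph of tagged triangulations to the exchange graph of seeds of $\mathcal{A}_{\mathcal{O},\mathbf{L}}$, and then checks that this map is a bijection compatible with cluster variables.

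\textbf{Step 1: assigning a variable to each tagged arc.} Use the laminated, decorated Teichm\"uller space of $\mathcal{O}$, following \cite{Chekhov-Shapiro,felikson2012-a,fomin2018cluster}. To each tagged arc $\gamma$ attach its lambda length $x_\gamma$ (the horocycle-truncated length, with the conjugate horocycle used at notched ends). Lift locally to the $\mathbf{p}$-fold cover and compute the relations under a flip.
\begin{itemize}
\item For a standard arc in a quadrilateral, the Ptolemy relation $x_\gamma x_{\gamma'} = x_ax_c + x_bx_d$ holds, with laminations entering through tropical shear coordinates exactly as in \cite[Theorem 13.5]{fomin2018cluster}.
\item For a pending arc, the lift is a configuration inside a regular $\mathbf{p}$-gon. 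Lemma \ref{lem:ChebyshevFact} shows the flip relation is $x_\rho x_{\rho'} = x_\alpha^2 + \lambda_{\mathbf{p}} x_\alpha x_\beta + x_\beta^2$, with the appropriate coefficients. This is exactly the generalized exchange relation with $z = 1+\lambda_{\mathbf{p}}u+u^2$.
\item Together with Lemma \ref{lem:FlipAndMutate}, these relations show that flipping $\tau_k$ corresponds to the mutation $\mu_k$ of the whole quadruple $(\widetilde B,\mathbf{x},\mathbf{y},\mathbf{Z})$. This gives the positive realization claimed in the last sentence of the theorem.
\end{itemize}

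\textbf{Step 2: building the map on seeds.} Fix the initial tagged triangulation $T$. For any vertex $t$ of $\mathbb{T}_n$, the corresponding flip sequence from $T$ produces a tagged triangulation $T_t$; flips are always defined for tagged triangulations. Step 1 shows that the seed $\Sigma_t$ equals the seed built from $T_t$. By \cite[Theorem 4.2]{felikson2012-a}, flips act transitively, except in the once-punctured closed case. So every tagged triangulation arises, and every cluster variable is some $x_\gamma$.

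\textbf{Step 3: injectivity.} This is the main obstacle. We must show that distinct tagged arcs give distinct cluster variables, and that the seed determines the triangulation.
\begin{itemize}
\item The approach is to argue as in \cite[Section 7]{FST-I} via the Teichm\"uller realization. The functions $x_\gamma$ on decorated Teichm\"uller space are algebraically independent within a triangulation, and distinct tagged arcs give distinct functions. Distinctness can be detected by degenerating the decorations or by examining the coefficient tropicalizations (the $g$-vector-like shear data of elementary laminations).
\item If $x_\gamma = x_{\gamma'}$ as elements of $\mathcal{F}$, they must agree as functions, which forces $\gamma = \gamma'$.
\item Flip-sequence loops in the exchange graph must return identical seeds. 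It suffices to check the pentagon, square, and new orbifold relations (hexagon/octagon type for pending arcs) locally, which the geometric realization gives for free.
\end{itemize}

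\textbf{Once-punctured closed case.} Here tag-changing moves are not realized by flips. The plain and notched triangulations then form two separate components, and the chosen component is determined by the initial data.
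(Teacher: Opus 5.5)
Your proposal is correct and takes essentially the same route as the paper. Lemma~\ref{lem:FlipAndMutate}, together with the Fomin--Thurston lambda-length argument (using the three-term relation of \cite[Lemma 3.1]{Chekhov-Shapiro} for pending arcs), gives the positive realization, and the once-punctured closed case reduces to \cite[Proposition 7.10]{FST-I} because flips never change tags. Your Steps 2--3 spell out what the paper compresses into ``the other items follow from Lemma~\ref{lem:FlipAndMutate}'': transitivity of flips gives surjectivity, and the geometric realization gives well-definedness and injectivity. The claim that distinct tagged arcs give distinct lambda-length functions remains a sketch in your proposal, but the paper leaves it at the same level.
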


\begin{proof}
The case for a once-punctured closed surface follows from Proposition 7.10 of~\cite{FST-I}. Even when allowing for orbifold points, flip of tagged arcs will never change the notch of an endpoint.  Hence, the proof of Proposition 7.10 of~\cite{FST-I} carries over to the orbifold setting.

The other items follow from Lemma \ref{lem:FlipAndMutate}. The last item follows from verifying that mutation relations in $\mathcal{A}_{\mathcal{O}}$ agree with relations on lambda lengths. This can be done by following the proof of \cite[Theorem 15.6]{fomin2018cluster} but, if necessary, replacing two-term relations with three term relations, as in \cite[Lemma 3.1]{Chekhov-Shapiro}.
\end{proof}

For the remainder of the article, we will only be concerned with the elementary multi-lamination. We will associate an initial seed of the cluster algebra $\mathcal{A}_{\mathcal{O}}$ to a tagged triangulation $T = \{\tau_1,\ldots,\tau_n\}$ with associated cluster $\{x_i\}_{1 \leq i \leq n}$ where $x_i$ is indexed by $\tau_i$. Since we are working with principal coefficients, we also index the $y$-variables $y_1,\ldots,y_n$ associated to $\tau_1,\ldots,\tau_n$ repsectively. Given any ordinary arc $\gamma$, let $x_\gamma^T$ denote the cluster variable indexed by the arc $\gamma$ written with respect to this cluster indexed by $T$. Later, we will also define such terms for generalized arcs and closed curves. We expect these will be important expressions to consider in future work concerning bases of generalized cluster algebras. 

\section{Combinatorial Expansion Formulas}\label{sec:Expansion}
\setcounter{subsection}{-1}

\subsection{Setting}\label{sec:Setting}

We describe here the setting that will apply to future sections. Fix an orbifold $\mathcal{O}$ and a tagged triangulation $T$. Unless stated otherwise, we denote a pending arc in $T$ by $\rho$. We assume throughout that the only notched arcs in $T$ are singly-notched and that these appear in pairs with a plain arc, such that the underlying undecorated arcs coincide. In such a setting, we replace $T$ with the ideal triangulation $T^\circ$, i.e., the result of replacing each notched arc $\gamma^{(p)}$ with a loop cutting out a singly-punctured monogon containing $p$. 

We present here several combinatorial formulas which produce a Laurent polynomial for a (possibly generalized) arc or closed curve $\gamma$ with respect to $T$.  The Laurent polynomials will lie in the ambient field of rational functions in the cluster indexed by $T$ of the generalized cluster algebra $\mathcal{A}_\mathcal{O}$.

We arbitrarily choose an orientation of $\gamma$. We choose a representative of $\gamma$ that minimizes intersections with $T$ and has no self-intersections lying on the arcs of $T$. We also choose a representative that has winding number $k \in \{0,1,\ldots,\mathbf{p}-2\}$ at every orbifold point. Recall this means that $\gamma$ winds in a counterclockwise direction around the orbifold point. Ordinary arcs will always have winding number 0 or $\mathbf{p}-2$; in the latter case, the arc would be isotopic to one without self-intersection.

With our choice of orientation of an arc $\gamma$, denote $s(\gamma)$ as the starting point of $\gamma$ and $t(\gamma)$ as the terminal point of $\gamma$. We index the intersection points of $\gamma$ and $T$ by $c_1,\ldots,c_d$. Our last stipulation guarantees that the $c_j$'s are distinct. Let $\tau_{i_j} \in T$ be such that $c_j$ lies on $\tau_{i_j}$. While the $c_j$'s are distinct, it can be that $\tau_{i_j} = \tau_{i_{j'}}$ for $j \neq j'$. Finally, let the faces that $\gamma$ passes through be $\Delta_0,\Delta_1,\ldots,\Delta_d$. These can be ordinary triangles, pending triangles, or self-folded triangles.

If $\gamma$ is a closed curve, choose a point $\mathrm{s}$ on $\gamma$ which lies in an ordinary triangle and let $\gamma'$ be the curve which starts and ends at $\mathrm{s}$. We will use notation as above with respect to $\gamma'$.

Finally, if $\gamma$ is singly-notched, orient $\gamma$ so that the notch occurs at its start point. Let $s(\gamma)=p$ be the puncture at which $\gamma$ is notched. The symbol $\widetilde{\gamma}$ will represent the  result of completing nearly a whole clockwise or counterclockwise revolution around $p$, starting and ending in $\Delta_0$, then following $\gamma$. Following \cite{wilson2020surface}, we call these near-revolutions clockwise or counterclockwise hooks. We specify the direction only when it affects the construction. If $\gamma$ is doubly-notched, then we introduce a hook at each endpoint. Although we are currently not dictating the direction of the hook(s), we will see later that for defining $T$-walks, some terms need fixed orientations. This will be discussed in~\Cref{sec:Twalk}.

Refer to arcs in $T$ with at least one endpoint on a puncture as \emph{spokes}. Suppose the spokes in $T$ incident to a fixed puncture $p$ are $\sigma_1,\ldots,\sigma_m$, indexed counterclockwise so that $\Delta_0$ is bordered by $\tau_{i_1}$, $\sigma_1$, and $\sigma_m$. If an arc of $T$ has both endpoints at $p$ (e.g., a pending arc), then it receives two labels in this notation. Notice that, if $\gamma^{(p)}$ is notched at $p$, then  $\widetilde{\gamma}$ crosses each arc $\sigma_i$ once for each endpoint of $\tau$ lying at $p$. See Figure \ref{fig:resolutions}.

\begin{figure}[h]
\begin{center}
    \begin{tikzpicture}[scale=1.35]
    \draw(-5,0) to node[left]{$\sigma_1$}(-4.5,1);
    \draw(-5,0) to node[left]{$\sigma_m$}(-4.5,-1);
    \draw(-4.5,1) to node[right, yshift = 15pt]{$\tau_{i_1}$} (-4.5,-1);
    \draw(-5,0) to (-5.5,0.5);
    \draw(-5,0) to (-5.5,-0.5);
    \draw (-2,0) to node[right]{$\eta_h$} (-2.5,1);
    \draw (-2,0) to   node[right]{$\eta_1$}(-2.5,-1);
    \draw (-2,0) to (-1.5,0.5);
    \draw (-2,0) to (-1.5,-0.5);
    \draw (-2.5,-1) to node[left, yshift = 15pt]{$\tau_{i_d}$} (-2.5,1);
    \draw[orange,line width = 1.2pt] (-5,0) to node[midway,below]{$\gamma^{(p,q)}$} (-2,0);
    \draw[orange,thick] (2.9-5,0.1) to (2.8-5,-0.1);
    \draw[orange,thick] (2.8-5,0.1) to (2.9-5,-0.1);
    \draw[orange,thick] (2.9-5,0.1) to (2.8-5,0.1);
    \draw[orange,thick] (2.9-5,-0.1) to (2.8-5,-0.1);
    \draw[orange,thick] (3.1-8,0.1) to (3.2-8,-0.1);
    \draw[orange,thick] (3.2-8,0.1) to (3.1-8,-0.1);
    \draw[orange,thick] (3.1-8,0.1) to (3.2-8,0.1);
    \draw[orange,thick] (3.1-8,-0.1) to (3.2-8,-0.1);
    \draw[fill=black] (-2,0) circle [radius=1pt];
    \node at (-1.95,0.2) {$q$};
    \draw[fill=black] (-5,0) circle [radius=1pt];
    \node at (-5.05,0.2) {$p$};
\draw[fill=black] (0,0) circle [radius=1pt];
    \node at (-0.05,0.2) {$p$};
    \draw(0,0) to node[left]{$\sigma_1$}(0.5,1);
    \draw(0,0) to node[left]{$\sigma_m$}(0.5,-1);
    \draw(0.5,1) to node[right, yshift = 15pt]{$\tau_{i_1}$} (0.5,-1);
    \draw(0,0) to (-0.5,0.5);
    \draw(0,0) to (-0.5,-0.5);
    \draw (3,0) to node[right]{$\eta_h$} (2.5,1);
    \draw (3,0) to   node[right]{$\eta_1$}(2.5,-1);
    \draw (3,0) to (3.5,0.5);
    \draw (3,0) to (3.5,-0.5);
    \draw (2.5,-1) to node[left, yshift = 15pt]{$\tau_{i_d}$} (2.5,1);
    \draw[fill=black] (3,0) circle [radius=1pt];
    \node at (3.05,0.2) {$q$};
    \draw[orange,line width = 1.2pt] (0.2,0) to node[midway,below]{$\widetilde{\gamma}$} (2.8,0);
    \draw[orange, line width = 1.2pt, out=90,in=90,looseness=2.25] (2.8,0) to (3.3,0);
    \draw[orange, line width = 1.2pt, out=-90,in=-70,looseness=2] (3.3,0) to (2.8,-0.15);
    \draw[orange, line width = 1.2pt, out=-90,in=-90,looseness=2] (0.2,0) to (-0.3,0);
    \draw[orange, line width = 1.2pt, out=90,in=110,looseness=2] (-0.3,0) to (0.2,0.15);
    \end{tikzpicture}
    \end{center}
    \caption{Resolutions of notches at the punctures.}
    \label{fig:resolutions}
\end{figure}

Finally, we remark that we will ignore arcs and closed curves which wind around an orbifold point and do not cross any arcs from $T$. Each of these should be associated to a scalar given by evaluating a Chebyshev polynomial of the first or second kind at the value $\lambda_{\mathbf{p}}$ associated to the orbifold point.

\subsection{Snake Graphs: Unpunctured Orbifolds and Hexagonal Tiles}\label{sec:HexagonalTiles}

In \cite{banaian2020snake}, two of the authors gave a construction for snake graphs from unpunctured orbifolds. Here we briefly recall this definition. In the next section, we will give an alternate combinatorial model which will be more readily extended to the punctured case. For this reason, we will use the notation $\GGen$ in this section to distinguish from the graphs in the next section, denoted simply $\calG$. We refer to a graph $\GGen$ as a ``snake graph with hexagonal tiles.''

Let $\mathcal{O}$ be an unpunctured orbifold with ideal triangulation $T = \{\tau_1,\ldots,\tau_n\}$.  Consider first a (possibly generalized) arc $\gamma$ on $\mathcal{O}$. If $\gamma \in T$, then we define $\GGen_\gamma$ to be the graph consisting of one edge which is labelled $x_\tau$. Otherwise, $\gamma$ crosses at least one arc in $T$. To each standard arc which $\gamma$ crosses, we associate two square tiles, one with orientation matching that of the surface and one with opposite orientation.

\begin{center}
    \begin{tikzpicture}[scale=1.3, transform shape]

        \draw[thick] (-3.3,0.3) to node[midway,above, yshift=2pt]{$d$} (-2.5,1.4);
        \draw[thick] (-3.3,0.3) to node[midway,below]{$c$}(-2.5,-0.2);
        \draw[thick] (-1.8,0.6) to node[midway,above]{$a$}(-2.5,1.4);
        \draw[thick] (-1.8,0.6) to node[midway,below,xshift=2pt]{$b$}(-2.5,-0.2);
        \draw[thick] (-2.5,-0.2) to node[midway,right]{$\tau_{i_j}$} (-2.5,1.4);
        \draw[line width = 1.5,orange, ->] (-2.8,0.4) to (-2.3,0.4);
        \node[orange] at (-2.7,0.2){$\gamma$};
        
        \draw[thick] (0,0) to node[below,midway]{$c$} (1,0) to node[right,midway]{$b$} (1,1) to node[midway,above]{$a$} (0,1) to node[left,midway]{$d$} (0,0);
        \draw[thick, dashed, black!60!white] (0,1) to node[above,midway,xshift=4,yshift=-5]{$\tau_{i_j}$} (1,0);
        \node[gray] at (0.2,0.2){$+$};
        
        \draw[thick] (3,0) to node[below,midway]{$d$} (4,0) to node[right,midway]{$a$} (4,1) to node[midway,above]{$b$} (3,1) to node[midway,left]{$c$} (3,0);
        \draw[thick, dashed, black!60!white] (3,1) to node[above,midway,xshift=4,yshift=-5]{$\tau_{i_j}$} (4,0);
        \node[gray] at (3.2,0.2){$-$};

    \end{tikzpicture}
\end{center}

If in our sequence of intersections we have $\tau_{i_j} = \tau_{i_{j+1}}$, then $\tau_{i_j}$ is necessarily a pending arc. This pair of intersections contributes a single hexagonal tile, as drawn in the first row of~\Cref{table:PuzzlePiecesWinding}. In the figures, $k$ is the winding number of $\gamma$ around the pending arc. As hexagonal tiles are composites of two squares, we describe their orientation with a tuple. The orientation of the hexagonal tile on the left below is $(-,+)$ and the orientation of the hexagonal tile on the right below is $(+,-)$. When $k=0$, the edge weight $U_{-1}(\lambda_\mathbf{p})$ vanishes, and the hexagonal tile collapses into the square-tile configuration shown in~\Cref{table:PuzzlePiecesWinding}.
 Similarly, if $k = \mathbf{p}-2$, then $\gamma$ is isotopic to one with 0 winding. Since $U_{\mathbf{p}-1}(\lambda_{\mathbf{p}}) = 0$, we again can remove one edge and recover two square tiles.

The other case to consider is when $\gamma$ is a corner arc, i.e., when it winds non-trivially around an orbifold point before its first intersection with arcs of $T$ (or winds non-trivially around an orbifold point after its last intersection with arcs of $T$). This type of intersection corresponds to a single square tile. Following our convention for winding, the case for counterclockwise winding appears in the third row of~\Cref{table:PuzzlePiecesWinding}. As before, the positive tile orientation is shown on the left and the negative tile orientation on the right. 

With all tiles constructed, we now form the snake graph $\mathcal{G}_{\gamma,T}$ by fixing an orientation of $\gamma$ and then gluing together subsequent tiles in the order of the corresponding intersections of $\gamma$ with $T$. By convention, we use the positive orientation (or orientation $(+,-)$ if it is a hexagonal tile) of the tile associated to the first crossing. Then, we proceed by gluing tiles one-by-one, using alternating tile orientations. Suppose we have accounted for crossings $\tau_{i_1},\ldots,\tau_{i_j}$, and that the tile associated to $\tau_{i_j}$ has orientation $\pm$, or if it is hexagonal, has orientation $(\mp, \pm)$. Then, given  $\tau_{i_j} \neq \tau_{i_{j+1}}$, we know that these two arcs form a triangle with a third arc, which we denote $\tau_{[j]}$. From the construction, we can see that the tiles associated to each intersection will have an edge labelled $\tau_{[j]}$.  We glue the tile associated to $\tau_{i_{j+1}}$ with orientation $\mp$ or $(\mp,\pm)$ to our partially constructed snake graph along the edge labelled $\tau_{[j]}$. The result after accounting for all intersections is the snake graph $\mathcal{G}_{\gamma,T}$. A result of this construction is that the orientations of the tiles of $\mathcal{G}_{\gamma,T}$ will alternate as we look across the graph. For example, see Table \ref{table:whole}.

Our construction of $\mathcal{G}_{\gamma,T}$ also invites a natural indexing of the tiles, $(G_1,\ldots,G_d)$, where $G_j$ is associated to the intersection with $\tau_{i_j}$. The hexagonal tiles are indexed as two tiles, $G_j$ and $G_{j+1}$, simultaneously. In fact, these can be seen as two square tiles glued together with an extra edge between the two tiles added.

The main theorem of \cite{banaian2020snake} is that these snake graphs encode the Laurent expansion of cluster variables. To write this precisely, we recall some notation. First, given the sequence $\tau_{i_1},\ldots,\tau_{i_d}$ of arcs from $T$ crossed by $\gamma$, let $\mathrm{cross}(\gamma,T) = x_{{i_1}} \cdots x_{{i_d}}$.

\begin{table}[H]
\centering
\begin{tabular}{|c|c|}
\hline
\begin{tikzpicture}[scale=1.7]
\tikzset{->-/.style={decoration={
  markings,
  mark=at position #1 with {\arrow{>}}},postaction={decorate}}}
\draw[out=15,in=-15,looseness=1.5] (0,0.3) to (0,1.65);
\draw[out=165,in=-165,looseness=1.5] (0,0.3) to (0,1.65);
\draw[in=0,out=45,looseness =1.25] (0,0.3) to (0,1.3);
\draw[in=180,out=135,looseness=1.25] (0,0.3) to (0,1.3);
\draw[orange,line width = 1.2pt,out=0,in=210] (-0.4,0.7) to (0,0.75);
\draw[orange,line width = 1.2pt,out=30,in=0,looseness=2,->] (0,0.75) to (0,1.2);
\draw[orange,line width = 1.2pt,out=180,in=150,looseness=2] (0,1.2) to (0,0.75);
\draw[orange,line width = 1.2pt,out=-30,in=180] (0,0.75) to (0.4,0.7);

\draw[fill=black] (0,0.3) circle [radius=2pt];
\draw[fill=black] (0,1.65) circle [radius=2pt];
\draw[thick] (0,1) node {$\mathbf{\times}$};

\node[scale=1] at (-0.56,1.4) {$\alpha$};
\node[scale=1] at (0.56,1.41) {$\beta$};
\node[scale=1] at (0,1.4) {$\rho$};

\end{tikzpicture}
&
\begin{tikzpicture}[scale = 1.5]
\draw[gray,thick,dashed] (-0.7,1.7) to node[midway,below,scale=0.75,xshift=-2pt,yshift=2pt]{$\rho$} (1,1);
\draw[gray,thick,dashed] (0,2.4) to node[midway,below,scale=0.75,xshift=-2pt,yshift=2pt]{$\rho$} (1.7,1.7);
\draw[thick] (-0.7,1.7) to node[midway,above,xshift=-20pt, yshift=-1pt,scale=0.75]{$U_{k-1}\rho$} (1.7,1.7);
\draw[ultra thick,white] (0,2.4) to (1,1);
\draw[thick] (0,2.4) to node[midway,xshift=19pt,yshift=-10pt,scale=0.75]{$U_{k+1}\rho$} (1,1);
\draw[thick] (1,1) to node[midway,below,xshift=3pt,yshift=2pt,scale=0.75]{$\beta$} (0,1);
\draw[thick] (0,1) to node[midway,below,,xshift=-1pt,scale=0.75]{$\alpha$} (-0.7,1.7) to node[midway,above,xshift=-2pt,scale=0.75]{$U_k \rho$} (0,2.4) to node[midway,above,scale=0.75,yshift=-2pt]{$\alpha$\textcolor{white}{$\beta$}} (1,2.4) to node[midway,above,xshift=-1pt,yshift=2pt,scale=0.75]{$\beta$}(1.7,1.7) to node[midway,below,scale=0.75,xshift=7pt]{$U_k \rho$} (1,1);
\end{tikzpicture}
\begin{tikzpicture}[scale = 1.5]
\draw[gray,thick,dashed] (-0.7,1.7) to node[midway,below,scale=0.75,xshift=-2pt,yshift=2pt]{$\rho$} (1,1);
\draw[gray,thick,dashed] (0,2.4) to node[midway,below,scale=0.75,xshift=-2pt,yshift=2pt]{$\rho$} (1.7,1.7);
\draw[thick] (-0.7,1.7) to node[midway,above,xshift=-20pt, yshift=-1pt,scale=0.75]{$U_{k+1}\rho$} (1.7,1.7);
\draw[ultra thick,white] (0,2.4) to (1,1);
\draw[thick] (0,2.4) to node[midway,xshift=19pt,yshift=-10pt,scale=0.75]{$U_{k-1}\rho$} (1,1);
\draw[thick] (1,1) to node[midway,below,xshift=3pt,yshift=2pt,scale=0.75]{$\alpha$\textcolor{white}{$\beta$}} (0,1);
\draw[thick] (0,1) to node[midway,below,,xshift=-1pt,scale=0.75]{$\beta$} (-0.7,1.7) to node[midway,above,xshift=-2pt,scale=0.75]{$U_k \rho$} (0,2.4) to node[midway,above,scale=0.75,yshift=-2pt]{$\beta$} (1,2.4) to node[midway,above,xshift=-1pt,yshift=2pt,scale=0.75]{$\alpha$}(1.7,1.7) to node[midway,below,scale=0.75,xshift=7pt]{$U_k \rho$} (1,1);
\end{tikzpicture}\\ \hline

\begin{tikzpicture}[scale=1.7]
\tikzset{->-/.style={decoration={
  markings,
  mark=at position #1 with {\arrow{>}}},postaction={decorate}}}
\draw[out=15,in=-15,looseness=1.5] (0,0.3) to (0,1.65);
\draw[out=165,in=-165,looseness=1.5] (0,0.3) to (0,1.65);
\draw[in=0,out=45,looseness =1.25] (0,0.3) to (0,1.3);
\draw[in=180,out=135,looseness=1.25] (0,0.3) to (0,1.3);
\draw[orange,line width =1.2pt,out=-10,in=190, looseness=1, ->] (-0.4,0.7) to (0.4,0.7);

\draw[fill=black] (0,0.3) circle [radius=2pt];
\draw[fill=black] (0,1.65) circle [radius=2pt];
\draw[thick] (0,1) node {$\mathbf{\times}$};

\node[scale=1] at (-0.56,1.4) {$\alpha$};
\node[scale=1] at (0.56,1.41) {$\beta$};
\node[scale=1] at (0,1.4) {$\rho$};

\end{tikzpicture}
&
\begin{tikzpicture}[scale = 1.25]
\draw[gray,thick,dashed] (0,2) to node[midway,right,yshift=2pt,xshift=-2pt]{$\rho$} (1,1);
\draw[gray,thick,dashed] (1,2) to node[midway,right,yshift=2pt,xshift=-2pt]{$\rho$} (2,1);

\draw[thick] (0,1) to node[midway,left,scale=0.75,xshift=1pt]{$\alpha$} (0,2) to node[midway,above,scale=0.75,xshift=-2pt]{$\rho$}  (1,2) to node[midway,right,scale=0.75,xshift=-2pt, yshift = -8pt]{$U_1\rho$}  (1,1) to node[midway,below,scale=0.75,xshift=1pt]{$\beta$} (0,1);
\draw[thick] (1,1) to (1,2) to node[midway,above,scale=0.75,xshift=-2pt]{$\alpha$}  (2,2) to node[midway,right,scale=0.75,xshift=-2pt, yshift = -8pt]{$\beta$}  (2,1) to node[midway,below,scale=0.75,xshift=1pt]{$\rho$} (1,1);

\draw[thick] (3,0.5) to node[below,scale=0.75,yshift = 2pt]{$\alpha$} (4,0.5) to node[right,scale=0.75]{$\rho$} (4,1.5) to node[right,scale=0.75]{$\alpha$} (4,2.5) to node[above,scale=0.75,yshift = -2pt]{$\beta$} (3,2.5) to node[left,scale=0.75]{$\rho$} (3,1.5) to node[left,scale=0.75]{$\beta$} (3,0.5);
\draw[thick] (3,1.5) to node[above,scale=0.75]{$U_1\rho$}(4,1.5);
\draw[gray, thick, dashed] (3,1.5) to node[midway,right,yshift=2pt,xshift=-2pt]{$\rho$} (4,0.5);
\draw[gray, thick, dashed] (3,2.5) to node[midway,right,yshift=2pt,xshift=-2pt]{$\rho$} (4,1.5);

\end{tikzpicture}

\\
\hline
\begin{tikzpicture}[scale=1.7]
\tikzset{->-/.style={decoration={
  markings,
  mark=at position #1 with {\arrow{>}}},postaction={decorate}}}
\draw[out=15,in=-15,looseness=1.5] (0,0.3) to (0,1.65);
\draw[out=165,in=-165,looseness=1.5] (0,0.3) to (0,1.65);
\draw[in=0,out=45,looseness =1.25] (0,0.3) to (0,1.3);
\draw[in=180,out=135,looseness=1.25] (0,0.3) to (0,1.3);

\draw[orange, line width = 1.2pt,out=80,in=-10,looseness = 1,->] (0,0.3) to (0,1.25);
\draw[orange, line width = 1.2pt,out=180,in=180,looseness = 1.5] (0,1.25) to (0,0.78);
\draw[orange, line width = 1.2pt,out=0,in=0,looseness = 1.3] (0,0.78) to (0,1.15);
\draw[orange, line width = 1.2pt,out=180,in=180,looseness = 1.7] (0,1.15) to (0.05,0.85);
\draw[orange, line width = 1.2pt,out=0,in=270,looseness = 1] (0.05,0.85) to (0.25,1.45); 

\draw[fill=black] (0,0.3) circle [radius=2pt];
\draw[fill=black] (0,1.65) circle [radius=2pt];
\draw[thick] (0,1) node {$\mathbf{\times}$};

\node[scale=1] at (-0.56,1.4) {$\alpha$};
\node[scale=1] at (0.56,1.41) {$\beta$};
\node[scale=1] at (0,1.4) {$\rho$};

\end{tikzpicture}&
\begin{tikzpicture}[scale = 1.25]

\draw[gray,thick,dashed] (2,2) to node[midway,right,yshift=2pt,xshift=-2pt]{ $\rho$} (3,1);
\draw[thick] (2,1) to node[midway,left,scale=0.75,yshift=8pt]{$U_{k} \rho$}  (2,2) to node[midway,above,scale=0.75,xshift=-1pt]{$\alpha$}  (3,2) to node[midway,right,scale=0.75,xshift=-1pt]{$\beta$} (3,1)  to node[midway,below,scale=0.75,yshift=-1pt]{$U_{k-1} \rho$}  (2,1);
\end{tikzpicture}

\begin{tikzpicture}[scale = 1.25]
\draw[gray,thick,dashed] (0,2) to node[midway,right,yshift=2pt,xshift=-2pt]{ $\rho$} (1,1);
\draw[thick] (0,1) to node[midway,left,scale=0.75,xshift=1pt]{$U_{k-1} \rho$} (0,2) to node[midway,above,scale=0.75,xshift=-2pt]{$\beta$}  (1,2) to node[midway,right,scale=0.75,xshift=-2pt, yshift = -8pt]{$\alpha$}  (1,1) to node[midway,below,scale=0.75,xshift=1pt]{$U_{k} \rho$} (0,1);
\end{tikzpicture}
\\ \hline
\end{tabular}
\caption{Two tiles with different orientations, each coming from intersections with a pending arc. The winding number of the arcs in the first and third row is $k$.}\label{table:PuzzlePiecesWinding}
\end{table}

Next, recall that a perfect matching (or a 1-dimer cover) of a graph $G = (V,E)$ is a subset of the edge set $M \subseteq E$ such that for all $v \in V$, there is exactly one $e \in M$ such that $v$ is incident to $e$. Let the set of all matchings of a graph be $\Match(G)$. Since our graphs have weighted edges, we can naturally assign a weight to a perfect matching, i.e., $x(M) := \prod_{e \in M} \mathrm{wt}(e)$. In this construction, if an edge $e$ corresponds to a boundary component of the orbifold $\mathcal{O}$, we set $\mathrm{wt}(e) = 1$. If an edge $e$ is labelled as $U_k \rho$, its weight is given by $\mathrm{wt}(e) = U_k x_\rho$.

To encode the coefficients, one can associate another statistic $y(M)$ to a perfect matching $M$. We say an edge of a snake graph is a \emph{boundary edge} if it only borders one tile, and it is not one of the crossing edges in a hexagonal tile. We remark that the following definition interacts with our convention of having tile $G_1$ have positive orientation. 

\begin{definition}\label{def:MinimalMatching1}
Given a snake graph $\GGen_{\gamma,T} = (G_1,\ldots,G_d)$, let $M_-$ denote the perfect matching of $\mathcal{G}_{\gamma,T}$ that (i) consists entirely of boundary edges and (ii) does not connect the southern vertices of $G_1$ to $G_2$.
\end{definition}

In this setting, one could more simply say that the minimal matching uses the south edge of the first tile. However, the definition as written more readily adapts to the generality of loop graphs, which will be introduced in the next section. 

Given any perfect matching $M$ of $\mathcal{G}_{\gamma,T}$, recall $M \ominus M_- = (M \cup M_-) \backslash (M \cap M_-)$. The set $M \ominus M_-$ forms a set of disjoint cycles in $\mathcal{G}_{\gamma,T}$. We define $y(M)$ to be the product of the $y$-variables associated to the diagonals enclosed in the cycles in $M \ominus M_-$. We are now ready to precisely state how a snake graph encodes the cluster expansion of a cluster variable $x_\gamma$.

\begin{theorem}[Theorem 1.1 \cite{banaian2020snake}]\label{thm:BKOld}
Let $\mathcal{O}$ be an unpunctured orbifold with triangulation $T$. 
Then we have \[
x_\gamma^T = \frac{1}{\mathrm{cross}(\gamma,T)} \sum_{M \in \Match(\mathcal{G}_{\gamma,T})} x(M)y(M).
\]
\end{theorem}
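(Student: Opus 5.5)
This theorem is quoted from \cite{banaian2020snake}, so the plan below reconstructs the natural proof in the unpunctured orbifold setting.

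The strategy is induction on the crossing number $d$ of $\gamma$ with $T$, modelled on the Musiker--Schiffler argument for unpunctured surfaces \cite{musiker2010cluster}, with the hexagonal tiles handled through the $\mathbf{p}$-fold cover. The base cases are the following.
\begin{itemize}
\item If $\gamma\in T$, the graph is a single edge and the formula is immediate.
\item If $\gamma$ crosses a single standard arc, the snake graph is one square. Its two matchings give exactly the binomial exchange relation with principal coefficients.
\item If $\gamma$ is the flip $\rho'$ of a pending arc, the hexagonal tile with $k=0$ collapses to two squares. One checks directly that the three matchings reproduce
\[
x_\rho x_{\rho'}=\alpha^2 y^2+\lambda_{\mathbf{p}}\alpha\beta y+\beta^2 .
\]
Here the middle term comes from the matching using the $U_1\rho$ edge, since $U_1=\lambda_{\mathbf{p}}$. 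This is the quadratic exchange polynomial $z=1+\lambda_{\mathbf{p}}u+u^2$.
\end{itemize}

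For the inductive step I would not mutate the initial seed. Instead I would cut $\gamma$ using the Ptolemy-type (skein) relations that hold among lambda lengths. These are available because, by Theorem \ref{thm:GeometryOfGenCA}, the decorated Teichm\"uller space gives a positive realization of the exchange pattern, so cluster variables equal lambda lengths.

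Choose a triangle or pending triangle traversed by $\gamma$ and a crossing $c_j$, and resolve $\gamma$ against an arc of $T$ to get a relation
\[
x_\gamma x_{\tau}=(\text{monomial})\,x_{\gamma_1}x_{\gamma_2}+(\text{monomial})\,x_{\gamma_3}x_{\gamma_4},
\]
where each $\gamma_i$ has fewer crossings than $\gamma$. For orbifold points this step takes place in the $\mathbf{p}$-fold cover. There lambda lengths of $k$-diagonals of the central regular $\mathbf{p}$-gon are $U_k(\lambda_{\mathbf{p}})x_\rho$ by Lemma \ref{lem:ChebyshevFact}, and generalized arcs winding $k$ times lift to honest arcs crossing these diagonals. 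The Ptolemy relations in the cover therefore project to relations on $\mathcal{O}$ whose coefficients are Chebyshev values.

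On the combinatorial side I would prove the matching analogue. There is a bijection
\[
\Match(\GGen_{\gamma,T})\;\longleftrightarrow\;\Match(\GGen_{\gamma_1})\times\Match(\GGen_{\gamma_2})\ \sqcup\ \Match(\GGen_{\gamma_3})\times\Match(\GGen_{\gamma_4}),
\]
obtained by splitting the graph at the tile $G_j$ according to which edges of $G_j$ the matching uses, as in the snake graph calculus of \canakci--Schiffler \cite{CS13}. The weights $x(M)$ and $\mathrm{cross}$ must factor compatibly.

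For hexagonal tiles the splitting requires the identity of Lemma \ref{lem:ChebyshevLogConcave}, $U_{k-1}U_{k+1}+1=U_k^2$. This identity accounts for the matchings that use the crossing edges $U_{k\pm1}\rho$ versus the pair of $U_k\rho$ edges. I would also run an induction on the winding number $k$, which handles the case where $\gamma$ is a corner arc or winds $k$ times with no other intervening crossings.

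The $y$-weights must be tracked through the bijection via the symmetric difference with $M_-$. The convention that $G_1$ has positive orientation makes the minimal matchings of the pieces combine into $M_-$ up to a predictable cycle, and this produces the monomial coefficients. These coefficients must be compared with the shear coordinates of the elementary laminations, following Lemma \ref{lem:FlipAndMutate}.

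I expect the main obstacle to be this coefficient bookkeeping at hexagonal tiles. There a single cycle in $M\ominus M_-$ may enclose both diagonals labelled $\rho$, giving $y_\rho^2$, or just one of them. The difficulty is verifying that the distribution of $y_\rho$-powers matches the tropical normalization $\bigoplus_s z_{k,s}y_k^s$ and the factor $d_k=2$ in the coefficient mutation rule. I would first settle the local case of a single pending triangle with arbitrary $k$ by direct computation, and then feed it into the gluing argument.
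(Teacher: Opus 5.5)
\textbf{There is a genuine gap, in the algebraic half of your inductive step.}

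You need an identity in $\mathcal{A}_{\mathcal{O}}$ with principal coefficients,
\[
x_\gamma x_\tau=Y_1x_{\gamma_1}x_{\gamma_2}+Y_2x_{\gamma_3}x_{\gamma_4},
\]
with explicit monomials $Y_1,Y_2$, for an essentially arbitrary crossing of $\gamma$ with $\tau\in T$. Nothing you invoke provides it.

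\emph{What the cited results give.} Theorem~\ref{thm:GeometryOfGenCA} and Lemma~\ref{lem:FlipAndMutate} only control exchange relations. That is, they cover the case where $\gamma$ and $\tau$ are the two diagonals of a quadrilateral (or two pending arcs in a digon) inside a single triangulation.

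\emph{What your resolutions produce.} Unless the crossing is chosen so that this happens, the smoothings lie in no triangulation. In particular, resolving a crossing with a pending arc that $\gamma$ crosses twice in a row produces corner arcs winding around the orbifold point; these are generalized arcs as soon as $\mathbf p>3$. Such curves are not cluster variables, so your induction hypothesis has no algebraic element $x_{\gamma_i}$ to refer to. Indeed, this paper \emph{defines} the elements attached to generalized arcs by the combinatorial formulas and leaves their skein relations to future work.

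\emph{Why lambda lengths do not close the gap.} Penner's Ptolemy relation in the orbifold universal cover gives the coefficient-free identity, but not the $y$-monomials. The principal-coefficient skein relations on surfaces were proved by Musiker--Williams \cite{musiker2013} \emph{using} the snake-graph and matrix expansion, so appealing to them here is circular. To make your route work you would have to extend laminated lambda lengths to generalized arcs, prove tropical Ptolemy identities for the elementary laminations, and compute $Y_1,Y_2$ from them. That is the actual content of the step, not bookkeeping. Note also that $\bigoplus_s z_{k,s}y_k^s$ and $d_k=2$ never enter an argument that does not mutate, so the obstacle you flag is misplaced.

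\emph{A smaller gap on the combinatorial side.} Snake graph calculus \cite{CS13} is for square tiles, and you never specify the splitting at a hexagonal tile. Compare Lemma~\ref{lem:BreakUpPMsOnHexagon}, which resolves both crossings with $\rho$ at once into four classes, and Lemma~\ref{lem:SkeinRelationWindAtBeginning} for corner arcs.

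\textbf{How the cited proof avoids this.} The paper does not reprove the theorem; it cites \cite{banaian2020snake}. That argument follows the Musiker--Schiffler--Williams strategy for plain arcs (see the proof of Lemma~\ref{lem:PlainOnPunctured}), and that strategy sidesteps the problem precisely because it \emph{does} mutate. Only genuine exchange relations enter. One realizes $\gamma$ as the flip of an arc in a quadrilateral, or of a pending arc in a digon, lying in some triangulation whose other arcs cross $T$ fewer times. The cluster side then has coefficients read off from shear coordinates, as in Lemma~\ref{lem:FlipAndMutate}. The combinatorial side becomes a ``quadrilateral lemma'' for the snake-graph expressions, together with its three-term analogue involving $\lambda_{\mathbf p}$. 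This can be proved by lifting to a triangulated polygon, where the type $A$ formula with principal coefficients is known and the regular $\mathbf p$-gons carry diagonals $U_k(\lambda_{\mathbf p})x_\rho$ (cf.\ the tile cover in Section~\ref{sec:Invariance}). The paper uses the same pattern for notched arcs in Section~\ref{sec:main}: it verifies that the combinatorial Laurent polynomial satisfies the exchange relation coming from an actual flip.
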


\begin{example}\label{ex:1poset} The top middle entry of Table \ref{table:whole} is the snake graph associated to the arc on the top left of the same table. Following Theorem \ref{thm:BKOld}, the cluster expansion of $x_\gamma^T$ is given by 
    \[\frac{U_1(\lambda_{\mathbf{p}})x_bx_c}{x_a}+\frac{U_1(\lambda_{\mathbf{p}})x_b^2x_d}{x_ax_\rho}y_a+U_0(\lambda_{\mathbf{p}})x_cy_\rho+\frac{U_0(\lambda_{\mathbf{p}})x_bx_d}{x_\rho}y_ay_\rho+\frac{U_2(\lambda_{\mathbf{p}})x_bx_d}{x_\rho}y_ay_\rho+\frac{U_1(\lambda_{\mathbf{p}})x_ax_d}{x_\rho}y_ay_\rho^2.\]
\end{example}

If $\gamma$ is a closed curve, we replace $\gamma$ with $\gamma'$ as described in Section \ref{sec:Setting}. We then construct the snake graph $\widetilde{\mathcal{G}}_{\gamma',T}$. In this configuration, the first and last arcs intersected by $\gamma'$, denoted $\tau_{i_1}$ and $\tau_{i_d}$, necessarily border a common triangle, implying that their corresponding tiles share a common edge. We then obtain the \emph{band graph with hexagonal tiles} $\mathcal{G}_{\gamma,T}$ for the closed curve by gluing these terminal tiles together along this shared edge.

\subsection{Loop Graphs: Punctured Orbifolds and Auxiliary Tiles}\label{subsec:aux}

In this section, we will give a new type of snake graph for arcs on an orbifold that only consists of square tiles. This is a completely new construction to accommodate loop arcs. Some of the tiles in these graphs will not correspond to any intersection of our arc with an arc in the triangulation. These tiles will be called \emph{auxiliary tiles}, and the overall graph will be called a \emph{graph with auxiliary tiles}. Since snake or loop graphs with auxiliary tiles will be our main object of study, we may at times simply call them snake or loop graphs.

We begin by recalling how to construct a loop graph from a tagged arc, closely following \cite{wilson2020surface} with necessary adaptations for our setting.
For each $\tau_{i_j}$ which is both a standard arc and not in a pending arc, we form a tile $G_j$ as before. Similarly, if $\gamma$ crosses a pending arc as in the second row of Table \ref{table:PuzzlePiecesWinding}, we use the same tile.

Now, suppose $\gamma$ crosses a pending arc with winding number $0 < k < \mathbf{p}-2$ so that $\GGen$ has a hexagonal tile with nonzero edge weights. We replace the hexagonal tile with three square tiles as in Table \ref{table:PuzzlePiecesWindingAuxiliary}. The puzzle piece on the left corresponds to an orientation pattern $+,-,+$, whereas the one on the right corresponds to the orientation pattern $-,+,-$. The middle tile in each case is an auxiliary tile.

\begin{remark}\label{rmk:WhatCCWGivesUs}
The assignment of weights $U_{k \pm 1}(\lambda_{\mathbf{p}})\rho$ to specific edges is a consequence of our convention that arcs wind counterclockwise around orbifold points. 
\end{remark}

\begin{table}[H]
\centering
\begin{tabular}{|c|cc|}
\hline
\raisebox{.25\height}{%
\begin{tikzpicture}[scale=2]
\tikzset{->-/.style={decoration={
  markings,
  mark=at position #1 with {\arrow{>}}},postaction={decorate}}}
\draw[out=15,in=-15,looseness=1.5] (0,0.3) to (0,1.65);
\draw[out=165,in=-165,looseness=1.5] (0,0.3) to (0,1.65);
\draw[in=0,out=45,looseness =1.25] (0,0.3) to (0,1.3);
\draw[in=180,out=135,looseness=1.25] (0,0.3) to (0,1.3);
\draw[orange,line width = 1.2pt,out=0,in=210] (-0.4,0.7) to (0,0.75);
\draw[orange,line width = 1.2pt,out=30,in=0,looseness=2,->] (0,0.75) to (0,1.2);
\draw[orange,line width = 1.2pt,out=180,in=150,looseness=2] (0,1.2) to (0,0.75);
\draw[orange,line width = 1.2pt,out=-30,in=180] (0,0.75) to (0.4,0.7);

\draw[fill=black] (0,0.3) circle [radius=2pt];
\draw[fill=black] (0,1.65) circle [radius=2pt];
\draw[thick] (0,1) node {$\mathbf{\times}$};

\node[scale=1] at (-0.56,1.4) {$\alpha$};
\node[scale=1] at (0.56,1.41) {$\beta$};
\node[scale=1] at (0,1.4) {$\rho$};

\end{tikzpicture}
}
 &
\raisebox{.55\height}{%
\begin{tikzpicture}[scale = 1.25]
\draw[thick] (0,0) to node[below]{$\alpha$} (1,0) to node[below]{$\rho$} (2,0) to node[below]{$U_k \rho$} (3,0) to node[right]{$\beta$} (3,1) to node[above]{$\alpha$} (2,1) to node[above]{$\rho$} (1,1) to node[above]{$U_k \rho$} (0,1) to node[left]{$\beta$} (0,0);
\draw[thick] (1,0) to node[right, scale = 0.6, yshift = -15pt]{$U_{k-1}\rho$} (1,1);
\draw[thick] (2,0) to node[right, scale = 0.6,yshift = -15pt]{$U_{k+1}\rho$} (2,1);
\draw[gray,dashed] (0,1) to node[right]{$\rho$} (1,0);
\draw[gray,dashed] (1,1) to node[right,  scale = 0.6]{$U_k \rho$} (2,0);
\draw[gray,dashed] (2,1) to node[right]{$\rho$} (3,0);
\end{tikzpicture}
}
&
\begin{tikzpicture}[scale = 1.25]
\draw[thick] (0,0) to node[below]{$\beta$} (1,0) to node[right]{$U_k \rho$} (1,1) to node[right]{$\rho$} (1,2) to node[right]{$\alpha$} (1,3) to node[above]{$\beta$} (0,3) to node[left]{$U_k \rho$} (0,2) to node[left]{$\rho$} (0,1) to node[left]{$\alpha$} (0,0);
\draw[thick] (0,1) to node[above, scale = 0.6]{$U_{k-1}\rho$} (1,1);
\draw[thick] (1,2) to node[above, scale = 0.6]{$U_{k+1}\rho$} (0,2);
\draw[gray,dashed] (0,1) to node[above]{$\rho$} (1,0);
\draw[gray,dashed] (1,1) to node[above,  scale = 0.6]{$U_k \rho$} (0,2);
\draw[gray,dashed] (1,2) to node[above]{$\rho$} (0,3);
\end{tikzpicture}
\\ \hline
\end{tabular}
\caption{A combination of three tiles we associate to two consecutive intersections of an arc with a pending arc in the triangulation where the arc winds nontrivially around the orbifold point. The two puzzle pieces correspond to having positive and negative orientation on the southwest-most tile respectively. The winding number of the arc is $k$.}\label{table:PuzzlePiecesWindingAuxiliary}
\end{table}

\begin{remark}\label{rem:k=0AuxTile}
    We only introduce auxiliary tiles in the presence of  nontrivial winding. However, one could uniformly use hexagonal tiles even in this case and replace these with three square tiles. When $k = 0$, the puzzle piece in Table \ref{table:PuzzlePiecesWindingAuxiliary} has three nonzero perfect matchings. Their weights are 
    \[
    x_\alpha^2 x_\rho^2, \quad U_1 x_\alpha x_\rho^2 x_\beta y_\rho, \quad \text{and} \quad U_1 x_\beta^2 x_\rho^2 y_\rho^2.
    \]
    The weights of the two other perfect matchings contain $U_{-1}(\lambda_{\mathbf{p}})\rho$, so they vanish since $U_{-1}(x) = 0$. After dividing by the crossings term $x_\rho^3$, the resulting expression aligns precisely with the weights of the three perfect matchings of the associated hexagonal tile from Table \ref{table:PuzzlePiecesWinding}. A similar statement holds when $k = \mathbf{p}-2$.
\end{remark}

Next, we address the puzzle pieces we use for intersections between $\gamma$ and a self-folded triangle. One way this can occur is as below, in which case we associate the following three tile puzzle pieces, where the left corresponds to orientations $+,-,+$ and the right corresponds to orientations $-,+,-$.  If $\gamma$ winds nontrivially around the puncture, then the corresponding puzzle piece is a larger zig-zag, with one tile labelled $r$ for each intersection. It is also possible that $\gamma$ begins at the puncture enclosed in a self-folded, in which case we use the $\ell$ tile as below. More specific figures are given in \cite[Figure 3]{musiker2013}.

\begin{center}
\begin{tabular}{ccc}
\begin{tikzpicture}[scale=1.5]
\tikzset{->-/.style={decoration={
  markings,
  mark=at position #1 with {\arrow{>}}},postaction={decorate}}}

\draw (0,0.2) to (0,1.4);

\draw[out=30,in=-30,looseness=1.5] (0,0.2) to (0,2.2);
\draw[out=150,in=-150,looseness=1.5] (0,0.2) to (0,2.2);

\draw[line width = 1.2pt, out=-30, in=-150, looseness=1, ->, orange] (-0.4, 0.7) to (0.4, 0.7);

\draw[in=0,out=45,looseness =1] (0,0.2) to (0,1.7);
\draw[in=180,out=135,looseness=1] (0,0.2) to (0,1.7);

\draw[fill=black] (0,0.2) circle [radius=1pt];
\draw[fill=black] (0,2.2) circle [radius=1pt];
\draw[fill=black] (0,1.4) circle [radius=1pt];

\node at (-0.9,1.3) {$\alpha$};
\node at (0.9,1.3) {$\beta$};
\node at (0.5,1.2) {$\ell$};
\node at (0.1,1) {$r$};
\end{tikzpicture}&
\begin{tikzpicture}[scale = 1.25]
\draw[thick] (2,0) to node[below,midway]{$\alpha$} (3,0) to node[right,midway]{$r$} (3,1) to node[midway,above]{$r$} (2,1) to node[left,midway]{$\beta$} (2,0);
\draw[thick, dashed, black!60!white] (2,1) to node[above,midway,xshift=5,yshift=-4]{$\ell$} (3,0);
\draw[thick] (3,0) to node[below,midway]{$\ell$} (4,0) to node[right,midway]{$\ell$} (4,1) to node[midway,above]{$r$} (3,1) to (3,0);
\draw[thick, dashed, black!60!white] (3,1) to node[above,midway,xshift=5,yshift=-4]{$r$} (4,0);
\draw[thick] (3,1) to (4,1) to node[right,midway]{$\beta$} (4,2) to node[midway,above]{$\alpha$} (3,2) to node[midway,left]{$r$} (3,1);
\draw[thick, dashed, black!60!white] (3,2) to node[above,midway,xshift=5,yshift=-4]{$\ell$} (4,1);
\end{tikzpicture}&
\begin{tikzpicture}[scale = 1.25]
\draw[thick] (6,0) to node[below,midway]{$\beta$} (7,0) to node[right,midway]{$r$} (7,1) to node[midway,above]{$r$} (6,1) to node[left,midway]{$\alpha$} (6,0);
\draw[thick, dashed, black!60!white] (6,1) to node[above,midway,xshift=5,yshift=-4]{$\ell$} (7,0);
\draw[thick] (6,1) to node[left,midway]{$\ell$} (6,2) to node[above,midway]{$\ell$} (7,2) to node[midway,right]{$r$} (7,1);
\draw[thick, dashed, black!60!white] (7,1) to node[above,midway,xshift=5,yshift=-4]{$r$} (6,2);
\draw[thick] (7,1) to node[midway, below]{$r$} (8,1) to node[right,midway]{$\alpha$} (8,2) to node[midway,above,yshift=-1]{$\beta$} (7,2) to (7,1);
\draw[thick, dashed, black!60!white] (7,2) to node[above,midway,xshift=5,yshift=-4]{$\ell$} (8,1);
    \end{tikzpicture}
\end{tabular}
\end{center}

At this point, we have discussed all possible intersections of a plain arc $\gamma$ with $T^\circ$. As in the previous section, to construct $\mathcal{G}_{\gamma,T}$, we glue the tiles and puzzle pieces together in order given by an orientation of $\gamma$. We also preserve our convention of requiring that the first tile has a positive orientation.

Now, consider a singly-notched arc $\gamma^{(p)}$ where $p = s(\gamma)$, as in Section \ref{sec:Setting}. Recall that the spokes incident to $p$ are labelled $\sigma_1,\ldots,\sigma_m$ and $\widetilde{\gamma}$ is obtained by prepending nearly a full revolution around $p$ in the clockwise or counterclockwise direction to $\gamma$. In this section, we choose the clockwise direction as a convention without loss of generality, as using a counterclockwise hook results in an isomorphic loop graph. Even though $\widetilde{\gamma}$ is technically not an arc since it has an endpoint which does not lie in $M$, we can still construct its loop graph $\mathcal{G}_{\widetilde{\gamma},T}$ as described in this section. Notice that even if a spoke incident to $p$ is a pending arc, $\widetilde{\gamma}$ will not wind nontrivially around the enclosed orbifold point, so we use puzzle pieces as in the middle of Table \ref{table:PuzzlePiecesWinding}.

\begin{center}
\begin{tikzpicture}[scale = 1.25]
\draw[thick] (0,0) to node[below,midway]{$\sigma_m$} (1,0) to (1,1) to (0,1) to node[left,midway]{$\tau_{i_1}$} (0,0); 
\draw[gray,dashed] (0,1) to node[right]{$\sigma_1$} (1,0);
\node[] at (2,0.5){$\cdots$};
\draw[thick] (3,0) to (4,0) to node[below]{$\sigma_m$} (5,0) to (5,1) to (4,1) to node[above]{$\tau_{i_1}$} (3,1) to (3,0);
\draw[thick] (4,0) to node[right,yshift = -5pt]{$\sigma_1$} (4,1);
\draw[gray,dashed] (3,1) to node[right]{$\sigma_m$} (4,0);
\draw[gray,dashed] (4,1) to node[right]{$\tau_{i_1}$} (5,0);
\draw[fill=black] (0,0) circle [radius=1pt];
\node[below] at (0,0){$x$};
\draw[fill=black] (1,0) circle [radius=1pt];
\node[below] at (1,0){$y$};
\draw[fill=black] (4,0) circle [radius=1pt];
\node[below] at (4,0){$y'$};
\draw[fill=black] (5,0) circle [radius=1pt];
\node[below] at (5,0){$x'$};
\end{tikzpicture}
\end{center}

From the construction of tiles, the tiles corresponding to $\tau_{i_1}$ and to $\sigma_1$ each contain an edge weighted by $\sigma_m$. We obtain the loop graph $\mathcal{G}_{\gamma,T}$ by gluing these two edges in $\mathcal{G}_{\widetilde{\gamma},T}$ so that $x$ is identified with $x'$ and $y$ with $y'$ as described above. If $\gamma$ is notched at both endpoints, we simply perform this procedure at each end. We remark that using a counterclockwise hook in place of a clockwise one produces the same loop graph.

Now consider the case where $\gamma$ is a notched arc such that its underlying arc $\gamma^0$ belongs to the triangulation $T$. 
For a singly-notched arc $\gamma^{(p)}$, let $\ell$ be the arc such that $\gamma$ and $\ell$ form a self-folded triangle. 
We define $\mathcal{G}_{\gamma^{(p)}}$ to be the result of gluing the edges labelled $x_{\gamma^0}$ on the extreme tiles of $\mathcal{G}_{\ell}$.  

For a doubly-notched arc $\gamma^{(p,q)}$ satisfying $\tau^0 \in T$, let the spokes in $T$ incident to $p$ be $\sigma_1, \dots, \sigma_m$, indexed as below and let the spokes incident to $q$ similarly be  $\eta_1, \dots, \eta_h$. We construct a loop graph by labelling the tiles $\sigma_1, \dots, \sigma_m, \gamma^0, \eta_h, \dots, \eta_1$. To complete the construction, we append a second copy of the tile for $\gamma^0$ after $\eta_1$. We then identify edge $\sigma_m$ with its counterpart in the $\eta_1$-tile, and edge $\eta_h$ with its counterpart in the $\sigma_1$-tile.
This resulting structure constitutes the loop graph for $\gamma^{(p,q)}$. Note that this construction for a doubly-notched arc where $\gamma^0 \in T$ remains valid in the case where $p=q$, such as a doubly-notched pending arc. 

\begin{remark}
    At the time of writing the article, we were aware of a construction in the surface case of a loop graph for a doubly-notched arc $\gamma$ where $\gamma^0 \in T$  from talk slides by Jon Wilson. However, we were unable to find a formal construction in the available literature. This construction will appear in the near future in \cite{wilson2020surface}. Selisko also recently conjectured an alternate approach to such arcs which combines Wilson's construction with  \emph{$\gamma$-compatible matchings} \cite{Selisko}.
\end{remark}

\begin{example}
The following diagram shows an example of a loop graph corresponding to a doubly-notched arc $\gamma$ whose underlying plain arc $\gamma^{0}$ is in the triangulation $T$. A hooked arc $\widetilde{\gamma}$ associated to $\gamma$ is shown on the left. In the loop graph, there are three pairs of edges, all labelled $\gamma^0$, that need to be identified: the blue zigzag and dashed edges, as well as the red wiggly edges. These coloured markings indicate the specific pairs of edges that are identified to complete the construction of the graph.

\begin{center}
\begin{tikzpicture}[scale=1.5, baseline=(current bounding box.center)]
    \draw[thick] (-2, 1) -- (0.5, 1) -- (2.5, 0) -- (0.5, -1) -- (-2, -1) -- cycle;

    \draw[fill=black] (-1, 0) circle [radius=1pt];
    \node at (-1, -0.2) {\small $p$};
    
    \draw (-1,0) -- (0.5,1)     node[pos=0.5, above left]{\small $\sigma_1$};
    \draw (-1,0) -- (-2,1)    node[pos=0.5, right]{\small $\sigma_2$};
    \draw (-1,0) -- (-2,-1)   node[pos=0.5, right]{\small $\sigma_3$};
    \draw (-1,0) -- (0.5,-1)  node[pos=0.5, below left]{\small $\sigma_4$};

    \draw[fill=black] (1, 0) circle [radius=1pt];
    \node at (1, -0.2) {\small $q$};
    
    \draw (1,0) -- (0.5,1)    node[pos=0.5, above right]{\small $\eta_3$};
    \draw (1,0) -- (2.5,0)    node[pos=0.5, above]{\small $\eta_2$};
    \draw (1,0) -- (0.5,-1)   node[pos=0.5, below right]{\small $\eta_1$};
    
    \draw (-1,0) -- (1,0) node[pos=0.5, above]{\small $\gamma^0$};

    \draw[orange,line width = 1.2pt,out=135,in=90,looseness=1.25] (-0.7,0.1) to (-1.3,0);
    \draw[orange,line width = 1.2pt, out=-90,in=-165,looseness=1.5] (-1.3,0) to (-0.7,-0.3);
    \draw[orange,line width = 1.2pt, out=15,in=-165] (-0.7,-0.3) to (0.7,0.2);
    \draw[orange, line width = 1.2pt, out=15,in=90,looseness=1.25] (0.7,0.2) to (1.4,0);
    \draw[orange, line width = 1.2pt, out=-90,in=-45,looseness=1.25] (1.4,0) to (0.7,-0.2);

    \node[orange] at (0.5, 0.3) {\small $\widetilde{\gamma}$};
\end{tikzpicture}
\begin{tikzpicture}[scale=0.9, transform shape, baseline=(current bounding box.center)] 
    \draw[thick] (5,6) to node[above, yshift=-2]{$\sigma_1$} (4,6);
    \draw[thick, decorate, decoration={zigzag}] (4,6) to node[above]{$\gamma^0$} (3,6);
    \draw[thick] (3,6) to node[left, xshift=2]{$\eta_2$} (3,5);
    \draw[draw=none] (5,6) to node[right]{$\eta_3$} (5,5);
    \draw[thick] (5,5) to node[below]{$\eta_1$} (4,5);
    \draw[gray,dashed] (5,5) to node[left, scale=0.6, xshift=-2, yshift=-2]{$\gamma^0$} (4,6);
    \draw[thick] (4,6) to node[left, scale=0.6]{$\sigma_4$} (4,5);
    \draw[gray,dashed] (4,5) to node[left, scale=0.6, xshift=-2, yshift=-2]{$\eta_1$} (3,6);
    \draw[thick] (4,5) to (3,5) to node[above, yshift=-2]{$\eta_2$} (2,5);
    \draw[draw=none] (2,5) to node[left]{$\gamma^0$} (2,4);
    \draw[thick] (4,5) to node[right]{$\eta_1$} (4,4) to node[below]{$\eta_3$} (3,4);
    \draw[gray,dashed] (4,4) to node[left, scale=0.6, xshift=-2, yshift=-2]{$\eta_2$} (3,5);
    \draw[thick] (3,5) to (3,4);
    \draw[gray,dashed] (3,4) to node[left, scale=0.6, xshift=-2, yshift=-2]{$\eta_3$} (2,5);
    \draw[thick] (3,4) to node[below,scale=0.6]{$\sigma_3$} (2,4) to node[left, xshift=2]{$\sigma_4$} (2,3) to node[below, yshift=1, scale=0.6, xshift=5]{$\eta_1$} (3,3) to node[right]{$\eta_3$} (3,4);     
    \draw[gray,dashed] (3,3) to node[left, scale=0.6, xshift=-2, yshift=-2]{$\gamma^0$} (2,4);
    
    \draw[thick, decorate, decoration={zigzag}] (3,3) to node[right]{$\gamma^0$} (3,2);
    \draw[thick] (3,2) to node[below]{$\sigma_3$} (2,2) to (1,2) to node[left, xshift=2]{$\sigma_2$} (1,3) to node[above, yshift=-2]{$\sigma_4$} (2,3);
    \draw[thick] (2,3) to (2,2);
    \draw[gray,dashed] (3,2) to node[left, scale=0.6, xshift=-2, yshift=-2]{$\sigma_4$} (2,3);
    \draw[gray,dashed] (2,2) to node[left, scale=0.6, xshift=-2, yshift=-2]{$\sigma_3$} (1,3);
    
    \draw[thick] (2,2) to node[right]{$\sigma_3$} (2,1) 
                to node[below, yshift=2]{$\sigma_1$} (1,1);
    \draw[draw=none] (1,1) to node[below]{$\eta_3$} (0,1) to node[left]{$\gamma^0$} (0,2);
    \draw[thick] (0,2) to node[above, yshift=-2]{$\sigma_2$} (1,2);
    \draw[thick] (1,2) to (1,1);
    \draw[gray,dashed] (2,1) to node[left, scale=0.6, xshift=-2, yshift=-2]{$\sigma_2$} (1,2);
    \draw[gray,dashed] (1,1) to node[left, scale=0.6, xshift=-2, yshift=-2]{$\sigma_1$} (0,2);
    
    \draw[ultra thick, red, decorate, decoration={snake, amplitude=1pt, segment length=5pt}] (1,1) to (0,1);
    \draw[ultra thick, red, decorate, decoration={snake, amplitude=1pt, segment length=5pt}] (5,5) to (5,6);
    \draw[thick, blue, decorate, decoration={zigzag, segment length=5pt}] (0,2) to (0,1);
    \draw[thick, blue, decorate, decoration={zigzag, segment length=5pt}] (2,4) to (2,5);
\end{tikzpicture}
\end{center}
\end{example}

Now assume that $\gamma$ is a corner arc for which the first arc it crosses, $\tau_{i_1}$, is a pending arc $\rho$, and that $\Delta_0$ is a pending triangle, as in the bottom row of~\Cref{table:PuzzlePiecesWinding}. Since representatives are always chosen to minimize intersections, this implies that the winding number of $\gamma$ around the orbifold point enclosed by $\tau_{i_1}$, $k$, is necessarily positive.  Suppose further that $\gamma$ is notched at $s(\gamma)$.  In this case, we will construct a loop which contains two auxiliary tiles, one with label $U_k(\lambda_{\mathbf{p}}) \rho$ and one with label $U_{k-1}(\lambda_{\mathbf{p}}) \rho$. The general rule for such arcs can be gleaned from the following example. One can verify that these loop graphs with auxiliary tiles cannot be represented using hexagonal tiles. As we will demonstrate, the notched corner arc case generally exhibits unique behaviour requiring special treatment.

\begin{example}\label{ex:NotchCorner}

    The following diagram shows an example of a notched corner arc with winding number $k=2$ and its corresponding loop graph. The zigzag blue edge decoration indicates that those edges are identified. 
    
    \begin{center}
    \raisebox{0.015\textheight}{
    \begin{tikzpicture}[scale=0.65, transform shape]
    \draw[thick] (0,0) circle (3cm);

    \draw[thick] (-3,0) .. controls (3,4) and (2,-2.5) .. (0,-3);
    \draw[thick] (0,-3) .. controls (-2.5,2) and (2.5,2) .. (0,-3);
    \draw[out=0, in=90, thick, looseness=1] (0,3) to (4,0);
    \draw[out=-90, in=0, thick, looseness=1] (4,0) to (0,-4);
    \draw[out=180, in=-90, thick, looseness=1] (0,-4) to (-3,0);

    \draw[in=0,out=-40,looseness=1, orange, line width = 1.2pt] (0,3) to (0,-.8);
    \draw[in=180,out=180,looseness=1.5, orange, line width = 1.2pt] (0,-.8) to (0,.5);
    \draw[in=0,out=0,looseness=1.5, orange, line width = 1.2pt] (0,-.5) to (0,.5);
    \draw[in=180,out=180,looseness=1.5, orange, line width = 1.2pt] (0,-.5) to (0,.3);
    \draw[in=80,out=0,looseness=0.6, orange, line width = 1.2pt] (0,.3) to (0,-3);
    \node[scale=1.5, rotate=-10, orange] at (0.17,-2.1) {$\bowtie$};

    \filldraw (0,-3) circle (3pt); 
    \filldraw (0,3) circle (3pt);  
    \filldraw (-3,0) circle (3pt); 

    \node[scale=2, thick] at (0,0) {$\times$};
    \node[scale=1.5] at (-1.4,1.2) {$a$};
    \node[scale=1.5] at (0.9,-0.9) {$\rho$};
    \node[scale=1.5] at (-1.8,-2) {$b$};
    \node[scale=1.5] at (-2.3,2.6) {$c$};
    \node[scale=1.5] at (2.7,0) {$d$};
    \node[scale=1.5] at (0,-3.7) {$e$};

    \end{tikzpicture}}\qquad
    \begin{tikzpicture}
    \draw[thick] (6,4) to node[above,yshift=-2]{$c$} (5,4) to node[above,yshift=-2]{$a$} (4,4) to node[above,yshift=-2]{$\rho$} (3,4) to node[above, scale = 0.9,yshift=-2]{$U_2\rho$} (2,4) to node[left,xshift=2]{$b$} (2,3) to node[below,yshift=1,scale=0.6,xshift=5]{$a$} (3,3) to node[below,yshift=2]{$\rho$}(4,3);
    \draw[draw=none] (4,3) to node[below, scale=0.9,yshift=-3]{$U_2\rho$} (5,3);
    \draw[thick] (5,3) to node[below,yshift=2]{$\rho$} (6,3) to node[right]{$d$} (6,4);
    
    \draw[thick] (5,4) to node[left, scale=0.6,yshift=10]{$b$} (5,3);
    \draw[thick] (4,4) to node[left, scale=0.6, yshift=10]{$U_{1}\rho$} (4,3); 
    \draw[thick] (3,4) to node[left, scale=0.6, yshift=10]{$U_{3}\rho$} (3,3); 
    
    \draw[gray,dashed] (6,3) to node[left,scale=0.6, xshift=-2,yshift=-2]{$a$} (5,4);
    \draw[gray,dashed] (5,3) to node[left,scale=0.6, xshift=-2,yshift=-2]{$\rho$} (4,4);
    \draw[gray,dashed] (4,3) to node[left,scale=0.6, xshift=-2,yshift=-2]{$U_2\rho$} (3,4);
    \draw[gray,dashed] (3,3) to node[left,scale=0.6, xshift=-2,yshift=-2]{$\rho$} (2,4);
    
    \draw[thick] (3,3) to node[right,xshift=-2]{$\rho$} (3,2) to node[below]{$d$} (2,2) to node[below, scale=0.6, yshift=2pt,xshift=5]{$c$} (1,2) to node[left,xshift=2]{$a$} (1,3) to node[above,yshift=-2]{$b$} (2,3);
    \draw[thick] (2,3) to node[left, scale=0.6,yshift=10]{$e$} (2,2);
    \draw[gray,dashed] (3,2) to node[left,scale=0.6, xshift=-2,yshift=-2]{$b$} (2,3);
    \draw[gray,dashed] (2,2) to node[left,scale=0.6, xshift=-2,yshift=-2]{$d$} (1,3);
    
    \draw[thick] (2,2) to node[right]{$d$} (2,1) to node[below,yshift=2]{$\rho$} (1,1) to node[below, scale=0.6, xshift=5]{$U_0\rho$} (0,1) to node[left]{$U_1 \rho$} (0,2) to node[above,yshift=-2]{$a$} (1,2);
    \draw[thick] (1,2) to node[left, scale=0.6,yshift=10]{$b$} (1,1);
    \draw[gray,dashed] (2,1) to node[left,scale=0.6, xshift=-2,yshift=-2]{$a$} (1,2);
    \draw[gray,dashed] (1,1) to node[left,scale=0.6, xshift=-2,yshift=-2]{$\rho$} (0,2);
    
    \draw[thick] (1,1) to node[right,xshift=-2]{$\rho$} (1,0);
    \draw[draw=none] (1,0) to node[below, scale=0.9,yshift=-3]{$U_2\rho$} (0,0);
    \draw[thick] (0,0) to node[left]{$\rho$} (0,1);
    \draw[gray,dashed] (1,0) to node[left, below, scale=0.6, xshift=-3,yshift=-2]{$U_1\rho$} (0,1);
    
    \draw[ultra thick, blue,decorate, decoration={zigzag, segment length=6pt}] (1,0) to (0,0);
    \draw[ultra thick, blue,decorate, decoration={zigzag, segment length=6pt}] (5,3) to (4,3);
    \end{tikzpicture}
    \end{center}
\end{example}

\begin{remark}
Example \ref{ex:NotchCorner} concerns an arc with winding number 2, and it includes an edge with weight $U_0(\lambda_{\mathbf{p}})  \rho$. Therefore, if we considered an arc with winding number 1 instead, then our graph would have an edge with weight $U_{-1}(\lambda_{\mathbf{p}})  \rho = 0$. One can redraw such a graph to not include the tile labelled $U_0(\lambda_{\mathbf{p}})  \rho$ and to not include the edge with weight 0. The same sort of statement is true if the winding number of $\mathbf{p}-2$; recall $U_{\mathbf{p}-1}(\lambda_{\mathbf{p}}) = 0$.
\end{remark}

For a closed curve $\gamma$ in an orbifold, the construction of its associated graph follows the procedure described in the previous subsection. That is, we pass to a  curve $\gamma'$ with $s(\gamma') = t(\gamma')$ and define $\mathcal{G}_{\gamma,T}$ to be the result of gluing two edges of $\mathcal{G}_{\gamma'}^T$. The resulting graphs are referred to as \emph{band graphs (with auxiliary tiles)}.

Loop and band graphs  are obtained by identifying multiple pairs of edges in a snake graph. We now define the specific set of dimer covers to be considered for such graphs.

\begin{definition}\label{def:GoodMatching}
Let $\mathcal{G} = (V,E)$ be a loop graph or a band graph. Let $\widetilde{\mathcal{G}}$ be the graph obtained by ungluing all edges glued in the construction of $\mathcal{G}$. A \emph{good matching} of $\mathcal{G}$ is a dimer cover $M \subset E$ such that every $v \in V$ is incident to exactly one $e \in M$ and such that $M$ can be extended to a perfect matching of $\widetilde{\mathcal{G}}$.
\end{definition}

We use the same description of $M_-$ as given in Definition \ref{def:MinimalMatching1} to describe the minimal matching of any  snake, loop, and band graph constructed here. Note that the minimal matching of a loop or band graph $\mathcal{G}$ is exactly the good matching whose extension to a perfect matching of the unglued snake graph $\widetilde{\mathcal{G}}$ is the minimal matching. 

In the punctured setting, a loop graph $\mathcal{G}_{\gamma,T^\circ}$ consists of tiles and edges indexed by the ideal triangulation $T^\circ$, but our cluster variable must be written with respect to the associated tagged triangulation $T$. We will define specialization maps to this effect. 

First, for a good matching $M$, we define the \emph{height monomial} as $h(M) = \prod_{i=1}^n h_{\tau_i}^{m_i}$, where $m_i$ denotes the number of tiles labelled $\tau_i$ enclosed by the cycles in the symmetric difference $M \ominus M_-$. Note that tiles labelled $U_k \tau_i$ do not contribute to this monomial as they are not elements of the triangulation. Following \cite{musiker2011positivity}, we define a specialization map $\Phi$ on the weight and height monomials derived from the perfect (or good) matchings of $\mathcal{G}_{\gamma,T^\circ}$.

\begin{definition}\label{def:Phi}
Let $T^\circ$ be an ideal triangulation. Given a monomial $\mathbf{x} = \prod_{\tau \in T^\circ} x_\tau^{a_\tau}$, let $\Phi(\mathbf{x})$ be the monomial obtained by replacing every factor $x_\ell$ for a loop $\ell$ cutting out a once-punctured monogon with $x_r x_{r^{(p)}}$ where $r$ and $\ell$ form a self-folded triangle. 

Given a good matching $M$ of $\mathcal{G}_{\gamma,T}$, recall that $x(M) = \Phi\left(\prod_{e \in M} \mathrm{wt}(e)\right)$.

If $\gamma$ is not a doubly-notched arc satisfying $\gamma^0 \in T$, define $\mathrm{cross}(\mathcal{G}_{\gamma,T})$ to be the result of applying $\Phi$ to the product of (scalar multiples of) cluster variables associated labels of tiles in $\mathcal{G}_{\gamma,T}$. In particular, a tile $U_k(\lambda_{\mathbf{p}}) \rho$ is associated to $U_k(\lambda_{\mathbf{p}}) x_\rho$. If $\gamma$ is a doubly-notched arc satisfying $\gamma^0 \in T$, then we define $\mathrm{cross}(\mathcal{G}_{\gamma,T})$ to be the monomial obtained by dividing the monomial constructed with this process by $x_{\gamma^0}$.

Define $\Phi$ on height monomials to be the multiplicative function satisfying 
\begin{equation}
\Phi(h_\tau) = \begin{cases} 
y_\tau & \text{if } \tau \text{ is not a side of a self-folded triangle;} \\
\frac{y_r}{y_{r^{(p)}}} & \text{if } \tau \text{ is a radius } r \text{ to a puncture } p \text{ in a self-folded triangle;} \\
y_{r^{(p)}} & \text{if } \tau \text{ is a loop } \ell \text{ in a self-folded triangle with radius } r.
\end{cases} \label{eq:DefOfPhiSpecializationMapY}
\end{equation} 
For a given good matching $M$, we denote $y(M) := \Phi(h(M))$.

\end{definition}

\begin{definition}\label{def:LaurentExpansionFromSnakeGraph}
Let $\gamma$ be an arc or closed curve on an orbifold $\mathcal{O}$ with triangulation $T$, and let $\mathcal{G}_{\gamma,T}$ be the associated loop or band graph with auxiliary tiles. We define the combinatorial expansion:
\[
\chi(\mathcal{G}_{\gamma,T}) = \frac{1}{\mathrm{cross}(\mathcal{G}_{\gamma,T})} \sum_{M \in \mathrm{Good}(\mathcal{G}_{\gamma,T})} x(M) y(M)
\]
where the sum is taken over all good matchings $M$ of $\mathcal{G}_{\gamma,T}$.
\end{definition}

\begin{example}
The snake graph with auxiliary tiles corresponding to the arc and triangulation shown in the top left of~\Cref{table:whole} is displayed on the top right. Applying~\Cref{def:LaurentExpansionFromSnakeGraph} and simplifying the result using identities of Chebyshev polynomials yields the computation in~\Cref{ex:1poset}.

\end{example}

\begin{remark}\label{rmk:ExtendHexagonalConstruction}
In Section \ref{sec:HexagonalTiles}, we only discussed the construction of snake and band graphs with hexagonal tiles for arcs on unpunctured orbifolds. However, it is straightforward to combine the constructions discussed here for plain arcs which intersect self-folded triangles and notched arcs other than notched corner arcs with the hexagonal tile construction. Therefore, in future sections, for any arc which is not a notched corner arc, we will reference both a loop or band graph with hexagonal tiles and a loop or band graph (with auxiliary tiles). Recall our convention is to consider auxiliary tiles, hence we may drop the phrase ``with axuiliary tiles.'' This will be an important point for proving the consistency of the formulas in Section \ref{sec:Equivalence}.
\end{remark}

\subsection{Labelled Posets}\label{subsec:Posets}

For a composition $\alpha=(\alpha_1,\alpha_2,\ldots,\alpha_k)$ of $n$, the corresponding \emph{fence poset} is a poset on $n+1$ elements, whose Hasse diagram is given by up and down steps with $\alpha_i$ up steps, followed by $\alpha_{i+1}$ down steps, etc.

We now introduce notation for constructing a poset  $\calP^T_\gamma$ (abbreviated as $\mathcal{P}_{\gamma}$ or simply $\mathcal{P}$ when clear from context) from an  arc or closed curve $\gamma$ on $\mathcal{O}$. This construction generalizes the framework in~\cite{ezgieminecluster2024}. These posets will either be fence posets or the result of adding a few relations to a fence poset. 

Recall the intersection points between $\gamma$ and $T$ are $c_1, c_2,\ldots, c_d$ and the triangle between crossings $c_i$ and $c_{i+1}$ is $\Delta_i$, with $\Delta_0$ and $\Delta_d$ the initial and final triangles. 

First, let $\gamma$ be an arc or closed curve such that $\gamma^0\notin T$ and not a notched corner arc. The poset $\mathcal{P}_{\gamma}^T$  for a (possibly generalized) arc $\gamma$ (plain or notched) consists of elements $v_1,v_2,\ldots,v_d$ corresponding to the crossings and possible extra elements $v_{(\rho,i)}$ that correspond to each time $\gamma$ wraps around an orbifold point between points $c_i$ and $c_{i+1}$. The partial order $\succeq$ defined on these elements is the transitive closure of the relations described below. See Figure \ref{fig:closed_curve_fence_poset} for a visualization.

\begin{itemize}
    \item[(i)] Consider a pair of consecutive crossings $c_i$ and $c_{i+1}$. If $\gamma$ does not wind around an orbifold point between these two crossings, then portions of the crossed arcs and $\gamma$ form the boundary of a contractible disc. This disc has a marked point, $t$, which is a common endpoint of the two arcs crossed. We set $v_{i} \preceq v_{i+1}$ if $t$ lies to the left of $\gamma$ (with respect to its orientation) and otherwise set $v_{i} \succeq v_{i+1}$. If $\gamma$ winds 0 ($\mathbf{p}-2$)  times  around an orbifold point of order $\mathbf{p}$ between $c_i$ and $c_{i+1}$, then we set $v_{i} \succeq v_{i+1}$ ($v_{i} \preceq v_{i+1}$).
    
    If $\gamma$ winds $0 < k < \mathbf{p}-2$ times around an orbifold point encapsulated by a pending arc $\rho$, then instead of relating $v_{i}$ and $v_{i+1}$, we add an extra element $v_{(\rho, i)}$ and relations  $ v_i\preceq v_{(\rho,i)}$ and $v_{(\rho,i)}\succeq v_{i+1}$. We will at times abuse notation and write $v_{\rho_L},v_{\rho^*},$ and $v_{\rho_R}$  instead of $v_i, v_{(\rho,i)},v_{i+1}$ respectively. 
    
    \item[(ii)] If the arc $\gamma$ is a closed curve, we follow the steps in (i) for the arc $\gamma'$ and also perform the same procedure to introduce a relation between $v_{d}$ and $v_1$  by considering the portion of $\gamma$ between $c_d$ and $c_1$ in the triangle $\Delta_0 = \Delta_d$.
    
    \item[(iii)] If $\gamma$ is a notched arc, then orient $\gamma$ so that $s(\gamma)$ is notched and replace $\gamma$ with a hook $\widetilde{\gamma}$. First, we construct the poset $\calP_{\widetilde{\gamma}}^T$. Then,  we include a relation between the element associated to the first spoke crossed by $\widetilde{\gamma}$ and the first arc crossed by $\gamma^0$ using the same rules as in (i). If $\gamma$ is also notched at $t(\gamma)$, we do the same but with respect to the last spoke crossed by $\widetilde{\gamma}$ and the last arc crossed by $\gamma^0$.

\end{itemize}

\begin{figure}[H]
\centering
\begin{tikzpicture}[scale=0.7, transform shape]
    \coordinate (P1) at (0, 3);
    \coordinate (P2) at (1, 1);
    \coordinate (P3) at (2, 3);
    \coordinate (P4) at (4, 2.5);
    \coordinate (P5) at (5, 1.5);

    \node[circle, fill=black, inner sep=2pt] (P1) at (P1) {};
    \node[circle, fill=black, inner sep=2pt, label=below:$t$] (P2) at (P2) {};
    \node[circle, fill=black, inner sep=2pt] (P3) at (P3) {};
    \node[circle, fill=black, inner sep=2pt, label=above:$v_i$] (P4) at (P4) {};
    \node[circle, fill=black, inner sep=2pt, label=below:$v_{i+1}$] (P5) at (P5) {};
    
    \draw[-] (P1) -- (P2) node[pos=0.5, left] {$c_i$};
    \draw[-] (P2) -- (P3) node[pos=0.5, right] {$c_{i+1}$};
    \draw[-] (P4) -- (P5);

    \coordinate (Q1) at (0, 1.5);
    \coordinate (Q2) at (1, 2);
    \coordinate (Q3) at (2, 1.5);

    \draw[smooth, ->, line width=1.5pt, color=orange] (Q1) .. controls (Q2) .. (Q3);

    \coordinate (P6) at (8, 1);
    \coordinate (P7) at (9, 3);
    \coordinate (P8) at (10, 1);
    \coordinate (P9) at (12, 1.5);
    \coordinate (P10) at (13, 2.5);

    \node[circle, fill=black, inner sep=2pt] (P6) at (P6) {};
    \node[circle, fill=black, inner sep=2pt, label=above:$t$] (P7) at (P7) {};
    \node[circle, fill=black, inner sep=2pt] (P8) at (P8) {};
    \node[circle, fill=black, inner sep=2pt, label=below:$v_i$] (P9) at (P9) {};
    \node[circle, fill=black, inner sep=2pt, label=above:$v_{i+1}$] (P10) at (P10) {};
    \draw[-] (P6) -- (P7) node[pos=0.5, left] {$c_i$};
    \draw[-] (P7) -- (P8) node[pos=0.5, right] {$c_{i+1}$};
    \draw[-] (P9) -- (P10);

    \coordinate (Q4) at (8, 2.5);
    \coordinate (Q5) at (9, 2);
    \coordinate (Q6) at (10, 2.5);

    \draw[smooth, ->, line width=1.5pt, color=orange] (Q4) .. controls (Q5) .. (Q6);

    \coordinate (P12) at (15.5, 1);
    \coordinate (P12') at (15.5, 3);
    \coordinate (P13) at (15.5, 2.5);

    \node[circle, fill=black, inner sep=2pt] (P12) at (P12) {};

    \draw[in=0,out=45,looseness =1.1] (P12) to node[midway,right,yshift=10,xshift=-2]{$\rho$} (P12');
    \draw[in=180,out=135,looseness=1.1] (P12) to (P12');
    \node at (P13) {$\times$};

    \coordinate (P14) at (18, 1.5);
    \coordinate (P15) at (19, 2.5);
    \coordinate (P16) at (20, 1.5);

    \node[circle, fill=black, inner sep=2pt, label=below:$v_{\rho_{L}}$] (P14) at (P14) {};
    \node[circle, fill=black, inner sep=2pt, label=above:$v_{\rho^*}$] (P15) at (P15) {};
    \node[circle, fill=black, inner sep=2pt, label=below:$v_{\rho_{R}}$] (P16) at (P16) {};

    \draw (P14) -- (P15);
    \draw (P15) -- (P16);

    \draw[line width = 1.5pt, color=orange,out=0, in=-90] (14.5, 1.75) to (15.8,2.5);
    \draw[line width = 1.5pt, color=orange, out=90,in=90,looseness=1.5] (15.8, 2.5) to (15.2, 2.5);
    \draw[line width = 1.5pt, color=orange, out=-90,in=-180,->] (15.2,2.5) to (16.5,1.75);

\end{tikzpicture}
\caption{Building a fence poset for an arc.}
\label{fig:closed_curve_fence_poset}
\end{figure}

\begin{example} \label{ex:notchedposet} Below are two examples of posets associated to doubly-notched arcs with the same endpoints. The elements are labelled according to the rule which we just discussed. 
\begin{center}
\begin{tabular}{ccc} 

    \begin{tikzpicture}[scale=0.4, transform shape]
    \draw[thick] (0,0) circle (3cm);

    \draw[thick] (-3,0) .. controls (3,4) and (2,-2.5) .. (0,-3);
    \draw[thick] (0,-3) .. controls (-2.5,2) and (2.5,2) .. (0,-3);

    \draw[in=0,out=40,looseness=1.5, orange, line width = 1.2pt] (0,-5) to (0,-2.3);
    \draw[in=180,out=130,looseness=1.5, orange, line width = 1.2pt] (0,-5) to (0,-2.3);

    \draw[thick] (0,-3) to (0,-5);
    \draw[in=0,out=0,looseness=2, thick] (0,-5) to (0,3);
    \draw[in=-85,out=180,looseness=1.5, thick] (0,-5) to (-3,0);

    \draw[in=-95,out=180,looseness=1.5, thick] (1,-6.5) to (-3,0);
    \draw[in=-90,out=0,looseness=1.5, thick] (1,-6.5) to (5,0);
    \draw[in=90,out=0,looseness=1.5, thick] (0,3) to (5,0);

    \filldraw (0,-3) circle (3pt); 
    \filldraw (0,3) circle (3pt);  
    \filldraw (-3,0) circle (3pt); 
    \filldraw (0,-5) circle (3pt); 

    \node[scale=2, thick] at (0,0) {$\times$};
    \node[scale=1.5] at (-1.4,1.2) {$a$};
    \node[scale=1.5] at (1,0) {$\rho$};
    \node[scale=1.5] at (-2.2,-1.6) {$b$};
    \node[scale=1.5] at (-2.3,2.6) {$c$};
    \node[scale=1.5] at (3.2,0) {$d$};
    \node[scale=1.5] at (4.2,-1) {$f$};
    \node[scale=1.5] at (-2,-4) {$g$};
    \node[scale=1.5] at (0.2,-3.7) {$e$};
    \node[scale=1.5] at (1,-5.8) {$h$};

    \node[scale=2, rotate=135, orange] at (0.4,-4.65) {$\bowtie$};
    \node[scale=2, rotate=45, orange] at (-0.34,-4.65) {$\bowtie$};

    \end{tikzpicture}

     &  
    \begin{tikzpicture}[scale=0.4, transform shape]
    \draw[thick] (0,0) circle (3cm);

    \draw[thick] (-3,0) .. controls (3,4) and (2,-2.5) .. (0,-3);
    \draw[thick] (0,-3) .. controls (-2.5,2) and (2.5,2) .. (0,-3);

    \draw[in=0,out=90,looseness=1.5, orange, line width = 1.2pt] (0.8,-5) to (0,-2.3);
    \draw[in=-90,out=-90,looseness=1.5, orange, line width = 1.2pt] (0.8,-5) to (-0.6,-5);
    \draw[in=60,out=150,looseness=1.5, orange, line width = 1.2pt] (.2,-4.8) to (-0.6,-5);

    \draw[in=180,out=180,looseness=1.5, orange, line width = 1.2pt] (0,-4.5) to (0,-2.3);
    \draw[in=0,out=0,looseness=1.5, orange, line width = 1.2pt] (0,-4.5) to (0,-5.3);
    \draw[in=180,out=-120,looseness=1.5, orange, line width = 1.2pt] (-0.3,-4.8) to (0,-5.3);

    \draw[thick] (0,-3) to (0,-5);
    \draw[in=0,out=0,looseness=2, thick] (0,-5) to (0,3);
    \draw[in=-85,out=180,looseness=1.5, thick] (0,-5) to (-3,0);

    \draw[in=-95,out=180,looseness=1.5, thick] (1,-6.5) to (-3,0);
    \draw[in=-90,out=0,looseness=1.5, thick] (1,-6.5) to (5,0);
    \draw[in=90,out=0,looseness=1.5, thick] (0,3) to (5,0);

    \filldraw (0,-3) circle (3pt); 
    \filldraw (0,3) circle (3pt);  
    \filldraw (-3,0) circle (3pt); 
    \filldraw (0,-5) circle (3pt); 

    \node[scale=2, thick] at (0,0) {$\times$};
    \node[scale=1.5] at (-1.4,1.2) {$a$};
    \node[scale=1.5] at (1,0) {$\rho$};
    \node[scale=1.5] at (-2.2,-1.6) {$b$};
    \node[scale=1.5] at (-2.3,2.6) {$c$};
    \node[scale=1.5] at (3.2,0) {$d$};
    \node[scale=1.5] at (4.2,-1) {$f$};
    \node[scale=1.5] at (-2,-4) {$g$};
    \node[scale=1.5] at (0.2,-3.7) {$e$};
    \node[scale=1.5] at (1,-5.8) {$h$};

    \end{tikzpicture}
    
    &
    
     \begin{tikzpicture}[scale=0.6, transform shape]
        \node (G1) at (0,1){$g$};
        \node (F1) at (1,2){$f$};
        \node (H1) at (2,3) {$e$};
        \node (C) at (3,2) {$b$};
        \node (B1) at (4,1) {$\rho$};
        \node (B2) at (5,0) {$\rho$};
        \node (A) at (6,-1) {$a$};
        \node (E) at (7,-2) {$d$};
        \node (F2) at (8,-1) {$f$};
        \node (G2) at (9,-2) {$g$};
        \node (H2) at (10,-3) {$e$};

        \draw (G1) -- (F1);
        \draw (F1) -- (H1);
        \draw (H1) -- (C);
        \draw (G1) -- (C);
        \draw (C) -- (B1);
        \draw (B1) -- (B2);
        \draw (B2) -- (A);
        \draw (A) -- (E);
        \draw (E) -- (F2);
        \draw (F2) -- (G2);
        \draw (G2) -- (H2);
        \draw (E) -- (H2);
    \end{tikzpicture}

\end{tabular}
\end{center}

\begin{center}
\begin{tabular}{ccc}

 \begin{tikzpicture}[scale=0.35, transform shape]
    \draw[thick] (0,0) circle (3cm);

    \draw[thick] (-3,0) .. controls (3,4) and (2,-2.5) .. (0,-3);
    \draw[thick] (0,-3) .. controls (-2.5,2) and (2.5,2) .. (0,-3);

    \draw[in=0,out=-40,looseness=1.5, orange, line width = 1.2pt] (0,-3) to (0,-5.7);
    \draw[in=180,out=-130,looseness=1.5, orange, line width = 1.2pt] (0,-3) to (0,-5.7);

    \node[scale=2, rotate=45, orange] at (0.4,-3.35) {$\bowtie$};
    \node[scale=2, rotate=135, orange] at (-0.34,-3.35) {$\bowtie$};

    \draw[thick] (0,-3) to (0,-5);
    \draw[in=0,out=0,looseness=2, thick] (0,-5) to (0,3);
    \draw[in=-85,out=180,looseness=1.5, thick] (0,-5) to (-3,0);

    \draw[in=-95,out=180,looseness=1.5, thick] (1,-6.5) to (-3,0);
    \draw[in=-90,out=0,looseness=1.5, thick] (1,-6.5) to (5,0);
    \draw[in=90,out=0,looseness=1.5, thick] (0,3) to (5,0);

    \filldraw (0,-3) circle (3pt); 
    \filldraw (0,3) circle (3pt);  
    \filldraw (-3,0) circle (3pt); 
    \filldraw (0,-5) circle (3pt); 

    \node[scale=2, thick] at (0,0) {$\times$};
    \node[scale=1.5] at (-1.4,1.2) {$a$};
    \node[scale=1.5] at (1,0) {$\rho$};
    \node[scale=1.5] at (-2.2,-1.6) {$b$};
    \node[scale=1.5] at (-2.3,2.6) {$c$};
    \node[scale=1.5] at (3.2,0) {$d$};
    \node[scale=1.5] at (4.2,-1) {$f$};
    \node[scale=1.5] at (-2,-4) {$g$};
    \node[scale=1.5] at (0.2,-4.3) {$e$};
    \node[scale=1.5] at (1,-5.8) {$h$};

    \end{tikzpicture}

     &  
    \begin{tikzpicture}[scale=0.35, transform shape]
    \draw[thick] (0,0) circle (3cm);

    \draw[thick] (-3,0) .. controls (3,4) and (2,-2.5) .. (0,-3);
    \draw[thick] (0,-3) .. controls (-2.5,2) and (2.5,2) .. (0,-3);

    \draw[in=0,out=-60,looseness=1.5, orange, line width = 1.2pt] (0.8,-3) to (0,-5.7);
    \draw[in=90,out=100,looseness=2, orange, line width = 1.2pt] (0.8,-3) to (-0.5,-3.2);
    \draw[in=-90,out=-120,looseness=1.5, orange, line width = 1.2pt] (0.25,-3.2) to (-0.5,-3.2);

    \draw[in=180,out=-150,looseness=1.5, orange, line width = 1.2pt] (0,-3.7) to (0,-5.7);
    \draw[in=0,out=0,looseness=1.5, orange, line width = 1.2pt] (0,-3.7) to (0,-2.6);
    \draw[in=200,out=100,looseness=1.5, orange, line width = 1.2pt](-0.25,-3.25) to (0,-2.6);

    \draw[thick] (0,-3) to (0,-5);
    \draw[in=0,out=0,looseness=2, thick] (0,-5) to (0,3);
    \draw[in=-85,out=180,looseness=1.5, thick] (0,-5) to (-3,0);

    \draw[in=-95,out=180,looseness=1.5, thick] (1,-6.5) to (-3,0);
    \draw[in=-90,out=0,looseness=1.5, thick] (1,-6.5) to (5,0);
    \draw[in=90,out=0,looseness=1.5, thick] (0,3) to (5,0);

    \filldraw (0,-3) circle (3pt); 
    \filldraw (0,3) circle (3pt);  
    \filldraw (-3,0) circle (3pt); 
    \filldraw (0,-5) circle (3pt); 

    \node[scale=2, thick] at (0,0) {$\times$};
    \node[scale=1.5] at (-1.4,1.2) {$a$};
    \node[scale=1.5] at (1,0) {$\rho$};
    \node[scale=1.5] at (-2.2,-1.6) {$b$};
    \node[scale=1.5] at (-2.3,2.6) {$c$};
    \node[scale=1.5] at (3.2,0) {$d$};
    \node[scale=1.5] at (4.2,-1) {$f$};
    \node[scale=1.5] at (-2,-4) {$g$};
    \node[scale=1.5] at (0.2,-4.3) {$e$};
    \node[scale=1.5] at (1,-5.8) {$h$};

    \end{tikzpicture}
    &
    \raisebox{0.1\height}{%
    \begin{tikzpicture}[scale=0.5, transform shape]
        \node[scale=1.5] (E1) at (0,1){$d$};
        \node[scale=1.5] (A1) at (1,2){$a$};
        \node[scale=1.5] (B1) at (2,3) {$\rho$};
        \node[scale=1.5] (B2) at (3,4) {$\rho$};
        \node[scale=1.5] (C1) at (4,5) {$b$};
        \node[scale=1.5] (H1) at (5,6) {$e$};
        \node[scale=1.5] (F) at (6,5) {$f$};
        \node[scale=1.5] (G) at (7,4) {$g$};
        \node[scale=1.5] (C2) at (8,5) {$b$};
        \node[scale=1.5] (B3) at (9,4) {$\rho$};
        \node[scale=1.5] (B4) at (10,3) {$\rho$};
        \node[scale=1.5] (A2) at (11,2) {$a$};
        \node[scale=1.5] (E2) at (12,1) {$d$};
        \node[scale=1.5] (H2) at (13,0) {$e$};

        \draw (E1) -- (A1);
        \draw (A1) -- (B1);
        \draw (B1) -- (B2);
        \draw (B2) -- (C1);
        \draw (C1) -- (H1);
        \draw (H1) -- (F);
        \draw (E1) -- (F);
        \draw (F) -- (G);
        \draw (G) -- (C2);
        \draw (G) -- (H2);
        \draw (C2) -- (B3);
        \draw (B3) -- (B4);
        \draw (B4) -- (A2);
        \draw (A2) -- (E2);
        \draw (E2) -- (H2);

    \end{tikzpicture}
}
\end{tabular}
\end{center}
\end{example}

If $\gamma$ is a plain arc, then $\calP_\gamma^T$ is a fence poset. If $\gamma$ is a closed curve, we call the resulting poset  a \emph{circular fence} poset (as in~\cite{ouguz2023rank}), and if $\gamma$ is a notched arc, then we call the resulting poset a \emph{loop} fence poset (see~\cite{KLL25, ezgieminecluster2024}). 

Before addressing some special cases, we will discuss some extra data we associate with this construction. Our posets $\mathcal{P}_\gamma$ come equipped with two functions, a label function and a weight function. Most of the elements of the poset correspond to an intersection point $c_j$. We label such an element with the arc $\tau_{i_j}$. The other elements are those of the from $v_{(\rho,i)}$, which we label with the formal product $U_k(\lambda_\mathbf{p}) \cdot \rho$ where $k$ is the winding number of $\gamma$ between the two intersection points $c_i$ and $c_{i+1}$.

We next discuss the weights, which largely follow the conventions established in \cite{banaian2024skein, ezgieminecluster2024}.  For each arc $\tau \in T$, let $x_{\mathrm{CCW}}(\tau) = x_j x_k$, where $\tau_j, \tau_k \in T$ are the arcs immediately counterclockwise from $\tau$ within the two triangles sharing $\tau$. Boundary arcs are assigned a weight of $1$ and thus do not contribute to this product. The monomial $x_{\mathrm{CW}}(\tau)$ is defined analogously using the clockwise neighbours. For each $\tau \in T$, we define:
\[ 
\hat{y}_\tau :=  \Phi\bigg(\frac{x_{\mathrm{CCW}}(\tau)}{x_{\mathrm{CW}}(\tau)}h_\tau\bigg),
\]
using the specialization map $\Phi$ on height monomials from \Cref{def:Phi}.

If an element $v_i \in \mathcal{P}_\gamma$ is labelled by a standard arc $\tau_i$, its weight is $w_{i} = \hat{y}_{\tau_i}$. For example, the elements labelled $b$ in Example \ref{ex:notchedposet} have weight $\frac{x_\rho x_g}{x_ax_c}y_b$. It is important to note that this definition naturally extends to the case of self-folded triangles. For instance, the weight for a radius $r$ of a self-folded triangle is computed directly as \(\frac{y_r}{y_{r^{(p)}}}\).

Next, we define the weights for the poset elements $v_{(\rho,i)}$ and $v_i$ associated with a pending arc $\rho$. Note that when $v_i \in \mathcal{P}_\gamma$ is labelled by a pending arc $\rho$, we get $\hat{y}_\rho = \frac{x_\alpha}{x_\beta} y_\rho$, where $\alpha$ and $\beta$ are the arcs in $T$ that form the enclosing bigon for $\rho$, with $\alpha$ the counterclockwise neighbor of $\rho$. 

Suppose $\gamma$ has two consecutive intersections with $\rho$, denoted by $\tau_{i_j} = \tau_{i_{j+1}} = \rho$, with a winding number $0 < k < \mathbf{p}-2$. Recall we write $v_{\rho_L} = v_j$ and $v_{\rho_R} = v_{j+1}$ which are both covered by $v_{\rho^*} = v_{(\rho,j)}$. In this situation,  the weights are defined as:
\[ 
    w_{\rho_L} = \frac{U_{k-1}(\lambda_{\mathbf{p}})}{U_{k}(\lambda_{\mathbf{p}})}\hat{y}_\rho, \quad
    w_{\rho^*} = \frac{1}{U_{k-1}(\lambda_{\mathbf{p}}) U_{k+1}(\lambda_{\mathbf{p}})}, \quad
    w_{\rho_R} = \frac{U_{k+1}(\lambda_{\mathbf{p}})}{U_{k}(\lambda_{\mathbf{p}})}\hat{y}_\rho.
\]
We stress that the weight of $w_{\rho^*}$ is just a scalar. As discussed in Remark \ref{rmk:WhatCCWGivesUs}, the weights here hinge on the fact that $\gamma$ winds counterclockwise.

\begin{example}
\label{ex:standardwindingarc_poset}
The following diagram contains an arc $\gamma$ that winds about an orbifold point with $k=1$ and its corresponding poset, where the $\boxed{\text{label}}$ and weight of each element are indicated.
\begin{center}
\begin{tikzpicture}[scale=0.6, transform shape]

    \draw[thick] (0,0) circle (3cm);

    \node[below, scale=1.5] at (0,-3) {$v_1$}; 
    \node[above, scale=1.5] at (0,3) {$v_2$};  
    \node[left, scale=1.5] at (-3,0) {$v_3$};  

    \draw[thick] (-3,0) .. controls (3,4) and (2,-2.5) .. (0,-3);

    \draw[thick] (0,-3) .. controls (-2.5,2) and (2.5,2) .. (0,-3);

    \draw[in=-90,out=-20,looseness=1, orange, line width = 1.2pt] (-3,0) to (0.5,0);
    \draw[in=90,out=90,looseness=1.5, orange, line width = 1.2pt] (0.5,0) to (-0.5,0);
    \draw[in=-100,out=-90,looseness=1, orange, line width = 1.2pt,  ->] (-0.5,0) to (2,0);
    \draw[in=-40,out=80,looseness=1, orange, line width = 1.2pt] (2,0) to (0,3);

    \filldraw (0,-3) circle (3pt); 
    \filldraw (0,3) circle (3pt);  
    \filldraw (-3,0) circle (3pt); 

    \node[scale=2, thick] at (0,0) {$\times$};
    \node[scale=1.5] at (-1.4,1.2) {$a$};
    \node[scale=1.5] at (1,0) {$\rho$};
    \node[scale=1.5] at (-2.3,-2.6) {$b$};
    \node[scale=1.5] at (-2.3,2.6) {$c$};
    \node[scale=1.5] at (3.2,0) {$d$};

    \node[orange, scale=1.5] at (1.3,1.4) {$\gamma$};

    \node[scale=1.2](1) at (5,0){$\boxed{\rho}$};
    \node[scale=1.2](2) at (7,2){$\boxed{U_1\rho}$};
    \node[scale=1.2](3) at (9,0){$\boxed{\rho}$};
    \node[scale=1.2](4) at (11,-2){$\boxed{a}$};
    \draw[thick] (1) -- (2);
     \draw[thick] (3) -- (2);
     \draw[thick] (3) -- (4);

    \node[scale=1.5] at (5,-0.7) {$\frac{U_0 x_a}{U_1x_b}y_\rho$};
    \node[scale=1.5] at (7,2.7) {$\frac{1}{U_0U_2}$};
    \node[scale=1.5] at (10.25,0.5) {$\frac{U_2x_a}{U_1x_b}y_\rho$};
    \node[scale=1.5] at (11,-2.7) {$\frac{x_dx_b}{x_\rho x_c}y_a$};

\end{tikzpicture}
\end{center}
\end{example}

If the winding number satisfies $k \in \{0,\mathbf{p}-2\}$, we do not have an element $v_{\rho^*}$. If $k = 0$, then we have $\rho_L \succeq \rho_R$ and we set weights \[ 
    w_{\rho_L} = \frac{1}{U_{1}(\lambda_{\mathbf{p}})}\hat{y}_\rho, \quad w_{\rho_R} = U_1(\lambda_{\mathbf{p}})\hat{y}_\rho.
\] 
Otherwise, $k = \mathbf{p}-2$, and we have $\rho_L \preceq \rho_R$ and we reverse the weights of $\rho_L$ and $\rho_R$. 

\begin{remark}\label{rmk:TreatAsFormalVariable}
    One can optionally still include the $v_{\rho^*}$ element when $k \in \{0,\mathbf{p}-2\}$ if we treat the ideal weights as products of formal variables. If $k = 0$, then $U_{-1} = 0$ and so $w_{\rho_L} = 0$. There are only three ideals of the subposet formed by $v_{\rho_L}, v_{\rho^*}$, and $v_{\rho_R}$ with nonzero weight: $\emptyset$, $\{v_{\rho_R}\}$, and $\{v_{\rho_L}, v_{\rho^*}, v_{\rho_R}\}$. We observe that the weight of the last ideal is given by 
    \[ w_{\rho_L}w_{\rho^*}w_{\rho_R} = \left(\frac{U_{-1}(\lambda_{\mathbf{p}})}{U_0(\lambda_{\mathbf{p}})}\hat{y_\rho}\right)\left(\frac{1}{U_{-1}(\lambda_{\mathbf{p}})U_1(\lambda_{\mathbf{p}})}\right)\left(\frac{U_1(\lambda_{\mathbf{p}})}{U_0(\lambda_{\mathbf{p}})}\hat{y_\rho}\right) = \frac{1}{U_0^2(\lambda_{\mathbf{p}})}\left(\hat{y_\rho} \right)^2 = \left(\hat{y_\rho} \right)^2 \]
    and more generally, that summing over the ideal weights of this subposet recovers the exchange relation for pending arcs,
        \[w(\emptyset) + w(\{\rho_R \}) + w(\{ \rho_L, \rho^*, \rho_R \}) =  1 + \frac{U_1(\lambda_{\mathbf{p}})}{U_0(\lambda_{\mathbf{p}})}\hat{y_{\rho}} + \left(\hat{y}_\rho \right)^2 = 1 + \lambda_{\rho}\hat{y_{\rho}} + \left(\hat{y_{\rho}} \right)^2.\]
\end{remark}

We now turn to special cases which we have not yet addressed. If $\gamma = \gamma^{(p)}$ is a notched corner arc, then as for other notched arcs, the poset $\calP_\gamma$ is the result  of including a ``loop'' at one or two ends of $\calP_{\gamma^0}$. If $\sigma_1,\ldots,\sigma_m,\rho$ are the spokes incident to $p$ where $\rho$ is the pending arc enclosing the pending triangle $\Delta_0$, then we attach a loop with elements 
\[
    v_1 \succ v_{\rho^-} \prec v_{U_{k-1}\rho} \prec v_{\sigma_1} \prec \cdots \prec v_{\sigma_m} \prec v_{U_k\rho} \prec v_{\rho^+} \succ v_1
\]
The notation suggests the weights of each element; notice that $v_1$ would also be labelled with $\rho$.
The weights associated with these elements are defined as follows:
\begin{align*}
    w_{1}& =\frac{U_{k}(\lambda_{\mathbf{p}})}{U_{k-1}(\lambda_{\mathbf{p}})}\,\hat{y}_{\rho},&  w_{\rho^+} &= \frac{U_{k-2}(\lambda_{\mathbf{p}})}{U_{k-1}(\lambda_{\mathbf{p}})} \hat{y}_{\rho}, & w_{\rho^-} = \frac{U_{k}(\lambda_{\mathbf{p}})}{U_{k+1}(\lambda_{\mathbf{p}})} \hat{y}_{\rho}, \\
    w_{U_{k-1}\rho} &= \frac{1}{U_{k-2}(\lambda_{\mathbf{p}})U_{k}(\lambda_{\mathbf{p}})}, & w_{U_k\rho} &= U_{k-1}(\lambda_{\mathbf{p}})U_{k+1}(\lambda_{\mathbf{p}}).&
\end{align*}

\begin{example}\label{ex:enteringnotched} We give an explicit example of a notched corner arc with winding number and its associated poset. The weights in this case are given by the following.

\[w_{\rho} = \frac{U_{2}(\lambda_{\mathbf{p}})}{U_{1}(\lambda_{\mathbf{p}})} \, \hat{y}_{\rho}, \quad w_{\rho^+} = \frac{U_{0}(\lambda_{\mathbf{p}})}{U_{1}(\lambda_{\mathbf{p}})} \, \hat{y}_{\rho}, \quad w_{\rho^-} = \frac{U_{2}(\lambda_{\mathbf{p}})}{U_{3}(\lambda_{\mathbf{p}})} \, \hat{y}_{\rho},
\]
\[w_{U_1\rho} = \frac{1}{U_{0}(\lambda_{\mathbf{p}})U_{2}(\lambda_{\mathbf{p}})}, \quad , w_{U_2\rho} = U_{1}(\lambda_{\mathbf{p}})U_{3}(\lambda_{\mathbf{p}}),
\]
In these weights and the poset below, we use the labels $\rho^{+}$ and $\rho^{-}$ to distinguish between different elements labelled with $\rho$.

\begin{center}
\begin{tabular}{ccc}
    \begin{tikzpicture}[scale=0.4, transform shape]
    \draw[thick] (0,0) circle (3cm);

    \draw[thick] (-3,0) .. controls (3,4) and (2,-2.5) .. (0,-3);
    \draw[thick] (0,-3) .. controls (-2.5,2) and (2.5,2) .. (0,-3);

    \draw[in=0,out=-40,looseness=1, orange, line width = 1.2pt] (0,3) to (0,-.8);
    \draw[in=180,out=180,looseness=1.5, orange, line width = 1.2pt] (0,-.8) to (0,.5);
    \draw[in=0,out=0,looseness=1.5, orange, line width = 1.2pt] (0,-.5) to (0,.5);
    \draw[in=180,out=180,looseness=1.5, orange, line width = 1.2pt] (0,-.5) to (0,.3);
    \draw[in=80,out=0,looseness=0.6, orange, line width = 1.2pt] (0,.3) to (0,-3);
    \node[scale=1.5, rotate=-10, orange, line width = 1.2pt] at (0.17,-2.1) {$\bowtie$};

    \draw[in=-90, out=-90, looseness=2, thick] (4,0) to (-3,0);
    \draw[in=0, out=90, looseness=0.9, thick] (4,0) to (0,3);
    \draw[in=0,out=0,looseness=2, thick] (0,-5) to (0,3);
    \draw[in=-85,out=180,looseness=1.5, thick] (0,-5) to (-3,0);

    \draw[in=-95,out=180,looseness=1.5, thick] (1,-6.5) to (-3,0);
    \draw[in=-90,out=0,looseness=1.5, thick] (1,-6.5) to (5,0);
    \draw[in=90,out=0,looseness=1.5, thick] (0,3) to (5,0);

    \filldraw (0,-3) circle (3pt); 
    \filldraw (0,3) circle (3pt);  
    \filldraw (-3,0) circle (3pt); 
    \filldraw (0,-5) circle (3pt); 

    \node[scale=2, thick] at (0,0) {$\times$};
    \node[scale=1.5] at (-1.4,1.2) {$a$};
    \node[scale=1.5] at (1,-0.8) {$\rho$};
    \node[scale=1.5] at (-2.2,-1.6) {$b$};
    \node[scale=1.5] at (-2.3,2.6) {$c$};
    \node[scale=1.5] at (3.2,0) {$d$};
    \node[scale=1.5] at (4.2,-2) {$f$};
    \node[scale=1.5] at (-2,-4) {$g$};
    \node[scale=1.5] at (0.2,-4.3) {$e$};
    \node[scale=1.5] at (1,-5.8) {$h$};

    \end{tikzpicture}

     &  
    \begin{tikzpicture}[scale=0.4, transform shape]
    \draw[thick] (0,0) circle (3cm);

    \draw[thick] (-3,0) .. controls (3,4) and (2,-2.5) .. (0,-3);
    \draw[thick] (0,-3) .. controls (-2.5,2) and (2.5,2) .. (0,-3);

    \draw[in=0,out=-40,looseness=1, orange, line width = 1.2pt] (0,3) to (0,-.8);
    \draw[in=180,out=180,looseness=1.5, orange, line width = 1.2pt] (0,-.8) to (0,.5);
    \draw[in=0,out=0,looseness=1.5, orange, line width = 1.2pt] (0,-.5) to (0,.5);
    \draw[in=180,out=180,looseness=1.5, orange, line width = 1.2pt] (0,-.5) to (0,.3);
    \draw[in=80,out=0,looseness=0.6, orange, line width = 1.2pt] (0,.3) to (-.5,-3);
    \draw[in=-90,out=-90,looseness=1, orange, line width = 1.2pt] (-0.5,-3) to (.5,-3);
    \draw[in=0,out=90,looseness=1, orange, line width = 1.2pt] (0.5,-3) to (0,-2.5);

    \draw[in=-90, out=-90, looseness=2, thick] (4,0) to (-3,0);
    \draw[in=0, out=90, looseness=0.9, thick] (4,0) to (0,3);

    \draw[in=0,out=0,looseness=2, thick] (0,-5) to (0,3);
    \draw[in=-85,out=180,looseness=1.5, thick] (0,-5) to (-3,0);

    \draw[in=-95,out=180,looseness=1.5, thick] (1,-6.5) to (-3,0);
    \draw[in=-90,out=0,looseness=1.5, thick] (1,-6.5) to (5,0);
    \draw[in=90,out=0,looseness=1.5, thick] (0,3) to (5,0);

    \filldraw (0,-3) circle (3pt); 
    \filldraw (0,3) circle (3pt);  
    \filldraw (-3,0) circle (3pt); 
    \filldraw (0,-5) circle (3pt); 

    \node[scale=2, thick] at (0,0) {$\times$};
    \node[scale=1.5] at (-1.4,1.2) {$a$};
    \node[scale=1.5] at (1,-0.8) {$\rho$};
    \node[scale=1.5] at (-2.2,-1.6) {$b$};
    \node[scale=1.5] at (-2.3,2.6) {$c$};
    \node[scale=1.5] at (3.2,0) {$d$};
    \node[scale=1.5] at (4.2,-2) {$f$};
    \node[scale=1.5] at (-2,-4) {$g$};
    \node[scale=1.5] at (0.2,-4.3) {$e$};
    \node[scale=1.5] at (1,-5.8) {$h$};
    \end{tikzpicture}
    
    &
    \raisebox{0\height}{%
    \begin{tikzpicture}[scale=0.7, transform shape]
        \node (A) at (0,1){$a$};
        \node (L) at (-1,2){$\rho$};
        \node (B1) at (-2,1) {$U_2\rho$};
        \node (R) at (-3,2) {$\rho^-$};
        \node (C) at (-4,3) {$b$};
        \node (E) at (-5,4) {$d$};
        \node (A2) at (-6,5) {$a$};
        \node (B2) at (-7,6) {$\rho^+$};
        \node (B3) at (-8,7) {$U_{1}\rho$};

        \draw (A) -- (L);
        \draw (L) -- (B1);
        \draw (B1) -- (R);
        \draw (R) -- (C);
        \draw (C) -- (E);
        \draw (E) -- (A2);
        \draw (A2) -- (B2);
        \draw (B2) -- (B3);
        \draw (B3) to (L);

    \end{tikzpicture}
    }
\end{tabular}
\end{center}

\end{example}

It remains to consider notched arcs whose underlying plain version lies in the triangulation $T$. These were previously treated in~\cite{pilaud2023posets,weng2023f} and we follow the same convention if an arc is a standard arc. If $\gamma = \gamma^{(p)}$ is a singly-notched standard arc, then we define $\calP_\gamma^T = \calP_{\widetilde{\gamma}}^T$, where $\widetilde{\gamma}$ is a hooked version of $\gamma$.

\begin{example}\label{ex:plainversionintriangulation} We give an example of a poset $\calP_\gamma^T$ associated to a singly-notched arc $\gamma = \gamma^{(p)}$ such that $\gamma^0 \in T$. It is interesting to compare this to $\calP_\gamma^{T'}$ where $T'$ is a similar triangulation which does not include $\gamma^0$. We use the main construction in  the latter setting.
    
\begin{center}
\begin{tabular}{ccc}
    \begin{tikzpicture}[scale=0.4, transform shape]
    \draw[thick] (0,0) circle (3cm);

    \draw[thick] (-3,0) .. controls (3,4) and (2,-2.5) .. (0,-3);
    \draw[thick] (0,-3) .. controls (-2.5,2) and (2.5,2) .. (0,-3);

    \draw[in=-40,out=40,looseness=1.5, orange, line width = 1.2pt] (0,-5) to (0,-3);
    \node[scale=2, rotate=45, orange] at (0.4,-3.35) {$\bowtie$};

    \draw[thick] (0,-3) to (0,-5);
    \draw[in=0,out=0,looseness=2, thick] (0,-5) to (0,3);
    \draw[in=-85,out=180,looseness=1.5, thick] (0,-5) to (-3,0);

    \draw[in=-95,out=180,looseness=1.5, thick] (1,-6.5) to (-3,0);
    \draw[in=-90,out=0,looseness=1.5, thick] (1,-6.5) to (5,0);
    \draw[in=90,out=0,looseness=1.5, thick] (0,3) to (5,0);

    \filldraw (0,-3) circle (3pt); 
    \filldraw (0,3) circle (3pt);  
    \filldraw (-3,0) circle (3pt); 
    \filldraw (0,-5) circle (3pt); 

    \node[scale=2, thick] at (0,0) {$\times$};
    \node[scale=1.5] at (-1.4,1.2) {$a$};
    \node[scale=1.5] at (1,0) {$\rho$};
    \node[scale=1.5] at (-2.2,-1.6) {$b$};
    \node[scale=1.5] at (-2.3,2.6) {$c$};
    \node[scale=1.5] at (3.2,0) {$d$};
    \node[scale=1.5] at (4.2,-1) {$f$};
    \node[scale=1.5] at (-2,-4) {$g$};
    \node[scale=1.5] at (0.2,-4.3) {$e$};
    \node[scale=1.5] at (1,-5.8) {$h$};

    \end{tikzpicture}

     &  
    \begin{tikzpicture}[scale=0.4, transform shape]
    \draw[thick] (0,0) circle (3cm);

    \draw[thick] (-3,0) .. controls (3,4) and (2,-2.5) .. (0,-3);
    \draw[thick] (0,-3) .. controls (-2.5,2) and (2.5,2) .. (0,-3);

    \draw[in=0,out=40,looseness=1.5, orange, line width = 1.2pt] (0,-5) to (0,-2.5);
    \draw[in=180,out=100,looseness=1.5, orange, line width = 1.2pt] (-0.5,-3.2) to (0,-2.5);

    \draw[thick] (0,-3) to (0,-5);
    \draw[in=0,out=0,looseness=2, thick] (0,-5) to (0,3);
    \draw[in=-85,out=180,looseness=1.5, thick] (0,-5) to (-3,0);

    \draw[in=-95,out=180,looseness=1.5, thick] (1,-6.5) to (-3,0);
    \draw[in=-90,out=0,looseness=1.5, thick] (1,-6.5) to (5,0);
    \draw[in=90,out=0,looseness=1.5, thick] (0,3) to (5,0);

    \filldraw (0,-3) circle (3pt); 
    \filldraw (0,3) circle (3pt);  
    \filldraw (-3,0) circle (3pt); 
    \filldraw (0,-5) circle (3pt); 

    \node[scale=2, thick] at (0,0) {$\times$};
    \node[scale=1.5] at (-1.4,1.2) {$a$};
    \node[scale=1.5] at (1,0) {$\rho$};
    \node[scale=1.5] at (-2.2,-1.6) {$b$};
    \node[scale=1.5] at (-2.3,2.6) {$c$};
    \node[scale=1.5] at (3.2,0) {$d$};
    \node[scale=1.5] at (4.2,-1) {$f$};
    \node[scale=1.5] at (-2,-4) {$g$};
    \node[scale=1.5] at (0.2,-4.3) {$e$};
    \node[scale=1.5] at (1,-5.8) {$h$};
    \end{tikzpicture}
    
    &
    \raisebox{0.2\height}{%
    \begin{tikzpicture}[scale=0.8, transform shape]
        \node (E) at (0,1){$d$};
        \node (A) at (1,2){$a$};
        \node (B1) at (2,3) {$\rho$};
        \node (B2) at (3,4) {$\rho$};
        \node (C) at (4,5) {$b$};

        \draw (E) -- (A);
        \draw (A) -- (B1);
        \draw (B1) -- (B2);
        \draw (B2) -- (C);
    \end{tikzpicture}
    }
\end{tabular}
\end{center}

\begin{center}
\begin{tabular}{ccc}
    \begin{tikzpicture}[scale=0.4, transform shape]
    \draw[thick] (0,0) circle (3cm);

    \draw[thick] (-3,0) .. controls (3,4) and (2,-2.5) .. (0,-3);
    \draw[thick] (0,-3) .. controls (-2.5,2) and (2.5,2) .. (0,-3);

    \draw[in=-40,out=40,looseness=1.5, orange, line width = 1.2pt] (0,-5) to (0,-3);
    \node[scale=2, rotate=45, orange] at (0.4,-3.35) {$\bowtie$};

    \draw[in=-90, out=-90, looseness=2, thick] (4,0) to (-3,0);
    \draw[in=0, out=90, looseness=0.9, thick] (4,0) to (0,3);

    \draw[in=0,out=0,looseness=2, thick] (0,-5) to (0,3);
    \draw[in=-85,out=180,looseness=1.5, thick] (0,-5) to (-3,0);

    \draw[in=-95,out=180,looseness=1.5, thick] (1,-6.5) to (-3,0);
    \draw[in=-90,out=0,looseness=1.5, thick] (1,-6.5) to (5,0);
    \draw[in=90,out=0,looseness=1.5, thick] (0,3) to (5,0);

    \filldraw (0,-3) circle (3pt); 
    \filldraw (0,3) circle (3pt);  
    \filldraw (-3,0) circle (3pt); 
    \filldraw (0,-5) circle (3pt); 

    \node[scale=2, thick] at (0,0) {$\times$};
    \node[scale=1.5] at (-1.4,1.2) {$a$};
    \node[scale=1.5] at (1,0) {$\rho$};
    \node[scale=1.5] at (-2.2,-1.6) {$b$};
    \node[scale=1.5] at (-2.3,2.6) {$c$};
    \node[scale=1.5] at (3.2,0) {$d$};
    \node[scale=1.5] at (4.2,-1) {$f$};
    \node[scale=1.5] at (-2,-4) {$g$};
    \node[scale=1.5] at (1.8,-3.3) {$e$};
    \node[scale=1.5] at (1,-5.8) {$h$};

    \end{tikzpicture}

     &  
     
    \begin{tikzpicture}[scale=0.4, transform shape]
    \draw[thick] (0,0) circle (3cm);

    \draw[thick] (-3,0) .. controls (3,4) and (2,-2.5) .. (0,-3);
    \draw[thick] (0,-3) .. controls (-2.5,2) and (2.5,2) .. (0,-3);

    \draw[in=0,out=40,looseness=1.5, orange, line width = 1.2pt] (0,-5) to (0,-2.5);
    \draw[in=180,out=100,looseness=1.5, orange, line width = 1.2pt] (-0.5,-3.2) to (0,-2.5);

    \draw[in=-90, out=-90, looseness=2, thick] (4,0) to (-3,0);
    \draw[in=0, out=90, looseness=0.9, thick] (4,0) to (0,3);

    \draw[in=0,out=0,looseness=2, thick] (0,-5) to (0,3);
    \draw[in=-85,out=180,looseness=1.5, thick] (0,-5) to (-3,0);

    \draw[in=-95,out=180,looseness=1.5, thick] (1,-6.5) to (-3,0);
    \draw[in=-90,out=0,looseness=1.5, thick] (1,-6.5) to (5,0);
    \draw[in=90,out=0,looseness=1.5, thick] (0,3) to (5,0);

    \filldraw (0,-3) circle (3pt); 
    \filldraw (0,3) circle (3pt);  
    \filldraw (-3,0) circle (3pt); 
    \filldraw (0,-5) circle (3pt); 

    \node[scale=2, thick] at (0,0) {$\times$};
    \node[scale=1.5] at (-1.4,1.2) {$a$};
    \node[scale=1.5] at (1,0) {$\rho$};
    \node[scale=1.5] at (-2.2,-1.6) {$b$};
    \node[scale=1.5] at (-2.3,2.6) {$c$};
    \node[scale=1.5] at (3.2,0) {$d$};
    \node[scale=1.5] at (4.2,-1) {$f$};
    \node[scale=1.5] at (-2,-4) {$g$};
    \node[scale=1.5] at (1.8,-3.3) {$e$};
    \node[scale=1.5] at (1,-5.8) {$h$};

    \end{tikzpicture}
    &
    \raisebox{0.2\height}{%
    \begin{tikzpicture}[scale=0.8, transform shape]
        \node (H) at (-1,2){$e$};
        \node (E) at (0,1){$d$};
        \node (A) at (1,2){$a$};
        \node (B1) at (2,3) {$\rho$};
        \node (B2) at (3,4) {$\rho$};
        \node (C) at (4,5) {$b$};

        \draw (H) -- (E);
        \draw (E) -- (A);
        \draw (A) -- (B1);
        \draw (B1) -- (B2);
        \draw (B2) -- (C);
        \draw (H) to (C);
    \end{tikzpicture}
    }
\end{tabular}
\end{center}

\end{example}

We will discuss the case of singly-notched pending arcs later. Note that such an arc cannot appear in any triangulation, and thus these do not correspond to cluster variables.

We now turn to doubly-notched arcs $\gamma^{(p,q)}$ such that $\gamma^0 \in T$. We refer the reader to \cite{pilaud2023posets,weng2023f} for the case when $\gamma$ is an ordinary arc. Now, consider a doubly-notched pending arc $\gamma^{(q,q)}$. 

Label the set of all spokes of $T$ at $p$ as $\sigma_1,\ldots,\sigma_m$ in counterclockwise order, with $\sigma_m=\gamma$. We define the total orders $\mathcal{P}_{l_q}$ and $\mathcal{P}_{r_q}$ on labelled elements as follows:
\[
\mathcal{P}_{l_q} : \ \sigma_1^l \prec \sigma_2^l \prec \cdots \prec \sigma_{m-1}^l \prec \rho^l \prec \lambda_\mathbf{p}\rho^l,
\]
\[
\mathcal{P}_{r_q} : \ \rho^r \prec \sigma_1^r \prec \sigma_2^r \prec \cdots \prec \sigma_{m-1}^r.
\]
We then define $\mathcal{P}_{\rho^{(q,q)}}$ to be the poset on 
\[
\{\sigma_1^l,\ldots,\sigma_{m-1}^l,\rho^l,\lambda_\mathbf{p}\rho^l,\rho^r,\sigma_1^r,\ldots,\sigma_{m-1}^r,\rho^-,\rho^+\}
\] 
with the relations inherited from $\mathcal{P}_{l_q}$ and $\mathcal{P}_{r_q}$, together with the additional relations
\[
\rho^- \prec \sigma_1^l \prec \sigma_{m-1}^r \prec \rho^+,
\quad
\rho^- \prec \rho^r \prec \lambda_\mathbf{p}\rho^l \prec \rho^+,
\]
so that $\rho^-$ is the minimal element and $\rho^+$ the maximal element. The elements $\rho^\pm$ of $\mathcal{P}_{\rho^{(q,q)}}$ represent the underlying plain pending arc $\rho^0 \in T$. 

The weights $w_{\sigma_i}$ are defined as usual, i.e., $w_{\sigma_i} = \hat{y}_{\sigma_i}$. The weights of the elements labelled by $\rho$ are defined as follows:

\[w_{\rho^-} = U_1(\lambda_{\mathbf{p}})\hat{y}_\rho, \quad w_{\rho^+} = U_1(\lambda_{\mathbf{p}})\hat{y}_\rho, \quad w_{\rho^r} = \frac{1}{U_1(\lambda_{\mathbf{p}})}\hat{y}_\rho,\]
\[w_{\rho^l} = \frac{U_2(\lambda_{\mathbf{p}})}{U_1(\lambda_{\mathbf{p}})}\hat{y}_\rho,\quad w_{\lambda_\mathbf{p}\rho^l} = \frac{1}{U_2(\lambda_{\mathbf{p}})}.\]

\begin{example}
\label{ex:doubly notched pending arc}As an example, consider a doubly-notched pending arc in the case where the corresponding plain pending arc lies in the triangulation. The hook convention illustrates how the associated poset is constructed. We remark that similar lattices also appeared in \cite{HuangLattice}. 

\begin{center}
\begin{tabular}{ccc}

    \begin{tikzpicture}[scale=0.5, transform shape]
    \draw[thick] (0,0) circle (3cm);

    \draw[thick] (-3,0) .. controls (3,4) and (2,-2.5) .. (0,-3);
    \draw[thick] (0,-3) .. controls (-2.5,2) and (2.5,2) .. (0,-3);

    \draw[in=0,out=40,looseness=1.3, orange, line width = 1.2pt] (0,-3) to (0,0.5);
    \draw[in=180,out=130,looseness=1.3, orange, line width = 1.2pt] (0,-3) to (0,0.5);

    \draw[thick] (0,-3) to (0,-5);
    \draw[in=0,out=0,looseness=2, thick] (0,-5) to (0,3);
    \draw[in=-85,out=180,looseness=1.5, thick] (0,-5) to (-3,0);

    \draw[in=-95,out=180,looseness=1.5, thick] (1,-6.5) to (-3,0);
    \draw[in=-90,out=0,looseness=1.5, thick] (1,-6.5) to (5,0);
    \draw[in=90,out=0,looseness=1.5, thick] (0,3) to (5,0);

    \filldraw (0,-3) circle (3pt); 
    \filldraw (0,3) circle (3pt);  
    \filldraw (-3,0) circle (3pt); 
    \filldraw (0,-5) circle (3pt); 

    \node[scale=2, thick] at (0,0) {$\times$};
    \node[scale=1.5] at (-1.4,1.2) {$a$};
    \node[scale=1.5] at (0.3,-1) {$\rho$};
    \node[scale=1.5] at (-2.2,-1.6) {$b$};
    \node[scale=1.5] at (-2.3,2.6) {$c$};
    \node[scale=1.5] at (3.2,0) {$d$};
    \node[scale=1.5] at (4.2,-1) {$f$};
    \node[scale=1.5] at (-2,-4) {$g$};
    \node[scale=1.5] at (0.2,-3.7) {$e$};
    \node[scale=1.5] at (1,-6) {$h$};

    \node[scale=2, rotate=135, orange] at (0.4,-2.65) {$\bowtie$};
    \node[scale=2, rotate=45, orange] at (-0.34,-2.65) {$\bowtie$};

    \end{tikzpicture}
    &
     
    \begin{tikzpicture}[scale=0.5, transform shape]
    \draw[thick] (0,0) circle (3cm);

    \draw[thick] (-3,0) .. controls (3,4) and (2,-2.5) .. (0,-3);
    \draw[thick] (0,-3) .. controls (-2.5,2) and (2.5,2) .. (0,-3);

    \draw[in=160,out=90,looseness=1.5, orange, line width = 1.2pt] (-0.8,-3) to (0,.3);
    \draw[in=-90,out=-90,looseness=1.5, orange, line width = 1.2pt] (-0.8,-3) to (0.6,-3);
    \draw[in=100,out=15,looseness=1.5, orange, line width = 1.2pt] (-0.3,-2.7) to (0.6,-3);

    \draw[in=-10,out=0,looseness=0.8, orange, line width = 1.2pt] (-0.3,-2.5) to (0,.3);
    \draw[in=180,out=180,looseness=1.5, orange, line width = 1.2pt] (-0.3,-2.5) to (0,-3.3);
    \draw[in=0,out=-60,looseness=1.5, orange, line width = 1.2pt] (0.3,-2.8) to (0,-3.3);

    \draw[thick] (0,-3) to (0,-5);
    \draw[in=0,out=0,looseness=2, thick] (0,-5) to (0,3);
    \draw[in=-85,out=180,looseness=1.5, thick] (0,-5) to (-3,0);

    \draw[in=-95,out=180,looseness=1.5, thick] (1,-6.5) to (-3,0);
    \draw[in=-90,out=0,looseness=1.5, thick] (1,-6.5) to (5,0);
    \draw[in=90,out=0,looseness=1.5, thick] (0,3) to (5,0);

    \filldraw (0,-3) circle (3pt); 
    \filldraw (0,3) circle (3pt);  
    \filldraw (-3,0) circle (3pt); 
    \filldraw (0,-5) circle (3pt); 

    \node[scale=2, thick] at (0,0) {$\times$};
    \node[scale=1.5] at (-1.4,1.2) {$a$};
    \node[scale=1.5] at (0.9,0) {$\rho$};
    \node[scale=1.5] at (-2.2,-1.6) {$b$};
    \node[scale=1.5] at (-2.3,2.6) {$c$};
    \node[scale=1.5] at (3.2,0) {$d$};
    \node[scale=1.5] at (4.2,-1) {$f$};
    \node[scale=1.5] at (-2,-4) {$g$};
    \node[scale=1.5] at (0.2,-4.3) {$e$};
    \node[scale=1.5] at (1,-6) {$h$};

    \end{tikzpicture}
    
    &
     \begin{tikzpicture}[scale=0.7, transform shape]
    \node (k1-) at (0,0.5){$\rho^-$};
    \node (k) at (-1,1){$b$};
    \node (dotsl) at (-1,2){$e$};
    \node (k2) at (-1,3){$d$};
    \node (k3) at (-1,4){$a$};
    \node (k4) at (-1,5){$\rho$};
    \node (k5) at (-1,6){$\lambda_\mathbf{p} \rho$};
    \node (as) at (1,1){$\rho$};
    \node (s1) at (1,2.25){$b$};
    \node (dotsr) at (1,3.5){$e$};
    \node (a1) at (1,4.75){$d$};
    \node (a2) at (1,6){$a$};
    \node (k1+) at (0,6.5){$\rho^+$};
    \draw (k1+) -- (a2);
    \draw(k1+) -- (k5);
    \draw(k4) -- (k5);
    \draw(k3) -- (k4);
    \draw(k2) -- (k3);
    \draw(k1-) -- (k);
    \draw(k1-) -- (as);
    \draw (as) --(s1);
    \draw (dotsr) --(s1);
    \draw (dotsr) -- (a1);
    \draw (a1) -- (a2);
    \draw(k) -- (dotsl);
    \draw(dotsl) -- (k2);
    \draw(k5) -- (as);
    \draw(a2) -- (k);
    \end{tikzpicture}

\end{tabular}
\end{center}
\end{example}

We have now defined a poset $\calP_\gamma^T$ for every generalized arc or closed curve $\gamma$. We will next use this poset to associate another Laurent polynomial to the pair $(\gamma,T)$. 

An \emph{order ideal} $I$ of a poset $\mathcal{P}$ is a down-closed subset of vertices of $\mathcal{P}$, i.e., a subset where $x\in I$ and $y\preceq x$ imply $y\in I$.  The order ideals of a poset ordered by inclusion have the structure of a distributive lattice, called the \emph{order ideal lattice} $J(\mathcal{P})$ of $\mathcal{P}$.

Given a weight function on our poset $\calP_\gamma$, the weight \(w\) of an order ideal \(I\) is defined by
\[
w(I)\;:=\;\prod_{v\in I}w_v,
\]
where the product is taken with multiplicity.
We adopt the convention \(w(\emptyset)=1\).

Given a poset $\mathcal{P}_\gamma$ for any arc $\gamma$ and a weight function $w$, we finally define the weight polynomial of the entire poset as
\[
\mathcal{W}(\calP_\gamma,w) = \sum_{I \in J(\calP_\gamma)} w(I).
\]

When the weight function and triangulation are understood, we will just write $\mathcal{W}(\mathcal{P}_\gamma)$.

\begin{example} \label{ex:fenceposet}The fence poset $\mathcal{P}$ for the composition $\alpha=(1,2)$ is depicted below.

  \begin{center}
\begin{tabular}{c c}
	\begin{tikzpicture}[scale=.8]
\draw (0,0)--(1.5,1)--(4.5,-1);
\fill (0,0) circle(.1) node[above,yshift=.17cm] {$1$} ;
\fill (1.5,1) circle(.1) node[above,yshift=.17cm] {$2$} ;
\fill (3,0) circle(.1) node[above,yshift=.17cm] {$3$} ;
\fill (4.5,-1) circle(.1) node[above,yshift=.17cm] {$4$} ;
\end{tikzpicture}& \raisebox{12mm}{\begin{tabular}{c} The poset has the  ideals:
\\ $\varnothing$, $\{1\}$, $\{4\}$,\\ $\{1,4\}$, $\{3,4\}$\\ $\{1,3,4\}$, $\{1,2,3,4\}$. 
\end{tabular}} \end{tabular}\end{center}

It has the weight polynomial: $\rank(\mathcal{P},w)=1+w_1+w_4+w_1w_4+w_3w_4+w_1w_3w_4+w_1w_2w_3w_4.$
\end{example}

Our final ingredient will be associating a minimal term to each poset. To compute this, we define an extended poset $\widehat{\mathcal{P}}_\gamma$ as follows. If $\gamma$ is a closed curve or $\gamma$ is a notched arc such that $\gamma^0 \in T$, then $\widehat{\mathcal{P}}_\gamma=\mathcal{P}_\gamma$. If an endpoint of $\gamma$ is plain, we adjoin one additional element adjacent to it. Recall that the first and last triangles crossed by $\gamma$ are $\Delta_0$ and $\Delta_d$. If $s(\gamma)$ is plain and there is a non-boundary arc $\tau_s \in T$ which follows $\tau_{i_1}$ in counterclockwise in $\Delta_0$, then we add a new element $v_0 \succeq v_1$ and label $v_0$ with $\tau_s$. Similarly, if $t(\gamma)$ is plain, we add $v_{d} \preceq v_{d+1}$ if there is a non-boundary arc $\tau_t$ which  follows $\tau_d$ in counterclockwise order in $\Delta_d$. If $v_1$ or $v_d$ is a pending arc, i.e., $\gamma$ is a plain corner arc, we instead adjoin an element labelled $U_k\rho$ as $v_0$ or $v_{d+1}$, respectively, where $k$ is the winding number in $\Delta_0$ or $\Delta_d$, respectively. We will not need to assign a weight to these elements.

Let $\min(\gamma)$ denote the set of minimal elements of the poset $\widehat{\mathcal{P}}_{\gamma}$.
We also define $\text{max}(\gamma)$ to be all elements of $\widehat{\mathcal{P}}_{\gamma}$ which cover two elements and which do not lie in loops (i.e., chains at one or both sides corresponding to notches). It is important here to note that when $\gamma$ is notched and $\gamma^0 \in T$, then we regard the entire poset $\calP_\gamma^T$ to be a loop.

Let $\mathcal{L}$ be defined as follows: if a vertex  $v \in \widehat{\mathcal{P}}_\gamma$ is labelled by $\tau$, we set $\mathcal{L}(v_j) = x_{\tau}$ and if $v$ is labelled by $U_k(\lambda_{\mathbf{p}}) \rho$, then we set $\mathcal{L}(v) = U_k x_{\rho}$.

\begin{definition}\label{def:PosetMin}
    Let $\gamma \notin T$ be a generalized arc in $\Orb$. If $\gamma$ is not a singly-notched arc where $\gamma^0 \notin T$, the \emph{minimal term} \(x_{\min}^\gamma\) is then defined as
    \[
    x_{\min}^\gamma = \frac{\prod_{v \text{ in max}(\gamma)} \mathcal{L}(v)}{\prod_{v \text{ in min} (\gamma)} \mathcal{L}(v)}.
    \]
    If $\gamma^0 \in T$ and $\gamma$ is a singly-notched arc, \(x_{\min}^\gamma\) is defined as
    \[x_{\min}^\gamma=\frac{\prod_{v \text{ in max}(\gamma)} \mathcal{L}(v)}{x_{\gamma^0}\cdot\prod_{v \text{ in min} (\gamma)} \mathcal{L}(v)}\]
\end{definition}

Simply put, the vector $x_{\min}^\gamma$ is read by looking at the \emph{peaks} and \emph{valleys} of the poset $\widehat{\calP}_\gamma$, taking some care at the ends.

\begin{example}
We can apply \Cref{def:PosetMin} to compute the minimal term for each of the preceding examples in this section. For both examples in \Cref{ex:plainversionintriangulation}, the minimal term is $\frac{x_f}{x_d}$. In \Cref{ex:enteringnotched}, the minimal term is $\frac{x_c}{U_2x_a}$. In \Cref{ex:notchedposet}, the minimal terms for the first and second cases are, respectively, $\frac{x_b}{x_gx_e}$ and $\frac{x_f}{x_dx_e}$. Finally, the minimal term in \Cref{ex:doubly notched pending arc} is $\frac{1}{x_\rho}.$
\end{example}

The degree vector of $x_{\min}^\gamma$ plays a special role in cluster theory. The following is straightforward.

\begin{lemma}\label{lem:g}
If $\gamma^0 \notin T$ or $\gamma$ is not singly-notched and $\gb_\gamma$ is the degree vector of $x_{\min}^\gamma$ i.e.,  $x_{\min}^\gamma = A \mathbf{x}^{\mathbf{g}_\gamma}$, then  $\gb_\gamma = \mathbf{a}_\gamma + \mathbf{b}_\gamma$ where $\mathbf{a}_\gamma$ and $\mathbf{b}_\gamma$ are given as follows.

\begin{enumerate}
    \item $\mathbf{a}_\gamma=(a_\tau)_{\tau \in T}$ where $a_\tau$ is the number minimal elements of $\widehat{\calP}_{\gamma}$ labelled by a scalar multiple of $x_\tau$. 
    \item $\mathbf{b}_\gamma=(b_\tau)_{\tau \in T}$ where $b_\tau$ is the number of maximal elements of $\widehat{\calP}_\gamma$  which do not lie in a hook and which are labelled by a scalar multiple of $x_\tau$.
\end{enumerate}

If $\gamma^0 \in T$ and $\gamma$ is singly-notched, then the same is true where we add $\mathbf{e}_{\gamma^0}$ to $\mathbf{a}_\gamma$.
Moreover, if $\gamma$ is an ordinary arc, $x^\gamma_{\min} = \mathbf{x}^{\mathbf{g}_{\gamma}}$.
\end{lemma}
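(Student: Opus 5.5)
The lemma is essentially a bookkeeping statement: $x_{\min}^\gamma$ is, up to a scalar $A$, a monomial in the cluster variables. We obtain $\gb_\gamma$ by reading off the exponent of each $x_\tau$ in the formula of Definition \ref{def:PosetMin}. The plan is to unwind that definition term by term. The one subtlety is the sign convention. The displayed formula puts maxima in the numerator and minima in the denominator, so the literal exponent is $b_\tau - a_\tau$. We will therefore first fix the convention that $\mathbf{a}_\gamma$ records minima with negative sign (equivalently, that $a_\tau$ is the negative of the count of minimal elements). This is the reading under which the statement is consistent, and it matches the $\mathbf{g}$-vector conventions of \cite{ezgieminecluster2024}.

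\textbf{Step 1.} By construction of the labelling $\mathcal{L}$, every element $v \in \widehat{\calP}_\gamma$ satisfies $\mathcal{L}(v) = c_v x_{\tau(v)}$. Here $c_v = 1$ if $v$ is labelled by an arc $\tau$ of $T$, and $c_v = U_k(\lambda_{\mathbf{p}})$ if $v$ is labelled by $U_k\rho$, in which case $\tau(v) = \rho$. This also covers the adjoined end elements $v_0$ and $v_{d+1}$, and the auxiliary-type elements $v_{\rho^*}$ and $v_{U_{k-1}\rho}$, $v_{U_k\rho}$ in notched corner loops. Each such element is labelled by a scalar multiple of a single $x_\tau$ with $\tau \in T$.

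\textbf{Step 2.} Substitute into Definition \ref{def:PosetMin}. The numerator is $\prod_{v\in\max(\gamma)} c_v x_{\tau(v)}$. Since $\max(\gamma)$ already excludes elements lying in loops (hooks), the exponent of $x_\tau$ in the numerator is exactly $b_\tau$. The denominator similarly contributes exponent $a_\tau$, up to the sign convention fixed above. Thus $x_{\min}^\gamma = A\,\mathbf{x}^{\mathbf{a}_\gamma + \mathbf{b}_\gamma}$ with
\[
A = \frac{\prod_{v\in\max(\gamma)} c_v}{\prod_{v\in\min(\gamma)} c_v}.
\]
In the singly-notched case with $\gamma^0\in T$, the extra factor $x_{\gamma^0}$ in the denominator contributes $\mathbf{e}_{\gamma^0}$ to the minima part, which gives the stated modification.

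\textbf{Step 3.} If $\gamma$ is an ordinary arc, we show $A = 1$. Ordinary arcs have winding number $0$ or $\mathbf{p}-2$ at every orbifold point, so no element $v_{\rho^*}$ is introduced. An ordinary arc also cannot be a corner arc, since a corner arc has nontrivial winding. So no adjoined end element carries a $U_k$ label with $k\ge 1$: the only possible scalar on such an element is $U_0 = 1$. Hence every $c_v = 1$, and $x_{\min}^\gamma = \mathbf{x}^{\gb_\gamma}$.

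The main obstacle is Step 3, specifically checking the special constructions. For notched ordinary arcs with $\gamma^0 \in T$, such as the doubly-notched pending arc, elements like $\lambda_{\mathbf{p}}\rho^l$ appear, and we must confirm that these do not land in $\min$ or $\max$. For the doubly-notched pending arc this holds because the whole poset is a loop, and $\rho^-$ is labelled by $\rho$ with scalar $1$. We will go through the list of constructions in Section \ref{subsec:Posets} case by case to confirm this.
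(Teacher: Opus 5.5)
Your strategy of reading off exponents from \Cref{def:PosetMin} is the only argument available here, since the paper gives no proof and just calls the lemma straightforward. You are also right that the literal formula puts minima in the denominator. The gap is in Step~2, where you say that, because $\max(\gamma)$ excludes loop elements, the exponent of $x_\tau$ in the numerator ``is exactly $b_\tau$.'' This silently identifies two different sets. \Cref{def:PosetMin} uses the elements of $\widehat{\calP}_\gamma$ that \emph{cover two elements} (outside loops). By contrast, $b_\tau$ counts \emph{maximal} elements outside hooks.

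These sets differ in both directions:
\begin{itemize}
\item In \Cref{ex:1poset} the extended poset is $b \succ \rho \prec U_1\rho \succ \rho \succ a \prec c$. The adjoined end elements $b$ and $c$ are maximal, but each covers only one element. So the literal definition gives $U_1(\lambda_{\boldp})/x_a$, whereas counting maximal elements gives $U_1(\lambda_{\boldp})x_bx_c/x_a$. The paper actually uses the latter: it is the $y$-free term of the expansion in \Cref{ex:1poset}.
\item In the first poset of \Cref{ex:notchedposet}, the element $b$ is the first arc crossed by $\gamma^0$ and is not in a hook. It covers $g$ and $\rho$, but is itself covered by the hook element $e$. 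So it lies in $\max(\gamma)$ but is not maximal, and the stated minimal term $x_b/(x_gx_e)$ needs it.
\end{itemize}
So neither literal reading works in all cases, and the lemma is not a tautology as written. You first need to fix a single notion of ``peak'' of $\widehat{\calP}_\gamma$. It must count the adjoined ends $v_0,v_{d+1}$ and junction elements like the $b$ above. Then check that both the numerator of \Cref{def:PosetMin} and $\mathbf{b}_\gamma$ are read with that notion. This reconciliation is where the actual content of the lemma lies.

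Separately, your sign convention is not carried through to the singly-notched case. If $a_\tau$ is minus the number of minima labelled by $\tau$, then the extra factor $x_{\gamma^0}$ changes $\mathbf{a}_\gamma$ by $-\mathbf{e}_{\gamma^0}$. So it does not ``give the stated modification.'' You should say explicitly that $\mathbf{e}_{\gamma^0}$ is added to the \emph{count} of minima, which in your convention means it is subtracted from $\mathbf{a}_\gamma$.

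Step~3 is fine as it stands. Ordinary arcs have winding $0$ or $\boldp-2$ at double crossings and cannot be corner arcs, so no label $U_k\rho$ with $k\ge 1$ can appear. In the doubly-notched pending case, the only element with a nontrivial scalar is $\lambda_{\boldp}\rho^l$, and it is neither minimal nor counted among the peaks.
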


\begin{remark}
The notation for $\gb_{\gamma}$ is inspired by the notation for the $\gb$-vector of a string module, as in \cite{palu2021non}. When $\gamma$ is a plain arc or a closed curve, $\gb_\gamma$ coincides with the $\gb$-vector of the corresponding \emph{arc module} over the Jacobian algebra associated with $T$, as described in~\cite{labardini2009quivers, labardini2009quivers2} whereas if $\gamma$ is notched, the vector $\gb_\gamma$ appears to agree with the $\gb$-vector of the arc module studied in~\cite{dominguez2017arc}. To our knowledge, there is no proof of the fact that the combinatorial recipe in~\Cref{lem:g} coincides with the description of the $\gb$-vector of such module. In the case of orbifolds of order~3, related discussions can be found in~\cite{banaian2025snake}. Note that, in our present notation, the vector $\mathbf{b}$ represents $\mathbf{b} + \mathbf{r}$ in \cite{banaian2025snake,palu2021non}.
\end{remark}

We are now prepared to define a family of Laurent polynomials associated to posets from arcs on $\mathcal{O}$.

\begin{definition}\label{def:LaurentExpansionFromFencePoset}
Let $\gamma$ be an (possibly generalized) arc or closed curve on an orbifold $\mathcal{O}$ with triangulation $T$ and let $\calP_\gamma^T$ be the poset associated to $\gamma$. We define  \[
\chi(\calP_\gamma^T) := x^\gamma_{\min}\mathcal{W}(\calP_\gamma^T,w).
\]
\end{definition}

\subsection{From Posets to Matrices}

Using the theory of oriented posets (see~\cite{ouguz2025oriented,ezgieminecluster2024}), we can calculate the weight polynomial of any fence poset using the \emph{down-step} matrix, $D(w_i)$, and \emph{up-step} matrix, $U(w_i)$, defined as follows:
\begin{align}
\mdo(w_i)=\mdom{w_i},\quad \qquad \mup(w_i):=\mupm{w_i}.
\end{align}~\label{down-up}

Here, an up-step can be visualized as a node with a potential upwards connection and a down-step can be visualised as a node with a potential down-step connection. A fence poset of $n+1$ nodes can be created by combining $n$ such steps, where the final potential step is taken to be downwards as a convention. See~\Cref{fig:steps} for an example.

\begin{prop}[\cite{ouguz2025oriented}, Proposition 5.9]\label{prop:rank_mat}

Let $(\alpha_1, \alpha_2,\ldots,\alpha_k)$ be a composition of $n$. The weight polynomial of the fence poset $\mathcal{P}$ of $\alpha$ is given by the top left entry of the \emph{rank matrix} of $\mathcal{P}$ given by:
\[\left(U(w_1)\cdots U(w_{\alpha_1})\right)\cdot (D(w_{\alpha_1+1})\cdots D(w_{\alpha_1+\alpha_2})) \cdot (U(w_{\alpha_1+\alpha_2+1})\cdots U(w_{\alpha_1+\alpha_2+\alpha_3}))\cdots D(w_{n}) \]

where $w_i$ is a weight of $i$. In other words, $\mathcal{W}(\mathcal{P}_\gamma,w)$ is the top left entry of the matrix product above.
\end{prop}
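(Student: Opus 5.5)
The plan is induction on $n$, the number of elements of the fence poset $\mathcal{P}$. We track the whole rank matrix, not only its top-left entry. Read the fence from left to right. Let $\mathcal{P}_j$ be the subposet on the first $j$ elements, and let $M_j$ be the product of the first $j$ step matrices, where the $j$-th step is an up-step $U(w_j)$ or a down-step $D(w_j)$ according to whether the next relation is $v_j\prec v_{j+1}$ or $v_j\succ v_{j+1}$. I will prove the stronger statement that the entries of $M_j$ are weight polynomials of order ideals of $\mathcal{P}_j$ restricted by the status of the last element $v_j$.

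Concretely, I expect the following. The first column of $M_j$ should be $\bigl(\sum_{I} w(I),\ \sum_{I\not\ni v_j} w(I)\bigr)^{T}$, up to the convention fixed by the last step. The second column should record the same sums for a modified status of $v_j$. The precise invariant has to be pinned down from the base case: $U(w_1)$ and $D(w_1)$ with $n=1$ must give top-left entry $1+w_1$, which holds since $D(w)$ has top-left $1+w$. The final step is a down-step by convention, and that is exactly what lets the top-left entry of the full product be the total $\mathcal{W}(\mathcal{P},w)$.

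For the inductive step, append one element $v_{j+1}$ to the fence and split the order ideals of $\mathcal{P}_{j+1}$ by whether they contain $v_{j+1}$ and/or $v_j$.

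If $v_j\prec v_{j+1}$, then $v_{j+1}$ may lie in an ideal only if $v_j$ does. The new sums are linear combinations of the old ones with coefficients $1$ and $w_{j+1}$, and these should match the entries of $\mupm{w}$.

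If $v_j\succ v_{j+1}$, then $v_j$ in the ideal forces $v_{j+1}$ in the ideal. This case gives the recurrence encoded by $\mdom{w}$, with the $-w$ entry coming from an inclusion–exclusion correction.

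The main obstacle is choosing the right two-by-two invariant. The entries of $D$ involve $1+w$ and $-w$, so the second column is not a plain count of ideals. It is most likely a signed or shifted combination, for instance ideals whose last element carries a prescribed weight factor. To find it, I will first compute small fences such as $(1)$, $(2)$, $(1,1)$, and Example \ref{ex:fenceposet}, read off candidate meanings for all four entries, and then verify that each of the two recurrences preserves them. Once this is done, the case analysis is routine bookkeeping. The alternating blocks in the product $U(w_1)\cdots U(w_{\alpha_1})\,D(\cdots)\cdots D(w_n)$ correspond exactly to the up and down runs of the composition $\alpha$, which completes the induction.
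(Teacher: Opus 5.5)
\textbf{Verdict: genuine gap.} Your strategy is sound. You induct on the number of elements, append one step at a time, and track the whole $2\times 2$ product. The paper does not reprove this result; it cites \cite{ouguz2025oriented}, where the product is identified with $\rmm_w(\calP\!\searrow)$ by gluing oriented posets (cf.\ Figure~\ref{fig:steps}). Your induction is the one-element instance of that gluing.

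The gap is that you never fix the invariant, which is the only substantive part of the proof, and the version you sketch is false. For the chain $v_1\succ v_2$,
\[
D(w_1)D(w_2)=\begin{bmatrix}1+w_2+w_1w_2 & -(1+w_1)w_2\\ 1+w_2 & -w_2\end{bmatrix}.
\]
The bottom-left entry $1+w_2$ is the weight of the ideals avoiding the \emph{first} element $v_1$, not the last element $v_2$ (that sum is $1$). Your base case also fails for an up-step: the top-left entry of $U(w_1)$ is $w_1$, not $1+w_1$. So no invariant in which the top-left entry is the total weight for every prefix can work. The meaning of the columns must depend on the type of the last step, and the rows must refer to $v_1$.

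The invariant that works is the pair of matrices defined right after the proposition, with $x_L=v_1$ and $x_R=v_j$:
\begin{itemize}
\item $M_j=\drm_w(\calP_j\!\nearrow)$ if the $j$-th step is $U$;
\item $M_j=\rmm_w(\calP_j\!\searrow)$ if the $j$-th step is $D$.
\end{itemize}
The rows mean ``no condition'' and ``$v_1\notin I$''. The columns record $v_j$ in the basis $(v_j\in I,\ v_j\notin I)$ after an up-step, and in the basis $(\text{all},\ -[v_j\in I])$ after a down-step. The two bases differ by $\tf$, matching $D(w)=U(w)\tf$. The base case is $U(w_1)=\drm_w(\{v_1\}\!\nearrow)$ and $D(w_1)=\rmm_w(\{v_1\}\!\searrow)$.

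For the inductive step, describe the ideals of $\calP_{j+1}$ as follows.
\begin{itemize}
\item If $v_j\prec v_{j+1}$, they are the ideals $I$ of $\calP_j$, together with $I\cup\{v_{j+1}\}$ for those $I$ containing $v_j$.
\item If $v_j\succ v_{j+1}$, they are $I\cup\{v_{j+1}\}$ for every $I$, together with those $I$ not containing $v_j$.
\end{itemize}
Each of the four products $M_jU(w_{j+1})$ and $M_jD(w_{j+1})$ then matches entry by entry. For instance, in the down--down case, with $W=\rank(\calP_j,w)$,
\[
\begin{bmatrix}W & -W|_{v_j\in I}\\ W|_{v_1\notin I} & -W|_{v_j\in I,\,v_1\notin I}\end{bmatrix}\mdom{w_{j+1}}
=\begin{bmatrix}w_{j+1}W+W|_{v_j\notin I} & -w_{j+1}W\\ w_{j+1}W|_{v_1\notin I}+W|_{v_j\notin I,\,v_1\notin I} & -w_{j+1}W|_{v_1\notin I}\end{bmatrix},
\]
which is exactly $\rmm_w(\calP_{j+1}\!\searrow)$. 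Since the final step is $D$ by convention, the top-left entry of the full product is $\rank(\calP,w)$.

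With this invariant written down, your plan becomes a complete proof. Without it, the proposal is only a search strategy, and its tentative invariant does not survive even the two-element case.
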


\begin{example}\label{ex:orientedbuilding} We combine an up-step with three down-steps to get the fence poset corresponding to $\alpha=(1,2)$ from~\Cref{ex:fenceposet}.
\begin{figure}[h]
\centering
    \scalebox{.8}{
\begin{tikzpicture}
\draw[->, blue, dashed, thick] (1.5,-1)--(2.5,-1/3);
\draw[->, blue, dashed, thick] (3,0)--(4,-2/3);
\draw[->, blue, dashed, thick] (4.5,-1)--(5.5,-5/3);
\draw[->, blue, dashed, thick] (6,-2)--(7,-8/3);

\fill (1.5,-1) circle(.2) node[white] {$1$} ;
\draw (1.6,-1.5) node{$\displaystyle w_1$} ;
\draw (3.1,-.5) node{$\displaystyle w_2$} ;
\fill (4.5,-1) circle(.2) node[white] {$3$} ;
\draw (4.6,-1.5) node{$\displaystyle w_3$} ;
\fill (6,-2) circle(.2) node[white] {$4$} ;
\draw (6.1,-2.5) node{$\displaystyle w_4$} ;
\fill[white] (1.5,-1) circle(.2) ;
\fill[red] (1.5,-1) circle(.1) ;
\draw[red] (1.5,-1) circle(.2);
\fill[white] (3,0) circle(.2) ;
\fill[red] (3,0) circle(.1) ;
\draw[red] (3,0) circle(.2);
\fill[white] (4.5,-1) circle(.2) ;
\fill[red] (4.5,-1) circle(.1) ;
\draw[red] (4.5,-1) circle(.2);
\fill[white] (6,-2) circle(.2) ;
\fill[red] (6,-2) circle(.1) ;
\draw[red] (6,-2) circle(.2);
\end{tikzpicture} \raisebox{15mm}{$\qquad$ \scalebox{2}{$\Rightarrow$} $\qquad$}
\begin{tikzpicture}
\draw (1.5,-1)-- (3,0)--(6,-2);
\draw[->, blue, dashed,thick] (6,-2)--(7,-8/3);
\fill (1.5,-1) circle(.1);
\draw (1.6,-1.5) node{$\displaystyle w_1$} ;
\fill (3,0) circle(.1) ;
\draw (3.1,-.5) node{$\displaystyle w_2$} ;
\fill (4.5,-1) circle(.1) ;
\draw (4.6,-1.5) node{$\displaystyle w_3$} ;
\fill[blue] (6,-2) circle(.15);
\draw (6.1,-2.5) node{$\displaystyle w_4$} ;
\fill[white] (1.5,-1) circle(.2) ;
\fill[red] (1.5,-1) circle(.1) ;
\draw[red] (1.5,-1)circle(.2);
\end{tikzpicture}}
\caption{Multiplying up and down posets to get the fence poset for $(1,2)$.}\label{fig:steps}
\end{figure}

 The corresponding weight polynomial is given by the top left entry of the following matrix:
\begin{align*}
&\mup(w_1)\mdo(w_2)\mdo(w_3)\mdo(w_4)=  \mupm{w_1}\mdom{w_2}\mdom{w_3}\mdom{w_4}=\\
 & \begin{bmatrix}
1 \!+\! w_1 \!+\! w_4 \!+\! w_1w_4 \!+\! w_3w_4 \!+\! w_1w_3w_4 \!+\! w_1w_2w_3w_4 & \!-w_4 \!-\! w_1w_4 \!-\! w_3w_4 \!-\! w_1w_3w_4 \!-\! w_1w_2w_3w_4 \\1 \!+\! w_4 \!+\! w_3w_4 &\!-w_4 \!-\! w_3w_4\end{bmatrix}
\end{align*}
\end{example}

These two matrices are sufficient for constructing all fence posets and calculating their weight polynomials, along with this definition: let $x_L$ be a vertex that corresponds to $w_1$ and $x_R$ be a vertex that corresponds to $w_{n+1}$. Now, we can define the matrix $\rmm_w(\calP\!\searrow)$ and its companion $\drm_w(\calP\!\nearrow)$ for any poset $\calP$ as follows:

\begin{itemize}
    \item \emph{Down pointing weight matrix of $\calP$}: $$\displaystyle \rmm_w(\calP\!\searrow):=\begin{bmatrix} \rank(\calP,w) & -\rank(\calP,w)|_{x_R\in I}\\ \rank(\calP,w)|_{x_L \notin I} & -\rank(\calP,w)|_{\substack{x_R\in I\\x_L \notin I}}
		\end{bmatrix}$$
    \item  \emph{Up pointing weight matrix of $\calP$}: 
    $$	\displaystyle \drm_w(\calP\!\nearrow):=\begin{bmatrix} \rank(\calP,w)|_{x_R\in I} & \rank(\calP,w)|_{x_R\notin I}\\ \rank(\calP,w)|_{\substack{x_R\in I\\x_L \notin I}} & \rank(\calP,w)|_{\substack{x_R\notin I\\x_L \notin I}}
			\end{bmatrix}$$
\end{itemize}

$\rmm_w(\calP\!\searrow)$ is the matrix product we described in~\Cref{prop:rank_mat}, which is called the \emph{rank matrix}. The entries are the weight polynomials of the ideals satisfying the given constraints; for instance, $\rank(\calP,w)|_{x_L \notin I}$ means that the weight polynomials of the ideals not containing the left endpoint $x_L$.
We can easily go from one matrix to the other by using the \emph{tail flip} matrix $\tf=\begin{bmatrix}
				1&-1\\1&0
			\end{bmatrix}$:
		$$\drm_w(\calP\!\nearrow)=\rmm_w(\calP\!\searrow) \cdot \tf, \qquad \qquad \rmm_w(\calP\!\searrow)=\drm_w(\calP\!{\nearrow}) \cdot \tf^{-1}.$$

Other types of posets constructed in this work can also be accommodated via some additional operations developed in \cite{ezgieminecluster2024}. We include a brief description below, and accompanying visuals in~\Cref{table:tableofmoves}.

\begin{itemize}
    \item \textbf{Circular fence posets for closed curves:} (\cite{ezgieminecluster2024}, Theorem 5.3) To construct circular fence posets, we use the trace operation. Taking the trace of $\calP\!\searrow$ gives the weight function of $\close(\calP\!\searrow)$  obtained by connecting the left end and right endpoints via $x_L \preceq x_R$. If the alternate connection   $x_L \succeq x_R$ is necessary, we take the trace of $\calP\!\nearrow$ instead.

    \item \textbf{Loop posets for notched curves:}  (\cite{ezgieminecluster2024}, Theorem 5.5, Theorem 5.6) To construct the loops, we use loop functions. For notches at the beginning of the curve, we have the functions $   \lloop_\searrow$ and    $\lloop_\nearrow$ depending on the direction of the inequality. For loops at the end of the curve, we have a single function $\rloop$ and the direction of the curve is determined via whether we apply the operation to $\calP\!\searrow$ or $\calP\!\nearrow$ as in the case of closed curves. The weight functions of notched curves are given by the top left entry of the resulting matrix.

    \begin{align*}
    \lloop_\searrow\left( \begin{bmatrix}
    a&b\\c&d
    \end{bmatrix}\right):&= \begin{bmatrix}
    a+d& b-d\\a+b&0
    \end{bmatrix} \qquad \qquad     &\lloop_\nearrow\left( \begin{bmatrix}
    a&b\\c&d
    \end{bmatrix}\right):=  \begin{bmatrix}
    a+d& -a\\d&0
    \end{bmatrix}\\
    \rloop\left( \begin{bmatrix}
    a&b\\c&d
    \end{bmatrix}\right): &= \begin{bmatrix}
    a+d& c-a\\c+d&0
    \end{bmatrix}&
\end{align*}
\item\textbf{Poset for Doubly-notched pending arc: } For a doubly-notched pending arc $\mathcal{P}_{\gamma^{(p,p)}}$, removing the elements $\gamma^\pm$ gives a circular fence poset $\mathcal{Q}$ whose weight polynomial we already know how to calculate via matrices. As $\gamma^-\preceq \gamma^+$ in the poset, we have three types of ideals. Ones without $\gamma^-$ (just the empty ideal), ones with $\gamma^-$ but not $\gamma^+$ (ideals of $\mathcal{Q}$ with $\gamma^-$ added), and ones with both $\gamma^-$ and $\gamma^+$ (the maximal ideal containing all vertices). This gives us the following formula.
   \begin{align*}\mathcal{W}(\mathcal{P}_{\gamma^{(p,p)}},w)&=1+w_{\gamma^-}\mathcal{W}(\mathcal{Q},w)+\prod_{v\in J(\mathcal{P}_{\gamma^{(p,p)}})}w_v
   \end{align*}
\end{itemize}

\begin{remark}
    One can notice that we can use the idea of deriving a matrix formula of a poset for a doubly-notched pending arc for a doubly-notched arc $\gamma^{(p,q)}$ where $\gamma \in T$. In other words, we can extend the matrix formula from ``nondegenerate" cases in~\cite{ezgieminecluster2024} to ``degenerate" cases in~\cite{pilaud2023posets}.
\end{remark}

\begin{table}[ht]
    \centering
    \newcommand{\mystrut}{\rule[0.5ex]{0pt}{8ex}} 
    
    \resizebox{\textwidth}{!}{ 
    \begin{tabular}{|c|c|c||c|c|c|}
        \hline
        \multicolumn{3}{|c||}{\textbf{Loop}} & \multicolumn{3}{c|}{\textbf{Source Loop}} \\ \hline
        $\close(\calP\!\searrow)$ & $\operatorname{tr}(\rmm_w({\calP\!\searrow}))$ & 
        \begin{tabular}{c} \mystrut
            \begin{tikzpicture}[scale=.45, baseline=(current bounding box.center)]
                \draw (0,0)--(1,1)--(3,-1);
                \fill[white] (0,0) circle(.2) ; \fill[red] (0,0) circle(.1) ; \draw[red] (0,0) circle(.2);
                \fill (1,1) circle(.1) ; \fill (2,0) circle(.1) ; \fill[blue] (3,-1) circle(.1) ;
                \draw[->, blue,dashed, thick] (3,-1)--(3.5,-1.5);
                \node at (1.5,-1.5) {$\scriptstyle{\calP\!\searrow}$};
            \end{tikzpicture} $\rightarrow$ 
            \begin{tikzpicture}[scale=.45, baseline=(current bounding box.center)]
                \draw (0,-1)--(1,1)--(3,-.2)--(0,-1);
                \fill (0,-1) circle(.1) ; \fill (1,1) circle(.1) ; \fill (2,0.4) circle(.1) ; \fill (3,-.2) circle(.1);
                \node at (1.5,-1.5) {$\scriptstyle{\close\,(\calP\!\searrow)}$};
            \end{tikzpicture}
        \end{tabular} &
        $\rhd(\calP\!\searrow)$ & $\lloop_\searrow (\rmm_w({\calP\!\searrow}))$ & 
        \begin{tabular}{c} \mystrut
            \begin{tikzpicture}[scale=.45, baseline=(current bounding box.center)]
                \draw (0,0)--(1,1)--(3,-1);
                \fill[white] (0,0) circle(.2) ; \fill[red] (0,0) circle(.1) ; \draw[red] (0,0) circle(.2);
                \fill (1,1) circle(.1) ; \fill (2,0) circle(.1) ; \fill[blue] (3,-1) circle(.1) ;
                \draw[->, blue,dashed, thick] (3,-1)--(3.5,-1.5);
                \node at (1.5,-1.5) {$\scriptstyle{\calP\!\searrow}$};
            \end{tikzpicture} $\rightarrow$ 
            \begin{tikzpicture}[scale=.45, baseline=(current bounding box.center)]
                \draw (0,-1)--(1,1)--(3,-.2)--(0,-1);
                \fill (0,-1) circle(.1) ; \fill (1,1) circle(.1) ; \fill (2,0.4) circle(.1) ;
                \fill[white] (3,-.2) circle(.2) ; \fill[red] (3,-.2) circle(.1) ; \draw[red] (3,-.2) circle(.2);
                \draw[->, blue,dashed, thick] (3,-.2)--(3.8,-1);
                \node at (1.5,-1.5) {$\scriptstyle{\rhd(\calP\!\searrow)\!\searrow}$};
            \end{tikzpicture}
        \end{tabular} \\ \hline
        $\close(\calP\nearrow)$ & $\operatorname{tr}(\rmm_w({\calP\!\nearrow}))$ & 
        \begin{tabular}{c} \mystrut
            \begin{tikzpicture}[scale=.45, baseline=(current bounding box.center)]
                \draw (0,0)--(1,1)--(3,-1);
                \fill[white] (0,0) circle(.2) ; \fill[red] (0,0) circle(.1) ; \draw[red] (0,0) circle(.2);
                \fill (1,1) circle(.1) ; \fill (2,0) circle(.1) ; \fill[blue] (3,-1) circle(.1) ;
                \draw[->, blue,dashed, thick] (3,-1)--(3.5,-.5);
                \node at (1.5,-1.5) {$\scriptstyle{\calP\!\nearrow}$};
            \end{tikzpicture} $\rightarrow$ 
            \begin{tikzpicture}[scale=.45, baseline=(current bounding box.center)]
                \draw (0,0)--(1,1)--(3,-1)--(0,0);
                \fill (0,0) circle(.1) ; \fill (1,1) circle(.1) ; \fill (2,0) circle(.1) ; \fill (3,-1) circle(.1);
                \node at (1.5,-1.5) {$\scriptstyle{\close\,(\calP\!\nearrow)}$};
            \end{tikzpicture}
        \end{tabular} &
        $\rhd(\calP\!\nearrow)$ & $\lloop_\nearrow(\rmm_w({\calP\!\nearrow}))$ & 
        \begin{tabular}{c} \mystrut
            \begin{tikzpicture}[scale=.45, baseline=(current bounding box.center)]
                \draw (0,0)--(1,1)--(3,-1);
                \fill[white] (0,0) circle(.2) ; \fill[red] (0,0) circle(.1) ; \draw[red] (0,0) circle(.2);
                \fill (1,1) circle(.1) ; \fill (2,0) circle(.1) ; \fill[blue] (3,-1) circle(.1) ;
                \draw[->, blue,dashed, thick] (3,-1)--(3.5,-.5);
                \node at (1.5,-1.5) {$\scriptstyle{\calP\!\nearrow}$};
            \end{tikzpicture} $\rightarrow$ 
            \begin{tikzpicture}[scale=.45, baseline=(current bounding box.center)]
                \draw (0,0)--(1,1)--(3,-1)--(0,0);
                \fill (0,0) circle(.1) ; \fill (1,1) circle(.1) ; \fill (2,0) circle(.1) ;
                \fill[white] (3,-1) circle(.2) ; \fill[red] (3,-1) circle(.1) ; \draw[red] (3,-1) circle(.2);
                \draw[->, blue,dashed, thick] (3,-1)--(3.9,-1.5);
                \node at (1.5,-1.5) {$\scriptstyle{\rhd(\calP\!\nearrow)\!\searrow}$};
            \end{tikzpicture}
        \end{tabular} \\ \hline
        \multicolumn{3}{|c||}{\textbf{Target Loop}} & \multicolumn{3}{c|}{} \\ \cline{1-3}
        $\lhd(\calP\!\searrow)$ & $\rloop(\rmm_w({\calP\!\searrow}))$ & 
        \begin{tabular}{c} \mystrut
            \begin{tikzpicture}[scale=.45, baseline=(current bounding box.center)]
                \draw (0,0)--(1,1)--(3,-1);
                \fill[white] (0,0) circle(.2) ; \fill[red] (0,0) circle(.1) ; \draw[red] (0,0) circle(.2);
                \fill (1,1) circle(.1) ; \fill (2,0) circle(.1) ; \fill[blue] (3,-1) circle(.1) ;
                \draw[->, blue,dashed, thick] (3,-1)--(3.5,-1.5);
                \node at (1.5,-1.5) {$\scriptstyle{\calP\!\searrow}$};
            \end{tikzpicture} $\rightarrow$ 
            \begin{tikzpicture}[scale=.45, baseline=(current bounding box.center)]
                \draw (0,-1)--(1,1)--(3,-.2)--(0,-1);
                \fill (3,-.2) circle(.1) ; \fill (1,1) circle(.1) ; \fill (2,0.4) circle(.1) ;
                \fill[white] (0,-1) circle(.2) ; \fill[red] (0,-1) circle(.1) ; \draw[red] (0,-1) circle(.2);
                \draw[->, blue,dashed, thick] (0,-1)--(-.5,-1.5);
                \node at (1.5,-1.5) {$\scriptstyle{\lhd(\calP\!\searrow)\!\searrow}$};
            \end{tikzpicture}
        \end{tabular} & \multicolumn{3}{c|}{} \\ \cline{1-3}
        $\lhd(\calP\!\nearrow)$ & $\rloop (\rmm_w({\calP\!\nearrow}))$ & 
        \begin{tabular}{c} \mystrut
            \begin{tikzpicture}[scale=.45, baseline=(current bounding box.center)]
                \draw (0,0)--(1,1)--(3,-1);
                \fill[white] (0,0) circle(.2) ; \fill[red] (0,0) circle(.1) ; \draw[red] (0,0) circle(.2);
                \fill (1,1) circle(.1) ; \fill (2,0) circle(.1) ; \fill[blue] (3,-1) circle(.1) ;
                \draw[->, blue,dashed, thick] (3,-1)--(3.5,-.5);
                \node at (1.5,-1.5) {$\scriptstyle{\calP\!\nearrow}$};
            \end{tikzpicture} $\rightarrow$ 
            \begin{tikzpicture}[scale=.45, baseline=(current bounding box.center)]
                \draw (0,0)--(1,1)--(3,-1)--(0,0);
                \fill (0,0) circle(.1) ; \fill (1,1) circle(.1) ; \fill (2,0) circle(.1) ; \fill (3,-1) circle(.1) ;
                \fill[white] (0,0) circle(.2) ; \fill[red] (0,0) circle(.1) ; \draw[red] (0,0) circle(.2);
                \draw[->, blue,dashed, thick] (0,0)--(-.8,-.8);
                \node at (1.5,-1.5) {$\scriptstyle{\lhd(\calP\!\nearrow)\!\searrow}$};
            \end{tikzpicture}
        \end{tabular} & \multicolumn{3}{c|}{} \\ \hline
    \end{tabular}
    }
    \caption{Poset operations, start is highlighted with red.}
    \label{table:tableofmoves}
\end{table}

\begin{example}
  Two loop posets are shown in~\Cref{ex:notchedposet}. Their weight functions are the top left entry of the following matrix formulas:
\begin{align*}
    &\left(\lloop_\searrow ( U(g)U(f)D(e)D(b)\right) \cdot D(\rho) D(\rho)D(a) \cdot\tf \left( \lloop_t(U(d)D(f)D(g)U(e))\right)\\
    &\left(\lloop_\searrow ( U(d)U(a)U(\rho)U(\rho)U(b)D(e)D(f))\right)\cdot D(f) \cdot \tf\left( \lloop_t(U(g)D(b)D(\rho)D(\rho)D(a)D(d)U(e)\right)
\end{align*}
\Cref{ex:doubly notched pending arc} contains a doubly-notched pending arc. As described in the last case, we can calculate it via the following formula, where $w(I_{\max})$ denotes the weight of the maximal ideal containing all elements.
  \begin{align*}
    &1+ w(\rho^-)\cdot\operatorname{trace}\left(U(b)U(e)U(d)U(a)U(\rho)D(\lambda_\mathbf{p}\rho)U(\rho)U(b)U(e)U(d)D(a)\right)+w(I_{\max})
\end{align*}
\Cref{ex:plainversionintriangulation} describes two singly-notched arcs where their notched endpoints are the endpoints of a pending arc in a triangulation. Though this provides a new case algebraically, the posets for the two examples turn out to be quite simple and can be calculated with the following formulas respectively: 
\begin{align*}
    &U(d)U(a)U(\rho)U(\rho)D(b)\\
    &\operatorname{trace}\left(D(e)U(d)U(a)U(\rho)U(\rho)D(b)\right)
\end{align*}
For the first formula, we need to take the top left entry of the matrix to get the weight function. In the second formula, we recover it directly as we are taking the trace.
\end{example}

\begin{prop}\label{prop:FormulasAgree}
For any arc on a (possibly punctured) orbifold, the matrix formula agrees with the expansion formula obtained via labelled posets.
\end{prop}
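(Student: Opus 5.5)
The proof will reduce, case by case, to the matrix results already available for surfaces. The point is that every poset $\calP_\gamma^T$ constructed in Section \ref{subsec:Posets} is built from a fence poset by the same finite list of operations as in \cite{ezgieminecluster2024}. Those operations are: closing up into a circular fence, attaching a source or target loop, and, for doubly-notched arcs with $\gamma^0\in T$, adjoining a global minimum and maximum. The orbifold data enter only through the labels and weights of the elements, and through the up/down pattern near pending arcs. Proposition \ref{prop:rank_mat} and the trace and loop formulas (\cite{ezgieminecluster2024}, Theorems 5.3, 5.5, 5.6) are statements about arbitrary weighted posets of the given shape. So the work is to check, for each type of arc, that the poset really has that shape with the claimed orientations.

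\textbf{Plain arcs.} For a plain (possibly generalized) arc, $\calP_\gamma^T$ is a fence poset by rules (i). This includes the case of nontrivial winding, where the extra element $v_{(\rho,i)}$ creates the peak $v_{\rho_L}\prec v_{\rho^*}\succ v_{\rho_R}$, and the cases $k\in\{0,\mathbf{p}-2\}$, which give a single cover relation. I will read the composition off the sequence of relations and assign $U(w_v)$ or $D(w_v)$ to each element according to whether the next relation goes up or down, with the final step taken downward. Proposition \ref{prop:rank_mat} then gives $\mathcal{W}(\calP_\gamma^T,w)$ as the top-left entry of $\rmm_w(\calP_\gamma^T\!\searrow)$. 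The weights are arbitrary elements of the ambient field, including the scalars $1/(U_{k-1}U_{k+1})$, so no identity among Chebyshev polynomials is needed.

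\textbf{Closed curves and notched arcs.} For closed curves, rule (ii) adds exactly one relation between $v_d$ and $v_1$, so the trace of $\rmm_w(\calP\!\searrow)$ or of $\rmm_w(\calP\!\nearrow)=\rmm_w(\calP\!\searrow)\tf$ applies, depending on the direction of that relation. For notched arcs with $\gamma^0\notin T$, including notched corner arcs, rule (iii) adds one relation between the first spoke element and $v_1$. The result is a fence with a chain attached and closed up at one or both ends, which is exactly the configuration handled by $\lloop_\searrow$, $\lloop_\nearrow$ and $\rloop$.

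For notched corner arcs I will check explicitly that the displayed chain
\[v_1\succ v_{\rho^-}\prec v_{U_{k-1}\rho}\prec\cdots\prec v_{U_k\rho}\prec v_{\rho^+}\succ v_1\]
has this form, with $v_1$ as the junction element. For a singly-notched arc with $\gamma^0\in T$, the poset is $\calP_{\widetilde\gamma}^T$, which is a fence, or a circular fence as in Example \ref{ex:plainversionintriangulation}, so the previous cases apply.

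\textbf{Doubly-notched arcs with $\gamma^0\in T$.} The only nonstandard case is the doubly-notched arc with $\gamma^0\in T$, where $\rho^-$ is the unique minimum and $\rho^+$ the unique maximum. Removing them leaves a circular fence $\mathcal{Q}$, which is a fence with its endpoints joined. Every ideal $I$ of the full poset is one of three kinds: the empty ideal; $\{\rho^-\}\cup I'$ with $I'\in J(\mathcal{Q})$, which is valid because no element of $\mathcal{Q}$ lies below anything but $\rho^-$ in the added relations; or the whole poset. This partition gives the stated formula, and the middle term is a trace by the closed-curve case.

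\textbf{Main obstacle.} I expect the main obstacle to be orientation bookkeeping rather than algebra. One must confirm that the direction of each added relation in rules (ii) and (iii) matches the choice between $\searrow$ and $\nearrow$ and between $\lloop_\searrow$ and $\lloop_\nearrow$. One must also confirm that the left endpoint $x_L$ in the loop formulas is the element where the loop is attached. The two places where this is most delicate are the corner-arc loop, where the attaching element $v_1$ is labelled by the pending arc itself, and the convention that winding is counterclockwise. For these I would verify the bijection of order ideals directly on the small configurations, using Example \ref{ex:enteringnotched} as a template.
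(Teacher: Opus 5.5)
Your proposal is correct and follows the paper's route. The paper's proof simply cites Section 5 of \cite{ezgieminecluster2024}, and your case-by-case shape check is exactly what that citation implicitly relies on: fence, circular fence, source or target loop at the junction element, and the decomposition $1+w_{\rho^-}\mathcal{W}(\mathcal{Q})+\prod_{v}w_v$ for doubly-notched arcs with $\gamma^0\in T$, with all of these matrix identities valid for arbitrary weights. One small point for your bookkeeping: the chain displayed in the text for notched corner arcs does not match Example \ref{ex:enteringnotched} or the figure in the proof of \Cref{prop:TWalkPosetNotchedCorner}, where the loop neighbours of $v_1$ are $U_k\rho$ below and $U_{k-1}\rho$ above; either reading is still a single cycle through $v_1$, so the loop formula applies unchanged.
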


\begin{proof}
This follows fairly immediately from previous work on matrix formulas in Section 5 of~\cite{ezgieminecluster2024}.
\end{proof}

\begin{example}
    For the arc $\gamma$ from~\Cref{ex:1poset}, the corresponding matrices are constructed as follows: renaming $\frac{U_0x_a}{U_1x_b}y_\rho = w_1$, $\frac{1}{U_0U_2} = w_2$, $\frac{U_2x_a}{U_1x_b}y_\rho = w_3$ and $\frac{x_dx_b}{x_\rho x_c}y_a = w_4$, we obtain $\chi(\calP_\gamma)$ by considering the left corner of the following matrix multiplication multiplied by $\frac{U_1x_bx_c}{x_a}$.
\[\begin{bmatrix}
w_1 & 1 \\
 0  & 1
\end{bmatrix}\begin{bmatrix}
1+w_2 & -w_2 \\
 1    & 0
\end{bmatrix}\begin{bmatrix}
1+w_3 & -w_3 \\
 1    & 0
\end{bmatrix}\begin{bmatrix}
1+w_4 & -w_4 \\
 1    &  0
\end{bmatrix}\]

\end{example}

\section{\texorpdfstring{$T$-walks}{T-walks}}\label{sec:Twalk}

\subsection{\texorpdfstring{$T$-walks for Plain Arcs}{T-walks for Plain Arcs}}
In this section, we explain how the combinatorial models of $T$-paths can be extended to orbifolds. The notion of a $T$-path was first introduced in~\cite{ST09} to provide expansion formulas for cluster variables on unpunctured surfaces. Later, Gunawan and Musiker~\cite{GM15} gave a  combinatorial proof that cluster monomials form the atomic basis of a cluster algebra of type~D by extending the notion of a $T$-path  to arcs on a once-punctured polygon. More recently, a further generalization of the $T$-path construction to generalized arcs on arbitrary punctured surfaces, called a $T$-walk, was introduced in~\cite{ezgieminecluster2024} by a subset of the authors.

Here we will demonstrate how the definition given in ~\cite{ezgieminecluster2024} can be extended to orbifolds. The main new feature will be the method of weighting the paths whenever an arc winds around an orbifold point. 
Before we describe the rule for these coefficients, we will briefly review the notation and definitions from~\cite{ezgieminecluster2024}.

In order to define $T$-walks, we need to assign directions to the arcs crossed by $\gamma$. Suppose that $\gamma$ has crossings $\tau_1, \dots, \tau_d$. For each crossing $\tau_i$, we assign a direction to that crossing by orienting the crossed arc towards one of its endpoints. If an arc is crossed multiple times, it receives an orientation for each time it is crossed. This means that $\gamma$ has a total of $2^d$ possible orientations of its crossings. These orientations are encoded as \emph{orientation vectors}  $\vec{v} \in \{0,1\}^d$ where $v_i=0$ if the arc is oriented counterclockwise in $\Delta_{i-1}$, and $v_i=1$ otherwise. The orientation specified by $\vec{0}$ is called the \emph{minimal direction}.

For the arc $\gamma$ from~\Cref{ex:1poset}, the minimal direction $\vec{0}=(0,0,0)$ is illustrated below in~\Cref{ex:forTwalk}.

\begin{figure}[H]
    \centering
\begin{tikzpicture}[scale=0.6, transform shape]

    \draw[thick] (0,0) circle (3cm);

    \node[below, scale=1.5] at (0,-3) {$v_1$}; 
    \node[above, scale=1.5] at (0,3) {$v_2$};  
    \node[left, scale=1.5] at (-3,0) {$v_3$};  

    \draw[thick,postaction={decorate, decoration={markings,mark=at position 0.9 with {\arrow[scale=1.5, black!30!green]{<}}}}] (-3,0) .. controls (3,4) and (2,-2.5) .. (0,-3);

    \draw[thick, postaction={decorate, decoration={markings, mark=at position 0.12 with {\arrow[scale=1.5, black!30!green]{>}}, mark=at position 0.9 with {\arrow[scale=1.5, black!30!green]{<}}}}] (0,-3) .. controls (-2.5,2) and (2.5,2) .. (0,-3);

    \draw[in=-90,out=-20,looseness=1, orange, line width = 1.2pt] (-3,0) to (0.5,0);
    \draw[in=90,out=90,looseness=1.5, orange, line width = 1.2pt] (0.5,0) to (-0.5,0);
    \draw[in=-100,out=-90,looseness=1, orange, line width = 1.2pt,  ->] (-0.5,0) to (2,0);
    \draw[in=-40,out=80,looseness=1, orange, line width = 1.2pt] (2,0) to (0,3);
    \filldraw (0,-3) circle (3pt); 
    \filldraw (0,3) circle (3pt);  
    \filldraw (-3,0) circle (3pt); 

    \node[scale=2, thick] at (0,0) {$\times$};
    \node[scale=1.5] at (-1.4,1.2) {$a$};
    \node[scale=1.5] at (1,0) {$\rho$};
    \node[scale=1.5] at (-2.3,-2.6) {$b$};
    \node[scale=1.5] at (-2.3,2.6) {$c$};
    \node[scale=1.5] at (3.2,0) {$d$};
   
\end{tikzpicture}
\caption{The minimal direction for the arc $\gamma$.}\label{ex:forTwalk}
\end{figure}

\begin{remark}
In the minimal direction $\vec{0}$, a pair of consecutive arcs $\alpha_{i}$ and $\alpha_{i+1}$ are always both directed either towards or away from their common endpoint in the triangle $\Delta_i$ (\cite[Lemma 3.3]{ezgieminecluster2024}).
\end{remark}

\begin{definition}\label{def:twalk}
Let $\mathcal{O}$ be a (possibly punctured) orbifold with an initial triangulation $T$, and let $\gamma$ be a plain arc or closed curve on $\mathcal{O}$. Suppose that $\gamma$ intersects the arcs of $T$ at a sequence of $d$ crossings, denoted by $\tau_{i_1}, \dots, \tau_{i_d}$. 
Consider a vector  $\vec{v} \in \{0, 1\}^d$, and let $\alpha_j$ be an oriented copy of the arc corresponding to $\tau_{i_j}$ with orientation determined by the component $v_j$. 

\begin{itemize}
    \item Suppose first $\gamma$ is a (generalized) arc. If there exists a sequence of directed paths $\beta_0,\beta_1,\ldots,\beta_d$, each contained within a single triangle of $T$ such that the concatenation  of segments 
    \[\beta_0 \alpha_1 \beta_1 \dots \alpha_d \beta_d\]
    forms a connected walk which is homotopic to $\gamma$, then we call this walk a \emph{$T$-walk} and denote it $T_{\vec{v}}$. 
    \item Suppose next that  $\gamma$ is a closed curve.  If there exists a sequence of directed paths $\beta_1,\ldots,\beta_d$ such that the concatenation  of segments
    \[\alpha_1 \beta_1 \dots \alpha_d \beta_d\]
    forms a connected walk which is homotopic to $\gamma$, then we call this walk a \emph{$T$-walk} and denote it $T_{\vec{v}}$.
\end{itemize}
Let $\mathrm{TW}(\gamma, T)$ (or just $\mathrm{TW}(\gamma)$)  denote the set of all $T$-walks corresponding to $\gamma$ on the initial triangulation $T$. 
\end{definition}

Some analysis of the definition reveals that every orientation vector $\vec{v}$ gives rise to at most one $T$-walk. When a $T$-walk exists for an orientation vector, we may call it a \emph{valid} $T$-walk.
If $\Delta_j$ is not a pending triangle, then $\beta_j$ is either a unique directed path within a triangle $\Delta_j$ of $T$ connecting the terminal point of $\alpha_j$ to the initial point of $\alpha_{j+1}$, if these points are different, or it cannot be constructed, if these points coincide. However, if $\Delta_j$ is a pending triangle, then $\beta_j$ can wind around the orbifold point an arbitrary number of times. This topological ambiguity requires the explicit inclusion of the homotopy equivalence condition in order to ensure the $T$-walk is well-defined. Such a condition was not necessary in the surface case in \cite{ezgieminecluster2024}, although it can be seen as a consequence of the definition. 

Table \ref{tab:HomotopicWinding} contains examples of what $\beta$-steps inside of a pending triangle look like. Each of these $\beta$ steps should have the same winding number at this orbifold point as the arc they are associated to. In this case, it is most convenient to break the base point of the pending arc into two vertices. This is similar to the standard treatment of self-folded triangles \cite{FST-I, ezgieminecluster2024}. With this notation, an $\alpha$ step along a pending arc goes between distinct vertices. 

\begin{table}[t]
\centering
\renewcommand{\arraystretch}{1} 
\begin{tabular}{|c|c|c|c|}
    \hline
    \rule{0pt}{0pt}
    \begin{tikzpicture}[scale=0.7, transform shape]
        \draw[thick] (-0.2,-3) .. controls (-2.5,0) and (2.5,0) .. (0.2,-3);
        \filldraw (-0.2,-3) circle (3pt);
        \filldraw (0.2,-3) circle (3pt);
        \node[scale=2, thick] at (0,-1.5) {$\times$};
        \draw[line width = 1.2pt, color=orange,out=70,in=-90,looseness=1] (0.25,-2.8) to (0.6,-1.5);
        \draw[line width = 1.2pt, color=orange,out=90,in=90,looseness=1.5] (0.6,-1.5) to (-0.6,-1.5);
        \draw[line width = 1.2pt, color=orange,out=-90,in=-90,looseness=1.5] (-0.6,-1.5) to (0.4,-1.5);
        \draw[line width = 1.2pt,color=orange,out=90,in=90,looseness=1.5] (0.4,-1.5) to (-0.4,-1.5);
        \draw[line width = 1.2pt, color=orange,out=-90,in=110,->] (-0.4,-1.5) to (0.1,-2.8);
    \end{tikzpicture}
     &
        \begin{tikzpicture}[scale=0.7, transform shape]
        \draw[thick] (-0.2,-3) .. controls (-2.5,0) and (2.5,0) .. (0.2,-3);
        \filldraw (-0.2,-3) circle (3pt);
        \filldraw (0.2,-3) circle (3pt);
        \node[scale=2, thick] at (0,-1.5) {$\times$};
        \draw[line width = 1.2pt, color=orange,out=70,in=-90,looseness=1] (-0.15,-2.8) to (0.6,-1.5);
        \draw[line width = 1.2pt, color=orange,out=90,in=90,looseness=1.5] (0.6,-1.5) to (-0.6,-1.5);
        \draw[line width = 1.2pt, color=orange,out=-90,in=-90,looseness=1.5] (-0.6,-1.5) to (0.4,-1.5);
        \draw[line width = 1.2pt,color=orange,out=90,in=90,looseness=1.5] (0.4,-1.5) to (-0.4,-1.5);
        \draw[line width = 1.2pt, color=orange,out=-90,in=110,->] (-0.4,-1.5) to (-0.25,-2.8);
    \end{tikzpicture}
    & 
    \begin{tikzpicture}[scale=0.7, transform shape]
        \draw[thick] (-0.2,-3) .. controls (-2.5,0) and (2.5,0) .. (0.2,-3);
        \filldraw (-0.2,-3) circle (3pt);
        \filldraw (0.2,-3) circle (3pt);
        \node[scale=2, thick] at (0,-1.5) {$\times$};
        \draw[line width = 1.2pt, color=orange,out=80,in=-90] (-0.2,-2.8) to (0.4,-1.5);
        \draw[line width = 1.2pt, color=orange,out=90,in=90,looseness=1.5] (0.4,-1.5) to (-0.4,-1.5);
        \draw[line width = 1.2pt, color=orange,out=-90,in=100,->] (-0.4,-1.5) to (0.2,-2.8);
    \end{tikzpicture}
    &
        \begin{tikzpicture}[scale=0.7, transform shape]
        \draw[thick] (-0.2,-3) .. controls (-2.5,0) and (2.5,0) .. (0.2,-3);
        \filldraw (-0.2,-3) circle (3pt);
        \filldraw (0.2,-3) circle (3pt);
        \node[scale=2, thick] at (0,-1.5) {$\times$};
        \draw[line width = 1.2pt, color=orange,out=70,in=-90,looseness=1] (0.25,-2.8) to (0.6,-1.5);
        \draw[line width = 1.2pt, color=orange,out=90,in=90,looseness=1.5] (0.6,-1.5) to (-0.6,-1.5);
        \draw[line width = 1.2pt, color=orange,out=-90,in=-90,looseness=1.5] (-0.6,-1.5) to (0.4,-1.5);
        \draw[line width = 1.2pt,color=orange,out=90,in=90,looseness=1.5] (0.4,-1.5) to (-0.4,-1.5);
        \draw[line width = 1.2pt, color=orange,out=-90,in=110,->] (-0.4,-1.5) to (-0.25,-2.8);
    \end{tikzpicture}
     \\ 
      \highlight{$U_k\rho$} & \highlight{$U_k\rho$}  & \highlight{$U_{k-1}\rho$} &  \highlight{$U_{k+1}\rho$}
      \\ \hline
    \end{tabular}
\caption{Types of $\beta$-steps inside a pending triangle and their associated weights.  }
\label{tab:HomotopicWinding}
\end{table}

If $\tau_{i_j} = \tau_{i_{j+1}}$ is a pending arc and $\gamma$ winds $0 < k < \mathbf{p}-2$ times around the enclosed orbifold point, then we can find valid $\beta$-steps to connect all four orientations of $\tau_{i_j}$ and $\tau_{i_{j+1}}$. We find it convenient to suppress drawing $\beta_j$ and to draw $\alpha_j,\beta_j,\alpha_{j+1}$ as a compound step, as shown on the middle row and bottom left entry of Table \ref{table:Twalkpending}. As shown in the top row, if $k = 0$, then one of the four orientations is not possible, and if  $k = \mathbf{p}-2$, any orientation $\vec{v}$ with $v_{j} = 1$ and $v_{j+1} = 0$ is not possible. If $\gamma$ is a corner arc, we can similarly consider the step $\beta_0$ to wind inside of $\Delta_0$, and we show the two possible compound steps in the bottom row of Table \ref{table:Twalkpending}.

\begin{remark}
Another perspective one can take for the $T$-walks discussed in this section is that they are the amalgamation of the concept of $T$-walks in \cite{ezgieminecluster2024} with weak $T$-paths from \cite{ccanakcci2021friezes} in a covering space of the orbifold. We require the notion of a weak $T$-path since the lift of the triangulation of $T$ may not fully triangulate the covering space.  From this vantage point, a $\beta$-step inside pending triangles is the projection of a $\beta$-step inside a regular $\mathbf{p}$-gon in the cover.
\end{remark}

\begin{table}[H]
\centering
\renewcommand{\arraystretch}{1.5} 
\begin{tabular}{|c|c|c|}
    \hline
    \begin{tikzpicture}[scale=0.65, transform shape]
        \draw[thick, postaction={decorate, decoration={markings, mark=at position 0.12 with {\arrow[scale=1.5, black!30!green]{>}}, mark=at position 0.9 with {\arrow[scale=1.5, black!30!green]{<}}}}] (0,-3) .. controls (-2.5,0) and (2.5,0) .. (0,-3);
        \filldraw (0,-3) circle (3pt);
        \node[scale=2, thick] at (0,-1.5) {$\times$};
        \draw[smooth, tension=1, ->, line width=1.2pt,color=orange] plot coordinates {(-1,-2.5) (0,-2) (1,-2.5)};
    \end{tikzpicture}%
    &
    \begin{tikzpicture}[scale=0.65, transform shape]
        \draw[thick, postaction={decorate, decoration={markings, mark=at position 0.12 with {\arrow[scale=1.5, black!30!green]{<}}, mark=at position 0.92 with {\arrow[scale=1.5, black!30!green]{>}}}}] (0,-3) .. controls (-2.5,0) and (2.5,0) .. (0,-3);
        \filldraw (0,-3) circle (3pt);
        \node[scale=2, thick] at (0,-1.5) {$\times$};
        \draw[smooth, tension=1, ->, line width=1.2pt,color=orange] plot coordinates {(-1,-2.5) (0,-2) (1,-2.5)};
    \end{tikzpicture}%
    &
    \begin{tikzpicture}[scale=0.65, transform shape]
        \draw[thick, postaction={decorate, decoration={markings, mark=at position 0.12 with {\arrow[scale=1.5, black!30!green]{>}}, mark=at position 0.92 with {\arrow[scale=1.5, black!30!green]{>}}}}] (0,-3) .. controls (-2.5,0) and (2.5,0) .. (0,-3);
        \filldraw (0,-3) circle (3pt);
        \node[scale=2, thick] at (0,-1.5) {$\times$};
        \draw[smooth, tension=1, ->, line width=1.2pt, color=orange] plot coordinates {(-1,-2.5) (0,-2) (1,-2.5)};
    \end{tikzpicture} \\
        \highlight{$\rho$} & \highlight{$\rho$} & \highlight{$U_1\rho$}
    \\ \hline
    \begin{tikzpicture}[scale=0.65, transform shape]
        \draw[thick, postaction={decorate, decoration={markings, mark=at position 0.12 with {\arrow[scale=1.5, black!30!green]{>}}, mark=at position 0.9 with {\arrow[scale=1.5, black!30!green]{<}}}}] (0,-3) .. controls (-2.5,0) and (2.5,0) .. (0,-3);
        \filldraw (0,-3) circle (3pt);
        \node[scale=2, thick] at (0,-1.5) {$\times$};
        \draw[line width = 1.2pt,color=orange,out=0,in=-90] (-1,-2) to (0.4,-1.5);
        \draw[line width = 1.2pt,color=orange,out=90,in=90,looseness=1.5] (0.4,-1.5) to (-0.4,-1.5);
        \draw[line width = 1.2pt,color=orange,out=-90,in=180,->] (-0.4,-1.5) to (1.1,-2);
    \end{tikzpicture}%
    &
    \begin{tikzpicture}[scale=0.65, transform shape]
        \draw[thick, postaction={decorate, decoration={markings, mark=at position 0.12 with {\arrow[scale=1.5, black!30!green]{<}}, mark=at position 0.92 with {\arrow[scale=1.5, black!30!green]{>}}}}] (0,-3) .. controls (-2.5,0) and (2.5,0) .. (0,-3);
        \filldraw (0,-3) circle (3pt);
        \node[scale=2, thick] at (0,-1.5) {$\times$};
        \draw[line width = 1.2pt,color=orange,out=0,in=-90] (-1,-2) to (0.4,-1.5);
        \draw[line width = 1.2pt,color=orange,out=90,in=90,looseness=1.5] (0.4,-1.5) to (-0.4,-1.5);
        \draw[line width = 1.2pt,color=orange,out=-90,in=180,->] (-0.4,-1.5) to (1.1,-2);
    \end{tikzpicture}%
    &
    \begin{tikzpicture}[scale=0.65, transform shape]
        \draw[thick, postaction={decorate, decoration={markings, mark=at position 0.12 with {\arrow[scale=1.5, black!30!green]{>}}, mark=at position 0.92 with {\arrow[scale=1.5, black!30!green]{>}}}}] (0,-3) .. controls (-2.5,0) and (2.5,0) .. (0,-3);
        \filldraw (0,-3) circle (3pt);
        \node[scale=2, thick] at (0,-1.5) {$\times$};
        \draw[line width = 1.2pt,color=orange,out=0,in=-90] (-1,-2) to (0.4,-1.5);
        \draw[line width = 1.2pt,color=orange,out=90,in=90,looseness=1.5] (0.4,-1.5) to (-0.4,-1.5);
        \draw[line width = 1.2pt,color=orange,out=-90,in=180,->] (-0.4,-1.5) to (1.1,-2);
    \end{tikzpicture}%
    \\ 
        \highlight{$U_k\rho$} & \highlight{$U_k\rho$} & \highlight{$U_{k+1}\rho$}
    \\ \hline
    \begin{tikzpicture}[scale=0.65, transform shape]
        \draw[thick, postaction={decorate, decoration={markings, mark=at position 0.12 with {\arrow[scale=1.5, black!30!green]{<}}, mark=at position 0.9 with {\arrow[scale=1.5, black!30!green]{<}}}}] (0,-3) .. controls (-2.5,0) and (2.5,0) .. (0,-3);
        \filldraw (0,-3) circle (3pt);
        \node[scale=2, thick] at (0,-1.5) {$\times$};
        \draw[line width = 1.2pt,color=orange,out=0,in=-90] (-1,-2) to (0.4,-1.5);
        \draw[line width = 1.2pt,color=orange,out=90,in=90,looseness=1.5] (0.4,-1.5) to (-0.4,-1.5);
        \draw[line width = 1.2pt,color=orange,out=-90,in=180,->] (-0.4,-1.5) to (1.1,-2);
    \end{tikzpicture}%
    &
    \begin{tikzpicture}[scale=0.65, transform shape]
        \draw[thick, postaction={decorate, decoration={markings, mark=at position 0.85 with {\arrow[scale=1.5, black!30!green]{<}}}}] (0,-3) .. controls (-2.5,0) and (2.5,0) .. (0,-3);
        \node[scale=2, thick] at (0,-1.5) {$\times$};
        \draw[line width = 1.2pt, color=orange,out=75,in=-90] (0,-3) to (0.4,-1.5);
        \draw[line width = 1.2pt, color=orange, out=90, in = 90,looseness=1.5] (0.4,-1.5) to (-0.4,-1.5);
        \draw[line width = 1.2pt, color=orange, out=-90, in = -130] (-0.4, -1.5) to (0.6,-1.6);
        \draw[line width = 1.2pt, color=orange, out=50,in=-90,->] (0.6,-1.6) to (1,-0.5);
        \filldraw (0,-3) circle (3pt);
    \end{tikzpicture}%
    &
    \begin{tikzpicture}[scale=0.65, transform shape]
        \draw[thick, postaction={decorate, decoration={markings, mark=at position 0.85 with {\arrow[scale=1.5, black!30!green]{>}}}}] (0,-3) .. controls (-2.5,0) and (2.5,0) .. (0,-3);
        \node[scale=2, thick] at (0,-1.5) {$\times$};
        \draw[line width = 1.2pt, color=orange,out=75,in=-90] (0,-3) to (0.4,-1.5);
        \draw[line width = 1.2pt, color=orange, out=90, in = 90,looseness=1.5] (0.4,-1.5) to (-0.4,-1.5);
        \draw[line width = 1.2pt, color=orange, out=-90, in = -130] (-0.4, -1.5) to (0.6,-1.6);
        \draw[line width = 1.2pt, color=orange, out=50,in=-90,->] (0.6,-1.6) to (1,-0.5);
        \filldraw (0,-3) circle (3pt);
    \end{tikzpicture}%
    \\ 
        \highlight{$U_{k-1}\rho$} & \highlight{$U_{k-1}\rho$} & \highlight{$U_{k}\rho$}
    \\ \hline
\end{tabular}
\caption{Orientations on pending arcs and corresponding $\beta_i$'s}
\label{table:Twalkpending}
\end{table}

Each $T$-walk $T_{\vec{v}}$ is assigned an $x$-weight and a $y$-weight, defined as:
\[
x(T_{\vec{v}}) := \frac{\prod x_{\beta_i}}{\prod x_{\alpha_i}}, \qquad y(T_{\vec{v}}) := \prod_{v_i=1} \Phi(h_{\alpha_i}),
\]
where we set $x_{U_k(\lambda_\boldp)\rho} := U_k(\lambda_\boldp) x_{\rho}$ and let $\Phi$ be the specialization map from \Cref{def:Phi}. Boundary arcs are assigned a weight of $1$, and thus their contributions to the products are effectively ignored. The \emph{$T$-walk expansion} of an arc $\gamma$ with respect to a triangulation $T$ is then defined as:
\[
\mathfrak{W}(\gamma, T) = \sum_{T_{\vec{v}} \in \mathrm{TW}(\gamma, T)} x(T_{\vec{v}}) \, y(T_{\vec{v}}).
\]
When the triangulation is clear from context, we abbreviate this as $\mathfrak{W}(\gamma)$. In \Cref{sec:Equivalence}, we will show that this combinatorial expansion coincides with the cluster variable expansion $x_{\gamma}$.

\begin{example} The arc $\gamma$ from \Cref{ex:1poset} has three crossings: $b$, $b$, and $a$. Out of the eight possible orientations of these crossings, only six correspond to valid $T$-walks. The minimal direction $(0,0,0)$ is shown in \Cref{ex:forTwalk}. For the six valid orientations, each corresponding $T$-walk has the form $(\beta_0,\alpha_1,\beta_1,\alpha_2,\beta_2,\alpha_3,\beta_3)$. Below, we explicitly list the valid $T$-walks and their corresponding weights.

\begin{enumerate}
    \item[(i)] For $\vec{v}_1=(0,0,0)$, the $T$-walk is $(b,\rho,U_1(\lambda_{\mathbf{p}})\rho,\rho,\rho,a,c)$ with weights $x_{\vec{v}_1}=\frac{U_1(\lambda_{\mathbf{p}})x_bx_c}{x_a}$, $ y_{\vec{v}_1}=1$.
    \item[(ii)] For $\vec{v}_2=(1,0,0)$, the $T$-walk is $(a,\rho,U_0(\lambda_{\mathbf{p}})\rho,\rho,\rho,a,c)$ with weights $x_{\vec{v}_2}={U_0(\lambda_{\mathbf{p}})x_c}$ and $y_{\vec{v}_2}=y_\rho$.
    \item[(iii)] For $\vec{v}_3=(0,0,1)$, the $T$-walk is $(b,\rho,U_1(\lambda_{\mathbf{p}})\rho,\rho,b,a,d)$ with weights $x_{\vec{v}_3}=\frac{U_1(\lambda_{\mathbf{p}})x_b^2x_d}{x_ax_\rho}$ and  $y_{\vec{v}_c}=y_a$.
    \item[(iv)] For $\vec{v}_4=(1,0,1)$, the $T$-walk is $(a,\rho,U_0(\lambda_{\mathbf{p}})\rho,\rho,b,a,d)$ with weights $x_{\vec{v}_4}=\frac{U_0(\lambda_{\mathbf{p}})x_bx_d}{x_\rho}$ and $y_{\vec{v}_4}=y_ay_\rho$.
    \item[(v)] For $\vec{v}_5=(0,1,1)$, the $T$-walk is $(b,\rho,U_2(\lambda_{\mathbf{p}})\rho,\rho,a,a,d)$ with weights $x_{\vec{v}_5}=\frac{U_2(\lambda_{\mathbf{p}})x_bx_d}{x_\rho}$ and $y_{\vec{v}_5}=y_ay_\rho$.
    \item[(vi)] For $\vec{v}_6=(1,1,1)$, the $T$-walk is $(a,\rho,U_1(\lambda_{\mathbf{p}})\rho,\rho,a,a,d)$ with weights $x_{\vec{v}_6}=\frac{U_1(\lambda_{\mathbf{p}})x_ax_d}{x_\rho}$ and $y_{\vec{v}_6}=y_ay_\rho^2$.
\end{enumerate}

\begin{figure}[H]
    \centering
\scalebox{.65}{
\begin{tikzpicture}[scale=0.5, transform shape]

    \draw[thick] (0,0) circle (3cm);

    \node[below, scale=2] at (0,-3.2) {$v_1$}; 
    \node[above, scale=2] at (0,3.2) {$v_2$};  
    \node[left, scale=2] at (-3.2,0) {$v_3$};  

    \draw[very thick, blue, postaction={decorate, decoration={markings, mark=at position 0.15 with {\arrow[scale=1.5, blue]{<}}}}] (-3,0) .. controls (3,4) and (2,-2.5) .. (0,-3);

    \draw[very thick, blue, postaction={decorate, decoration={markings, mark=at position 0.15 with {\arrow[scale=1.5, blue]{>}}, mark=at position 0.88 with {\arrow[scale=1.5, blue]{<}}}}] (0,-3) .. controls (-2.5,2) and (2.5,2) .. (0,-3);

    \draw[in=-90,out=-20,looseness=1, orange, line width = 1.2pt] (-3,0) to (0.5,0);
    \draw[in=90,out=90,looseness=1.5, orange, line width = 1.2pt] (0.5,0) to (-0.5,0);
    \draw[in=-100,out=-90,looseness=1, orange, line width = 1.2pt,  ->] (-0.5,0) to (2,0);
    \draw[in=-40,out=80,looseness=1, orange, line width = 1.2pt] (2,0) to (0,3);

    \filldraw (0,-3) circle (3pt); 
    \filldraw (0,3) circle (3pt);  
    \filldraw (-3,0) circle (3pt); 

    \node[scale=2, thick] at (0,0) {$\times$};
    \node[scale=1.5] at (-1.4,1.2) {$a$};
    \node[scale=1.5] at (1,0) {$\rho$};
    \node[scale=1.5] at (-2.3,-2.6) {$b$};
    \node[scale=1.5] at (-2.3,2.6) {$c$};
    \node[scale=1.5] at (3.2,0) {$d$};

    \node[red, scale=1.5] at (1.3,1.4) {$\gamma$};

    \node[scale=2] at (0,-4.5) {$\vec{v}_1=(0,0,0)$};
    \node[scale=2] at (0,-5.5) {$(b,\rho,U_1\rho,\rho,\rho,a,c)$};
\end{tikzpicture}

\begin{tikzpicture}[scale=0.5, transform shape]

    \draw[thick] (0,0) circle (3cm);

    \node[below, scale=2] at (0,-3.2) {$v_1$}; 
    \node[above, scale=2] at (0,3.2) {$v_2$};  
    \node[left, scale=2] at (-3.2,0) {$v_3$};  


    \draw[very thick, blue, postaction={decorate, decoration={markings, mark=at position 0.15 with {\arrow[scale=1.5, blue]{<}} }}] (-3,0) .. controls (3,4) and (2,-2.5) .. (0,-3);

    \draw[very thick, blue, postaction={decorate, decoration={markings, mark=at position 0.15 with {\arrow[scale=1.5, blue]{<}}, mark=at position 0.88 with {\arrow[scale=1.5, blue]{<}}}}] (0,-3) .. controls (-2.5,2) and (2.5,2) .. (0,-3);

    \draw[in=-90,out=-20,looseness=1, orange, line width = 1.2pt] (-3,0) to (0.5,0);
    \draw[in=90,out=90,looseness=1.5, orange, line width = 1.2pt] (0.5,0) to (-0.5,0);
    \draw[in=-100,out=-90,looseness=1, orange, line width = 1.2pt,  ->] (-0.5,0) to (2,0);
    \draw[in=-40,out=80,looseness=1, orange, line width = 1.2pt] (2,0) to (0,3);

    \filldraw (0,-3) circle (3pt); 
    \filldraw (0,3) circle (3pt);  
    \filldraw (-3,0) circle (3pt); 

    \node[scale=2, thick] at (0,0) {$\times$};
    \node[scale=1.5] at (-1.4,1.2) {$a$};
    \node[scale=1.5] at (1,0) {$\rho$};
    \node[scale=1.5] at (-2.3,-2.6) {$b$};
    \node[scale=1.5] at (-2.3,2.6) {$c$};
    \node[scale=1.5] at (3.2,0) {$d$};

    \node[red, scale=1.5] at (1.3,1.4) {$\gamma$};

    \node[scale=2] at (0,-4.5) {$\vec{v}_2=(1,0,0)$};
    \node[scale=2] at (0,-5.5) {$(a,\rho,U_0\rho,\rho,\rho,a,c)$};
\end{tikzpicture}

\begin{tikzpicture}[scale=0.5, transform shape]

    \draw[thick] (0,0) circle (3cm);

    \node[below, scale=2] at (0,-3.2) {$v_1$}; 
    \node[above, scale=2] at (0,3.2) {$v_2$};  
    \node[left, scale=2] at (-3.2,0) {$v_3$};  


    \draw[very thick, blue, postaction={decorate, decoration={markings, mark=at position 0.85 with {\arrow[scale=1.5, blue]{>}}}}] (-3,0) .. controls (3,4) and (2,-2.5) .. (0,-3);

    \draw[very thick, blue, postaction={decorate, decoration={markings, mark=at position 0.15 with {\arrow[scale=1.5, blue]{>}}, mark=at position 0.88 with {\arrow[scale=1.5, blue]{<}}}}] (0,-3) .. controls (-2.5,2) and (2.5,2) .. (0,-3);

    \draw[in=-90,out=-20,looseness=1, orange, line width = 1.2pt] (-3,0) to (0.5,0);
    \draw[in=90,out=90,looseness=1.5, orange, line width = 1.2pt] (0.5,0) to (-0.5,0);
    \draw[in=-100,out=-90,looseness=1, orange, line width = 1.2pt,  ->] (-0.5,0) to (2,0);
    \draw[in=-40,out=80,looseness=1, orange, line width = 1.2pt] (2,0) to (0,3);

    \filldraw (0,-3) circle (3pt); 
    \filldraw (0,3) circle (3pt);  
    \filldraw (-3,0) circle (3pt); 

    \node[scale=2, thick] at (0,0) {$\times$};
    \node[scale=1.5] at (-1.4,1.2) {$a$};
    \node[scale=1.5] at (1,0) {$\rho$};
    \node[scale=1.5] at (-2.3,-2.6) {$b$};
    \node[scale=1.5] at (-2.3,2.6) {$c$};
    \node[scale=1.5] at (3.2,0) {$d$};

    \node[red, scale=1.5] at (1.3,1.4) {$\gamma$};

    \node[scale=2] at (0,-4.5) {$\vec{v}_3=(0,0,1)$};
    \node[scale=2] at (0,-5.5) {$(b,\rho,U_1\rho,\rho,b,a,d)$};
\end{tikzpicture}

\begin{tikzpicture}[scale=0.5, transform shape]

    \draw[thick] (0,0) circle (3cm);

    \node[below, scale=2] at (0,-3.2) {$v_1$}; 
    \node[above, scale=2] at (0,3.2) {$v_2$};  
    \node[left, scale=2] at (-3.2,0) {$v_3$};  


    \draw[very thick, blue, postaction={decorate, decoration={markings, mark=at position 0.85 with {\arrow[scale=1.5, blue]{>}}}}] (-3,0) .. controls (3,4) and (2,-2.5) .. (0,-3);

    \draw[very thick, blue, postaction={decorate, decoration={markings, mark=at position 0.15 with {\arrow[scale=1.5, blue]{<}}, mark=at position 0.88 with {\arrow[scale=1.5, blue]{<}}}}] (0,-3) .. controls (-2.5,2) and (2.5,2) .. (0,-3);

    \draw[in=-90,out=-20,looseness=1, orange, line width = 1.2pt] (-3,0) to (0.5,0);
    \draw[in=90,out=90,looseness=1.5, orange, line width = 1.2pt] (0.5,0) to (-0.5,0);
    \draw[in=-100,out=-90,looseness=1, orange, line width = 1.2pt,  ->] (-0.5,0) to (2,0);
    \draw[in=-40,out=80,looseness=1, orange, line width = 1.2pt] (2,0) to (0,3);

    \filldraw (0,-3) circle (3pt); 
    \filldraw (0,3) circle (3pt);  
    \filldraw (-3,0) circle (3pt); 

    \node[scale=2, thick] at (0,0) {$\times$};
    \node[scale=1.5] at (-1.4,1.2) {$a$};
    \node[scale=1.5] at (1,0) {$\rho$};
    \node[scale=1.5] at (-2.3,-2.6) {$b$};
    \node[scale=1.5] at (-2.3,2.6) {$c$};
    \node[scale=1.5] at (3.2,0) {$d$};

    \node[red, scale=1.5] at (1.3,1.4) {$\gamma$};

    \node[scale=2] at (0,-4.5) {$\vec{v}_4=(1,0,1)$};
    \node[scale=2] at (0,-5.5) {$(a,\rho,U_0\rho,\rho,b,a,d)$};
\end{tikzpicture}

\begin{tikzpicture}[scale=0.5, transform shape]

    \draw[thick] (0,0) circle (3cm);

    \node[below, scale=2] at (0,-3.2) {$v_1$}; 
    \node[above, scale=2] at (0,3.2) {$v_2$};  
    \node[left, scale=2] at (-3.2,0) {$v_3$};  

    \draw[very thick, blue, postaction={decorate, decoration={markings, mark=at position 0.85 with {\arrow[scale=1.5, blue]{>}}}}] (-3,0) .. controls (3,4) and (2,-2.5) .. (0,-3);

    \draw[very thick, blue, postaction={decorate, decoration={markings, mark=at position 0.15 with {\arrow[scale=1.5, blue]{>}}, mark=at position 0.88 with {\arrow[scale=1.5, blue]{>}}}}] (0,-3) .. controls (-2.5,2) and (2.5,2) .. (0,-3);

    \draw[in=-90,out=-20,looseness=1, orange, line width = 1.2pt] (-3,0) to (0.5,0);
    \draw[in=90,out=90,looseness=1.5, orange, line width = 1.2pt] (0.5,0) to (-0.5,0);
    \draw[in=-100,out=-90,looseness=1, orange, line width = 1.2pt,  ->] (-0.5,0) to (2,0);
    \draw[in=-40,out=80,looseness=1, orange, line width = 1.2pt] (2,0) to (0,3);

    \filldraw (0,-3) circle (3pt); 
    \filldraw (0,3) circle (3pt);  
    \filldraw (-3,0) circle (3pt); 

    \node[scale=2, thick] at (0,0) {$\times$};
    \node[scale=1.5] at (-1.4,1.2) {$a$};
    \node[scale=1.5] at (1,0) {$\rho$};
    \node[scale=1.5] at (-2.3,-2.6) {$b$};
    \node[scale=1.5] at (-2.3,2.6) {$c$};
    \node[scale=1.5] at (3.2,0) {$d$};

    \node[red, scale=1.5] at (1.3,1.4) {$\gamma$};

    \node[scale=2] at (0,-4.5) {$\vec{v}_5=(0,1,1)$};
    \node[scale=2] at (0,-5.5) {$(b,\rho,U_2\rho,\rho,a,a,d)$};
\end{tikzpicture}

\begin{tikzpicture}[scale=0.5, transform shape]

    \draw[thick] (0,0) circle (3cm);

    \node[below, scale=2] at (0,-3.2) {$v_1$}; 
    \node[above, scale=2] at (0,3.2) {$v_2$};  
    \node[left, scale=2] at (-3.2,0) {$v_3$};  


    \draw[very thick, blue, postaction={decorate, decoration={markings, mark=at position 0.85 with {\arrow[scale=1.5, blue]{>}}}}] (-3,0) .. controls (3,4) and (2,-2.5) .. (0,-3);

    \draw[very thick, blue, postaction={decorate, decoration={markings, mark=at position 0.15 with {\arrow[scale=1.5, blue]{<}}, mark=at position 0.88 with {\arrow[scale=1.5, blue]{>}}}}] (0,-3) .. controls (-2.5,2) and (2.5,2) .. (0,-3);

    \draw[in=-90,out=-20,looseness=1, orange, line width = 1.2pt] (-3,0) to (0.5,0);
    \draw[in=90,out=90,looseness=1.5, orange, line width = 1.2pt] (0.5,0) to (-0.5,0);
    \draw[in=-100,out=-90,looseness=1, orange, line width = 1.2pt,  ->] (-0.5,0) to (2,0);
    \draw[in=-40,out=80,looseness=1, orange, line width = 1.2pt] (2,0) to (0,3);

    \filldraw (0,-3) circle (3pt); 
    \filldraw (0,3) circle (3pt);  
    \filldraw (-3,0) circle (3pt); 

    \node[scale=2, thick] at (0,0) {$\times$};
    \node[scale=1.5] at (-1.4,1.2) {$a$};
    \node[scale=1.5] at (1,0) {$\rho$};
    \node[scale=1.5] at (-2.3,-2.6) {$b$};
    \node[scale=1.5] at (-2.3,2.6) {$c$};
    \node[scale=1.5] at (3.2,0) {$d$};

    \node[red, scale=1.5] at (1.3,1.4) {$\gamma$};

    \node[scale=2] at (0,-4.5) {$\vec{v}_6=(1,1,1)$};
    \node[scale=2] at (0,-5.5) {$(a,\rho,U_1\rho,\rho,a,a,d)$};
\end{tikzpicture}
}
    \caption{The complete set of valid $T$-walks for $\gamma$.}
    \label{fig:twalks}
\end{figure}

Following our $T$-walk expansion formula, we have that

\[\mathfrak{W}(\gamma, T)=\frac{U_1(\lambda_\boldp)x_bx_c}{x_a}+U_0(\lambda_\boldp)x_cy_\rho+\frac{U_1(\lambda_\boldp)x_b^2x_d}{x_ax_\rho}y_a+\frac{U_0(\lambda_\boldp)x_bx_d}{x_\rho}y_ay_\rho+\frac{U_2(\lambda_\boldp)x_bx_d}{x_\rho}y_ay_\rho+\frac{U_1(\lambda_\boldp)x_ax_d}{x_\rho}y_ay_\rho^2.\]

\end{example}

\subsection{\texorpdfstring{$T$-walks for Notched Arcs}{T-walks for Notched Arcs}}

Next, we construct $T$-walks for notched arcs. Once again, we represent notched arcs with hooks and consequently, we construct $T$-walks for the hook. Examples for surfaces are provided in Section A.3 of \cite{ezgieminecluster2024}.

When working with a singly-notched arc $\gamma^{(p)}$, we must consider a vector of orientations for both the $d$ arcs crossed by the underlying plain arc and the $m$ spokes incident to $p$ that are crossed by the hook. We define $\vec{v}=(\vec{v}_s,\vec{v}_a) \in \{0,1\}^m \times \{0,1\}^d$. The vector $\vec{v}_s$ will record the orientations on the spokes while the vector $\vec{v}_a$ will, as before, record the orientations on arcs crossed by $\gamma^0$. We fix the minimal orientation on the spokes $\sigma_i$ to be pointing outward from the puncture $p$ for crossings in both $\vec{v}_s$ and $\vec{v}_a$.

\begin{definition}\label{def:twalkNOTCHED}
Let $\mathcal{O}$ be a punctured orbifold with an initial triangulation $T$. Let $\gamma^{(p)}$ be an arc notched at the puncture $p = s(\gamma)$. Suppose the underlying arc $\gamma^0$ has $d$ intersections with $T$ at arcs $\tau_{i_1}, \dots, \tau_{i_d}$, and that $p$ is incident to $m$ spokes $\sigma_1, \dots, \sigma_m$ in $T$. 
Consider a vector $\vec{v}=(\vec{v}_s, \vec{v}_a) \in \{0,1\}^m \times \{0,1\}^d$. For each $j$, let $\alpha_j$ and $\alpha_{\sigma_j}$ be oriented copies of $\tau_{i_j}$ and $\sigma_j$, respectively, with orientations determined by the entries $v_j \in \vec{v}_a$ and $v_{\sigma_j} \in \vec{v}_s$. Let $\alpha_{\sigma_j}^-$ denote the arc $\sigma_j$ with the orientation opposite to $\alpha_{\sigma_j}$.

If there exists a sequence of directed paths $\beta_{\sigma_1}, \dots, \beta_{\sigma_m}, \beta_1, \dots, \beta_d$, each contained within a single triangle of $T$, such that either the concatenation
\[
\alpha_{\sigma_m} \beta_{\sigma_m} \alpha_{\sigma_{m-1}} \beta_{\sigma_{m-1}} \cdots \alpha_{\sigma_1} \beta_{\sigma_1} \alpha_1 \beta_1 \alpha_2 \beta_2 \cdots \alpha_d \beta_d
\]
or
\[
\alpha_{\sigma_1}^{-} \beta_{\sigma_1} \alpha_{\sigma_{2}}^- \beta_{\sigma_{2}} \cdots \alpha_{\sigma_m} \beta_{\sigma_m} \alpha_1 \beta_1 \alpha_2 \beta_2 \cdots \alpha_d \beta_d
\]
forms a walk from $s(\gamma)$ to $t(\gamma)$ that is homotopically equivalent to $\gamma$, then we call the resulting walk a \emph{$T$-walk} $T_{\vec{v}}$. For a doubly-notched arc $\gamma^{(p,q)}$, this construction is applied analogously at the endpoint $q$.
\end{definition}

We highlight that if $\gamma$ is notched at its starting point (ending point), then any $T$-walk associated to $\gamma$ starts (ends) on an $\alpha$ step. 

Definition \ref{def:twalkNOTCHED} will yield two $T$-walks for some orientation vectors, but these $T$-walks are related by reversing the initial segment $(\alpha_{\sigma_m},\ldots,\alpha_{\sigma_1})$ and contribute the same weight, so we regard them as the same. Notice that the two possible concatenations correspond to the two possible hooked arcs associated to $\gamma^{(p)}$. In the doubly-notched case, one must consider four possible concatenations associated to the four possible hooked arcs. This flexibility is necessary for the extreme cases. If $\vec{v_s} = \vec{0}$ or  $\vec{v_s} = \vec{1}$, then the choice of hook orientation is strictly constrained. 

\begin{remark}
    Note that in the construction of loop graphs or oriented posets, we are free to choose either a clockwise or counterclockwise hook at any notched endpoint of an arc. The orientation constraints described above are unique to the construction of $T$-walks for notched arcs.
\end{remark}

\begin{example}\label{ex:TWalkFromGoodMatchingOfLoopGraph}
 There is a well-known method to go between perfect matchings of snake graphs and $T$-walks. This is essentially done by walking on the edges in the perfect matching and the diagonal labels of the graph. Here, we explore what applying this process to loop graphs tells us about $T$-walks.
 
 \Cref{fig:minimalnotched} illustrates two valid good matchings, denoted with thick red edges, for the loop graph corresponding to the notched corner arc shown in \Cref{ex:NotchCorner}. The two bullet points denote the two vertices that are associated to $s(\gamma)$. These vertices are also identified in the graph, implying both can be a starting point. 
 
 On the left, we display the minimal matching, associated to the $T$-walk with orientation vector $\vec{0}$. A $T$-walk is constructed from such a matching by alternating between edges in the matching (representing $\beta$ steps) and the dashed diagonal labels (representing $\alpha$ steps). In this configuration, we can construct a walk that begins at the southwest vertex of the initial tile and terminates at the northeast vertex of the final tile.

Compare this situation with the good matching on the right. We can again construct a $T$-walk between the same two vertices. However, we could also begin at the other vertex labelled with a bullet, and travel left, taking diagonals and edges $U_2(\lambda_\boldp)\rho, U_3(\lambda_\boldp)\rho, {\rho}, b,\ldots$. We will eventually reach the lower bullet. Since the two bulleted vertices are identified we can then take the edge $U_1(\lambda_\boldp)\rho$ at the upper bullet and proceed to the northeast vertex of the final tile. The relationship between these two $T$-walks is exactly that described in Definition \ref{def:twalkNOTCHED}.

    \begin{table}[t]
\centering
\renewcommand{\arraystretch}{1.5}
\begin{tabular}{|c|c|c|}
    \hline
    \begin{tikzpicture}[scale=0.6, transform shape]
        \draw[thick, postaction={decorate, decoration={markings, 
            mark=at position 0.3 with {\arrow[scale=1.5, black!30!green]{>}}, 
            mark=at position 0.7 with {\arrow[scale=1.5, black!30!green]{<}}, 
            mark=at position 0.9 with {\arrow[scale=1.5, black!30!green]{<}}}}] (0,-3) .. controls (-2.5,0) and (2.5,0) .. (0,-3);
        \filldraw (0,-3) circle (3pt);
        \node[scale=2, thick] at (0,-1.5) {$\times$};
        \draw[line width = 1.2pt, orange,out=0,in=0,looseness=1.5] (0,-2.6) to (0,-3.3);
        \draw[line width = 1.2pt, orange, out=180,in=-120,looseness=1.5] (0,-3.3) to (-0.2,-2.3);
        \draw[line width = 1.2pt, orange, out=60,in=-90] (-0.2,-2.3) to (0.35,-1.5);
        \draw[line width = 1.2, orange, out=90,in=90,looseness=1.75] (0.35,-1.5) to (-0.35,-1.5);
        \draw[line width = 1.2, orange, out=-90,in=180,->] (-0.35,-1.5) to (1.2,-2);
        \node at (0.8,-2.5) {$1$};
        \node at (1.1,-1.2) {$3$};
        \node at (-1.2,-1.5) {$2$};
    \end{tikzpicture}
    &
    \begin{tikzpicture}[scale=0.6, transform shape]
        \draw[thick, postaction={decorate, decoration={markings, 
            mark=at position 0.3 with {\arrow[scale=1.5, black!30!green]{<}}, 
            mark=at position 0.7 with {\arrow[scale=1.5, black!30!green]{<}}, 
            mark=at position 0.9 with {\arrow[scale=1.5, black!30!green]{<}}}}] (0,-3) .. controls (-2.5,0) and (2.5,0) .. (0,-3);
        \filldraw (0,-3) circle (3pt);
        \node[scale=2, thick] at (0,-1.5) {$\times$};
        \draw[line width = 1.2pt, orange,out=0,in=0,looseness=1.5] (0,-2.6) to (0,-3.3);
        \draw[line width = 1.2pt, orange, out=180,in=-120,looseness=1.5] (0,-3.3) to (-0.2,-2.3);
        \draw[line width = 1.2pt, orange, out=60,in=-90] (-0.2,-2.3) to (0.35,-1.5);
        \draw[line width = 1.2, orange, out=90,in=90,looseness=1.75] (0.35,-1.5) to (-0.35,-1.5);
        \draw[line width = 1.2, orange, out=-90,in=180,->] (-0.35,-1.5) to (1.2,-2);
        \node at (0.8,-2.5) {$1$};
        \node at (1.1,-1.2) {$3$};
        \node at (-1.2,-1.5) {$2$};

    \end{tikzpicture}
    &
     \begin{tikzpicture}[scale=0.6, transform shape]
        \draw[thick, postaction={decorate, decoration={markings, 
            mark=at position 0.3 with {\arrow[scale=1.5, black!30!green]{<}}, 
            mark=at position 0.7 with {\arrow[scale=1.5, black!30!green]{<}}, 
            mark=at position 0.9 with {\arrow[scale=1.5, black!30!green]{>}}}}] (0,-3) .. controls (-2.5,0) and (2.5,0) .. (0,-3);
        \filldraw (0,-3) circle (3pt);
        \node[scale=2, thick] at (0,-1.5) {$\times$};
        \draw[line width = 1.2pt, orange,out=0,in=0,looseness=1.5] (0,-2.6) to (0,-3.3);
        \draw[line width = 1.2pt, orange, out=180,in=-120,looseness=1.5] (0,-3.3) to (-0.2,-2.3);
        \draw[line width = 1.2pt, orange, out=60,in=-90] (-0.2,-2.3) to (0.35,-1.5);
        \draw[line width = 1.2, orange, out=90,in=90,looseness=1.75] (0.35,-1.5) to (-0.35,-1.5);
        \draw[line width = 1.2, orange, out=-90,in=180,->] (-0.35,-1.5) to (1.2,-2);
        \node at (0.9,-2.5) {$1$};
        \node at (1.1,-1.2) {$3$};
        \node at (-1.2,-1.5) {$2$};
    \end{tikzpicture}
    \\
        \highlight{$U_{k}\rho$} & \highlight{$U_{k-1}\rho$} & \highlight{$U_{k-2}\rho$} 
    \\ \hline
    
    \begin{tikzpicture}[scale=0.6, transform shape]
        \draw[thick, postaction={decorate, decoration={markings, 
            mark=at position 0.3 with {\arrow[scale=1.5, black!30!green]{>}}, 
            mark=at position 0.7 with {\arrow[scale=1.5, black!30!green]{>}}, 
            mark=at position 0.9 with {\arrow[scale=1.5, black!30!green]{<}}}}] (0,-3) .. controls (-2.5,0) and (2.5,0) .. (0,-3);
        \filldraw (0,-3) circle (3pt);
        \node[scale=2, thick] at (0,-1.5) {$\times$};
        \draw[line width = 1.2pt, orange,out=0,in=0,looseness=1.5] (0,-2.6) to (0,-3.3);
        \draw[line width = 1.2pt, orange, out=180,in=-120,looseness=1.5] (0,-3.3) to (-0.2,-2.3);
        \draw[line width = 1.2pt, orange, out=60,in=-90] (-0.2,-2.3) to (0.35,-1.5);
        \draw[line width = 1.2, orange, out=90,in=90,looseness=1.75] (0.35,-1.5) to (-0.35,-1.5);
        \draw[line width = 1.2, orange, out=-90,in=180,->] (-0.35,-1.5) to (1.2,-2);
        \node at (0.8,-2.5) {$1$};
        \node at (1.2,-1.2) {$3$};
        \node at (-1.2,-1.5) {$2$};
    \end{tikzpicture}
    &
    \begin{tikzpicture}[scale=0.6, transform shape]
        \draw[thick, postaction={decorate, decoration={markings, 
            mark=at position 0.3 with {\arrow[scale=1.5, black!30!green]{<}}, 
            mark=at position 0.7 with {\arrow[scale=1.5, black!30!green]{>}}, 
            mark=at position 0.9 with {\arrow[scale=1.5, black!30!green]{<}}}}] (0,-3) .. controls (-2.5,0) and (2.5,0) .. (0,-3);
        \filldraw (0,-3) circle (3pt);
        \node[scale=2, thick] at (0,-1.5) {$\times$};
        \draw[line width = 1.2pt, orange,out=0,in=0,looseness=1.5] (0,-2.6) to (0,-3.3);
        \draw[line width = 1.2pt, orange, out=180,in=-120,looseness=1.5] (0,-3.3) to (-0.2,-2.3);
        \draw[line width = 1.2pt, orange, out=60,in=-90] (-0.2,-2.3) to (0.35,-1.5);
        \draw[line width = 1.2, orange, out=90,in=90,looseness=1.75] (0.35,-1.5) to (-0.35,-1.5);
        \draw[line width = 1.2, orange, out=-90,in=180,->] (-0.35,-1.5) to (1.2,-2);
        \node at (0.8,-2.5) {$1$};
        \node at (1.2,-1.2) {$3$};
        \node at (-1.2,-1.5) {$2$};
    \end{tikzpicture}
    &
    \begin{tikzpicture}[scale=0.6, transform shape]
        \draw[thick, postaction={decorate, decoration={markings, 
            mark=at position 0.3 with {\arrow[scale=1.5, black!30!green]{<}}, 
            mark=at position 0.7 with {\arrow[scale=1.5, black!30!green]{>}}, 
            mark=at position 0.9 with {\arrow[scale=1.5, black!30!green]{>}}}}] (0,-3) .. controls (-2.5,0) and (2.5,0) .. (0,-3);
        \filldraw (0,-3) circle (3pt);
        \node[scale=2, thick] at (0,-1.5) {$\times$};
        \draw[line width = 1.2pt, orange,out=0,in=0,looseness=1.5] (0,-2.6) to (0,-3.3);
        \draw[line width = 1.2pt, orange, out=180,in=-120,looseness=1.5] (0,-3.3) to (-0.2,-2.3);
        \draw[line width = 1.2pt, orange, out=60,in=-90] (-0.2,-2.3) to (0.35,-1.5);
        \draw[line width = 1.2, orange, out=90,in=90,looseness=1.75] (0.35,-1.5) to (-0.35,-1.5);
        \draw[line width = 1.2, orange, out=-90,in=180,->] (-0.35,-1.5) to (1.2,-2);
        \node at (0.9,-2.5) {$1$};
        \node at (1.2,-1.2) {$3$};
        \node at (-1.2,-1.5) {$2$};
    \end{tikzpicture} 
    \\
        \highlight{$U_{k+1}\rho$} & \highlight{$U_{k}\rho$} & \highlight{$U_{k-1}\rho$}
    \\ \hline
\end{tabular}
\caption{Orientations on pending arcs when $\gamma$ is a notched corner arc. The arrow numbering indicates the traversal order on this portion of the $T$-walk.}
\label{tab:TwalkLoop}
\end{table}

    \begin{figure}[H]
        \centering
    \begin{tabular}{cc}
    \begin{tikzpicture}[scale=1]
        \draw[thick] (6,4) to node[above,yshift=-2]{$c$} (5,4) to node[above,yshift=-2]{$a$} (4,4) to node[above,yshift=-2]{$\rho$} (3,4) to node[above, scale = 0.9,yshift=-2]{$U_2\rho$} (2,4) to node[left,xshift=2]{$b$} (2,3) to node[below,yshift=1,scale=0.6,xshift=5]{$a$} (3,3) to node[below,yshift=2]{$\rho$}(4,3);
        \draw[draw=none] (4,3) to node[below, scale=0.9,yshift=-3]{$U_2\rho$} (5,3);
        \draw[thick] (5,3) to node[below,yshift=2]{$\rho$} (6,3) to node[right]{$d$} (6,4);
        
        \draw[thick] (5,4) to node[left, scale=0.6,yshift=10]{$b$} (5,3);
        \draw[thick] (4,4) to node[left, scale=0.6, yshift=10]{$U_{1}\rho$} (4,3); 
        \draw[thick] (3,4) to node[left, scale=0.6, yshift=10]{$U_{3}\rho$} (3,3); 
        
        \draw[gray,dashed] (6,3) to node[left,scale=0.6, xshift=-2,yshift=-2]{$a$} (5,4);
        \draw[gray,dashed] (5,3) to node[left,scale=0.6, xshift=-2,yshift=-2]{$\rho$} (4,4);
        \draw[gray,dashed] (4,3) to node[left,scale=0.6, xshift=-2,yshift=-2]{$U_2\rho$} (3,4);
        \draw[gray,dashed] (3,3) to node[left,scale=0.6, xshift=-2,yshift=-2]{$\rho$} (2,4);
        
        \draw[thick] (3,3) to node[right,xshift=-2]{$\rho$} (3,2) to node[below]{$d$} (2,2) to node[below, scale=0.6, yshift=2pt,xshift=5]{$c$} (1,2) to node[left,xshift=2]{$a$} (1,3) to node[above,yshift=-2]{$b$} (2,3);
        \draw[thick] (2,3) to node[left, scale=0.6,yshift=10]{$e$} (2,2);
        \draw[gray,dashed] (3,2) to node[left,scale=0.6, xshift=-2,yshift=-2]{$b$} (2,3);
        \draw[gray,dashed] (2,2) to node[left,scale=0.6, xshift=-2,yshift=-2]{$d$} (1,3);
        
        \draw[thick] (2,2) to node[right]{$d$} (2,1) to node[below,yshift=2]{$\rho$} (1,1) to node[below, scale=0.6, xshift=5]{$U_0\rho$} (0,1) to node[left]{$U_1 \rho$} (0,2) to node[above,yshift=-2]{$a$} (1,2);
        \draw[thick] (1,2) to node[left, scale=0.6,yshift=10]{$b$} (1,1);
        \draw[gray,dashed] (2,1) to node[left,scale=0.6, xshift=-2,yshift=-2]{$a$} (1,2);
        \draw[gray,dashed] (1,1) to node[left,scale=0.6, xshift=-2,yshift=-2]{$\rho$} (0,2);
        
        \draw[thick] (1,1) to node[right,xshift=-2]{$\rho$} (1,0);
        \draw[draw=none] (1,0) to node[below, scale=0.9,yshift=-3]{$U_2\rho$} (0,0);
        \draw[thick] (0,0) to node[left]{$\rho$} (0,1);
        \draw[gray,dashed] (1,0) to node[left, below, scale=0.6, xshift=-3,yshift=-2]{$U_1\rho$} (0,1);
        
        \draw[ultra thick, blue,decorate, decoration={zigzag, segment length=6pt}] (1,0) to (0,0);
        \draw[ultra thick, blue,decorate, decoration={zigzag, segment length=6pt}] (5,3) to (4,3);
            
        \draw[ultra thick, red] (6,3) to (5,3);
        \draw[ultra thick, red] (6,4) to (5,4);
        \draw[ultra thick, red] (4,4) to (3,4);
        \draw[ultra thick, red] (4,3) to (3,3);
        \draw[ultra thick, red] (2,4) to (2,3);
        \draw[ultra thick, red] (3,2) to (2,2);
        \draw[ultra thick, red] (1,3) to (1,2);
        \draw[ultra thick, red] (2,1) to (1,1);
        \draw[ultra thick, red] (0,2) to (0,1);
        
        \filldraw (4,3) circle (2pt);
        \filldraw (1,0) circle (2pt);
    \end{tikzpicture}
        &
    \begin{tikzpicture}[scale=1]
        \draw[thick] (6,4) to node[above,yshift=-2]{$c$} (5,4) to node[above,yshift=-2]{$a$} (4,4) to node[above,yshift=-2]{$\rho$} (3,4) to node[above, scale = 0.9,yshift=-2]{$U_2\rho$} (2,4) to node[left,xshift=2]{$b$} (2,3) to node[below,yshift=1,scale=0.6,xshift=5]{$a$} (3,3) to node[below,yshift=2]{$\rho$}(4,3);
        \draw[draw=none] (4,3) to node[below, scale=0.9,yshift=-3]{$U_2\rho$} (5,3);
        \draw[thick] (5,3) to node[below,yshift=2]{$\rho$} (6,3) to node[right]{$d$} (6,4);
        
        \draw[thick] (5,4) to node[left, scale=0.6,yshift=10]{$b$} (5,3);
        \draw[thick] (4,4) to node[left, scale=0.6, yshift=10]{$U_{1}\rho$} (4,3); 
        \draw[thick] (3,4) to node[left, scale=0.6, yshift=10]{$U_{3}\rho$} (3,3); 
        
        \draw[gray,dashed] (6,3) to node[left,scale=0.6, xshift=-2,yshift=-2]{$a$} (5,4);
        \draw[gray,dashed] (5,3) to node[left,scale=0.6, xshift=-2,yshift=-2]{$\rho$} (4,4);
        \draw[gray,dashed] (4,3) to node[left,scale=0.6, xshift=-2,yshift=-2]{$U_2\rho$} (3,4);
        \draw[gray,dashed] (3,3) to node[left,scale=0.6, xshift=-2,yshift=-2]{$\rho$} (2,4);
        
        \draw[thick] (3,3) to node[right,xshift=-2]{$\rho$} (3,2) to node[below]{$d$} (2,2) to node[below, scale=0.6, yshift=2pt,xshift=5]{$c$} (1,2) to node[left,xshift=2]{$a$} (1,3) to node[above,yshift=-2]{$b$} (2,3);
        \draw[thick] (2,3) to node[left, scale=0.6,yshift=10]{$e$} (2,2);
        \draw[gray,dashed] (3,2) to node[left,scale=0.6, xshift=-2,yshift=-2]{$b$} (2,3);
        \draw[gray,dashed] (2,2) to node[left,scale=0.6, xshift=-2,yshift=-2]{$d$} (1,3);
        
        \draw[thick] (2,2) to node[right]{$d$} (2,1) to node[below,yshift=2]{$\rho$} (1,1) to node[below, scale=0.6, xshift=5]{$U_0\rho$} (0,1) to node[left]{$U_1 \rho$} (0,2) to node[above,yshift=-2]{$a$} (1,2);
        \draw[thick] (1,2) to node[left, scale=0.6,yshift=10]{$b$} (1,1);
        \draw[gray,dashed] (2,1) to node[left,scale=0.6, xshift=-2,yshift=-2]{$a$} (1,2);
        \draw[gray,dashed] (1,1) to node[left,scale=0.6, xshift=-2,yshift=-2]{$\rho$} (0,2);
        
        \draw[thick] (1,1) to node[right,xshift=-2]{$\rho$} (1,0);
        \draw[draw=none] (1,0) to node[below, scale=0.9,yshift=-3]{$U_2\rho$} (0,0);
        \draw[thick] (0,0) to node[left]{$\rho$} (0,1);
        \draw[gray,dashed] (1,0) to node[left, below, scale=0.6, xshift=-3,yshift=-2]{$U_1\rho$} (0,1);
        
        \draw[ultra thick, blue,decorate, decoration={zigzag, segment length=6pt}] (1,0) to (0,0);
        \draw[ultra thick, blue,decorate, decoration={zigzag, segment length=6pt}] (5,3) to (4,3);
    
    \draw[ultra thick, red] (6,3) to (5,3);
    \draw[ultra thick, red] (6,4) to (5,4);
    \draw[ultra thick, red] (4,4) to (4,3);
    \draw[ultra thick, red] (3,4) to (3,3);
    \draw[ultra thick, red] (2,4) to (2,3);
    \draw[ultra thick, red] (3,2) to (2,2);
    \draw[ultra thick, red] (1,3) to (1,2);
    \draw[ultra thick, red] (2,1) to (1,1);
    \draw[ultra thick, red] (0,2) to (0,1);
    
    \filldraw (4,3) circle (2pt);
    \filldraw (1,0) circle (2pt);
    \end{tikzpicture}
    \end{tabular}
        \caption{Two good matchings of the loop graph associated to the notched corner arc $\gamma$ from~\Cref{ex:NotchCorner}.}
        \label{fig:minimalnotched}
    \end{figure}
\end{example}

Example \ref{ex:TWalkFromGoodMatchingOfLoopGraph} provides a hint for how we should treat $T$-walks for notched corner arcs.  The directed paths $\beta_i$ for the segment $U_k(\lambda_\boldp)\rho$ are constructed following the conventions in \Cref{tab:HomotopicWinding}. When $\gamma$ is a notched corner arc, the compound steps are illustrated in \Cref{tab:TwalkLoop}. To ensure accessibility, the orientations derived from the spoke vector $\vec{v}_s$ are represented by arrows $1$ and $2$. Between the $\alpha$ steps associated to arrows 1 and 2 are all the steps along the other spokes incident to the puncture. Note that the hook depicted in \Cref{tab:HomotopicWinding} corresponds specifically to the minimal $T$-walk.

We now consider the case $\gamma^0\in T$. When $\gamma$ is a standard arc, the same rules apply, with the only modification being the replacement of a notch by a hook. When $\gamma$ is a doubly-notched pending arc or a doubly-notched monogon, as discussed in~\Cref{ex:doubly notched pending arc}, applying the hooks in the appropriate orientation as discussed above yields the valid $T$-walk.

\section{Equivalences of Formulas}\label{sec:Equivalence}

In this section, we prove that given an arc or closed curve $\gamma$, all of our provided combinatorial formulas give equivalent Laurent expansions. Let us restate~\Cref{thm:MainExpansion}.

\Expan*

Recall \(\chi(\calP_\gamma)=x_{\min}^{\gamma}\mathcal{W}(\calP_\gamma,w)\). The equivalence of $\mathcal{W}(\calP_\gamma,w)$ to the top-left entry of the rank matrix $\rmm_w(\calP_\gamma\!\searrow)$ for arcs, and to the trace of $\rmm_w(\calP_\gamma\!\searrow)$ for closed curves has already been established in \Cref{prop:FormulasAgree}. It remains to show that \(\chi(\mathcal{G}_{\gamma,T}) = \chi(\calP_\gamma^T) = \mathfrak{W}(\gamma,T).\)

Our plan for the proof is outlined in the following diagram. The connection between loop and band graphs with auxiliary tiles and posets follows directly from the surface case \cite{ezgieminecluster2024}; we saw already that the posets and matrix formulas also have the same relationship. The dashed edges are incident to the set of loop and band graphs with hexagonal tiles, and therefore these results omit the case of notched corner arcs. However, the connection between graphs with the two types of tiles is necessary for the proof of the correctness of the formulas (Theorem \ref{thm:Correctness}), as this roots us to the earlier work of two authors \cite{banaian2020snake}. We also include the map between loop graphs with hexagonal tiles and $T$-walks as this map is a true bijection, showing how close these two formulations are. Finally, we show the $T$-walk expansion and poset expansion for notched corner arcs to close this gap.

\begin{center}
\begin{tikzcd}
  \parbox{40mm}{\centering Loop graphs with \\hexagonal tiles} 
  \arrow[leftrightarrow, swap, dashed]{dd}{\text{\Cref{prop:CanChooseAuxiliaryTiles}}}
  \arrow[leftrightarrow, dashed]{rr}{\text{\Cref{thm:SnakeTWalks}}}   & \hspace{0.5cm} &T\text{- walks}\arrow[leftrightarrow]{dd}{\text{\Cref{prop:TWalkPosetNotchedCorner}}}&& \\
  \\
  \parbox{40mm}{\centering Loop graphs with \\auxiliary tiles}\arrow[leftrightarrow]{rr}[pos=0.5]{\text{\Cref{prop:AuxAndPoset}}}  & &\text{Posets}\arrow[leftrightarrow]{rr}[pos=0.5]{\text{\Cref{prop:FormulasAgree}}} &\hspace{0.5cm}& \text{Matrices}
\end{tikzcd}
\end{center}

\begin{remark}
    The rows of this table categorize our combinatorial objects into two distinct classes. The top row consists of objects whose definitions do not require auxiliary data, whereas the bottom row includes those that involve such choices. These distinctions are explored further in Section \ref{sec:conclusion}. Notably, because $T$-walks are defined for all arcs—unlike loop graphs with hexagonal tiles—these bijections allow us to conclude that the specific choices of auxiliary tiles or elements do not alter the resulting Laurent polynomials. 
\end{remark}

\subsection{Comparing Loop Graphs with Auxiliary Tiles and Posets}

Our poset construction was designed to create a correspondence with loop graphs with auxiliary tiles. Consequently, the following result is relatively immediate. While we adopt the methodology established in \cite{ezgieminecluster2024}, there is one key difference: in that work, the leading term of the poset expansion (the Laurent polynomial), $x_{\min}^\gamma$, is defined via the loop graph whereas we define $x_{\min}^\gamma$ purely in terms of the poset and the data from the triangulation $T$. Accordingly, we first demonstrate that these two definitions of the minimal term are equivalent.

\begin{lemma}\label{lem:MinTermAgreeAux}
    Let $\gamma$ be an arc or a closed curve on an orbifold $\mathcal{O}$ with triangulation $T$. If $\mathcal{G}_{\gamma,T}$ is the associated loop or band graph with auxiliary tiles with minimal matching $M_-$, then 
    \[
        x_{\min}^\gamma = \frac{x(M_-)}{\mathrm{cross}(\gamma, T)}
    \]
\end{lemma}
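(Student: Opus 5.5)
We compare the two sides locally, tile by tile, after clearing the crossing monomial. The minimal matching $M_-$ consists only of boundary edges of $\widetilde{\mathcal{G}}_{\gamma,T}$ and alternates along the snake: on each maximal straight run of tiles it uses every other boundary edge. We plan to show that, after dividing by $\mathrm{cross}(\mathcal{G}_{\gamma,T})$, the surviving factors of $x(M_-)$ are exactly the labels of the peaks and valleys of $\widehat{\calP}_\gamma$. This is the mechanism used in \cite{ezgieminecluster2024} for surfaces; we reduce to it wherever possible and check the orbifold-specific puzzle pieces by hand.

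\textbf{Step 1 (dictionary between tiles and poset elements).} The tiles $G_1,\dots$ of $\mathcal{G}_{\gamma,T}$ correspond bijectively to the elements of $\calP_\gamma^T$, with equal labels. In particular, an auxiliary tile labelled $U_k\rho$ corresponds to $v_{\rho^*}$, and the loop tiles correspond to the loop elements. Moreover, the cover relations of the poset record whether consecutive tiles are glued "north" or "east", that is, whether the snake turns or goes straight. Two consecutive tiles glued along their shared edge $\tau_{[j]}$ lie in the same straight run exactly when the relation between $v_j$ and $v_{j+1}$ has the same direction as the previous relation. Hence the turning points of the snake graph are the local extrema of the fence.

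\textbf{Step 2 (local computation).} Consider a maximal chain $v_a\prec\dots\prec v_b$ in $\widehat{\calP}_\gamma$. The corresponding tiles form a zigzag, and $M_-$ restricted to it picks boundary edges labelled by the neighbouring arcs. We will show the following telescoping identity:
\[
\prod_{\text{edges of } M_- \text{ in the zigzag}} \mathrm{wt}(e) \Big/ \prod_{a<j<b} \mathcal{L}(v_j) = \mathcal{L}(v_b)/\mathcal{L}(v_a),
\]
up to the shared endpoint conventions. The reason is that each interior tile contributes one $M_-$ edge whose label equals the label of an adjacent tile, namely the third side $\tau_{[j]}$ of the triangle, which cancels with the crossing factor. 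Assembling over all chains then gives the product of labels at the maxima divided by the product at the minima. The adjoined elements $v_0$ and $v_{d+1}$ of $\widehat{\calP}_\gamma$ account for the boundary edges of $G_1$ and $G_d$, where $M_-$ uses the south edge of $G_1$ by Definition \ref{def:MinimalMatching1}. This step transfers essentially verbatim from the surface argument, once we note that boundary edges of weight $1$ correspond to the omitted boundary arcs. For band graphs the circular fence has no endpoints, which is consistent with $\widehat{\calP}_\gamma=\calP_\gamma$. For loops, the elements in hooks are excluded from $\max(\gamma)$, matching the fact that in a loop graph the glued edges cancel against the extra spoke tiles.

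\textbf{Step 3 (orbifold pieces; expected main obstacle).} The scalars $U_k$ must match exactly. For the three-tile piece of Table \ref{table:PuzzlePiecesWindingAuxiliary}, we check directly that in the $+,-,+$ orientation $M_-$ uses $\alpha$ (south), the edge $U_k\rho$ on top, and $\beta$ (east). Dividing by the crossings $x_\rho\cdot U_kx_\rho\cdot x_\rho$ should reproduce $\mathcal{L}(v_{\rho^*})=U_kx_\rho$ as a peak, with the $U_k$ factors cancelling or surviving consistently. For a plain corner arc, the adjoined $v_0$ labelled $U_k\rho$ must account for the $U_k\rho$ boundary edge of the corner tile. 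The notched corner arc, with its two auxiliary tiles $U_{k-1}\rho$ and $U_k\rho$, and the doubly-notched pending arc (with $\lambda_{\mathbf{p}}\rho^l$) are the delicate cases. There the cross term includes scalar factors and the hook elements are excluded from $\max(\gamma)$. We would verify these by direct inspection of the explicit graphs (Example \ref{ex:NotchCorner}) against the explicit posets (Example \ref{ex:enteringnotched}), using $U_{-1}=0$ and Lemma \ref{lem:ChebyshevLogConcave} only where degenerate windings force it. Finally, applying $\Phi$ to both sides handles the self-folded triangles uniformly. In the case $\gamma^0\in T$, the extra factor $x_{\gamma^0}$ appears consistently in both $\mathrm{cross}(\mathcal{G}_{\gamma,T})$ and Definition \ref{def:PosetMin}.
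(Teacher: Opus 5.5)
Your overall strategy matches the paper's proof. The paper obtains the lemma by quoting the surface and tagged-arc statements (\cite[Proposition~6.2]{musiker2013bases}, \cite[Lemma~1]{banaian2024skein}). It observes that tiles labelled $U_k\rho$ enter $x(M_-)$, $\mathrm{cross}$ and the peak/valley recipe exactly as ordinary tiles do, and then checks the three-tile piece by hand. However, the local facts you write down are wrong, and your one orbifold-specific check would fail as stated.

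\textbf{Step~1 has the dictionary reversed.} If $v_{j-1},v_j,v_{j+1}$ form a chain, then $\tau_{i_{j-1}},\tau_{i_j},\tau_{i_{j+1}}$ form a fan at a common endpoint, and the tiles $G_{j-1},G_j,G_{j+1}$ \emph{turn}. A peak or valley at $v_j$ means $\tau_{i_{j\pm1}}$ sit at opposite ends of $\tau_{i_j}$, so $G_j$ is the middle of a \emph{straight} piece. As a check, a chain on $d$ elements has $d+1$ ideals, like a zigzag snake graph, while an alternating fence has Fibonacci-many, like a straight one. In items (III) and (IV) of Table~\ref{table:whole}, the peak $w_2$ is the middle tile of a straight row. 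Your Step~2 silently uses the correct version, so the two steps contradict each other.

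\textbf{Step~2 mislabels the surviving edge.} The $M_-$-edge of an interior zigzag tile is not $\tau_{[j]}$. That is a gluing edge, hence interior, and never lies in $M_-$. The boundary edges of such a tile are labelled $\tau_{i_{j\pm1}}$, and that is why they cancel neighbouring crossing factors. The real content of the surface argument is which boundary edges $M_-$ picks at the straight pieces, and you hide this in \emph{up to the shared endpoint conventions}. That is acceptable only if you simply cite the surface result, as the paper does.

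\textbf{Step~3 misidentifies $M_-$ on the auxiliary piece.} On the piece of Table~\ref{table:PuzzlePiecesWindingAuxiliary}, the edges $\alpha$ (south of the first tile), $U_k\rho$ (top of the first tile) and $\beta$ (east of the third tile) cannot coexist in a boundary matching. Once the south edge of the first tile is used, the lower-left vertex of the third tile can only be covered by the boundary edge $U_k\rho$ at the bottom of the third tile. That edge covers the lower-right corner and blocks $\beta$. With your edges the local quotient would be $x_\alpha x_\beta/x_\rho^2$, so no factor $U_kx_\rho$ survives and the peak $v_{\rho^*}$ is lost.

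The correct local picture is as follows.
\begin{itemize}
\item Since $v_{\rho_L}\prec v_{\rho^*}\succ v_{\rho_R}$, the auxiliary tile is the middle of a straight run and cannot be twisted from $M_-$.
\item Hence $M_-$ contains \emph{both} boundary edges of weight $U_kx_\rho$; in the left-hand drawing these are the top of the first tile and the bottom of the third.
\item $\mathrm{cross}$ contains $U_kx_\rho$ once, so exactly $\mathcal{L}(v_{\rho^*})$ survives.
\item The two factors $x_\rho$ from $v_{\rho_L}$ and $v_{\rho_R}$, which lie on chains, cancel against $\rho$-labelled $M_-$-edges of the adjacent tiles.
\end{itemize}
This is precisely the local check the paper displays after the lemma.

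\textbf{The $x_{\gamma^0}$ corrections are not parallel.} Definition~\ref{def:Phi} modifies $\mathrm{cross}$ only for doubly-notched arcs with $\gamma^0\in T$. Definition~\ref{def:PosetMin} modifies $x^\gamma_{\min}$ only for singly-notched ones. These cases need separate comparison with the corresponding loop-graph constructions rather than cancelling against each other.
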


\begin{proof}
Following our definition of $x^\gamma_{\min}$, the proof follows directly from that for ordinary arcs and closed curves (see \cite[Proposition~6.2]{musiker2013bases} ) and tagged arcs (see \cite[Lemma 1]{banaian2024skein}). This is due to the fact that the auxiliary tiles, which have labels of the form $U_k(\lambda_\boldp) \rho$, are treated in the same way as non-auxiliary tiles in both definitions, so that they do not require a separate treatment.
\end{proof}

For an illustration of Lemma \ref{lem:MinTermAgreeAux}, consider the following subposet of a typical poset $\calP_\gamma$ and the associated subgraph of $\calG_{\gamma}$, where the edges in the minimal matching are highlighted. 

    \begin{center}
    \begin{tabular}{cc}
    \raisebox{4mm}{
    \begin{tikzpicture}[scale = 0.8]
    \node(1) at (0,0){$\rho$};
    \node(2) at (1,1){$U_k\rho$};
    \node(3) at (2,0){$\rho$};
    \node(a) at (-1,-1){$\alpha$};
    \node(aa) at (3,-1){$\alpha$};
    \draw(1) -- (2);
    \draw(2) -- (3);
    \draw(1) -- (a);
    \draw(3) -- (aa);
    \end{tikzpicture}
    }
    \hspace{5mm}
    &
    \begin{tikzpicture}[scale = 1]
    \draw[thick] (0,0) to node[below]{$\beta$} (1,0) to node[below]{$\rho$} (2,0) to node[below]{$U_k\rho$} (3,0) to node[right]{$\alpha$} (3,1) to node[above]{$\beta$} (2,1) to node[above]{$\rho$} (1,1) to node[above]{$U_k\rho$} (0,1) to node[left]{$\alpha$} (0,0);
    \draw[thick] (1,0) to node[right, below, xshift=10pt, yshift=-3pt, scale = 0.6]{$U_{k+1}\rho$} (1,1);
    \draw[thick] (2,0) to node[right, below, xshift=10pt, yshift=-3pt, scale = 0.6]{$U_{k-1}\rho$} (2,1);
    \draw[gray,dashed] (0,1) to node[right]{$\rho$} (1,0);
    \draw[gray,dashed] (1,1) to node[right, scale=0.6]{$U_k\rho$} (2,0);
    \draw[gray,dashed] (2,1) to node[right]{$\rho$} (3,0);
    \draw[very thick, red] (0,0) to (0,-1);
    \draw[very thick, red] (1,-1) to (1,0);
    \draw[very thick, red] (3,1) to (3,2);
    \draw[very thick, red] (2,2) to (2,1);
    \draw[very thick, red] (0,1) to (1,1);
    \draw[very thick, red] (2,0) to (3,0);
    \end{tikzpicture}
    \\
    \end{tabular}
    \end{center}

In the labelled poset model, the maximal element $U_k(\lambda_\boldp) \rho$ contributes a factor of $U_k(\lambda_\boldp) x_\rho$ to the numerator of $x^\gamma_{\min}$. In comparison, in the loop graph model $x(M_{-})$ contains two factors of $U_k(\lambda_\boldp) x_\rho$ and $\cross(\calG_\gamma)$ contains one factor of $U_k(\lambda_\boldp) x_\rho$. Hence, the minimal terms from both models agree locally.

With our minimal terms aligned, we turn to comparing the entire Laurent polynomial defined from each object. Recall that we assign a partial order to the set of good matchings of a loop graph by computing the sets of tiles in the symmetric differences $M \ominus M_-$ and ordering these via containment.

\begin{prop}\label{prop:AuxAndPoset}
If $\gamma$ is an arc or closed curve on an orbifold $\mathcal{O}$ with triangulation $T$ and $\calG_{\gamma,T}$ and $\calP_\gamma^T$ are its associated loop or band graph and poset respectively, then there is an order-preserving bijection between the good matchings of $\calG_{\gamma,T}$ and order ideals of $\calP_\gamma^T$. Moreover, we have \[
\chi(\calG_{\gamma,T}) = \chi(\calP_{\gamma}^T).
\]
\end{prop}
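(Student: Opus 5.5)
The plan is to reduce to the surface case of \cite{ezgieminecluster2024}. We observe that a loop or band graph with auxiliary tiles is built only from square tiles glued in the same zig-zag pattern as an ordinary surface loop graph. The labels $U_k(\lambda_\mathbf{p})\rho$ on auxiliary tiles can be treated as formal new arc labels. First we fix the tile indexing $(G_1,\dots,G_N)$ of $\calG_{\gamma,T}$, auxiliary tiles included, and the element indexing of $\calP_\gamma^T$, elements $v_{(\rho,i)}$ and the loop elements of notched corner arcs included. We then check that tiles and poset elements are in label-preserving bijection. We also check that two consecutive tiles $G_j,G_{j+1}$ are glued north/east in the pattern that \cite{ezgieminecluster2024} translates into $v_j\preceq v_{j+1}$ or $v_j\succeq v_{j+1}$. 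This is a local check on each puzzle piece. For ordinary crossings it is rule (i) of the poset construction. For the three-tile pieces of Table~\ref{table:PuzzlePiecesWindingAuxiliary} it gives $v_{\rho_L}\preceq v_{\rho^*}\succeq v_{\rho_R}$. For self-folded pieces and hooks it is the surface case. The glued edges of loop and band graphs match the extra loop/circular relations (ii), (iii). For the notched corner arc and the doubly-notched cases with $\gamma^0\in T$, we compare the explicit graphs (e.g.\ Example~\ref{ex:NotchCorner}) with the explicit posets (Examples~\ref{ex:enteringnotched}, \ref{ex:doubly notched pending arc}).

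With this dictionary in hand, we invoke the surface bijection. For snake graphs this is Propp/Musiker--Schiffler--Williams; for loop and band graphs with good matchings it is Wilson \cite{wilson2020surface} and \cite{ezgieminecluster2024}. The map sends a good matching $M$ to the set of tiles enclosed by the cycles of $M\ominus M_-$, which is an order ideal of $\calP_\gamma^T$. It is order preserving, since the lattice order on matchings is containment of these tile sets. This argument depends only on the tile and gluing combinatorics, not on the weights, so it transfers verbatim.

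For the equality of expansions we use Lemma~\ref{lem:MinTermAgreeAux} to get $x(M_-)/\mathrm{cross}(\calG_{\gamma,T})=x_{\min}^\gamma$. It then suffices to show $x(M)y(M)/x(M_-)=w(I_M)$ for each good matching $M$. Since both sides are multiplicative over the tiles added, we twist one tile at a time along a maximal chain in the lattice. Adding tile $G_j$ replaces its two current matching edges by the other two, multiplying the weight by $x_{\mathrm{CCW}}/x_{\mathrm{CW}}$ of the tile label together with $h_{\tau}$. Applying $\Phi$, this is $\hat y_\tau$ for ordinary tiles, including radii and loops via Definition~\ref{def:Phi}, exactly as in the surface case.

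The main obstacle is the pending pieces, where the edge weights are Chebyshev multiples. Here we compute directly on the three-tile piece with winding $k$. Flipping the left $\rho$ tile swaps edges $U_k\rho,\beta$ for $\alpha,U_{k-1}\rho$, giving $\frac{U_{k-1}}{U_k}\hat y_\rho=w_{\rho_L}$. Flipping the right $\rho$ tile gives $\frac{U_{k+1}}{U_k}\hat y_\rho$. Flipping the auxiliary tile (no $y$-contribution) swaps $U_{k-1}\rho\cdot U_{k+1}\rho$ for $U_k\rho\cdot U_k\rho$ in the appropriate local state, giving $1/(U_{k-1}U_{k+1})$ times the ratio forced by Lemma~\ref{lem:ChebyshevLogConcave}. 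One must check that each ratio is independent of which neighboring tiles are already flipped. This holds since the edges shared between neighbouring tiles carry weights that cancel, as in the surface case. We then redo this computation for the two auxiliary tiles and three $\rho$-elements of the notched corner loop, matching the weights $w_1,w_{\rho^\pm},w_{U_{k-1}\rho},w_{U_k\rho}$, and for the doubly-notched pending arc with its $\lambda_\mathbf{p}\rho^l$ element. This last bookkeeping is where errors are most likely, and I would verify it first on Example~\ref{ex:NotchCorner}. Summing over $M$ then gives $\chi(\calG_{\gamma,T})=x_{\min}^\gamma\mathcal W(\calP_\gamma^T,w)=\chi(\calP_\gamma^T)$.
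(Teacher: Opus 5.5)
Your approach is the same as the paper's. The paper proves this in one line by appealing to the surface results: \cite{musiker2013bases} for snake and band graphs, \cite{wilson2020surface} for loop graphs, and \cite{ezgieminecluster2024} for the weighted identity. Lemma~\ref{lem:MinTermAgreeAux} supplies $x(M_-)/\mathrm{cross}=x_{\min}^\gamma$, and your tile-by-tile twisting is an unpacking of that appeal.

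The thing to fix is your explicit local check on the pending pieces, which is wrong as written.

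\textbf{Auxiliary tile.} In the three-tile piece of Table~\ref{table:PuzzlePiecesWindingAuxiliary}, $U_k\rho$ labels the diagonal of the auxiliary tile, not an edge. Its four edges are $\rho,\rho,U_{k-1}\rho,U_{k+1}\rho$.
\begin{itemize}
\item Adding the tile trades $U_{k-1}x_\rho\cdot U_{k+1}x_\rho$ for $x_\rho^2$. This gives $w_{\rho^*}=1/(U_{k-1}U_{k+1})$ directly, and Lemma~\ref{lem:ChebyshevLogConcave} plays no role.
\item Trading for $(U_kx_\rho)^2$, as you wrote, would give $U_k^2/(U_{k-1}U_{k+1})=1+1/(U_{k-1}U_{k+1})$. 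That is not the poset weight.
\end{itemize}

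\textbf{$\rho$-tiles.} A twist exchanges one pair of opposite edges of a tile for the other pair. The pairs you list, $\{U_k\rho,\beta\}$ and $\{\alpha,U_{k-1}\rho\}$, are adjacent pairs in that table, where the opposite pairs are $\{\alpha,U_k\rho\}$ and $\{\beta,U_{k-1}\rho\}$. So this ratio must be recomputed. Whether it matches $w_{\rho_L}$ or $w_{\rho_R}$ is decided by the counterclockwise convention of Remark~\ref{rmk:WhatCCWGivesUs} and by which of $\alpha,\beta$ is the counterclockwise neighbour of $\rho$.

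\textbf{Notched corner loop.} The auxiliary tile labelled $U_k\rho$ enters with the reverse orientation: its poset weight is $U_{k-1}U_{k+1}$, not the inverse. This is exactly where an appeal to ``as in the winding piece'' would mislead you.

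A cleaner way to justify treating $U_k\rho$ as a formal arc label is to use the lift. By Lemma~\ref{lem:SnakeGraphFromLift}, $\calG_{\gamma,T}$ is literally the surface loop graph of $\widetilde\gamma$ in the completed tile cover. There $U_k\rho$ is a genuine $k$-diagonal of a $\mathbf{p}$-gon, whose adjacent triangles are $(\rho,U_{k-1}\rho,U_k\rho)$ and $(\rho,U_k\rho,U_{k+1}\rho)$. Every twist ratio is then $x_{\mathrm{CCW}}/x_{\mathrm{CW}}$ of a diagonal in an honest triangulation, and setting $y=1$ on the auxiliary arcs recovers the scalar weights. The surface argument then applies verbatim, with no case-by-case edge bookkeeping.
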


\begin{proof}
This follows directly from the analogous result for surfaces. For the first statement, we can follow the logic from \cite{musiker2013bases} for snake graphs and band graphs and from \cite{wilson2020surface} for loop graphs. We note that the case of a doubly-notched arc $\gamma$ with $\gamma^0 \in T$ will appear in \cite{wilson2020surface} in the near future. For the second statement, we can closely follow \cite{ezgieminecluster2024} for all types of graphs.
\end{proof}

\subsection{Comparing Hexagonal and Auxiliary Tiles}\label{subsec:hexaux}

In this section, we show that our two types of tiles give equivalent expansion formulas. 
It will be helpful to first develop some intuition for the properties of good matchings of graphs with hexagonal tiles.

\begin{lemma}\label{lem:HexagonMatch}
Let $G$ be a loop or band graph with a hexagonal tile as below. Then, a good matching of $G$ will include exactly one of the edges from the set $\{A,B,C,D\}$.

\begin{center}
\begin{tikzpicture}
\draw[thick] (-0.7,1.7) to node[midway,above,xshift=-15pt, yshift=-1pt,scale=0.75]{$B$} (1.7,1.7);
\draw[ultra thick,white] (0,2.4) to (1,1);
\draw[thick] (0,2.4) to node[midway,xshift=10pt,yshift=-7pt,scale=0.75]{$C$} (1,1);
\draw[thick](1,1) to  (0,1) to (-0.7,1.7) to node[midway,above,xshift=-4pt,scale=0.75]{$A$} (0,2.4) to (1,2.4) to(1.7,1.7) to node[midway,below,scale=0.75,xshift=7pt]{$D$} (1,1) ;
\draw[dotted, thick](0,0.2) to (0,1);
\draw[dotted, thick](1,0.2) to (1,1);
\draw[dotted, thick](0,2.4) to (0,3.4);
\draw[dotted, thick](1,2.4) to (1,3.4);
\draw[dotted, thick](0,1) to (-0.7, 0.3);
\draw[dotted, thick](1,0.2) to (1,1);
\draw[dotted, thick] (-0.7,1.7) to (-1.4,1);
\draw[dotted, thick](1,2.4) to (1.7,3.1);
\draw[dotted, thick](1.7,1.7) to (2.4,2.4);
\end{tikzpicture}
\end{center}

\end{lemma}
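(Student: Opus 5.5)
The plan is to argue locally at the hexagonal tile, using only the vertices of the hexagon and how they are covered. Label the six boundary vertices of the hexagon. Read off the picture: $A$ joins the west vertex $(-0.7,1.7)$ to the north-west vertex $(0,2.4)$. $B$ joins the west vertex to the east vertex $(1.7,1.7)$. $C$ joins the north-west vertex to the south-east vertex $(1,1)$. $D$ joins the east vertex to the south-east vertex. The remaining hexagon edges are the bottom edge $(0,1)$--$(1,1)$, the two slanted edges at $(0,1)$--$(-0.7,1.7)$ and $(1,2.4)$--$(1.7,1.7)$, and the top edge $(0,2.4)$--$(1,2.4)$. The dotted edges leave the hexagon from its vertices toward neighbouring tiles. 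The key feature is that the west and east vertices $(-0.7,1.7)$ and $(1.7,1.7)$ are the corner vertices of the hexagon. These are the endpoints of the crossing edge $B$, and they are also the two vertices at which the two underlying square tiles meet.

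The first step is to count. The four edges $A,B,C,D$ are all incident to the four vertices west, north-west, east and south-east, so at most two of them can lie in a matching. Moreover, $\{A,D\}$ and $\{B,C\}$ are the only disjoint pairs. I would exclude both pairs by a parity (balance) argument on the unglued snake graph $\widetilde{G}$. A good matching extends to a perfect matching of $\widetilde{G}$ by Definition~\ref{def:GoodMatching}, so it suffices to work in $\widetilde{G}$, since gluing only identifies edges away from the interior of the hexagon.

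Cut $\widetilde{G}$ along the hexagon, using the fact that the hexagonal tile is glued to the rest of the snake graph only along its bottom edge and its top edge (the edges labelled by $\tau_{[j-1]}$ and $\tau_{[j+1]}$). The part of the graph below the hexagon is a snake graph with an even number of vertices, and likewise the part above. In a perfect matching, either the two bottom vertices $(0,1),(1,1)$ are both matched downward or both matched inside the hexagon, and similarly at the top. I would then enumerate the possibilities.

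\emph{Bottom vertices both matched outside, top vertices both matched outside.} The four middle vertices must be matched among themselves using $A,B,C,D$. This forces $\{A,D\}$ or $\{B,C\}$, and I need to rule this out. Here the dotted edges at the west and east vertices matter: if they attach to other tiles, then the perfect-matching count shows that the west and east vertices are never simultaneously matched internally when both ends are matched outside. I expect this to be the main obstacle. Rather than rely on a parity argument alone, I would instead use the known description of perfect matchings of hexagonal tiles from \cite{banaian2020snake}, which effectively treats the hexagon as two square tiles sharing a diagonal edge. In the two-square model, $B$ and $C$ play the roles of crossing edges, and every perfect matching of two glued squares covers the shared pair of vertices by exactly one of the four edges incident to them. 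I would transfer this via the identification of a hexagon with two squares plus an added edge, as noted after the construction.

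\emph{The remaining cases.} In the cases where the bottom edge or the top edge of the hexagon is in the matching (or the slanted edges are used), a direct check shows that exactly one of $A,B,C,D$ covers the leftover pair among the middle vertices. Concluding, every good matching uses exactly one of $A,B,C,D$.
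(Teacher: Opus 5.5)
Your framework is the right one and matches the paper's. Only $\{A,D\}$ and $\{B,C\}$ are disjoint pairs, and you may work with a perfect matching of $\widetilde{G}$. The snake graphs on either side of the hexagon have an even number of vertices, so the two vertices the hexagon shares with a neighbour are either both matched into that neighbour or both matched inside the hexagon.

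However, you misapply this balance condition and then abandon it at the one nontrivial step. In your model, if both bottom and both top vertices are matched outside, only the west and east vertices remain, and they are forced onto $B$ alone. There are no ``four middle vertices'' and nothing to exclude. The pairs $\{A,D\}$ and $\{B,C\}$ arise elsewhere. Each pair covers $(1,1)$ from inside the hexagon and also uses $(-0.7,1.7)$. So both hexagon neighbours of $(0,1)$ are taken, and $(0,1)$ must be matched outward alone (or is unmatched if there is no tile below). That is exactly the imbalance your parity claim forbids. Similarly, using none of $A,B,C,D$ forces $(-0.7,1.7)$ onto $(0,1)$, which leaves $(1,1)$ to be matched outward alone. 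That is the whole proof, and it is what the paper does.

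Instead, you defer the exclusion of $\{A,D\}$ and $\{B,C\}$ to an unstated description of hexagonal-tile matchings from \cite{banaian2020snake}. The supporting claim you give is false: in a snake graph with two square tiles, two of its three perfect matchings cover the endpoints of the shared edge by two different edges, not by exactly one. So, as written, the key step is unproved and the ``remaining cases'' are only asserted.

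You should also drop the assumption that the neighbouring tiles are glued along $(0,1)$--$(1,1)$ and $(0,2.4)$--$(1,2.4)$. The dotted edges in the figure show they may instead be glued along $(0,1)$--$(-0.7,1.7)$ or $(1,2.4)$--$(1.7,1.7)$. The same balance argument works there, with shared pair $\{(0,1),(-0.7,1.7)\}$:
\begin{itemize}
\item In the $\{A,D\}$ and $\{B,C\}$ cases, $(0,1)$ is forced outward while $(-0.7,1.7)$ is covered inside.
\item In the ``none'' case, $(1,1)$ is forced onto $(0,1)$, which leaves $(-0.7,1.7)$ to go outward alone.
\end{itemize}
These cases must be treated explicitly.
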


\begin{proof}
Many of the possible pairs of $\{A,B,C,D\}$ share a vertex, so that it is clear they cannot coexist in a good matching. Using any other pair of these edges, such as $A$ and $D$, will separate the remainder of the graph into two subgraphs that each contain an odd number of vertices, since the graphs connected to the hexagonal tile each have an even number of edges. Similarly, using none of the edges $A,B,C,D$ will either, depending on the specific graph, leave a vertex unmatched or again require a good matching of a subgraph with an odd number of vertices.
\end{proof}

Now, using Lemma \ref{lem:HexagonMatch}, we show how to replace one hexagonal tile in a loop or band graph with three ordinary tiles, one of which is an auxiliary tile.

\begin{prop}\label{prop:NonzeroWindingAuxiliary}
Let the three snake graphs $\calG_1,\calG_2,$ and $\calG_3$, depicted in Figure~\ref{fig:three_graphs}, be isomorphic as labelled graphs apart from the depicted subgraphs. Specifically, suppose that there are snake graphs $H_1$ and $H_2$ such that the portion of each $\calG_i$ southwest of the depicted subgraph is isomorphic to $H_1$ and the portion northeast of the depicted subgraph is isomorphic to $H_2$. Furthermore, suppose the weights satisfy $\mathrm{wt}(A)=\mathrm{wt}(D)=wt(\sigma) =x_\sigma$, $\mathrm{wt}(B)=\mathrm{wt}(B')$, $\mathrm{wt}(C)=\mathrm{wt}(C')$, and the labels satisfy $x_{B'}x_{C'} + x_{\rho}^2 = x_{\sigma}^2$. Set $h_\sigma = 1$; equivalently, let $\calG_2$ and $\calG_3$ be considered as snake graphs with auxiliary tiles labelled by $\sigma$.
 Then, \[
 \chi(\calG_1) = \chi(\calG_2) = \chi(\calG_3).
 \]

\end{prop}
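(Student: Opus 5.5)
I would prove the statement by a local computation. Every good matching of each $\calG_i$ is determined by what it does on the depicted subgraph together with a compatible matching of $H_1$ and of $H_2$. I will therefore group the matchings of each $\calG_i$ according to their restriction to the gluing boundary, that is, according to which boundary vertices shared with $H_1$ and $H_2$ are covered from inside the depicted subgraph. By Lemma \ref{lem:HexagonMatch}, a good matching of $\calG_1$ contains exactly one of $A,B,C,D$. Each of these four choices forces a definite \emph{boundary state}, meaning the parity and pattern on the two interface edges, and hence forces the matchings of $H_1$ and $H_2$ to lie in specific classes. The plan is to show that, for each boundary state, the sum of local weights times $y$-contributions, divided by the local crossing monomial, agrees across the three graphs. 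Then $\chi(\calG_1)=\chi(\calG_2)=\chi(\calG_3)$ follows by summing over boundary states, since the $H_1$ and $H_2$ factors are identical.

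For the main step, I enumerate the local matchings of $\calG_2$, the three-tile piece with the auxiliary middle tile $\sigma$. There are five of them, as in Remark \ref{rem:k=0AuxTile}, so I expect a correspondence of the following shape: four boundary states, with one state realized by two matchings. In $\calG_2$ the local crossing monomial is $x_\rho^2 x_\sigma$, while in $\calG_1$ it is $x_\rho^2$. So the goal in each boundary state is an identity of the form
\[
\frac{\sum (\text{local weights in }\calG_2)}{x_\sigma} = \sum(\text{local weights in }\calG_1).
\]
The three boundary states realized by a single matching should match directly. In each, one edge of weight $x_\sigma$ from $A$ or $D$ cancels the extra $x_\sigma$ in the crossing term, and the remaining edge weights coincide through $\mathrm{wt}(B)=\mathrm{wt}(B')$ and $\mathrm{wt}(C)=\mathrm{wt}(C')$.

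The remaining state corresponds, in $\calG_1$, to the single matching using the crossing edge (say $B$ or $C$) whose local $y$-weight is $h_\rho$. In $\calG_2$ this state is realized by two matchings: one that flips only the middle tile and one that does not. Because $h_\sigma=1$, both carry the same $y$-monomial. Their $x$-weights sum to a multiple of $x_{B'}x_{C'}+x_\rho^2$, which the hypothesis converts to $x_\sigma^2$. Dividing by the extra $x_\sigma$ then leaves exactly the $\calG_1$ weight. Making sure the $y$-monomials agree (the heights of the $\rho$ tiles enclosed by $M\ominus M_-$) requires checking that the minimal matching restricts compatibly. I would verify this using Definition \ref{def:MinimalMatching1}: both minimal matchings consist of boundary edges, with $A$ or $D$ and the outer boundary edges.

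Finally, $\calG_3$ is the reflection of $\calG_2$ (the $-,+,-$ orientation piece). I would treat it by the same enumeration, or by a symmetry argument that swaps the roles of $B',C'$ and of the two $\rho$ tiles. The main obstacle is the bookkeeping in the degenerate state: identifying precisely which two $\calG_2$ matchings collapse onto one $\calG_1$ matching, and confirming that the $H_1$ and $H_2$ parts they force are identical, including in the loop and band cases where good matchings must extend to the unglued graph. I would first draw all five local matchings explicitly, together with the four of $\calG_1$, and match them by their boundary vertices.
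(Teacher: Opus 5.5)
Your overall strategy is the one the paper uses. You split $\Match(\calG_1)$ according to which of $A,B,C,D$ is used (Lemma \ref{lem:HexagonMatch}) and split the matchings of $\calG_2$ into the five local configurations on the $\rho,\sigma,\rho$ strip. You then hold $M|_{H_1}\sqcup M|_{H_2}$ fixed and note that exactly one state of $\calG_1$ is realized by two matchings of $\calG_2$, whose weights combine via $x_{B'}x_{C'}+x_\rho^2=x_\sigma^2$. However, you assign that degenerate state to the wrong edge, and as written the weight check fails.

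The doubly realized state is not a crossing edge $B$ or $C$. It is $D$, an edge of weight $x_\sigma$. Matching interface vertices gives the following correspondence:
\begin{itemize}
\item the configuration using both $\sigma$-edges covers the same interface vertices as $A$;
\item $\{B',\sigma\}$ corresponds to $B$, and $\{C',\sigma\}$ corresponds to $C$;
\item $\{B',C'\}$ and $\{\rho,\rho\}$ both correspond to $D$. These are the two matchings that differ by flipping the auxiliary tile: its two sides versus its top and bottom.
\end{itemize}
Take $A\in M_-^1$, so that $M_-^2$ uses the two $\sigma$-edges. In the degenerate state the check is $(x_{B'}x_{C'}+x_\rho^2)/x_\sigma=x_\sigma=\mathrm{wt}(D)$. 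The two height monomials are $h_\rho^2$ (two cycles, around the outer tiles) and $h_\rho^2h_\sigma$ (one cycle around all three tiles). Both equal the $h_\rho^2$ carried by $\mathcal{M}^1_D$ once $h_\sigma=1$.

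Your assignment fails on both counts. Dividing $x_\sigma^2$ by $x_\sigma$ leaves $x_\sigma$, which is not $\mathrm{wt}(B)$ or $\mathrm{wt}(C)$; in the intended application these are $U_{k\pm1}(\lambda_{\mathbf{p}})x_\rho$, while $x_\sigma=U_k(\lambda_{\mathbf{p}})x_\rho$. Also, the local $y$-weight of this state is $h_\rho^2$, not $h_\rho$. Your description of the singly realized states is off for the same reason. In the $B$ and $C$ states, the extra factor $x_\sigma$ comes from the $\sigma$-edge of the $\calG_2$ matching, not from $A$ or $D$. With the correct assignment, your argument goes through exactly as in the paper, including the symmetric treatment of $\calG_3$.
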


\begin{figure}
\begin{center}
\begin{tabular}{ccc}
  \begin{tikzpicture}[scale = 1.1]
\draw[thick] (-0.7,1.7) to node[midway,above,xshift=-15pt, yshift=-1pt,scale=0.7]{$B$} (1.7,1.7);
\draw[ultra thick,white] (0,2.4) to (1,1);
\draw[thick] (0,2.4) to node[midway,xshift=14pt,yshift=-7pt,scale=0.7]{$C$} (1,1);
\draw[thick](1,1) to node[below]{$b$}  (0,1) to node[left]{$a$} (-0.7,1.7) to node[midway,above,xshift=-4pt,scale=0.7]{$A$} (0,2.4) to node[above]{$a$} (1,2.4) to node[right]{$b$} (1.7,1.7) to node[midway,below,scale=0.7,xshift=7pt]{$D$} (1,1) ;
\draw[dotted, thick] (0,0.2) to (0,1);
\draw[dotted, thick] (1,0.2) to (1,1);
\draw[dotted, thick](0,2.4) to (0,3.4);
\draw[dotted, thick](1,2.4) to (1,3.4);
\draw[dotted, thick] (0,1) to (-0.7, 0.3);
\draw[dotted, thick] (1,0.2) to (1,1);
\draw[dotted, thick] (-0.7,1.7) to (-1.4,1);
\draw[dotted, thick] (1,2.4) to (1.7,3.1);
\draw[dotted, thick] (1.7,1.7) to (2.4,2.4);
\draw[gray,dashed] (-0.7,1.7) to node[right]{$\rho$} (1,1);
\draw[gray,dashed] (0,2.4) to node[right]{$\rho$} (1.7,1.7);
\node at (0.5,-.5) {$\mathcal{G}_1$};
\end{tikzpicture}   
& \qquad 
\raisebox{0.8cm}{
\begin{tikzpicture}[scale = 0.8]
\draw[thick] (0,0) to node[below]{$b$} (1,0) to node[below]{$\rho$} (2,0) to node[below]{$\sigma$} (3,0) to node[right]{$a$} (3,1) to node[above]{$b$} (2,1) to node[above]{$\rho$} (1,1) to node[above]{$\sigma$} (0,1) to node[left]{$a$} (0,0);
\draw[thick] (1,0) to node[right, scale = 0.6]{$C'$} (1,1);
\draw[thick] (2,0) to node[right, scale = 0.6]{$B'$} (2,1);
\draw[gray,dashed] (0,1) to node[right]{$\rho$} (1,0);
\draw[gray,dashed] (1,1) to node[right]{$\sigma$} (2,0);
\draw[gray,dashed] (2,1) to node[right]{$\rho$} (3,0);
\draw[dotted, thick] (-1,1) to (0,1);
\draw[dotted, thick] (-1,0) to (0,0) to (0,-1);
\draw[dotted, thick] (1,-1) to (1,0);
\draw[dotted, thick] (3,0) to (4,0);
\draw[dotted, thick] (4,1) to (3,1) to (3,2);
\draw[dotted, thick] (2,2) to (2,1);
\node at (1.5,-1.5) {$\mathcal{G}_2$};
\end{tikzpicture}}
& \qquad
\begin{tikzpicture}[scale = 0.8]
\draw[thick] (0,0) to node[below]{$b$} (1,0) to node[right]{$\sigma$} (1,1) to node[right]{$\rho$} (1,2) to node[right]{$a$} (1,3) to node[above]{$b$} (0,3) to node[left]{$\sigma$} (0,2) to node[left]{$\rho$} (0,1) to node[left]{$a$}(0,0);
\draw[thick] (0,1) to node[above, scale =0.6]{$B'$} (1,1);
\draw[thick] (0,2) to node[above, scale = 0.6]{$C'$} (1,2);
\draw[gray,dashed] (0,1) to node[right]{$\rho$} (1,0);
\draw[gray,dashed] (0,2) to node[right]{$\sigma$}(1,1);
\draw[gray,dashed] (0,3) to node[right]{$\rho$} (1,2);
\draw[dotted, thick] (1,-1) -- (1,1);
\draw[dotted, thick] (0,-1) -- (0,0) -- (-1,0);
\draw[dotted, thick] (0,1) -- (-1,1);
\draw[dotted, thick] (0,4) -- (0,3);
\draw[dotted, thick] (1,4) -- (1,3) -- (2,3);
\draw[dotted, thick] (1,2) -- (2,2);
\node at (.5,-1.5) {$\mathcal{G}_3$};
\end{tikzpicture}
\\
\end{tabular}
\end{center}
    \caption{The graphs $\mathcal{G}_1$, $\mathcal{G}_2$, and $\mathcal{G}_3$ discussed in Proposition \ref{prop:NonzeroWindingAuxiliary}}
    \label{fig:three_graphs}
\end{figure}

\begin{proof}

We will explicitly show that $\chi(\calG_1) = \chi(\calG_2)$. The proof that $\chi(\calG_1) = \chi(\calG_3)$ is analogous.

Using Lemma \ref{lem:HexagonMatch}, we can partition $\Match(\calG_1)$ into four disjoint subsets - $\mathcal{M}_A^1,\mathcal{M}_B^1,\mathcal{M}_C^1$ and $\mathcal{M}_D^1$ - where the subscript indicates which edge of the hexagonal tile is included in the matching. We partition $\Match(\calG_2)$ into 5 subsets - $\mathcal{M}_\sigma^2,\mathcal{M}_{B'}^2,\mathcal{M}_{C'}^2, \mathcal{M}_{B'C'}^2$ and $\mathcal{M}_{\rho}^2$ - based on which of the five possible configurations appears on the three tiles labelled by $\rho,\sigma,\rho$. 

\begin{center}
\begin{tabular}{ccccc}
\begin{tikzpicture}[scale = 0.8]
\draw(0,0) -- (0,1) -- (3,1) -- (3,0) -- (0,0);
\draw(1,0) -- (1,1);
\draw (2,0) -- (2,1);
\draw[ultra thick,red] (0,1) -- (1,1);
\draw[ultra thick,red] (2,0) -- (3,0);
\end{tikzpicture}
&
\begin{tikzpicture}[scale = 0.8]
\draw(0,0) -- (0,1) -- (3,1) -- (3,0) -- (0,0);
\draw(1,0) -- (1,1);
\draw (2,0) -- (2,1);
\draw[ultra thick,red] (0,1) -- (1,1);
\draw[ultra thick,red] (2,0) -- (2,1);
\end{tikzpicture}
&
\begin{tikzpicture}[scale = 0.8]
\draw(0,0) -- (0,1) -- (3,1) -- (3,0) -- (0,0);
\draw(1,0) -- (1,1);
\draw (2,0) -- (2,1);
\draw[ultra thick,red] (1,0) -- (1,1);
\draw[ultra thick,red] (2,0) -- (3,0);
\end{tikzpicture}
&
\begin{tikzpicture}[scale = 0.8]
\draw(0,0) -- (0,1) -- (3,1) -- (3,0) -- (0,0);
\draw(1,0) -- (1,1);
\draw (2,0) -- (2,1);
\draw[ultra thick,red] (1,0) -- (1,1);
\draw[ultra thick,red] (2,0) -- (2,1);
\end{tikzpicture}
&
\begin{tikzpicture}[scale = 0.8]
\draw(0,0) -- (0,1) -- (3,1) -- (3,0) -- (0,0);
\draw(1,0) -- (1,1);
\draw (2,0) -- (2,1);
\draw[ultra thick,red] (1,0) -- (2,0);
\draw[ultra thick,red] (1,1) -- (2,1);
\end{tikzpicture}
\\ $\mathcal{M}_\sigma^2$ & $\mathcal{M}_{B'}^2$ & $\mathcal{M}_{C'}^2$ & $\mathcal{M}_{B'C'}^2$ & $\mathcal{M}_{\rho}^2$
\end{tabular}
\end{center}

From Lemma \ref{lem:HexagonMatch}, we know that the minimal matching $M_-^1$ of $\calG_1$ contains either $A$ or $D$. We will assume here that $A \in M_-^1$; the argument for the case with $D \in M_-^1$ follows the same logic. Since $A \in M_-^1$, and $\calG_1$ and $\calG_2$ are the same outside of these subgraphs, the minimal matching $M_-^2$ of $\calG_2$ must be in $\mathcal{M}_\sigma^2$. 

Let $\Psi^\sigma: \mathcal{M}_A^1 \to \mathcal{M}_\sigma^2$ be the map which sends $M \in \mathcal{M}_A^1$ to the unique matching in $\mathcal{M}_\sigma^2$ that can be decomposed as $\{\sigma,\sigma\}\sqcup M\vert_{H_1} \sqcup M\vert_{H_2}$. Because we assumed the weight of $A$ is the same as that of $\sigma$, it follows that $x(\Psi^\sigma(M)) = x_\sigma x(M)$.

Now, we compare $y(\Psi^\sigma(M))$ and $y(M)$ for $M \in \mathcal{M}_A^1$. Because we assumed $M_-^1 \in \mathcal{P}_1^A$, we know that for $M \in \mathcal{M}_A^1$, all cycles in $M \ominus M_-^1$ are completely contained in $H_1$ and $H_2$ and similarly for $\Psi^\sigma(M) \ominus P_-^2$. Because these cycles will contain the same labels, it immediately follows that $y(\Psi^\sigma(M)) = y(M)$. In particular, we have \[
\frac{1}{\cross(\calG_1)} \sum_{M \in \mathcal{M}_A^1} x(M) y(M) = \frac{1}{\cross(\calG_1)} \sum_{M \in \mathcal{M}_A^1} \frac{1}{x_\sigma}x(\Psi^A(M)) y(\Psi^A(M)) = \frac{1}{\cross(\calG_2)} \sum_{M \in \mathcal{M}_\sigma^2} x(M) y(M),
\]
where the final equality uses the identity $\cross(\calG_1) = x_\sigma  \cross(\calG_2)$.

Next, consider $M \in \mathcal{M}_B^1$. We can decompose $M$ as $\{B\} \sqcup M\vert_{H_1} \sqcup M\vert_{H_2}$. Depending on how $H_1,H_2$ are glued onto $G$, $M\vert_{H_i}$ may or may not be a good matching for $i = 1,2$. Let $\Psi^{B}: \mathcal{M}_B^1 \to \mathcal{M}_{B'}^2$ be the map which takes $M \in \mathcal{M}_B^{1}$ to the matching in $\mathcal{M}_{B'}^2$ which decomposes as $\{B',\sigma\} \sqcup M\vert_{H_1} \sqcup M\vert_{H_2}$. One can see that in each case for how the graphs $H_1,H_2$ are glued onto $\calG_2,\calG_2$,  $\Psi^{B}$ is a bijection. Moreover, since $\mathrm{wt}(B) = \mathrm{wt}(B')$, $x(\Psi^{B}(M)) = x_{\sigma} x(M)$.

Now we compare $y(\Psi^B(M))$ and $y(M)$.  Since $B$ is an interior edge, we know that for any $M \in \mathcal{M}_B^1$, $P \ominus M_-^1$ includes a cycle containing the diagonal $\rho$ above $B$ and not the diagonal below; all other cycles are completely contained in $H_1$ or $H_2$. We can also see that $\Psi^B(M) \ominus M_-^2$ contains the diagonal labelled $\rho$ to the right of the tile labelled $\sigma$; this cycle will contain the same labels as the cycle in $M \ominus M_-^1$ which contains the diagonal $\rho$, and the remainder of the cycles are contained in $H_1$ or $H_2$. Thus, $h(\Psi^B(M)) = h_\sigma h(M)$ so  $y(\Psi^B(M)) = y(M)$.

The same statements for $\mathcal{M}_C^1$ and $\mathcal{M}_{C'}^2$ as for $\mathcal{M}_B^1$ and $\mathcal{M}_{B'}^2$ follow by symmetry.

Finally, we construct two maps $\Psi^{BC}$ and $\Psi^\rho$ which send a good matching $M \in \mathcal{M}_D^1$ to $\mathcal{M}_{B'C'}^2$ and $\mathcal{M}_{\rho}^2$ respectively. We define $\Psi^{BC}(M)$ to be the matching in $\mathcal{M}_{B'C'}^2$ given by $M\vert_{H_1} \sqcup M\vert_{H_2} \sqcup \{B',C'\}$, and define $\Psi^{\rho}(M)$ to be the matching in $\mathcal{M}_{\rho}^2$ given by $M\vert_{H_1} \sqcup M\vert_{H_2} \sqcup \{\rho, \rho \}$. We can see that $x(\Psi^{BC}(M)) = \frac{\mathrm{wt}(B)\mathrm{wt}(C)}{\mathrm{wt}(A)} x(M)$ and $x(\Psi^{\rho}(M)) = \frac{x_\rho^2}{\mathrm{wt}(A)} x(M)$. Since we assumed $B'C' + x_{\rho}^2 = x_{\sigma}^2$ and $\mathrm{wt}(A) = x_\sigma$, we can see that for $M \in \mathcal{M}_A^1$, $x(\Psi^{BC}(M)) + x(\Psi^\rho(M)) = x_{\sigma}x(M)$. Since $M_-^2 \in \mathcal{M}_\sigma^2$, we can see that $h(\Psi^{BC}(M)) = h(M)$ and $h(\Psi^{\rho}(M)) = h_\sigma h(M)$ so that $y(\Psi^{BC}(M)) = y(\Psi^{\rho}(M)) = y(M)$. 

Combining these observations with the definition of $\chi(\mathcal{G}_1)$ and $\chi(\mathcal{G}_2)$ completes the proof.
\end{proof}

Via repeated applications of \Cref{prop:NonzeroWindingAuxiliary}, one can convert a loop or band graph containing an arbitrary number of hexagonal tiles into one containing only regular and auxiliary square tiles.

\begin{remark}
By comparing \Cref{lem:HexagonMatch}, \Cref{def:LaurentExpansionFromSnakeGraph}, and the weight assignments on the hexagonal tiles in \Cref{table:PuzzlePiecesWinding}, we observe that every good matching \( M \) of a loop or band graph containing a hexagonal tile associated with \( \rho \) carries a weight factor of \( x_\rho \). Consequently, since $ \text{cross}(\gamma,T)$ includes a factor of \( x_\rho^2 \) for this tile, one factor of \( x_\rho \) can be universally cancelled.
\end{remark}

In \Cref{subsec:aux}, we explained how we chose one specific loop or band graph with auxiliary tiles, $\mathcal{G}_\gamma$, to associate to each arc and closed curve on $\mathcal{O}$. Therefore, repeated use of \Cref{prop:NonzeroWindingAuxiliary} yields the following.

\begin{prop}\label{prop:CanChooseAuxiliaryTiles}
Let $\gamma$ be an arc or closed curve on an orbifold $\mathcal{O}$ which is not a notched corner arc. If $\mathcal{G}_{\gamma}$ is the associated loop or band graph with auxiliary tiles and $\mathcal{G}^{\text{gen}}_{\gamma}$ is the associated loop or band graph with hexagonal tiles, then \[
\chi(\mathcal{G}_{\gamma,T}) = \chi(\mathcal{G}^{\text{gen}}_{\gamma,T}).
\]
\end{prop}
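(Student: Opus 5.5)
The plan is induction on the number of hexagonal tiles in $\mathcal{G}^{\text{gen}}_{\gamma,T}$, replacing one hexagon at a time using Proposition \ref{prop:NonzeroWindingAuxiliary}. First, tiles coming from pending arcs with winding number $k\in\{0,\mathbf{p}-2\}$ need no work. There the hexagonal tile collapses to the two square tiles in the middle row of Table \ref{table:PuzzlePiecesWinding}, because $U_{-1}(\lambda_{\mathbf{p}})=U_{\mathbf{p}-1}(\lambda_{\mathbf{p}})=0$. These are exactly the tiles used in $\mathcal{G}_{\gamma,T}$, consistent with Remark \ref{rem:k=0AuxTile}. So we may assume every hexagonal tile in $\mathcal{G}^{\text{gen}}_{\gamma,T}$ corresponds to a pair of consecutive crossings of a pending arc $\rho$ with winding number $0<k<\mathbf{p}-2$.

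For a single such hexagon, match the data of Proposition \ref{prop:NonzeroWindingAuxiliary} to the first row of Table \ref{table:PuzzlePiecesWinding} and to Table \ref{table:PuzzlePiecesWindingAuxiliary}:
\[
\sigma=U_k\rho, \qquad \mathrm{wt}(A)=\mathrm{wt}(D)=U_kx_\rho, \qquad \{\mathrm{wt}(B),\mathrm{wt}(C)\}=\{U_{k-1}x_\rho,\,U_{k+1}x_\rho\}.
\]
The required relation $x_{B'}x_{C'}+x_\rho^2=x_\sigma^2$ then reads $U_{k-1}U_{k+1}x_\rho^2+x_\rho^2=U_k^2x_\rho^2$, which is Lemma \ref{lem:ChebyshevLogConcave}. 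The auxiliary tile is labelled $U_k\rho$, which is not in $T$, so it contributes no height variable. This matches the hypothesis $h_\sigma=1$. Its factor $U_kx_\rho$ in $\mathrm{cross}$ also matches the convention of Definition \ref{def:Phi}. Whether $\mathcal{G}_2$ or $\mathcal{G}_3$ is the right replacement depends on whether the hexagon has orientation $(+,-)$ or $(-,+)$, and it should agree with the left or right piece of Table \ref{table:PuzzlePiecesWindingAuxiliary}. I would check that the $U_{k\pm1}$ edge placements agree in each case, using Remark \ref{rmk:WhatCCWGivesUs}.

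The main obstacle is that Proposition \ref{prop:NonzeroWindingAuxiliary} is stated for snake graphs $H_1,H_2$ attached on either side, whereas here the graph may be a loop or band graph. Its edge identifications may pass through, or wrap around, the region near the hexagon. I would redo the proof's partition $\mathcal{M}_A,\dots,\mathcal{M}_D$ versus $\mathcal{M}_\sigma,\dots,\mathcal{M}_\rho$ at the level of good matchings. Lemma \ref{lem:HexagonMatch} is already stated for loop and band graphs. The maps $\Psi$ act only on the local subgraph, and extendability to the unglued snake graph is preserved because the boundary behaviour on $H_1\sqcup H_2$ is unchanged. One more point needs checking: the minimal matching of a loop or band graph, via Definition \ref{def:MinimalMatching1}, must lie in the local class $\mathcal{M}_A$ or $\mathcal{M}_D$ compatibly in both graphs. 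That is where the proof's $y$-comparison is used. For band graphs, cycles in $M\ominus M_-$ may wrap around the gluing, but they still contain the same labelled diagonals outside the local piece, so the height comparison carries over.

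Finally, the notched corner arcs are excluded because they have no hexagonal model. For all other cases, namely plain arcs crossing self-folded triangles, notched arcs, and closed curves, Remark \ref{rmk:ExtendHexagonalConstruction} says $\mathcal{G}^{\text{gen}}$ and $\mathcal{G}$ differ exactly at the hexagons. Applying the local replacement hexagon by hexagon therefore transforms $\mathcal{G}^{\text{gen}}_{\gamma,T}$ into $\mathcal{G}_{\gamma,T}$ while preserving $\chi$, which proves the claim.
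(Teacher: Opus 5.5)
Your proposal is correct and takes the same route as the paper: the paper proves this by repeated application of Proposition \ref{prop:NonzeroWindingAuxiliary}, one hexagonal tile at a time. Your checks of its hypotheses, namely $x_{B'}x_{C'}+x_\rho^2=x_\sigma^2$ via Lemma \ref{lem:ChebyshevLogConcave} with $\sigma=U_k\rho$, $h_\sigma=1$, the two constructions coinciding when $k\in\{0,\mathbf{p}-2\}$, and the loop/band gluings, spell out what the paper leaves implicit.
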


An application of our analysis comparing graphs with hexagonal tiles and auxiliary tiles, along with Proposition \ref{prop:AuxAndPoset}, is the observation that the lattice of good matchings of a loop or band graph with hexagonal tiles is distributive. In Proposition \ref{prop:AuxAndPoset}, we show a bijection between good matchings of $\calG_{\gamma}$ and $\calP_{\gamma}$, in line with similar statements in earlier literature. A feature of this bijection is a 1-1 correspondence between tiles of $\calG_{\gamma}$ and elements of $\calP_{\gamma}$. Accordingly, let an \emph{auxiliary element} of $\calP_{\gamma}$ be one in correspondence with an auxiliary tile of $\calG_{\gamma}$.

\begin{theorem}\label{Thm:LatticeOfGenSGIsDistributive}
The lattice of good matchings of a loop or band graph with hexagonal tiles is distributive, and its poset of join-irreducibles is isomorphic to the induced subposet of $\calP_{\gamma}$ on the complement of the auxiliary elements. 
\end{theorem}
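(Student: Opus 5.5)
Our plan is to realize the good matchings of $\GGen_{\gamma,T}$ as a sublattice of the distributive lattice $J(\calP_\gamma)$ by tracking, through the bijections already established, which good matchings of $\calG_{\gamma,T}$ actually arise from good matchings of the hexagonal graph. By \Cref{prop:AuxAndPoset}, good matchings of $\calG_{\gamma,T}$ correspond bijectively and order-preservingly to order ideals of $\calP_\gamma$, with the order on matchings given by containment of the tile sets of $M\ominus M_-$. The proof of \Cref{prop:NonzeroWindingAuxiliary} gives, locally at each hexagonal tile, an explicit comparison. Matchings in $\mathcal{M}_A^1,\mathcal{M}_B^1,\mathcal{M}_C^1$ correspond injectively to matchings in $\mathcal{M}_\sigma^2,\mathcal{M}_{B'}^2,\mathcal{M}_{C'}^2$. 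A matching in $\mathcal{M}_D^1$ corresponds to two matchings, one in $\mathcal{M}_{B'C'}^2$ and one in $\mathcal{M}_\rho^2$, and these differ only by the twist on the auxiliary tile $\sigma$.

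First, we will choose the canonical lift $\iota$ from good matchings of $\GGen$ to good matchings of $\calG$. This is $\Psi^\sigma,\Psi^B,\Psi^C$ on the first three classes and $\Psi^{\rho}$ (equivalently, always including the auxiliary tile) on $\mathcal{M}_D^1$, applied independently at each hexagonal tile. Translating to ideals of $\calP_\gamma$ using the local picture $v_{\rho_L}\prec v_{\rho^*}\succ v_{\rho_R}$, the image of $\iota$ is exactly the set of ideals $I$ with $v_{\rho^*}\in I$ whenever $v_{\rho_L},v_{\rho_R}\in I$. Indeed, the four classes $A,B,C,D$ correspond to $I\cap\{v_{\rho_L},v_{\rho_R}\}$ being $\emptyset$, one singleton, the other singleton, or both, and on the $D$ class we select the ideal that contains the auxiliary element. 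Since $v_{\rho^*}$ covers exactly $v_{\rho_L}$ and $v_{\rho_R}$, this condition is equivalent to ``$I$ is generated by its non-auxiliary elements''. The map $I\mapsto I\cap Q$, where $Q$ is the induced subposet on non-auxiliary elements, is therefore a bijection from this image onto $J(Q)$. Its inverse sends $K\in J(Q)$ to its down-closure in $\calP_\gamma$ with the appropriate auxiliary elements adjoined; this is an ideal because each auxiliary element is maximal in its local fence.

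Second, we must check that the order on good matchings of $\GGen$ transported by $\iota$ is containment in $J(Q)$. We plan to do this by comparing symmetric differences with the minimal matching. As computed in the proof of \Cref{prop:NonzeroWindingAuxiliary}, $h(\iota(M))$ equals $h(M)$ times the appropriate power of $h_\sigma$. More concretely, the tiles enclosed by $\iota(M)\ominus M_-^2$ are the square-tile images of the half-tiles enclosed by $M\ominus M_-^1$, plus the auxiliary tile exactly in the $D$ case. Containment is therefore preserved and reflected. It then follows that the matching lattice of $\GGen$ is isomorphic to $J(Q)$, hence distributive by the Fundamental Theorem of Finite Distributive Lattices, with join-irreducibles $Q$.

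The main obstacle is pinning down the order on good matchings of a graph with hexagonal tiles. Hexagonal tiles are ``two squares'' sharing crossing edges, so we must specify how $M\ominus M_-$ encloses the two halves $G_j,G_{j+1}$ separately. We would handle this by declaring the half-tiles enclosed as in the $\Psi$ correspondences above. We would then verify, case by case for loop and band gluings, that the twist moves between matchings of $\GGen$ (including the move $A\leftrightarrow D$ through a single hexagon) match covering relations in $J(Q)$. The key point is that the move $A\to D$ corresponds to adding both $v_{\rho_L}$ and $v_{\rho_R}$, which is a single cover in $J(Q)$ but two covers in $J(\calP_\gamma)$.
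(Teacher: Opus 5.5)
Your argument is correct in substance and follows the paper's route. Like the paper, you use the local correspondences $\Psi^\sigma,\Psi^B,\Psi^C,\Psi^{BC},\Psi^\rho$ from the proof of Proposition~\ref{prop:NonzeroWindingAuxiliary} to choose one good matching of $\calG_2$ for each good matching of $\calG_1$, and then transport through Proposition~\ref{prop:AuxAndPoset}.

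The difference is the section you choose. The paper keeps only those matchings $M$ of $\calG_2$ whose symmetric difference $M\ominus M_-^2$ does \emph{not} enclose the auxiliary tile, i.e.\ it uses $\Psi^{BC}$ on $\mathcal{M}_D^1$. Since each auxiliary element is maximal, that image is exactly the set of ideals avoiding the auxiliary elements. This is literally $J(Q)$ and a genuine sublattice of $J(\calP_\gamma)$, so nothing further is needed.

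With your choice $\Psi^\rho$, the image $S$ is in general \emph{not} closed under unions. Two ideals whose traces on $\{v_{\rho_L},v_{\rho^*},v_{\rho_R}\}$ are $\{v_{\rho_L}\}$ and $\{v_{\rho_R}\}$ have a union containing both but not $v_{\rho^*}$. So your opening claim that you realize the matchings as a sublattice of $J(\calP_\gamma)$ is false for your $\iota$. Likewise, ``$I$ is generated by its non-auxiliary elements'' describes the paper's section, not yours; yours picks the \emph{largest} ideal with a given trace on $Q$.

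What saves the argument is the step you actually carry out. The map $I\mapsto I\cap Q$ is an order isomorphism $S\to J(Q)$. Your Step~2 correctly shows that the containment order on enclosed (half-)tiles becomes containment in $J(Q)$. That suffices for distributivity and for identifying the join-irreducibles with $Q$, and it handles all hexagons at once instead of iterating as the paper does.

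Your final paragraph contains an error and should be dropped. In $Q$ the elements $v_{\rho_L}$ and $v_{\rho_R}$ are incomparable, since their only local relation passed through $v_{\rho^*}$. Passing from the $A$-configuration to the $D$-configuration therefore adds two incomparable elements, which is two covers in $J(Q)$, not one. Correspondingly, the hexagon twist $A\leftrightarrow D$ is not a cover in the containment order on good matchings of $\GGen$. The $B$- and $C$-configurations lie strictly between, each obtained from $A$ by twisting a single half-square of the hexagon. Under your own $\iota$ this twist is also three covers in $J(\calP_\gamma)$, not two. The proposed twist-by-twist verification is unnecessary anyway, because the order is defined by containment of enclosed half-tiles, which Step~2 already handles.
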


\begin{proof}
In the proof of Proposition \ref{prop:NonzeroWindingAuxiliary}, we give a surjection from the set of good matchings of a loop or band graph with hexagonal tiles $\calG_1$ to the set of good matchings of a loop or band graph $\calG_2$ with one less hexagonal tile and one auxiliary tile. Let the auxiliary tile be  $G_j$. 

Recall in the proof of Proposition \ref{prop:NonzeroWindingAuxiliary}, we assumed $A \in M_-$. We will keep this convention for now. This implies that the element associated to $G_j$ in $\calP_{\gamma}$ is maximal. 
Notice our map in this proof would be a bijection if we only consider the set of good matchings $M$ of $\calG_2$ such that $G_j$ is not contained in $M \ominus M^2_-$. This subset of good matchings is isomorphic to pairs of matchings of the two smaller graphs resulting from removing the two edges of $\calG_2$ labelled $\tau$. Note that the details of the proof of Proposition \ref{prop:NonzeroWindingAuxiliary} concerning the $h$-monomials and $y$-monomials guarantee we are considering order-preserving maps. Therefore, if there are no other hexagonal tiles in the loop or band graph, then from \cite[Theorem 5.2]{musiker2013bases} and \cite[Theorem 7.9]{wilson2020surface}, we know that the containment order on $M \ominus M^1_-$ endows the good matchings of $\calG_1$ with the structure of a distributive lattice.  If $\calG_2$ has other hexagonal tiles, we repeat this process until we have replaced all hexagonal tiles. 
\end{proof}

In our running example of a snake graph with hexagonal tiles $\calG^{\text{gen}}_{\gamma,T}$ (see item (II) in Table \ref{table:whole}), the corresponding poset of join irreducibles is the result of deleting element $w_2$ in the associated poset $\calP_{\gamma}$ (item (IV) in Table \ref{table:whole}).

The maximal connected subsets of an ordered set are referred as its \emph{connected components}~\cite{Schroder2003}. By Theorem~\ref{Thm:LatticeOfGenSGIsDistributive}, the lattice of good matchings for a loop or band graph with hexagonal tiles is isomorphic to the lattice of order ideals of the disjoint union of its connected components. Consequently, the rank generating polynomial of the set of good matchings is simply the product of the rank generating polynomials of its individual connected components.

We can compute these polynomials using our matrix formulas with each $w_i$ set equal to the formal variable $q$. If these  rank matrices associated with the components in the variable $q$ are denoted by $M_1, M_2, \dots, M_l$, then the desired rank generating polynomial is the upper-left entry of the matrix product:
\[ M_1 \begin{pmatrix} 1 & 0 \\ 0 & 0 \end{pmatrix} M_2 \begin{pmatrix} 1 & 0 \\ 0 & 0 \end{pmatrix} M_3 \dots \begin{pmatrix} 1 & 0 \\ 0 & 0 \end{pmatrix} M_l \]

For example, the rank generating function for the poset of perfect matchings of the snake graph with hexagonal tiles given in item (II) of Table \ref{table:whole} is the upper left entry of \[
\begin{pmatrix}
q+1 & -q \\ 1 & 0
\end{pmatrix}
\begin{pmatrix}
1 & 0 \\ 0 & 0
\end{pmatrix}
\begin{pmatrix}
q+1 & -q \\ 1 & 0
\end{pmatrix}
\begin{pmatrix}
q+1 & -q \\ 1 & 0
\end{pmatrix} = \begin{pmatrix} q^3 + 2q^2 + 2q + 1 & -q^3 - 2q^2 -q \\ q^2 + q + 1 & -q^2 -q \end{pmatrix}.
\]

\subsection{\texorpdfstring{Comparing $T$-walks and Loop Graphs with Hexagonal Tiles}{Comparing T-walks and Loop Graphs with Hexagonal Tiles}}\label{subsec:TWalkAndSnake}

The comparison between snake graphs and $T$-walks was first explored in \cite[Section 4.5]{musiker2010cluster}, where the authors established a direct bijection between the steps of a $T$-walk and the edges included in a perfect matching of the corresponding snake graph. The setting for this article was unpunctured surfaces, and the  correspondence was generalized to the case of punctured surfaces in \cite{ezgieminecluster2024}. In the following sections, we demonstrate that this bijection can be further, nontrivially extended to generalized cluster algebras arising from punctured orbifolds.

The lemma below follows directly from the definition of a $T$-walk and the method of assigning an $x$-weight and $y$-weight to each.

\begin{lemma}\label{lem:BreakUpTWalk}
Let $\gamma$ be a generalized arc or closed curve on $\mathcal{O}$ and let $\tau_{i_j} \in T$ be a pending arc crossed by $\gamma$ such that $\gamma$ winds $0<k<\mathbf{p}-2$ times around the enclosed orbifold point of order $\mathbf{p}$. Let \(\tau_{i_e},\tau_{i_{e+1}},\ldots,\tau_{i_{j-1}},\tau_{i_j}\), \(\tau_{i_{j+1}}\ldots,\tau_{i_{f-1}},\tau_{i_f}\) be the contiguous sublist of arcs crossed by $\gamma$ which share an endpoint with $\tau_{i_j}$ and lie counterclockwise from $\tau_{i_j}$.
\begin{enumerate}
    \item[(i)] Suppose that $j = 1$ and $\gamma$ crosses $\tau_{i_j}$ just once. The subset of $TW(\gamma)$ containing $T$-walks such that $\alpha_1$ is oriented minimally is in bijection with $TW(\gamma_1)$, where $\gamma_1$ is drawn on the left in Figure \ref{fig:ForBreakUpTWalkLemma}. Moreover, given such a $T$-walk, $T_{\vec{v}} \in TW(\gamma)$, which is sent to $T_{\vec{u_1}} \in TW(\gamma_1)$, we have $ x(T_{\vec{v}}) = U_{k-1}(\lambda_{\mathbf{p}}) x(T_{\vec{u_1}})$ and $y(T_{\vec{v}}) = y(T_{\vec{u_1}}).$ Similarly, the subset of $TW(\gamma)$ containing $T$-walks such that $\alpha_1$ is oriented maximally is in bijection with $TW(\gamma_2)$, where $\gamma_2$ is drawn on the left in Figure \ref{fig:ForBreakUpTWalkLemma}. Moreover, given such a $T$-walk, $T_{\vec{v}}$, which is sent to $T_{\vec{u_2}}$, we have $ x(T_{\vec{v}}) = U_{k}(\lambda_{\mathbf{p}})x(T_{\vec{u_2}})$ and $y(T_{\vec{v}}) = \Phi(h_{i_1}h_{i_2}\cdots h_{i_f})y(T_{\vec{u_2}}).$

\item[(ii)] Suppose $\gamma$ crosses $\tau_{i_j}$ twice consecutively, i.e., $\tau_{i_j} = \tau_{i_{j+1}}$. Define $\gamma_1,\gamma_2,\gamma_3,$ and $\gamma_4$ as on the right in Figure \ref{fig:ForBreakUpTWalkLemma}, where the solid black line is $\tau_{i_j}$ and the dashed black line is $\gamma$.
Partition $TW(\gamma)$ into sets $TW_{(a,b)} = \{T_{\vec{v}} \in TW(\gamma): v_{j} = a \text{ and } v_{j+1} = b\}$ for $a,b \in \{0,1\}$. We have bijections \begin{align*}
&TW_{(0,0)} \cong TW(\gamma_1) \times TW(\gamma_3) \qquad TW_{(1,0)} \cong TW(\gamma_2) \times TW(\gamma_3) \\
&TW_{(0,1)} \cong TW(\gamma_1) \times TW(\gamma_4) \qquad TW_{(1,1)} \cong TW(\gamma_2) \times TW(\gamma_4).
\end{align*}
Given $T_{\vec{v}} \in TW_{(a,b)}$, corresponding to the pair $(T_{\vec{v_1}},T_{\vec{v_2}})$, we have $x(T_{\vec{v}}) = \frac{C_j}{x_{i_j}}x(T_{\vec{v_1}})x(T_{\vec{v_2}})$ where $C_j$ is as in Table \ref{table:Twalkpending} and $y(T_{\vec{v}}) = y(a,b)y(T_{\vec{v_1}})y(T_{\vec{v_2}})$ where $y(a,b)$ is defined as follows, \[
y(0,0) = 1 \quad y(1,0) = \Phi(h_{i_e}h_{i_{e+1}}\cdots h_{i_j}) \quad y(0,1) = \Phi(h_{i_j}h_{i_{j+1}} \cdots h_{i_f}) \quad y(1,1) = \Phi(h_{i_e}h_{i_{e+1}}\cdots h_{i_f})
\]
\end{enumerate}

\end{lemma}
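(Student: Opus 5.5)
The plan is a local cut-and-paste argument on $T$-walks. A $T$-walk is a concatenation $\beta_0\alpha_1\beta_1\cdots\alpha_d\beta_d$ in which each $\beta$-step lives in a single triangle. Validity is decided triangle by triangle, except for the homotopy condition. So I would cut each walk at the pending-arc crossings, or at the first crossing in case (i). I then match the two halves with $T$-walks of the shorter curves $\gamma_i$ drawn in Figure \ref{fig:ForBreakUpTWalkLemma}. These curves agree with $\gamma$ away from the pending triangle and end (or start) at the appropriate endpoint of $\tau_{i_j}$.

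\textbf{Case (ii).} Fix $a,b\in\{0,1\}$ and a walk $T_{\vec v}\in TW_{(a,b)}$.
\begin{enumerate}
\item The orientation $v_j=a$ fixes the head of $\alpha_j$ at one endpoint of $\rho=\tau_{i_j}$; after splitting the basepoint of $\rho$, these are the two distinct vertices of Table \ref{tab:HomotopicWinding}.
\item The prefix $\beta_0\alpha_1\cdots\beta_{j-1}$, with $\alpha_j$ omitted, is then a walk ending at that vertex. I claim it is homotopic to $\gamma_1$ if $a=0$ and to $\gamma_2$ if $a=1$, so it is a $T$-walk of that curve.
\item Symmetrically, the suffix from $\beta_{j+1}$ on is a $T$-walk of $\gamma_3$ or $\gamma_4$, according to $b$.
\item The compound step $\alpha_j\beta_j\alpha_{j+1}$ is forced. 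Its $\beta_j$ must carry the winding $k$ of $\gamma$, so it is exactly the step recorded in Table \ref{table:Twalkpending}, with weight $C_j\in\{U_{k-1}x_\rho,U_kx_\rho,U_{k+1}x_\rho\}$.
\item Conversely, given a pair of walks of the shorter curves, concatenating them with this compound step gives a walk homotopic to $\gamma$. This yields the four bijections.
\end{enumerate}

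\textbf{The weights in case (ii).} The $x$-weight is multiplicative over steps. The denominator of $x(T_{\vec v})$ contains $x_\rho^2$ from $\alpha_j,\alpha_{j+1}$, while the subwalks supply none of these, and $C_j$ contributes one $x_\rho$. This gives $x(T_{\vec v})=\frac{C_j}{x_{i_j}}x(T_{\vec v_1})x(T_{\vec v_2})$, once I check that the truncated curves use the adjacent arcs $\tau_{i_e},\dots,\tau_{i_f}$ as their final or initial $\beta$-steps in the matching way.

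The $y$-weight is where care is needed. In a walk of a truncated curve, the arcs fanning around the common endpoint, $\tau_{i_e},\dots,\tau_{i_{j-1}}$, are crossed by $\gamma_1$ and $\gamma_2$ with orientations measured against a different minimal direction. This is because the minimal direction is defined triangle by triangle, counterclockwise in $\Delta_{i-1}$ (Remark after Figure \ref{ex:forTwalk}). Changing the terminal vertex from one end of $\rho$ to the other flips the orientation of every fan arc relative to its own minimal orientation. This accounts for the factors $\Phi(h_{i_e}\cdots h_{i_j})$ and $\Phi(h_{i_j}\cdots h_{i_f})$, and their product when $(a,b)=(1,1)$.

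\textbf{Case (i).} This is the same argument with only one cut. $\alpha_1$ is the first step and $\beta_0$ winds inside the pending triangle $\Delta_0$. The bottom row of Table \ref{table:Twalkpending} supplies the factors $U_{k-1}$ and $U_k$. The maximal orientation flips the whole fan $\tau_{i_1},\dots,\tau_{i_f}$, giving $y(T_{\vec v})=\Phi(h_{i_1}\cdots h_{i_f})y(T_{\vec u_2})$.

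\textbf{Main obstacle.} The hard step is verifying the homotopy bookkeeping. Specifically:
\begin{itemize}
\item the truncated prefix and suffix really are homotopic to the chosen $\gamma_i$, with no leftover winding around the orbifold point;
\item the fan flip changes exactly the listed $v$-coordinates and no others.
\end{itemize}
I would first handle the fan in the $\mathbf p$-fold cover. There the compound step lifts to a weak $T$-path inside the regular $\mathbf p$-gon, by the Remark on amalgamation with \cite{ccanakcci2021friezes}. The fan arcs then become honest arcs around a single lifted vertex, and the orientation flip can be read off directly as in \cite[Lemma 3.3]{ezgieminecluster2024}.
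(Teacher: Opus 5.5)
There is a genuine gap, and it is exactly where the fan factors $\Phi(h_{i_e}\cdots h_{i_j})$ and $\Phi(h_{i_j}\cdots h_{i_f})$ are supposed to come from. Your explanation, that the truncated curves cross the fan arcs ``against a different minimal direction'', cannot be right. As you note yourself, $v_i=0$ means the crossed arc is oriented counterclockwise in $\Delta_{i-1}$. Wherever $\gamma$ and a truncated curve cross the same arc, they do so out of the same triangle in the same direction, so the reference orientations agree and nothing is flipped. What actually differs is the crossing sequence. The curves that run straight to the basepoint $v$ of $\tau_{i_j}$ on the fan side are $\gamma_2$ in (i) and $\gamma_1,\gamma_4$ in (ii). Each ends at the common endpoint of the fan arcs, so in minimal position it does not cross them at all: $\tau_{i_2},\ldots,\tau_{i_f}$ (resp.\ $\tau_{i_e},\ldots,\tau_{i_{j-1}}$, resp.\ $\tau_{i_{j+2}},\ldots,\tau_{i_f}$). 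Only the curves that pass around the orbifold point keep those crossings. Hence your step ``the prefix $\beta_0\alpha_1\cdots\beta_{j-1}$ is a $T$-walk of $\gamma_1$'' is false when $a=0$, because the prefix has $\alpha$-steps on arcs that $\gamma_1$ does not cross. The same failure occurs for the suffix versus $\gamma_4$ and for the maximal half of (i), so the claimed bijections are not yet well defined.

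The missing idea is a forcing argument. Suppose the vertex at the cut is the copy of $v$ on the fan side: the tail of $\alpha_j$, the head of $\alpha_{j+1}$, or the head of $\alpha_1$ in (i). A $\beta$-step in an ordinary triangle cannot start and end at the same point. So the adjacent fan arc must point away from $v$ (before the cut) or toward $v$ (after it), and this propagates through the entire fan. Every intervening $\beta$-step then runs along a fan arc and cancels the neighbouring $\alpha$-step in $x(T_{\vec v})$.
\begin{itemize}
\item Excising this forced segment is what gives the bijection with $TW$ of the shorter curve.
\item The extra $y$-factor is $\Phi(h)$ for the maximally oriented pending crossing(s), together with $\Phi(h_\tau)$ for those forced fan orientations that are maximal.
\item On the other side of the pending arc the fan is unconstrained, and the prefix or suffix is literally a $T$-walk of $\gamma_2$ or $\gamma_3$ with the same orientation vector.
\end{itemize}
You can see this in Figure~\ref{fig:twalks}. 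The vectors $(0,1,0)$ and $(1,1,0)$ are missing because $\alpha_2$ ends at the copy of $v_1$ next to $a$, which forces $v_3=1$. In $(0,1,1)$ the step $\beta_2=a$ cancels $\alpha_3=a$, giving $y=y_\rho y_a$. This does not agree with the printed $y(0,1)$, which carries an extra $h_{i_j}$, so read the index ranges in $y(a,b)$ off this forcing analysis rather than assuming them. The $\mathbf{p}$-fold cover will not repair this step, since the fan lives at a marked point, not at the orbifold point.
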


 \begin{figure}[h]
     \centering
\begin{tabular}{cc}
\begin{tikzpicture}[scale=1, transform shape]
        \draw[out=60,in=0,looseness=1.2,thick] (0,-3) to (0,-1);
       \draw[out=120,in=180,looseness=1.2,thick] (0,-3) to (0,-1);
        \filldraw (0,-3) circle (1.5pt);
        \node[scale=2, thick] at (0,-1.5) {$\times$};
        \draw[out=80,in=-90,looseness=1, thick,dashed] (0,-3) to (0.4,-1.5);
        \draw[out=90,in=90,looseness=1.5, thick,dashed] (0.4,-1.5) to (-0.4,-1.5);
        \draw[out=-90,in=-90,looseness=1.5, thick,dashed] (-0.4,-1.5) to (0.3,-1.5);
        \draw[out=90,in=90,looseness=1.5, thick,dashed] (0.3,-1.5) to (-0.3,-1.5);
        \draw[out=-90,in=-100,looseness=1.5, thick,dashed] (-0.3,-1.5) to (0.2,-1.5);
        \draw[out=80,in=-120,looseness=1, thick,dashed, ->] (0.2,-1.5) to (1,-0.5);
        \draw[thick,Dgreen,out = 30, in = -100] (0,-3) to node[right]{$\gamma_2$} (0.9,-0.8);
       \draw[thick,blue, out = 150, in = 270] (0,-3) to node[left]{$\gamma_1$} (-0.8, -1.5);
       \draw[thick,blue] (-0.8,-1.5) to [out=90, in = 200] (0.8,-0.6);
    \end{tikzpicture} & 
\begin{tikzpicture}[scale=1.7]

\tikzset{->-/.style={decoration={
  markings,
  mark=at position #1 with {\arrow{>}}},postaction={decorate}}}

\draw[in=0,out=45,looseness =1.25,thick] (0,0.3) to (0,1.3);
\draw[in=180,out=135,looseness=1.25,thick] (0,0.3) to (0,1.3);

\draw[dashed,thick,out=0,in=210] (-1,0.7) to (0,0.75);
\draw[dashed,thick,out=30,in=0,looseness=2,->] (0,0.75) to (0,1.2);
\draw[dashed,thick,out=180,in=150,looseness=2] (0,1.2) to (0,0.75);
\draw[dashed,thick,out=-30,in=180] (0,0.75) to (1,0.7);

\draw[blue,thick] (-1,0.7) to node[below]{$\gamma_1$} (0,0.3);
\draw[Dgreen,thick](1,0.7) to node[below]{$\gamma_4$}(0,0.3);
\draw[red,thick, out = 20, in = 180](-1,0.7) to node[left]{$\gamma_2$} (0,1.4);
\draw[red,thick,out = 0, in = 30, looseness = 1.2] (0,1.4) to (0,0.3);
\draw[orange,thick,out=160, in = 0] (1,0.7) to node[right]{$\gamma_3$} (0,1.5);
\draw[orange, thick, out = 180, in = 150,looseness=1.2] (0,1.5) to (0,0.3);

\draw[fill=black] (0,0.3) circle [radius=1pt];
\draw[thick] (0,1) node {$\mathbf{\times}$};

\end{tikzpicture}

\end{tabular}
     \caption{Figures used in the proofs in Section \ref{subsec:TWalkAndSnake}. In each figure, the dashed arc is $\gamma$ and the solid, black arcs are in the triangulation (possibly on the boundary).}
     \label{fig:ForBreakUpTWalkLemma}
 \end{figure}

Next, we provide corresponding lemmas for good  matchings. Comparing these two lemmas will be the backbone of our proof of~\Cref{thm:SnakeTWalks}. The first is just a special case of snake graph calculus \cite{CS13}.

 \begin{lemma}\label{lem:BreakUpPMSOnFirstSquareTile}
Let $\gamma$ be an arc on an orbifold $\mathcal{O}$ with triangulation $T$ such that $s(\gamma)$ is plain. Suppose $\tau_{i_1}$ is a pending arc and $\gamma$ crosses $\tau_{i_1}$ only once.  Let $\tau_{i_1},\ldots,\tau_{i_f}$ be the contiguous sublist of arcs crossed by $\gamma$ which share an endpoint with $\tau_{i_1}$ and lie counterclockwise from $\tau_{i_1}$. Let $a$ and $b$ be the labels of the south and west edges of the first tile of $\mathcal{G}^{\text{gen}}_{\gamma,T}$ respectively. Let $\mathcal{M}_S$ be the set of good matchings of $\calG_{\gamma,T}$ which include the edge labelled $a$ and let $\mathcal{M}_W$ be the set of good matchings of $\calG_{\gamma,T}$ which include the edge labelled $b$. Let $\gamma_1$ and $\gamma_2$ be as on the left in Figure \ref{fig:ForBreakUpTWalkLemma}. Then, we have $\mathcal{M}_S \cong \mathrm{Match}(\calG_{\gamma_1,T})$ and $\mathcal{M}_W \cong \mathrm{Match}(\calG_{\gamma_2,T})$. Moreover, given $M \in \mathcal{M}_S$, $M' \in \mathcal{M}_W$, and the associated matchings of smaller graphs $M_1 \in \mathrm{Match}(\calG_{\gamma_1,T})$ and $M_2 \in \mathrm{Match}(\calG_{\gamma_2,T})$, we have \[
x(M) = x_a x(M_1) \qquad y(M) = y(M_1) \qquad x(M') = x_bx(M_2) \qquad y(M') = \Phi(h_{i_1}\cdots h_{i_f} )y(M_2).
\]
\end{lemma}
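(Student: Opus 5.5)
The plan is to follow the standard snake graph calculus argument of \cite{CS13} and \cite{musiker2010cluster}, specialized to the first tile, which in this case is the square tile attached to a single crossing of a pending arc as in the second or third row of Table \ref{table:PuzzlePiecesWinding}. The first tile $G_1$ has positive orientation by convention, so its south edge $a$ and west edge $b$ are both boundary edges of $\calG_{\gamma,T}$. The southwest vertex of $G_1$ has degree two and lies only on these two edges. Hence every good matching contains exactly one of $a$ and $b$, and $\mathrm{Match}$ splits as $\mathcal{M}_S \sqcup \mathcal{M}_W$.

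For $M\in\mathcal{M}_S$, I will argue as follows. Because $a$ is in $M$, the northwest vertex of $G_1$ must be matched by the north edge of $G_1$. Otherwise it is matched by the west edge, which is impossible since $b\notin M$. Next I consider how the remaining vertices are forced along the zigzag of tiles. A perfect matching that contains a boundary edge of the first tile is forced to consist of boundary edges along the maximal straight or zigzag piece of tiles. I will show that this piece is exactly the block of tiles $G_2,\ldots,G_f$ corresponding to arcs that share the endpoint with $\tau_{i_1}$ and lie counterclockwise from it. The remaining graph is then $\calG_{\gamma_1,T}$, which is the graph obtained from $\gamma_1$, the arc that avoids the fan $\tau_{i_1},\ldots,\tau_{i_f}$ on the blue side. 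I will check this identification by comparing the labels of the removed forced edges with the labels of the first tile of $\calG_{\gamma_1,T}$. The key point is that the edge weighted $U_{k-1}\rho$ in the corner tile plays the role of the boundary segment cut off by $\gamma_1$. The forced edges contribute $x_a$ together with weights that cancel against $\mathrm{cross}$. The new minimal matching also agrees with the old one on the forced part, so $y(M)=y(M_1)$.

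For $M'\in\mathcal{M}_W$, the argument is symmetric. Using $b$ forces edges through the same fan block, but on the opposite side of each tile. Each of the tiles $G_1,\ldots,G_f$ is therefore enclosed in the cycle $M'\ominus M_-$, because $M_-$ uses $a$ by Definition \ref{def:MinimalMatching1}. This enclosure gives the factor $\Phi(h_{i_1}\cdots h_{i_f})$. What remains is identified with $\calG_{\gamma_2,T}$, the green arc that goes around the orbifold point to the other side.

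I expect the main obstacle to be the precise bookkeeping of the edge labels carrying $U_{k}$ and $U_{k-1}$ coefficients. These must be matched against the first tile of the smaller graphs, so that the local factor is exactly $x_a$, respectively $x_b$, and not some other Chebyshev multiple. I will handle this by direct comparison with the corner tiles in Table \ref{table:PuzzlePiecesWinding}, together with the orientation conventions of Remark \ref{rmk:WhatCCWGivesUs}.
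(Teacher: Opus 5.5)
Your overall route (split at the degree-two southwest vertex of $G_1$, then strip off forced edges as in \cite{CS13}) is the one the paper intends, since the paper simply cites snake graph calculus. However, the forcing analysis you sketch is wrong, and that analysis is the whole content of the lemma. Forcing propagates only through \emph{turns} of a snake graph, i.e.\ along a zigzag and not along a straight piece. In a zigzag, exactly one of the two choices at the southwest corner of $G_1$ propagates; the other forces nothing beyond $G_1$.

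The fan $\tau_{i_1},\ldots,\tau_{i_f}$ produces a zigzag $G_1,\ldots,G_f$. For $f\ge 2$, the factor $\Phi(h_{i_1}\cdots h_{i_f})$ can only land on $\mathcal{M}_W$ (recall $a\in M_-$) if $G_2$ is attached to the north of $G_1$. In that configuration your first step fails. The northwest vertex of $G_1$ is the southwest vertex of $G_2$, the north edge of $G_1$ is interior, and that vertex can equally well be matched by the west edge of $G_2$. In fact $a$ covers exactly the two vertices of $G_1$ not on $G_2$, so $\mathcal{M}_S$ is in bijection with \emph{all} perfect matchings of $(G_2,\ldots,G_d)$. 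Reflecting this graph in the southwest--northeast diagonal makes its first tile positive and carries $M_-\setminus\{a\}$ to the minimal matching. The result is $\calG_{\gamma_1,T}$, and $\gamma_1$ still crosses $\tau_{i_2},\ldots,\tau_{i_d}$.

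It is the choice $b$ that forces a chain: the east edge of $G_1$, the north edge of $G_2$, the east edge of $G_3$, and so on up to $G_{f-1}$. These forced edges carry the labels $\tau_{i_2},\ldots,\tau_{i_f}$ and cancel against $\mathrm{cross}$. Every $M'\in\mathcal{M}_W$ then encloses $G_1,\ldots,G_f$ in addition to what $M_2$ encloses. What is left is $(G_{f+1},\ldots,G_d)$, which is $\calG_{\gamma_2,T}$ up to the same reflection.

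As written, both of your branches strip off the same block $G_1,\ldots,G_f$ and leave the same residual graph $G_{f+1},\ldots,G_d$. That graph cannot be both $\calG_{\gamma_1,T}$ and $\calG_{\gamma_2,T}$, because $\gamma_1$ and $\gamma_2$ pass on opposite sides of the orbifold point and cross different arcs. Your mechanism also gives the wrong count. For instance, when $d=f\ge 2$ the graph is a zigzag with $f+1$ perfect matchings, whereas your argument would yield one matching in each branch.

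The Chebyshev bookkeeping you single out as the main obstacle is actually the easy part: the local factor is just the weight of $a$, resp.\ $b$, read off from the corner tile in Table~\ref{table:PuzzlePiecesWinding}. What has to be supplied is the asymmetric forcing argument above, together with a check that the minimal matchings remain compatible after the reflection.
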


\begin{lemma}\label{lem:BreakUpPMsOnHexagon}
Let $\gamma$ be an arc or closed curve on $\mathcal{O}$ and let $\tau_{i_j} \in T$ be a pending arc crossed by $\gamma$ such that $\gamma$ winds $0 < k < \mathbf{p}-2$  times around the enclosed orbifold point of order $\mathbf{p}$ and $\gamma$ crosses $\tau_{i_j}$ twice, i.e., $\tau_{i_j} = \tau_{i_{j+1}}$. Let $\tau_{i_e},\tau_{i_{e+1}},\ldots,\tau_{i_{j-1}},\tau_{i_j},\tau_{i_{j+1}}\ldots,\tau_{i_{f-1}},\tau_{i_f}$ be the contiguous sublist of arcs crossed by $\gamma$ which share an endpoint with $\tau_{i_j}$ and lie counterclockwise from $\tau_{i_j}$. Let $A,B,C,D$ be the edges of the hexagonal tile in $\calG^{\text{gen}}_{\gamma,T}$ associated to $\tau_{i_j} = \tau_{i_{j+1}}$, as in Lemma \ref{lem:HexagonMatch}. Let $\mathcal{M}_A \subset \mathrm{Match}(\calG^{\text{gen}}_{\gamma,T})$ be the set of good matchings containing edge $A$ and define $\mathcal{M}_B,\mathcal{M}_C,$ and $\mathcal{M}_D$ similarly. Suppose $M_- \in \mathcal{M}_A$. We have the following 
\begin{itemize}
    \item $\mathcal{M}_A \cong \mathrm{Match}(\calG^{\text{gen}}_{\gamma_1,T}) \times \mathrm{Match}(\calG^{\text{gen}}_{\gamma_3,T}) $. 
    Given $M \in \mathcal{M}_A$ with associated $(M',M'') \in  \mathrm{Match}(\calG^{\text{gen}}_{\gamma_1,T}) \times \mathrm{Match}(\calG^{\text{gen}}_{\gamma_3,T})$, we have $x(M) = \frac{\mathrm{wt}(A)}{x_{i_j}^2}x(M')x(M'')$ and $y(M) = y(M')y(M'')$.
    \item  $\mathcal{M}_B \cong \mathrm{Match}(\calG^{\text{gen}}_{\gamma_2,T}) \times \mathrm{Match}(\calG^{\text{gen}}_{\gamma_3,T}) $.
    Given $M \in \mathcal{M}_B$ with associated $(M',M'') \in  \mathrm{Match}(\calG^{\text{gen}}_{\gamma_2,T}) \times \mathrm{Match}(\calG^{\text{gen}}_{\gamma_3,T})$, we have $x(M) = \frac{\mathrm{wt}(B)}{x_{i_j}^2}x(M')x(M'')$ and $y(M) = \Phi(h_{i_{j+1}}\cdots h_{i_f})y(M')y(M'')$.
    \item  $\mathcal{M}_C \cong \mathrm{Match}(\calG^{\text{gen}}_{\gamma_1,T}) \times \mathrm{Match}(\calG^{\text{gen}}_{\gamma_4,T}) $
    Given $M \in \mathcal{M}_C$ with associated $(M',M'') \in  \mathrm{Match}(\calG^{\text{gen}}_{\gamma_1,T}) \times \mathrm{Match}(\calG^{\text{gen}}_{\gamma_4,T})$, we have $x(M) = \frac{\mathrm{wt}(C)}{x_{i_j}^2}x(M')x(M'')$ and $y(M) = \Phi(h_{i_e}\cdots h_{i_j})y(M')y(M'')$.
    \item  $\mathcal{M}_D \cong \mathrm{Match}(\calG^{\text{gen}}_{\gamma_2,T}) \times \mathrm{Match}(\calG^{\text{gen}}_{\gamma_4,T}) $
    Given $M \in \mathcal{M}_D$ with associated $(M',M'') \in  \mathrm{Match}(\calG^{\text{gen}}_{\gamma_2,T}) \times \mathrm{Match}(\calG^{\text{gen}}_{\gamma_4,T})$, we have $x(M) = \frac{\mathrm{wt}(D)}{x_{i_j}^2}x(M')x(M'')$ and $y(M) = \Phi(h_{i_e}\cdots h_{i_j}h_{i_{j+1}}\cdots  h_{i_f})y(M')y(M'')$.
\end{itemize}
\end{lemma}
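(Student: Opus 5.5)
The plan is to decompose each matching set according to its edge set $\{A,B,C,D\}$ on the hexagonal tile, and to cut the graph along the two diagonals labelled $\rho$. By Lemma \ref{lem:HexagonMatch}, $\mathrm{Match}(\calG^{\text{gen}}_{\gamma,T})$ is the disjoint union of $\mathcal{M}_A,\mathcal{M}_B,\mathcal{M}_C,\mathcal{M}_D$. For each such set, I remove the hexagonal tile together with the vertices covered by the chosen edge. What remains is two pieces: the portion of $\calG^{\text{gen}}_{\gamma,T}$ southwest of the hexagon and the portion northeast of it, each augmented by one boundary edge of the hexagon.

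First I identify these pieces with $\calG^{\text{gen}}_{\gamma_i,T}$. The curves $\gamma_1,\gamma_2$ run from $s(\gamma)$ up to the pending triangle and end at the basepoint of $\tau_{i_j}$, one on each side of the orbifold point. The curves $\gamma_3,\gamma_4$ do the same for the tail of $\gamma$. Their crossing sequences are therefore $\tau_{i_1},\ldots,\tau_{i_{j-1}}$ and $\tau_{i_{j+2}},\ldots,\tau_{i_d}$. The difference between $\gamma_1$ and $\gamma_2$ is whether the final triangle adds the edge labelled $\alpha$ or $\beta$ adjacent to the hexagon, which is exactly the unmatched boundary edge of the southwest piece. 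This matches the standard cut-along-a-tile argument in Lemma \ref{lem:BreakUpPMSOnFirstSquareTile}, and in \cite{CS13} for ordinary snake graphs. I then check the four cases against the picture in Lemma \ref{lem:HexagonMatch}:
\begin{itemize}
\item $A$ forces the southwest piece to end like $\gamma_1$ and the northeast piece to begin like $\gamma_3$;
\item $D$ gives $\gamma_2$ and $\gamma_4$;
\item the crossing edges $B$ and $C$ give the mixed pairs.
\end{itemize}
Each restriction is a good matching of the corresponding graph, and conversely any pair of matchings glues back uniquely with the chosen hexagon edge. This gives the four bijections. For loop or band graphs, the gluing identifications occur away from the hexagon, so good-matching extendability is preserved on both sides.

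For the $x$-weights, $x(M)$ is the weight of the chosen edge times the weights of the restricted matchings. The crossing monomial of $\gamma$ equals $x_{i_j}^2$ times the crossing monomials of the two pieces. Dividing through gives the factor $\mathrm{wt}(\cdot)/x_{i_j}^2$.

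For the $y$-weights, I use $M_-\in\mathcal{M}_A$. Its restrictions are the minimal matchings $M'_-,M''_-$ of $\calG^{\text{gen}}_{\gamma_1,T}$ and $\calG^{\text{gen}}_{\gamma_3,T}$, so on $\mathcal{M}_A$ the cycles of $M\ominus M_-$ split and $y(M)=y(M')y(M'')$. For the other cases, $M\ominus M_-$ contains one extra cycle passing through the hexagon. It encloses the tiles between the minimal boundary and the edge that was switched: the run of tiles $\tau_{i_{j+1}},\ldots,\tau_{i_f}$ for $B$, the run $\tau_{i_e},\ldots,\tau_{i_j}$ for $C$, and both runs for $D$. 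The remaining cycles still split. The runs are exactly the contiguous tiles sharing the basepoint counterclockwise from $\tau_{i_j}$, because the boundary edges of $\calG_{\gamma_2}$ (respectively $\calG_{\gamma_4}$) differ from $M'_-$ (respectively $M''_-$) along the zigzag of those tiles. Applying $\Phi$ then gives the stated prefactors.

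The main obstacle is this last step: confirming that the switched cycle encloses precisely the tiles $\tau_{i_e},\ldots,\tau_{i_j}$ or $\tau_{i_{j+1}},\ldots,\tau_{i_f}$, and that the minimal matchings of the $\gamma_2$- and $\gamma_4$-graphs are not the restrictions of $M_-$ but differ from them exactly on these runs. I would first handle this by the surface argument of \cite{musiker2010cluster,ezgieminecluster2024}: the fan of tiles at a common endpoint forms a straight zigzag segment of the snake graph. Only the case $M_-\in\mathcal{M}_D$ would be obtained by symmetry.
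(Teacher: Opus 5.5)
\textbf{The gap is in identifying the two leftover pieces with the graphs of the $\gamma_i$.} You assert that $\gamma_1$ and $\gamma_2$ both have crossing sequence $\tau_{i_1},\ldots,\tau_{i_{j-1}}$ (and $\gamma_3,\gamma_4$ both $\tau_{i_{j+2}},\ldots,\tau_{i_d}$), differing only by an ``added'' hexagon edge. A snake graph, however, depends only on the crossing sequence of a minimal representative, so it cannot record which corner of $\Delta_{j-1}$ the curve ends in.

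On each side, one of the two curves ends in the very corner that $\gamma$ cuts off as it approaches the pending arc. In the figure these are $\gamma_1$ and $\gamma_4$, which run straight to the basepoint $v$ of $\tau_{i_j}$. Every arc of the fan $\tau_{i_e},\ldots,\tau_{i_{j-1}}$ is incident to $v$, so this curve can be isotoped off all of them. Hence $\mathcal{G}^{\text{gen}}_{\gamma_1,T}=(G_1,\ldots,G_{e-1})$ and $\mathcal{G}^{\text{gen}}_{\gamma_4,T}=(G_{f+1},\ldots,G_d)$. Only $\gamma_2$ and $\gamma_3$, which pass around the pending triangle, keep the full sequences.

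The same phenomenon already appears in Lemma~\ref{lem:BreakUpPMSOnFirstSquareTile}, which you cite as your model. There the curve going straight to $v$ loses the whole run $\tau_{i_1},\ldots,\tau_{i_f}$, which is why the factor $\Phi(h_{i_1}\cdots h_{i_f})$ appears. Concretely, if $\gamma$ crosses a single arc before $\tau_{i_j}$, the short curve is a side of $\Delta_0$, so its graph is one edge; your one-tile graph would double the corresponding matching count. For the same reason, your identity $\mathrm{cross}(\gamma)=x_{i_j}^2\,\mathrm{cross}(\gamma_1)\,\mathrm{cross}(\gamma_3)$ is off by the factor $x_{i_e}\cdots x_{i_{j-1}}$.

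The missing idea, which is exactly what the paper's short proof consists of, is forcing along the zigzag. The fan tiles $G_e,\ldots,G_{j-1}$ form a zigzag, and the gluing at $G_e$ is straight. Consider a set in which the hexagon edges of $M$ cover both endpoints of the edge shared with $G_{j-1}$ (for instance $A$ together with the hexagon boundary edge it forces). Then each zigzag tile $G_l$ has exactly one admissible edge left, namely the one labelled $\tau_{i_{l-1}}$. The propagation stops at the straight tile $G_e$, and deleting the forced tiles leaves exactly $\mathcal{G}^{\text{gen}}_{\gamma_1,T}$. The same happens on the northeast side for $\gamma_4$.

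The forced edges, including the forced hexagon edge labelled $\tau_{i_{j-1}}$, carry weight $x_{i_e}\cdots x_{i_{j-1}}$. This is precisely what cancels the extra tiles in $\mathrm{cross}(\gamma)$ and leaves the factor $\mathrm{wt}(\cdot)/x_{i_j}^2$. The $h$-monomials likewise come from comparing these forced edges, and the hexagon edges, with $M_-$. For example, on the $\gamma_4$ side the forced edges alternate against $M_-$, and the resulting cycle encloses the run. They do not come from a different minimal matching on an unchanged graph, as your last paragraph suggests for $\gamma_4$: that run is deleted, not re-matched.
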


\begin{proof}
Let the tiles of $\mathcal{G}^{\text{gen}}_{\gamma,T}$ be  $(G_1,\ldots,G_j,G_{j+1},\ldots,G_d)$ where $G_j$ and $G_{j+1}$ comprise the compound hexagonal tile. We analyse the restriction of a matching in each of the sets to the subgraph on tiles $G_1,\ldots, G_{j-1}$ and on $G_{j+2},\ldots,G_d$. In some cases, this restriction will be forced to contain certain edges. By removing the tiles containing these forced edges, we recover the loop or band graphs for the appropriate arcs $\gamma_i$.
\end{proof}

With these ``reduction'' results established, we conclude with the correspondence between loop and band graphs with hexagonal tiles and $T$-walks.

\begin{theorem}\label{thm:SnakeTWalks}
   If $\gamma$ is an arc which is not a notched corner arc or a closed curve and $\mathcal{G}^{\text{gen}}_{\gamma,T}$ is its associated loop or band graph with hexagonal tiles, then there is a bijection between $\Match(\calG^{\text{gen}}_{\gamma,T})$ and $TW(\gamma,T)$, and moreover,
    \[
        \chi(\mathcal{G}^{\text{gen}}_{\gamma,T}) = \mathfrak{W}(\gamma,T).
    \]
\end{theorem}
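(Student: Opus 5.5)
I would prove this by induction on the number of winding points of $\gamma$ at orbifold points, that is, the number of hexagonal tiles in $\mathcal{G}^{\text{gen}}_{\gamma,T}$ together with any corner winding at the plain endpoints.

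\textbf{Base case.} If $\gamma$ never winds nontrivially around an orbifold point, then every tile of $\mathcal{G}^{\text{gen}}_{\gamma,T}$ is a square tile. The only pending-arc crossings are then of the form in the middle row of Table \ref{table:PuzzlePiecesWinding}; equivalently, they are the $k=0$ or $k=\mathbf{p}-2$ degenerations of Remark \ref{rem:k=0AuxTile}. In this case the standard bijection of \cite[Section 4.5]{musiker2010cluster}, extended to loop and band graphs in \cite{ezgieminecluster2024}, applies essentially verbatim. Given a good matching $M$, one walks alternately along edges of $M$ (the $\beta$-steps) and along tile diagonals (the $\alpha$-steps). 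The orientation vector is read off from whether each diagonal is traversed from southwest to northeast. The $x$-weights agree because $\mathrm{cross}(\gamma,T)$ is the product of the diagonal labels. The $y$-weights agree because the tiles enclosed by $M\ominus M_-$ are exactly the crossings with $v_i=1$.

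Two checks are needed beyond the cited work.
\begin{itemize}
\item When $\Delta_j$ is a pending triangle with $k\in\{0,\mathbf{p}-2\}$, the three nonvanishing matchings of the degenerate piece must match the three valid orientations in the top row of Table \ref{table:Twalkpending}, with weights $\rho,\rho,U_1\rho$.
\item For notched arcs, which are not notched corner arcs, the two concatenations in Definition \ref{def:twalkNOTCHED} must correspond to the two ways of completing a good matching to a perfect matching of the unglued graph. This is the same phenomenon as in Example \ref{ex:TWalkFromGoodMatchingOfLoopGraph}, and it reduces to \cite{ezgieminecluster2024}.
\end{itemize}

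\textbf{Inductive step.} Choose one hexagonal tile, coming from $\tau_{i_j}=\tau_{i_{j+1}}=\rho$ with winding $0<k<\mathbf{p}-2$. Lemma \ref{lem:HexagonMatch} shows that every good matching uses exactly one of $A,B,C,D$. Lemma \ref{lem:BreakUpPMsOnHexagon} then identifies the four classes $\mathcal{M}_A,\mathcal{M}_B,\mathcal{M}_C,\mathcal{M}_D$ with products of matching sets for the shorter arcs $\gamma_1,\ldots,\gamma_4$. On the $T$-walk side, Lemma \ref{lem:BreakUpTWalk}(ii) gives the parallel decomposition of $TW_{(a,b)}$. We match classes as follows:
\[
\mathcal{M}_A \leftrightarrow TW_{(0,0)},\quad
\mathcal{M}_B \leftrightarrow TW_{(1,0)},\quad
\mathcal{M}_C \leftrightarrow TW_{(0,1)},\quad
\mathcal{M}_D \leftrightarrow TW_{(1,1)}.
\]
This assumes $M_-\in\mathcal{M}_A$; the case $M_-\in\mathcal{M}_D$ is symmetric after reversing orientation. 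Each $\gamma_i$ has fewer winding points, so the inductive hypothesis gives bijections on the factors, and composing them yields the global bijection.

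It then remains to compare the local factors. On the matching side the factor is $\mathrm{wt}(A)/x_\rho^2$, and similarly for $B,C,D$. On the walk side it is $C_j/x_\rho$, together with the missing $\alpha$-weight. Reading off Table \ref{table:PuzzlePiecesWinding}:
\begin{itemize}
\item $A$ and $D$ have weight $U_k\rho$;
\item $B$ and $C$ have weights $U_{k-1}\rho$ and $U_{k+1}\rho$;
\item the middle row of Table \ref{table:Twalkpending} gives $U_k\rho$ and $U_k\rho$ for the two mixed orientations, and $U_{k+1}\rho$ for the third orientation;
\item the remaining orientation uses the $\beta$-step of weight $U_{k-1}\rho$ from Table \ref{tab:HomotopicWinding}.
\end{itemize}
The $y$-factors $y(a,b)$ in Lemma \ref{lem:BreakUpTWalk} match the monomials $\Phi(h_{i_e}\cdots h_{i_j})$, and so on, in Lemma \ref{lem:BreakUpPMsOnHexagon} term by term.

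Plain corner endpoints are handled in the same way. Lemma \ref{lem:BreakUpPMSOnFirstSquareTile} is compared with Lemma \ref{lem:BreakUpTWalk}(i), where the edge weights $U_{k-1}\rho$ and $U_k\rho$ of the corner tile (bottom row of Table \ref{table:PuzzlePiecesWinding}) match the factors $U_{k-1}$ and $U_k$.

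\textbf{Main obstacle.} The hardest step is the bookkeeping of which of $B,C$ corresponds to which mixed orientation $(1,0)$ or $(0,1)$, and checking that the resulting edge weight equals the Chebyshev index predicted by the $\beta$-step. This depends on the counterclockwise winding convention (Remark \ref{rmk:WhatCCWGivesUs}) and on the alternating tile orientations $(+,-)$ versus $(-,+)$. I would first fix the orientation $(+,-)$ of the hexagonal tile and verify both cases directly against Example \ref{ex:1poset}, where the six walks and six matchings are listed explicitly, and then argue that the opposite orientation is obtained by reflection. A secondary issue is that for band graphs and loop graphs the decomposition of Lemma \ref{lem:BreakUpPMsOnHexagon} cuts the cycle open. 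Here I would cut at the chosen hexagon, so that $\gamma_1$ and $\gamma_3$, or their analogues, become a single generalized arc rather than a product.
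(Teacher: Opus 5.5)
Your proposal follows the paper's proof: induction on the number of pending arcs around which $\gamma$ winds nontrivially, with the no-winding base case deferred to \cite{ezgieminecluster2024}, and the inductive step obtained by comparing Lemma \ref{lem:BreakUpTWalk} with Lemma \ref{lem:BreakUpPMSOnFirstSquareTile} (a single crossing at a plain end) and with Lemma \ref{lem:BreakUpPMsOnHexagon} (two consecutive crossings). One bookkeeping caution, which is exactly the check you flagged: when $A\in M_-$ (the $(+,-)$ tile, as in Example \ref{ex:1poset}), the edge $B$ carries $U_{k+1}\rho$, and its cycle with $M_-$ encloses the tile of the second $\rho$-crossing, so $\mathcal{M}_B$ must be paired with $TW_{(0,1)}$ and $\mathcal{M}_C$ with $TW_{(1,0)}$, which is the reverse of your table.
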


\begin{proof}

We use induction on the number of pending arcs which $\gamma$ crosses and winds nontrivially within. If this number is zero, then the same proof as in \cite{ezgieminecluster2024} holds immediately. Note that if $\gamma$ crosses a pending arc but does not have a self-intersection inside, then the dynamics of both objects are analogous to crossing two distinct arcs. 

Now, at the inductive step, let $\tau_{i_j}$ be the first pending arc which $\gamma$ crosses and winds nontrivially within. Suppose first that $\gamma$ intersects $\tau_{i_j}$ only one time; this implies $j = 1$ or $j = d$. Without loss of generality, assume $j = 1$. By our assumption that $\gamma$ is not a notched corner arc, we know that $s(\gamma)$ is plain. 

By Lemma \ref{lem:BreakUpTWalk} part 1, we know that \[
TW(\gamma) \cong TW(\gamma_1) \sqcup TW(\gamma_2) \qquad \text{and} \qquad \mathfrak{W}(\gamma) = U_k(\lambda_{\mathbf{p}}) x_{i_1} \mathfrak{W}(\gamma_1) + U_{k-1}(\lambda_{\mathbf{p}}) x_{i_1}\Phi( h_{i_1}\cdots h_{i_f}) \mathfrak{W}(\gamma_2)
\]
where $\gamma_1$ and $\gamma_2$ are depicted on the left in Figure \ref{fig:ForBreakUpTWalkLemma} and $y_{i_1},\ldots,y_{i_f}$ are as described in the statement of Lemma \ref{lem:BreakUpTWalk}. Lemma \ref{lem:BreakUpPMSOnFirstSquareTile} gives us two analogous statements for $\Match(\calG^{\text{gen}}_{\gamma})$ and $\chi(\calG^{\text{gen}}_{\gamma})$. By induction, $TW(\gamma_i) \cong \Match(\calG^{\text{gen}}_{\gamma_i})$ and $\mathfrak{W}(\gamma_i) = \chi(\calG^{\text{gen}}_{\gamma_i})$ for each $i \in \{1,2\}$. Therefore, both statements hold for $\gamma$. 

The other case to consider is when $\gamma$ crosses $\tau_{i_j}$ twice. Here, we follow the same process but using part 2 of Lemma \ref{lem:BreakUpTWalk} and Lemma \ref{lem:BreakUpPMsOnHexagon}.

\end{proof}

\subsection{\texorpdfstring{Comparing Posets and $T$-walks for Notched Corner Arcs}{Comparing Posets and T-walks for Notched Corner Arcs}}\label{subsec:TWalkAndPoset}

Here, we directly compare $T$-walks and posets in the case for which we cannot utilize hexagonal tiles, namely, notched corner arcs. The proof method is analogous to those of~\Cref{prop:CanChooseAuxiliaryTiles} and~\Cref{thm:SnakeTWalks}. That is, we will find a way to partition each set of combinatorial objects and use induction to discuss these subsets. Like in Proposition \ref{prop:CanChooseAuxiliaryTiles}, this correspondence will not be bijective, and we will utilize formulas concerning Chebyshev polynomials.

\begin{theorem}\label{prop:TWalkPosetNotchedCorner}
Let $\mathcal{O}$ be an orbifold with triangulation $T$. If $\gamma^{(p)}$ is a corner arc which is notched at its starting point, then \[
\chi(\calP_{\gamma^{(p)}}^T) = \mathfrak{W}(\gamma^{(p)},T).
\]
\end{theorem}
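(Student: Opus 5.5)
We will prove the identity by partitioning both sides according to the local behaviour at the notched end, and reducing to a plain arc for which the identity is already known. Combining \Cref{prop:AuxAndPoset}, \Cref{prop:CanChooseAuxiliaryTiles} and \Cref{thm:SnakeTWalks}, we have $\chi(\calP_{\gamma'}^T) = \mathfrak{W}(\gamma',T)$ for every arc $\gamma'$ that is not a notched corner arc. We use this as the base input.

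The first step is to split order ideals of $\calP_{\gamma^{(p)}}^T$ by the status of the loop elements. The loop attached to $\calP_{\gamma^0}$ is the chain
\[v_{\rho^-}\prec v_{U_{k-1}\rho}\prec v_{\sigma_1}\prec\cdots\prec v_{\sigma_m}\prec v_{U_k\rho}\prec v_{\rho^+},\]
together with the relations $v_1\succ v_{\rho^-}$ and $v_{\rho^+}\succ v_1$. An order ideal $I$ is therefore determined by three pieces of data: an initial segment of this chain, whether $v_1\in I$, and an ideal of the remaining fence $\calP_{\gamma^0}\setminus\{v_1\}$. These are subject to the constraints that $v_1\in I$ forces $v_{\rho^-}\in I$, and $v_{\rho^+}\in I$ forces $v_1\in I$. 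On the $T$-walk side, we partition $TW(\gamma^{(p)})$ by the spoke vector $\vec v_s$ together with the orientation $v_1$ of the first crossing of $\rho$. The compound steps in \Cref{tab:TwalkLoop} show which $\beta$-weight ($U_{k-2}\rho,\ldots,U_{k+1}\rho$) is picked up in each case. For a fixed orientation of $\alpha_1$, the remainder of the walk is a $T$-walk of a plain arc ($\gamma_1$ or $\gamma_2$ from \Cref{fig:ForBreakUpTWalkLemma}), as in \Cref{lem:BreakUpTWalk}(i). Correspondingly, fixing $v_1\notin I$ or $v_1\in I$ in the poset leaves $\calP_{\gamma_1}$ or $\calP_{\gamma_2}$ (up to the minimal-term shift). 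Induction, or the known plain case, then identifies the tail sums on both sides.

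The comparison thus reduces to a finite local identity. On each side, the contribution of the loop times the first crossing is a polynomial in $\hat y_\rho$, $\hat y_{\sigma_i}$ and Chebyshev scalars, multiplying either the $\gamma_1$-sum or the $\gamma_2$-sum. On the poset side, we sum the weights of initial segments of the chain, using the weights $w_{\rho^\pm}$, $w_{U_{k-1}\rho}$, $w_{U_k\rho}$ and $w_1$, and multiply by the minimal term $x_{\min}^{\gamma}$ (\Cref{def:PosetMin}). On the $T$-walk side, the spoke orientations $\vec v_s$ range over the monotone patterns allowed by the two hook concatenations of \Cref{def:twalkNOTCHED}, and we record the $x$-weight $\prod x_\beta/\prod x_\alpha$ for each. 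We match the terms with equal $y$-monomial: for $\vec v_s$ interior to the chain the correspondence is one-to-one with equal weights. For the extreme patterns $\vec v_s=\vec 0$ and $\vec 1$, several poset ideals (those containing or omitting $v_{\rho^\pm}$ and the auxiliary elements) collapse onto one walk or split across two. Their scalar coefficients must then agree by the Casini-type identity \Cref{lem:ChebyshevLogConcave}, $U_{i-1}U_{i+1}+1=U_i^2$, applied at $i=k-1$ and $i=k$. This is analogous to the identity $x_{B'}x_{C'}+x_\rho^2=x_\sigma^2$ used in \Cref{prop:NonzeroWindingAuxiliary}.

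The main obstacle is this extreme-case bookkeeping. The walks with $\vec v_s\in\{\vec0,\vec1\}$ are constrained to one hook direction, and they pick up $U_{k\pm1}$ or $U_{k-2}$ factors from \Cref{tab:TwalkLoop}. The poset ideals these walks must match carry ratio weights such as $U_{k-2}/U_{k-1}$ and $1/(U_{k-2}U_k)$. We would first verify the identity by direct expansion in the model case $m=0$, i.e.\ with $\rho$ the only spoke, as in \Cref{ex:NotchCorner}. We would also check the normalization against $x_{\min}^\gamma$, which contains $1/U_k x_a$-type factors. Finally, we would observe that the additional spokes $\sigma_i$ contribute a common geometric-series factor to both sides, so the general case follows from the model case.
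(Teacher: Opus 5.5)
Your strategy is the one the paper uses. Split order ideals and $T$-walks according to which of the three $\rho$-labelled elements $v_{\rho^-},v_{\rho^+},v_1$ are present (equivalently, which of arrows 2, 1, 3 in Table~\ref{tab:TwalkLoop} are flipped). Reduce the tails to $\gamma_1,\gamma_2$, and compare the six scalar coefficients; this is \eqref{eq:RewriteCornerArc}.

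The problem is the poset you run it on. You take the loop relations as printed in the construction: $v_{\rho^-}\prec v_1\prec v_{\rho^+}$, with $v_{U_{k-1}\rho},v_{U_k\rho}$ inside the chain. These relations conflict with Example~\ref{ex:enteringnotched}, and with them the statement fails. Under your constraint that $v_{\rho^+}\in I$ forces $v_1\in I$, every ideal omitting $v_1$ contains at most one element whose weight involves $\hat y_\rho$. So your coefficient of $\chi(\calP_{\gamma_1})$ is at most linear in $\hat y_\rho$. But among the walks with arrow 3 minimal is the one with arrows 1 and 2 both flipped (weight $U_{k-2}\rho$). It contributes $U_{k-2}\hat y_\rho^2\hat Y_p$, which is nonzero for $k\ge 2$. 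Hence the local identity you plan to verify is false.

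The argument needs the structure actually drawn in Example~\ref{ex:enteringnotched}, which is the one the paper's proof uses:
\[
v_{U_k\rho}\prec v_{\rho^-}\prec v_{\sigma_1}\prec\cdots\prec v_{\sigma_m}\prec v_{\rho^+}\prec v_{U_{k-1}\rho},\qquad v_{U_k\rho}\prec v_1\prec v_{U_{k-1}\rho}.
\]
With it, the Casini identity enters exactly twice, as you anticipated, but at specific places. In the class with no $\rho$-labelled element, the ideals $\emptyset$ and $\{v_{U_k\rho}\}$ give $1+U_{k-1}U_{k+1}=U_k^2$. Combined with $x_{\min}^{\gamma^{(p)}}=\frac{1}{U_kx_\rho}x_{\min}^{\gamma_1}$, this produces the coefficient $U_k$ of the minimal walk. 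In the class containing $v_1$ and $v_{\rho^+}$, including or omitting $v_{U_{k-1}\rho}$ gives $1+U_{k-2}U_k=U_{k-1}^2$, which produces the coefficient $U_{k-1}$.

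Second, your reduction to the case $m=0$ does not work, because the spokes are not a common factor. In each branch they contribute $1$, $\hat S_p$ and $\hat Y_p$ respectively to the three classes: no $v_{\rho^\pm}$, only $v_{\rho^-}$, and both. So the matching must be done class by class, using three facts:
\begin{itemize}
\item a walk with arrow 2 flipped and arrow 1 minimal may flip an initial segment $\sigma_1,\dots,\sigma_i$ of the remaining spokes;
\item a walk with arrow 1 flipped must flip all of them;
\item $\hat y_\rho^2\hat Y_p=y_\rho^2\prod_i\Phi(h_{\sigma_i})$ by \cite[Corollary 1]{banaian2024skein}.
\end{itemize}
Also, Example~\ref{ex:NotchCorner} is not an $m=0$ case: its notched puncture carries the spokes $a$, $b$, $d$.
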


\begin{proof}
 
Throughout, we will assume that $\gamma$ is not doubly-notched and not a corner arc at both ends; however, should these cases arise, one would simply have to repeat our arguments at the terminal end. Let $\rho$ be the pending arc which encloses the orbifold point that $\gamma$ winds around before its first intersection with $T$.

The poset for $\gamma^{(p)}$ has three elements labelled $\rho$ and two elements labelled with formal scalar multiples of $\rho$, as below. To distinguish between the elements sharing the label $\rho$, we use superscripts: $\rho$, $\rho^+$, and $\rho^-$. Here, each element in the poset is written as $(\text{label},\text{weight})$.

\begin{center}
\begin{tikzpicture}
\node(k-1) at (0,0.5) {$(U_{k}\rho,U_{k-1}U_{k+1})$};
\node(1) at (0,1.5) {$(\rho^-,\frac{U_{k}}{U_{k+1}}\hat{y}_{\rho})$};
\node[] at (0,2.1){$\vdots$};
\node(3) at (0,2.5){$(\rho^+,\frac{U_{k-2}}{U_{k-1}}\hat{y}_\rho)$};
\node(k) at (0,3.5){$(U_{k-1} \rho,\frac{1}{U_{k-2}U_{k}})$};
\node(2) at (2.5,2){$(\rho,\frac{U_{k}}{U_{k-1}}\hat{y}_\rho)$};
\draw(k-1) -- (1);
\draw(3) -- (k);
\draw(k-1) -- (2);
\draw(k) -- (2);
\node[] at (3.6,2){$\cdots$};
\end{tikzpicture}
\end{center}

We can partition order ideals of $\calP_{\gamma^{(p)}}$ into six sets based on which of the elements labelled $\rho$ are present: Let $A_0$ be the set of order ideals which do not contain any element labelled $\rho$. Let $A_1$ be the set of order ideals which contain $\rho^-$ but not $\rho$ nor $\rho^+$, and let $A_2$ be the set of order ideals which contain $\rho^+$ (implying they also contain $\rho^-$) but not $\rho$.   Let $A_3$ be the set of order ideals which contain $\rho$ but not $\rho^\pm$, and let $A_{4}$ be the  set of order ideals which contain $\rho$ and $\rho^-$ but not $\rho^+$. Finally, let $A_5$ be the set of order ideals which contain $\rho$ and $\rho^+$.

The set $A_0$ can be further partitioned into order ideals which contain the element labelled $U_{k}\rho$ and those which do not. Call these $A_0^1$ and $A_0^0$ respectively. Each subset $A_0^i$ is clearly in bijection with order ideals of $\calP_{\gamma^0}$ which do not include the first element, i.e., $\rho$. Following Proposition \ref{prop:AuxAndPoset} and Lemma \ref{lem:BreakUpPMSOnFirstSquareTile}, this subset of order ideals is in bijection with $\calP_{\gamma_1}$, where $\gamma_1$ is as on the left in Figure \ref{fig:ForBreakUpTWalkLemma}. By comparing the two posets, we see $x_{\min}^{\gamma^{(p)}} = \frac{1}{U_{k}(\lambda_{\mathbf{p}})x_\rho} x_{\min}^{\gamma_1}$. Therefore, we have\begin{align*}
x_{\min}^{\gamma^{(p)}} \sum_{I \in A_0} w(I) &= x_{\min}^{\gamma^{(p)}} \bigg(\sum_{I \in A_0^0} w(I) + \sum_{I \in A_0^1} w(I)\bigg) \\
&= \frac{1}{U_{k}(\lambda_{\mathbf{p}})x_\rho}  x_{\min}^{\gamma_1} \bigg(\sum_{I \in J(\calP_{\gamma_1})} w(I) 
+ U_{k-1}(\lambda_{\mathbf{p}})U_{k+1}(\lambda_{\mathbf{p}})\sum_{I \in J(\calP_{\gamma_1})} w(I)\bigg)\\
&= \frac{1 + U_{k-1}(\lambda_{\mathbf{p}})U_{k+1}(\lambda_{\mathbf{p}})}{U_{k}(\lambda_{\mathbf{p}})x_\rho} x_{\min}^{\gamma_1} \sum_{I \in J(\calP_{\gamma_1})} w(I) = \frac{U_{k}(\lambda_{\mathbf{p}}) }{x_\rho}\chi(\calP_{\gamma_1})
\end{align*}
where the final equality comes from Lemma \ref{lem:ChebyshevLogConcave}. One can perform a similar analysis for the other subsets of $J(\calP_{\gamma^{(p)}})$, using the following observations. 

Any other set which does not contain $\rho$  (i.e., $A_1$ and $A_2$) will be in correspondence  with $J(\calP_{\gamma_1,T})$ and any set which does (i.e., $A_3,A_4$ and $A_5$) will be in correspondence with $J(\calP_{\gamma_2,T})$, where $\gamma_1$ and $\gamma_2$ again come from the left of Figure \ref{fig:ForBreakUpTWalkLemma}. If an order ideal contains $\rho$, by definition, it must contain all elements less than $\rho$ in $\calP_\gamma^{(p)}$. This set consists of $U_k\rho$ and the elements corresponding to $\tau_{i_1} = \rho,\tau_{i_{2}},\ldots,\tau_{i_f}$ where this is the contiguous sublist of arcs crossed by $\gamma^0$ which share an endpoint with $\rho$ and lie counterclockwise from $\rho$.  Let $Y^\geq = \prod_{j=1}^f \Phi(h_{\tau_{i_j}})$ and $\hat{Y}^\geq = w_{U_k\rho}\prod_{j=1}^f \Phi(w_{\tau_{i_j}})$. As in previous proofs, one can check $\hat{Y}^\geq x_{\min}^{\gamma^{(p)}} = \frac{U_{k+1}(\lambda_{\mathbf{p}})}{x_\rho}Y^\geq x_{\min}^{\gamma_2} $.

 The sets which contain $\rho^-$ but not $\rho^+$ can be further partitioned based on how many of the elements between $\rho^-$ and $\rho^+$ are included. 
To this end, let $\sigma_1,\ldots,\sigma_m$ be the spokes at $s(\gamma^{(p)})$, ordered as in Section \ref{sec:Setting} but excluding $\rho$, and let $\hat{S}_p = \sum_{i=0}^m \prod_{j=1}^i \hat{y}_{\sigma_j}$. One can, for example, factor $\hat{S}_p$ out of $\sum_{I \in A_1} w(I)$. Set also $\hat{Y}_p:= \prod_{j=1}^m \hat{y}_{\sigma_j}$.

From this analysis, we have  \begin{align}
\chi(\calP_{\gamma^{(p)}}) &= \frac{1}{x_\rho}\bigg(\big(U_k(\lambda_{\mathbf{p}})+ U_{k-1}(\lambda_{\mathbf{p}}) \hat{y}_\rho \hat{S}_p +  U_{k-2}(\lambda_{\mathbf{p}})\hat{y}_{\rho}^2 \hat{Y}_p\big)\chi(\calP_{\gamma_1})\nonumber \\
&+ Y^\geq\big(U_{k+1}(\lambda_{\mathbf{p}}) + U_k(\lambda_{\mathbf{p}})\hat{y}_\rho \hat{S}_p + U_{k-1}(\lambda_{\mathbf{p}})\hat{y}_\rho^2\hat{Y}_p\big)\chi(\calP_{\gamma_2})\bigg) \label{eq:RewriteCornerArc}
\end{align}
where the righthand side corresponds to, in order, $A_0, A_1, A_2, A_3, A_4, A_5$. Moreover, by \cite[Corollary 1]{banaian2024skein} we have $\hat{y}_{\rho}^2 \hat{Y}_{p}= y_\rho^2\prod_{i=1}^m \Phi(h_{\sigma_i})$. 

Now, by our initial assumption, the arcs $\gamma_i$ are not a notched corner arcs, and so by combining~\Cref{prop:AuxAndPoset},~\Cref{prop:CanChooseAuxiliaryTiles}, and~\Cref{thm:SnakeTWalks}, $\chi(\calP_{\gamma_i}) = \mathfrak{W}(\gamma_i)$ for each $i$. Therefore, we will be done if we can show that $\mathfrak{W}(\gamma^{(p)})$ can be broken up in an analogous way as above. 

Indeed, our description of $T$-walks for notched corner arcs involves six cases (see Table \ref{tab:TwalkLoop}), and our claim is that these correspond exactly to the six subsets of order ideals of $\calP_{\gamma^{(p)}}$. Arrows 1,2, and 3 correspond to elements $\rho^+$,$\rho^-$, and $\rho$, respectively. If arrow 2 is in the maximal position and arrow 1 is in the minimal position, then it is possible that a contiguous interval of other spokes is also in the maximal position, analogous to how an order ideal can contain $\rho^-$ and some subset of the elements $\sigma_1,\ldots,\sigma_m$. One can also check that the weights agree in this correspondence. For example, we recovered that the weighted sum of order ideals in set $A_0$ has a factor of $U_k(\lambda_{\mathbf{p}})$, matching the weight assigned to the minimal configuration in Table \ref{tab:TwalkLoop}.
\end{proof}

\section{Verification of Formulas}\label{sec:main}

In this section, we show that all of our combinatorial expansion formulas give correct Laurent expressions for cluster variables in a punctured orbifold. We will treat plain, singly-notched, and doubly-notched arcs separately, in this order. The plain case is quite straightforward.

\begin{lemma}\label{lem:PlainOnPunctured}
If $\gamma$ is an ordinary plain arc on an orbifold $\Orb$ with triangulation $T$, then \[
x_\gamma  =\chi(\mathcal{P}_\gamma^T) = \chi(\mathcal{G}_{\gamma,T})=\mathfrak{W}(\gamma,T).
\]
\end{lemma}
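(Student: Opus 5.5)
The three combinatorial expressions already agree by Theorem~\ref{thm:MainExpansion}, so the content of the lemma is the single equality $x_\gamma = \chi(\mathcal{G}_{\gamma,T})$. The plan is to reduce to the unpunctured orbifold result, Theorem~\ref{thm:BKOld}, and to the surface results of \cite{musiker2011positivity}. A plain ordinary arc never crosses its own arcs in a self-intersecting way; its only orbifold behaviour is crossing a pending arc with winding $0$ or $\mathbf{p}-2$, or being a pending arc itself. Consequently, by Proposition~\ref{prop:CanChooseAuxiliaryTiles}, it suffices to prove $x_\gamma = \chi(\mathcal{G}^{\text{gen}}_{\gamma,T})$ for the graph built from square tiles, the degenerate hexagonal tiles of Table~\ref{table:PuzzlePiecesWinding}, and the self-folded puzzle pieces.

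\emph{Step 1: reduce to plain ideal triangulations.} Using the tag-changing transformations, we may assume $T$ is the tagged version of the ideal triangulation $T^\circ$ fixed in Section~\ref{sec:Setting}. Because the map $\Phi$ of Definition~\ref{def:Phi} is exactly the specialization used in \cite{musiker2011positivity}, it translates $T^\circ$-monomials into cluster monomials for $T$. The task is therefore to show that the graph expansion computes $x_\gamma$ in the cluster of $T^\circ$, after applying $\Phi$.

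\emph{Step 2: localize.} The Laurent expansion of $x_\gamma$ depends only on a neighbourhood of $\gamma$ together with the triangles it crosses. I would pass to the universal-cover-style local lift already used to build the graph. Cutting along $\gamma$'s strip, we replace the orbifold by a (possibly punctured) polygon glued from $\Delta_0,\dots,\Delta_d$, where pending triangles stay pending and self-folded triangles stay self-folded. In this lifted local picture, the exchange relations involved in computing $x_\gamma$ by a flip sequence, namely flipping the crossed arcs in order, are the same as on $\mathcal{O}$. This holds by Lemma~\ref{lem:FlipAndMutate} and Theorem~\ref{thm:GeometryOfGenCA}, which identify seeds with tagged triangulations, including the shear-coordinate (principal coefficient) data.

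\emph{Step 3: split into cases.} If the local picture has no self-folded triangle, it is an unpunctured orbifold, and Theorem~\ref{thm:BKOld} applies directly. If there are self-folded triangles but no pending crossings relevant to them, the puzzle pieces are exactly those of \cite{musiker2011positivity}, and their theorem applies. In general, the pending-arc tiles and the self-folded tiles occupy disjoint consecutive blocks of $\mathcal{G}^{\text{gen}}$. I would run the standard argument by induction on $d$: expand along the first crossing using the exchange relation (binomial or $1+\lambda_\mathbf{p} u+u^2$), and compare with the decomposition of matchings according to the edges used at the first tile, as in Lemma~\ref{lem:BreakUpPMSOnFirstSquareTile}.

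\emph{Main obstacle.} The hardest part is the mixed case: making rigorous that the proof of \cite{banaian2020snake} still goes through when self-folded triangles are present. In particular, the coefficients and $\Phi$-specialization on heights at radii and loops must interact correctly with the degree-two exchange polynomial. I would handle this by checking the finitely many local configurations, namely a pending triangle adjacent to a bigon containing a self-folded triangle. The check should follow the pattern of the computation in the proof of Lemma~\ref{lem:FlipAndMutate}, which verifies that the factor $d_k=2$ appears consistently in both the algebra and the graph.
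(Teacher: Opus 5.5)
Your plan is essentially the paper's argument. The paper's proof observes that the proof of Theorem~\ref{thm:BKOld} in \cite{banaian2020snake} follows the plain-arc strategy of \cite{musiker2011positivity}, which already allows punctures, so it carries over to punctured orbifolds, and the remaining equalities come from Theorem~\ref{thm:MainExpansion}; your localization and case split just make that reduction explicit. The one point to repair when you write it out is Step~2: relations that hold in your local lift (including the first-tile Ptolemy-type relations in your induction) transfer to $\mathcal{O}$ through the laminated lambda-length realization in the last clause of Theorem~\ref{thm:GeometryOfGenCA}, not through flip sequences or Lemma~\ref{lem:FlipAndMutate}, because flips in the lift do not descend to $\mathcal{O}$ once an arc of $T$ is crossed more than once.
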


\begin{proof}
For the first equality, the main result of \cite{banaian2020snake} is that this is true in the absence of punctures, and the proof strategy followed that of \cite{musiker2011positivity} for plain arcs. Since the latter article allowed for punctured surfaces, the proof methods of the former would still hold for a plain arc in a punctured orbifold. The other equalities follow from~\Cref{thm:MainExpansion}.
\end{proof}

\subsection{Singly-Notched Arcs}\label{subsec:singlynotched}

Let $\gamma^{(q)}$ be a singly-notched ordinary arc on $\mathcal{O}$. Let $\ell$ be the arc that cuts out a once-punctured monogon whose unique puncture is $q$. The three elements of the cluster algebra $x_\gamma,x_{\gamma^{(q)}},$ and $x_\ell$ satisfy $x_\ell = x_\gamma x_{\gamma^{(q)}}$. This identity comes from a statement on lambda lengths on surfaces \cite{fomin2018cluster}. By lifting such a configuration on an orbifold to a covering surface, we see that the identity still holds here. Therefore, if we can show one of our combinatorial expansion formulas satisfies an analogous relation, we will be done by \Cref{lem:PlainOnPunctured}.

\begin{prop}\label{prop:SinglyNotchedExpansion}
If $\gamma^{(q)}$ is a singly-notched ordinary arc on $\mathcal{O}$ with initial triangulation $T$, then
\[
x_{\gamma^{(q)}}  = \chi(\mathcal{P}_{\gamma^{(q)}}^T) = \chi(\mathcal{G}_{\gamma^{(q)},T})=\mathfrak{W}(\gamma^{(q)},T).
\]
\end{prop}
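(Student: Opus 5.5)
We follow the strategy announced just before the statement. By \Cref{thm:MainExpansion} the three combinatorial expressions $\chi(\mathcal{P}_{\gamma^{(q)}}^T)$, $\chi(\mathcal{G}_{\gamma^{(q)},T})$ and $\mathfrak{W}(\gamma^{(q)},T)$ coincide, so it suffices to prove $x_{\gamma^{(q)}} = \chi(\mathcal{G}_{\gamma^{(q)},T})$. Since $x_\ell = x_\gamma x_{\gamma^{(q)}}$ holds in $\mathcal{A}_\mathcal{O}$, and $x_\ell = \chi(\mathcal{G}_{\ell,T})$ and $x_\gamma = \chi(\mathcal{G}_{\gamma,T})$ by \Cref{lem:PlainOnPunctured} ($\ell$ and $\gamma = (\gamma^{(q)})^0$ are plain arcs), the claim reduces to the purely combinatorial identity
\[
\chi(\mathcal{G}_{\ell,T}) = \chi(\mathcal{G}_{\gamma,T})\,\chi(\mathcal{G}_{\gamma^{(q)},T}).
\]

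\textbf{Main case.} We treat first the case where $\gamma^0 \notin T$ and $\gamma^{(q)}$ is not a notched corner arc. We work with graphs with hexagonal tiles, which is legitimate by \Cref{prop:CanChooseAuxiliaryTiles} and \Cref{rmk:ExtendHexagonalConstruction}. The loop $\ell$ crosses the arcs of $T$ in the sequence: the crossings of $\gamma^0$, then the spokes $\sigma_1,\ldots,\sigma_m$ at $q$, then the crossings of $\gamma^0$ in reverse. So $\mathcal{G}_{\ell,T}$ consists of $\mathcal{G}_{\gamma}$, the hook tiles, and a reflected copy of $\mathcal{G}_\gamma$. The punctured-surface argument of \cite{musiker2011positivity} (see also \cite{wilson2020surface}) gives a weight-preserving bijection
\[
\Match(\mathcal{G}_{\ell,T}) \;\longleftrightarrow\; \Match(\mathcal{G}_{\gamma,T}) \times \mathrm{Good}(\mathcal{G}_{\gamma^{(q)},T}),
\]
obtained by cutting at the tiles adjacent to the hook and regluing the hook onto one copy of $\mathcal{G}_\gamma$. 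It remains to check two things:
\begin{itemize}
\item $\mathrm{cross}(\ell) = \mathrm{cross}(\gamma)\,\mathrm{cross}(\gamma^{(q)})$, which follows from the tile counts;
\item $y$-monomials are multiplicative under $\Phi$, which is the same argument as in \cite{musiker2011positivity}.
\end{itemize}
This bijection never touches the interiors of hexagonal tiles, because the cuts occur at spoke tiles. Hence the orbifold modification is harmless, and the Chebyshev edge weights are simply carried along.

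\textbf{Case $\gamma^0 \in T$.} Here $\mathcal{G}_{\gamma^{(q)}}$ is $\mathcal{G}_\ell$ with its two end edges labelled $x_{\gamma^0}$ glued. Since $x_\gamma = x_{\gamma^0}$ is an initial variable, we instead verify $x_{\gamma^{(q)}} = x_\ell / x_{\gamma^0}$ directly. Good matchings of the glued graph correspond to perfect matchings of $\mathcal{G}_\ell$, and the extra factor of $x_{\gamma^0}$ is absorbed by the convention in \Cref{def:PosetMin} and by $\mathrm{cross}$.

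\textbf{Notched corner arcs (main obstacle).} These have no hexagonal model, so the bijection above does not apply as stated. Here we plan to work with posets. The poset $\mathcal{P}_\ell$ is a fence containing a hexagonal-type segment at $\rho$ at both ends. We split order ideals of $\mathcal{P}_\ell$ according to the elements labelled $\rho$ near the hook, exactly as in the proof of \Cref{prop:TWalkPosetNotchedCorner}. This should express both $\chi(\mathcal{P}_\ell)$ and $\chi(\mathcal{P}_\gamma)\chi(\mathcal{P}_{\gamma^{(q)}})$ as combinations of $\chi(\mathcal{P}_{\gamma_1})$ and $\chi(\mathcal{P}_{\gamma_2})$ and their products with the same prefactors; compare \eqref{eq:RewriteCornerArc}. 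Matching the coefficients will require Chebyshev identities such as \Cref{lem:ChebyshevLogConcave}, together with $U_{k-1}U_{k+1} - U_{k-2}U_{k+2} = U_0U_2 - U_{-1}U_3$-type relations. This coefficient matching is the step most likely to need care.
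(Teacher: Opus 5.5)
\textbf{The main gap: the bijection you describe does not exist.} Your reduction is the paper's: $x_\ell=x_\gamma x_{\gamma^{(q)}}$, \Cref{lem:PlainOnPunctured} for $\ell$ and $\gamma$, and \Cref{thm:MainExpansion} for the remaining equalities. So everything hinges on $\chi(\ell)=\chi(\gamma)\chi(\gamma^{(q)})$.

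The paper gets this identity in one stroke from the poset identity $\mathbf{x}^{\mathbf{g}_\gamma+\mathbf{g}_{\gamma^{(q)}}}\mathcal{W}(\mathcal{P}_\gamma)\mathcal{W}(\mathcal{P}_{\gamma^{(q)}})=\mathbf{x}^{\mathbf{g}_\ell}\mathcal{W}(\mathcal{P}_\ell)$ of \cite[Corollary 2]{banaian2024skein}. That is a statement about labelled fence and loop posets, so auxiliary elements behave like any other element and no case split is needed.

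Your main case proposes to cut $\mathcal{G}_\ell$ at the hook and reglue the hook onto one copy of $\mathcal{G}_\gamma$. Already for $d=1$ this fails. In poset language (via \Cref{prop:AuxAndPoset}), up to duality $\mathcal{P}_\ell$ is $\tau_1\succ\eta_1\prec\cdots\prec\eta_h\succ\tau_2$. Meanwhile $\mathcal{P}_{\gamma^{(q)}}$ is the chain $\eta_1\prec\cdots\prec\eta_h$ together with $\eta_1\prec\tau'\prec\eta_h$.
\begin{itemize}
\item If the $\mathcal{P}_\gamma$-factor is always read off $\tau_2$, the ideal $\{\eta_1,\dots,\eta_h,\tau_2\}$ has no image.
\item If it is always read off $\tau_1$, the ideal $\{\tau_2\}$ has no image.
\end{itemize}
A working correspondence must therefore switch, according to the state at the hook, which copy of $\mathcal{G}_\gamma$ supplies the $\Match(\mathcal{G}_\gamma)$-factor. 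It then moves the restricted matching of the other copy into the loop graph. This moves matchings wholesale between the two copies, so your reason that hexagonal tiles are never touched does not apply.

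With hexagonal tiles you would have to check that the two copies of $\GGen_\gamma$ inside $\GGen_\ell$ are isomorphic as weighted graphs, compatibly with height monomials. This is not automatic. $\ell$ traverses the second copy backwards, so under the counterclockwise convention its hexagonal tiles come from winding number $\mathbf{p}-2-k$, and the labels match only via $U_{\mathbf{p}-2-j}(\lambda_{\mathbf p})=U_j(\lambda_{\mathbf p})$. The cleaner fix is to run the surface argument on the graphs with auxiliary tiles (ordinary square tiles, with $h=1$ on auxiliary tiles), or directly on the posets, as the paper does.

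\textbf{Two smaller points.} First, the case you flag as the main obstacle is vacuous. An ordinary arc in minimal position is never a corner arc. An initial segment from $q$ into the pending triangle that exits through $\rho$ cobounds, with a piece of $\rho$, a disc containing no orbifold point, so it can be isotoped off $\rho$. Hence a singly-notched ordinary arc is never a notched corner arc, and your unfinished Chebyshev coefficient matching is not needed.

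Second, in the case $\gamma^0\in T$, \Cref{def:GoodMatching} makes good matchings of the glued graph correspond only to perfect matchings of $\mathcal{G}_\ell$ that use at least one of the two glued $\gamma^0$-edges. You therefore need the additional (true) fact that every perfect matching of the zigzag fan graph $\mathcal{G}_\ell$ uses one of them. Also, on the graph side the extra $x_{\gamma^0}$ comes from $x(\widetilde M)=x_{\gamma^0}\,x(M)$ for the lifted matching $\widetilde M$, not from $\mathrm{cross}$; \Cref{def:Phi} does not divide $\mathrm{cross}$ by $x_{\gamma^0}$ for singly-notched arcs.
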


\begin{proof}
Let $\ell$ be the (necessarily ordinary) arc which forms a self-folded triangle with $\gamma$. We have the equality $x_\gamma x_{\gamma^{(q)}} = x_\ell$. From  Corollary 2 in \cite{banaian2024skein}, we have \[
\mathbf{x}^{\mathbf{g}_\gamma + \mathbf{g}_{\gamma^{(q)}}} \mathcal{W}(\mathcal{P}_\gamma) \mathcal{W}(\mathcal{P}_{\gamma^{(q)}}) = \mathbf{x}^{\mathbf{g}_\ell} \mathcal{W}(\mathcal{P}_\ell).
\]

Therefore, after rearranging, we have \[
\chi(\mathcal{P}_{\gamma^{(q)}}) = \mathbf{x}^{\mathbf{g}_{\gamma^{(q)}}} \mathcal{W}(\mathcal{P}_{\gamma^{(q)}})  = \mathbf{x}^{\mathbf{g}_\ell - \mathbf{g}_\gamma} \frac{\mathcal{W}(\mathcal{P}_\ell)}{\mathcal{W}(\mathcal{P}_\gamma)} = \frac{x_\ell}{x_\gamma} = x_{\gamma^{(q)}}
\] 
where the second-to-last equality follows from \Cref{lem:PlainOnPunctured}. 

The remaining equalities in the statement are a consequence of Theorem \ref{thm:MainExpansion}.
\end{proof}

Note that \Cref{prop:SinglyNotchedExpansion} is only valid for arcs with distinct endpoints because a singly-notched loop would not be an ordinary arc. In particular, this result does not apply to pending arcs. Given a singly-notched pending arc $\rho^{(q)}$ such that $\rho^0 \notin T$, one can use our previous constructions to assign a Laurent polynomial to $\rho^{(q)}$. Here, we complete the discussion of singly-notched arcs by introducing Laurent polynomials to each singly-notched pending arc whose underlying plain version lies in $T$. 

\begin{definition}\label{def:SinglyNotchedPendingArc}
Let $\rho \in T$ be a pending arc incident to a puncture $q$ and let $\ell'$ be a closed curve that intersects all spokes incident to $q$ (as on the top left of Table \ref{tab:SinglyNotchedPending}). Let these spokes, other than $\rho$, be $\sigma_1,\sigma_2,\ldots,\sigma_m$, and define $Y_q':= y_{\rho}\prod_{i=1}^m \Phi(h_{\sigma_i})$. Orient $\rho$ so that its path of travel has the orbifold point on its right. We set
    \begin{align*}
        x_{\rho^{(s(\rho))}} &:=x_{\ell'}+\lambda_\mathbf{p}Y'_{q},  \\
        x_{\rho^{(t(\rho))}} &:=y_{\rho}x_{\ell'}+\lambda_\mathbf{p}.
    \end{align*}
\end{definition}

\begin{table}
\begin{center}
\begin{tabular}{c|c|c}
    \begin{tikzpicture}[scale=0.7, transform shape]
      \draw[
        thick
      ] (0,-3) .. controls (-2.5,0) and (2.5,0) .. (0,-3);
    \filldraw (0,-3) circle (3pt);
    \draw[thick] (0,-3) to (1,-3);
    \draw[thick] (0,-3) to (-1,-3.5);
    \draw[thick] (0,-3) to (-0.5,-4);
    \draw[out=0,in=0,looseness=1.3,thick, densely dotted] (0,-3.3) to (0,-0.5);
    \draw[out=180,in=180,looseness=1.3,thick, densely dotted] (0,-3.3) to (0,-0.5);
    \node[scale=2, thick] at (0,-1.5) {$\times$};
    \node[scale=1.5, thick] at (0.7,-2.4) {$\rho$};
    \node[scale=1.5, thick] at (1.1,-1) {$\ell'$};
    \node[scale=1.5, thick] at (-0.4,-2.9) {$q$};
    \node[scale=1.3, thick] at (-1.2,-3.7) {$\sigma_1$};
    \node[scale=1.3, thick] at (-0.7,-4.2) {$\sigma_2$};
    \node[scale=1.3, thick] at (1.6,-3) {$\sigma_{m}$};
    \node[scale=1.3, thick] at (0.6,-3.5) {$\iddots$}; 
    \end{tikzpicture}
    &
    \begin{tikzpicture}[scale=0.7, transform shape]
      \draw[
        thick,
        postaction={
          decorate,
          decoration={
            markings,
            mark=at position 0.12 with {\node[ rotate=90] {$\bowtie$};}
          }
        }
      ] (0,-3) .. controls (-2.5,0) and (2.5,0) .. (0,-3);
    \filldraw (0,-3) circle (3pt);
    \node[scale=2, thick] at (0,-1.5) {$\times$};
    \node[scale=1.5, thick] at (0,-3.5) {$q$};

    \end{tikzpicture}
 &
    \begin{tikzpicture}[scale=0.7, transform shape]
      \draw[
        thick,
        postaction={
          decorate,
          decoration={
            markings,
            mark=at position 0.88 with {\node[ rotate=90] {$\bowtie$};}
          }
        }
      ] (0,-3) .. controls (-2.5,0) and (2.5,0) .. (0,-3);
    \filldraw (0,-3) circle (3pt);
    \node[scale=2, thick] at (0,-1.5) {$\times$};
    \node[scale=1.5, thick] at (0,-3.5) {$q$};
    \end{tikzpicture}
    \\[2mm]
    \highlight{$\rho$ (solid) and $\ell'$(dotted)} & \highlight{$\rho^{(s(\rho))}$} & \highlight{$\rho^{(t(\rho))}$} \\[2mm]
    \hline
    \begin{tikzpicture}
    \node[] (s1) at (4,-4){$(\sigma_1,\hat{y}_{\sigma_1})$};
    \node[] (s2) at (4,-3){$(\sigma_2,\hat{y}_{\sigma_2})$};
    \node[] (s3) at (4,-2){$(\sigma_3,\hat{y}_{\sigma_3})$};
    \node[] (dotsl) at (4,-1){$\vdots$};
    \node[] (ta) at (4,0){$(\sigma_m,\hat{y}_{\sigma_m})$};
    \draw (s1) -- (s2);
    \draw (s2) -- (s3);
    \draw(s3) -- (dotsl);
    \draw(dotsl) -- (ta);
    \end{tikzpicture}
    &
    \begin{tikzpicture}
    \node[] (s1) at (4,-3){$(\sigma_1,\hat{y}_{\sigma_1})$};
    \node[] (dotsl) at (4,-2){$\vdots$};
    \node[] (sm1) at (4,-1){$(\sigma_m,\hat{y}_{\sigma_m})$};
    \node[] (ta) at (4,0){$(\rho,\frac{U_2(\lambda_\mathbf{p})}{\lambda_\mathbf{p}}\hat{y}_\rho)$};
    \node[] (ta1) at (4,1){$(\lambda_{\mathbf{p}}\rho,\frac{1}{U_2(\lambda_\mathbf{p})})$};
    \draw(s1) -- (dotsl);
    \draw(dotsl) -- (sm1);
    \draw(sm1) -- (ta);
    \draw(ta) -- (ta1);
    \end{tikzpicture}
    &
    \begin{tikzpicture}
    \node[] (ta1) at (4,-3){$(\rho,\frac{1}{\lambda_\mathbf{p}}\hat{y}_{\rho})$};
    \node[] (s1) at (4,-2){$(\sigma_1,\hat{y}_{\sigma_1})$};
    \node[] (s2) at (4,-1){$(\sigma_2,\hat{y}_{\sigma_2})$};
    \node[] (dotsl) at (4,0){$\vdots$};
    \node[] (sm1) at (4,1){$(\sigma_{m},\hat{y}_{\sigma_m})$};
    \draw(ta1) -- (s1);
    \draw (s1) -- (s2);
    \draw(s2) -- (dotsl);
    \draw(dotsl) -- (sm1);
    \end{tikzpicture}
    \\
    \highlight{$\mathcal{P}(\ell')$} & \highlight{$\mathcal{P}(\rho^{(s(\rho))})$} & \highlight{$\mathcal{P}(\rho^{(t(\rho))})$} \\
\end{tabular}
\end{center}
\caption{Posets assigned to a singly-notched pending arc whose underlying plain arc is in the initial triangulation. Each element is denoted by (label,weight).}\label{tab:SinglyNotchedPending}
\end{table}

Following our conventions, we have $x_{\min}^{\rho^{(s(\rho))}} = \tfrac{x_{\sigma_m}}{x_{\sigma_1}}$ and $x_{\min}^{\rho^{(t(\rho))}}=\lambda_\mathbf{p}$. With these minimal terms, one can verify that~\Cref{def:SinglyNotchedPendingArc} is consistent with applying~\Cref{def:LaurentExpansionFromFencePoset} to the posets shown in Table \ref{tab:SinglyNotchedPending}.

\begin{remark}
   The inspiration for the definitions of  $\chi(\mathcal{P}_{\rho^{(s(\rho))}})$ and $\chi(\mathcal{P}_{\rho^{(t(\rho))}})$ come from the treatment of singly-notched loops in \cite[Definition 12.22]{musiker2011positivity}. The nature of the specific posets is discussed further in Section \ref{sec:Invariance}. This definition is also consistent with the skein relation given in~\cite[Proposition 3]{banaian2020snake}.
\end{remark} 

\subsection{Doubly-Notched Arcs}

In this section, we complete the proof of Theorem \ref{thm:Correctness} by considering doubly-notched arcs. Our proof method consists of showing that our combinatorial Laurent polynomials satisfy formulas analogous to those in the cluster algebra. We continue to find it most convenient to phrase the combinatorics in terms of posets; by~\Cref{thm:MainExpansion}, this is equivalent to working with loop graphs or with $T$-walks.

To prepare for the proof of \Cref{thm:DoublyNotched}, we present skein-like formulas for the polynomials $\chi(\calP_\gamma)$, complementing results of~\cite{banaian2025cluster,banaian2024skein}.  
We prove the following only in a certain setting  which appears when resolving intersections of ordinary arcs. A similar claim is true in wider generality, but by focusing on this case we are able to simplify notation.

\begin{lemma}\label{lem:SkeinRelationWindAtBeginning}
Let $\gamma$ be a corner arc with $p = s(\gamma)$ and winding number $k = 1$. Assume $\gamma$ has no other self-intersections.

Let $\rho \in T$ be the pending arc enclosing this orbifold point, and by abuse of notation, let $\rho$ also denote $\calP_\gamma(1)$. Let $\gamma_1,\gamma_2$ be as on the left of Figure \ref{fig:ForBreakUpTWalkLemma}. Let $\tau_{i_1} = \rho,\tau_{i_{2}},\ldots,\tau_{i_f}$ be the contiguous sublist of arcs crossed by $\gamma$ which share an endpoint with $\rho$ and lie counterclockwise from $\rho$, and let $\tau_{j_1} = \rho, \tau_{j_2}, \cdots, \tau_{j_e}$ be the contiguous sublist of arcs crossed by $\gamma$ which share an endpoint with $\rho$ and lie clockwise from $\rho$. Set \[
Y^\geq = \Phi(h_{j_1}h_{j_2}\cdots h_{j_e}) \qquad \qquad Y^\leq = \Phi(h_{i_1}h_{i_2} \cdots h_{i_f}).
\]

Then we have  \[
\chi(\mathcal{P}_\gamma^T) = \chi(\mathcal{P}_{\gamma_1}^T) + \lambda_{\mathbf{p}}Y^\geq \chi(\mathcal{P}_{\gamma_2}^T) \qquad \text{and} \qquad \chi(\mathcal{P}_{\gamma^{(p)}}^T) = \lambda_{\mathbf{p}}\chi(\mathcal{P}_{\gamma_2^{(p)}}^T) + Y^\leq \chi(\mathcal{P}_{\gamma_1^{(p)}}^T).
\]

Similarly, if $\gamma$ is a corner arc with winding number $k = \mathbf{p}-2$ , then \[
\chi(\mathcal{P}_\gamma^T) = \lambda_{\mathbf{p}}\chi(\mathcal{P}_{\gamma_1}^T) + Y^\geq \chi(\mathcal{P}_{\gamma_2}^T)\qquad \text{and} \qquad
\chi(\mathcal{P}_{\gamma^{(p)}}^T) = \chi(\mathcal{P}_{\gamma_2^{(p)}}^T) + \lambda_{\mathbf{p}}Y^\leq \chi(\mathcal{P}_{\gamma_1^{(p)}}^T).
\]
\end{lemma}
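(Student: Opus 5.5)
The plan is to prove each identity by partitioning the order ideals of the relevant poset according to how they meet the elements labelled $\rho$ near the start of $\gamma$. This is the same strategy as in the proof of \Cref{prop:TWalkPosetNotchedCorner}, but specialized to $k=1$ (and to $k=\mathbf{p}-2$). Because $\gamma$ is a plain corner arc with $k=1$, the poset $\calP_\gamma$ begins at $v_1=\rho$, and the extended poset $\widehat{\calP}_\gamma$ adjoins an element $v_0$ labelled $U_1\rho=\lambda_{\mathbf{p}}\rho$. The remainder of $\calP_\gamma$ coincides with $\calP_{\gamma_1}$ and $\calP_{\gamma_2}$ once the first element and the contiguous fan of arcs around the shared endpoint are removed. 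This is exactly the local decomposition behind \Cref{lem:BreakUpPMSOnFirstSquareTile}.

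For the plain identity, we split $J(\calP_\gamma)$ into ideals not containing $v_1$ and ideals containing $v_1$. The first class is in weight-preserving bijection with $J(\calP_{\gamma_1})$ (after forced exclusions). The second class forces the chain of elements below $v_1$ to be included, namely those corresponding to $\tau_{j_1},\ldots,\tau_{j_e}$, whose product of weights contributes the factor $Y^\geq$ together with $x$-ratios. What remains is $J(\calP_{\gamma_2})$. We then compare minimal terms using \Cref{def:PosetMin}. The valley or peak structure at the start changes: $v_0=U_1\rho$ versus the first element of $\widehat{\calP}_{\gamma_i}$. We check that $x_{\min}^\gamma=x_{\min}^{\gamma_1}$, and that $x_{\min}^\gamma\cdot(\text{forced weights})=\lambda_{\mathbf{p}}Y^\geq x_{\min}^{\gamma_2}$. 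Here the factor $\lambda_{\mathbf{p}}=U_1$ comes from the $U_1\rho$ label, with the $\hat y$ ratios cancelling the $x$'s exactly as in the surface case of \cite{banaian2024skein}. The weight $w_{v_1}$ for a $k=1$ corner crossing, and hence the coefficient bookkeeping, should be read off from \Cref{table:PuzzlePiecesWinding} via \Cref{prop:AuxAndPoset}. Alternatively, since by \Cref{thm:MainExpansion} all models agree, it may be cleaner to run the argument on $T$-walks: \Cref{lem:BreakUpTWalk}(i) with $k=1$ gives $U_0=1$ and $U_1=\lambda_{\mathbf{p}}$ directly.

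For the notched identity, we use the loop poset of a notched corner arc with $k=1$. The element $U_{k-1}\rho=U_0\rho$ has scalar weight $1/(U_{-1}U_1)$, and the elements $\rho^+$ have weight involving $U_{-1}=0$. So, as in \Cref{rmk:TreatAsFormalVariable}, the ideals containing $\rho^+$ have zero weight once the formal expressions are simplified. In equation~\eqref{eq:RewriteCornerArc} with $k=1$, the terms multiplying $U_{k-2}=U_{-1}$ vanish. This leaves
\[
\chi(\calP_{\gamma^{(p)}})=\tfrac{1}{x_\rho}\big((1+\lambda_{\mathbf{p}}\hat y_\rho\hat S_p)\chi(\calP_{\gamma_1})+Y^\geq(U_2+\lambda_{\mathbf{p}}\hat y_\rho\hat S_p+\hat y_\rho^2\hat Y_p)\chi(\calP_{\gamma_2})\big).
\]
We then regroup these terms by recognizing the loop posets of $\gamma_1^{(p)}$ and $\gamma_2^{(p)}$. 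Their weight polynomials expand, by the same partition into whether the hook chain $\sigma_1,\ldots,\sigma_m$ is included, into $\hat S_p$-type sums times $\chi(\calP_{\gamma_i})$. The notch at $p$ exchanges the roles: $\lambda_{\mathbf{p}}$ now multiplies the $\gamma_2^{(p)}$ term, and the fan factor becomes $Y^\leq$. This regrouping uses $U_2=\lambda_{\mathbf{p}}^2-1$ and \Cref{lem:ChebyshevLogConcave}.

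For the case $k=\mathbf{p}-2$, we use the symmetry $U_{\mathbf{p}-1-j}=U_{j-1}$, which follows from $U_{\mathbf{p}-1}(\lambda_{\mathbf{p}})=0$. Under this symmetry the roles of $U_{k-1}$ and $U_{k+1}$ swap, and so the coefficient $\lambda_{\mathbf{p}}$ moves to the other term. Alternatively, we use the reflection relating winding $k$ to winding $\mathbf{p}-2-k$.

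We expect the main obstacle to be the notched regrouping: matching the coefficient of each partition block to the correct product $\lambda_{\mathbf{p}}\chi(\calP_{\gamma_2^{(p)}})$ or $Y^\leq\chi(\calP_{\gamma_1^{(p)}})$. This requires carefully tracking the minimal terms of the loop posets, including the hook maxima that are excluded from $\max(\gamma)$. The first approach we would try is to verify the regrouping by the $T$-walk model with \Cref{tab:TwalkLoop}, where the six configurations with $k=1$ visibly collapse to four nonzero ones.
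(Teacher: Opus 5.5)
Your overall route is the paper's. The plain identities come from \Cref{lem:BreakUpTWalk}(i) at $k=1$ transported to posets, and your $T$-walk alternative is exactly what the paper does. The notched identity is obtained by specializing \eqref{eq:RewriteCornerArc} at $k=1$ and regrouping.

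\textbf{The specialization you display is wrong.} In \eqref{eq:RewriteCornerArc} the coefficient of $\chi(\calP_{\gamma_1})$ is $U_k+U_{k-1}\hat{y}_\rho\hat{S}_p+U_{k-2}\hat{y}_\rho^2\hat{Y}_p$. At $k=1$ this is $\lambda_{\mathbf{p}}+\hat{y}_\rho\hat{S}_p$, not $1+\lambda_{\mathbf{p}}\hat{y}_\rho\hat{S}_p$; you swapped $U_0=1$ and $U_1=\lambda_{\mathbf{p}}$. This is not cosmetic, because it makes the identity you then try to reach false. Compare $y$-free terms:
\begin{itemize}
\item In $\calP_{\gamma^{(p)}}$ only the ideals $\emptyset$ and $\{U_k\rho\}$ are $y$-free. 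They give $x_{\min}^{\gamma^{(p)}}(1+U_{k-1}U_{k+1})=\frac{U_k}{x_\rho}x_{\min}^{\gamma_1}$ by \Cref{lem:ChebyshevLogConcave}. At $k=1$ this is $\frac{\lambda_{\mathbf{p}}}{x_\rho}x_{\min}^{\gamma_1}$.
\item This agrees with the $y$-free part of $\lambda_{\mathbf{p}}\chi(\calP_{\gamma_2^{(p)}})$, via $x_\rho x_{\min}^{\gamma_2^{(p)}}=x_{\min}^{\gamma_1}$.
\item Your formula gives only $\frac{1}{x_\rho}x_{\min}^{\gamma_1}$.
\end{itemize}

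Your explanation of which ideals vanish is also incorrect. At $k=1$ the element $U_0\rho$ has weight $1/(U_{-1}U_1)$. So in ideals containing both $\rho^+$ and $U_0\rho$, the factor $U_{-1}$ cancels formally and leaves the nonzero weight $\hat{y}_\rho/\lambda_{\mathbf{p}}$. This is exactly the phenomenon of \Cref{rmk:TreatAsFormalVariable}. Only the block $A_2$ vanishes (ideals containing $\rho^+$ but not $\rho$, coefficient $U_{k-2}$). The block $A_5$ survives with coefficient $U_{k-1}=1$, and it is precisely the $\hat{y}_\rho^2\hat{Y}_p$ term you correctly kept in the $\chi(\calP_{\gamma_2})$-coefficient, so your prose contradicts your formula. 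Since individual weights are undefined at $k=1$, work only with \eqref{eq:RewriteCornerArc}, where the Chebyshev ratios are already combined into polynomials.

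\textbf{The regrouping needs concrete ingredients you do not supply.} After the correction, the target is \eqref{eq:TwoWaysToSeeNotchedCorner}. The step you flag as the main obstacle is where the content lies, and ``the notch exchanges the roles'' restates the claim rather than proving it. The paper does the following.
\begin{enumerate}
\item It expands $\mathcal{W}(\calP_{\gamma_1^{(p)}})$ and $\mathcal{W}(\calP_{\gamma_2^{(p)}})$ according to how much of the hook chain lies in the ideal.
\item It uses three minimal-term relations: $\hat{y}_\rho x_{\min}^{\gamma_2^{(p)}}=y_\rho x_{\min}^{\gamma_1^{(p)}}$, $\hat{Y}^{\geq}x_{\min}^{\gamma_1}=Y^{\geq}x_{\min}^{\gamma_2}$, and $x_\rho x_{\min}^{\gamma_2^{(p)}}=x_{\min}^{\gamma_1}$.
\item It observes that one of $Y^{\geq}$, $Y^{\leq}$ equals $y_\rho$, according to whether $\gamma$ next crosses $\sigma_1$ or $\sigma_m$.
\item It substitutes $U_2=\lambda_{\mathbf{p}}^2-1$ and compares the two sides power-by-power in $\lambda_{\mathbf{p}}$. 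For example, the $\lambda_{\mathbf{p}}$-linear parts agree because every ideal of $\calP_{\gamma_2}$ either contains its first element $\alpha$ or does not. The $\lambda_{\mathbf{p}}^0$ parts are similar and the $\lambda_{\mathbf{p}}^2$ parts agree immediately.
\end{enumerate}
With the corrected coefficient and these steps your plan goes through. Treating $k=\mathbf{p}-2$ via $U_{\mathbf{p}-1-j}(\lambda_{\mathbf{p}})=U_{j-1}(\lambda_{\mathbf{p}})$ is fine.
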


The notation $Y^\geq$ and $Y^\leq$ hints at how these sets appear in the poset $\calP_{\gamma}$.

\begin{proof}
The cases where $\gamma$ is plain come directly from combining Lemma \ref{lem:BreakUpTWalk} with the combinatorial translation between $T$-walks and order ideals of posets in Theorem \ref{thm:MainExpansion}. 

Now, consider a notched corner arc $\gamma^{(p)}$. Here, we focus on the $k = 1$ case as the $k = \mathbf{p}-2$ case is similar. Let the spokes incident to the puncture $p = s(\gamma)$ other than $\rho$ be $\sigma_1,\ldots,\sigma_m$, indexed in counterclockwise order. Let $\hat{S}_p = \sum_{i=0}^m \prod_{j=1}^i \hat{y}_{\sigma_j}$, $\hat{Y}_p = \prod_{i=1}^m\hat{y}_{\sigma_i}$ and $Y_p = \prod_{i=1}^m \Phi(h_{\sigma_i})$. Set also $\hat{Y}^\geq = \hat{y}_{j_1}\hat{y}_{j_2} \cdots \hat{y}_{j_e}$ where the arcs $\tau_{j_i}$ are as defined in the premable.

Recall in~\Cref{eq:RewriteCornerArc} in the proof of \Cref{prop:TWalkPosetNotchedCorner}, we gave a reduction formula for $\chi(\calP_{\gamma^{(p)}})$. 
Since $k = 1$ here, we have $U_{k-2}(\lambda_{\mathbf{p}}) = 0, U_{k-1}(\lambda_{\mathbf{p}}) = 1, U_{k-2}(\lambda_{\mathbf{p}}) = \lambda_{\mathbf{p}}$ and  $U_{k+1}(\lambda_{\mathbf{p}}) = \lambda_{\mathbf{p}}^2 - 1$. Therefore, we are done if we show \begin{align}
\lambda_{\mathbf{p}} x_\rho  \chi(\calP_{\gamma_2^{(p)}}) + x_\rho Y^{\leq} \chi(\calP_{\gamma_1^{(p)}}) &= (\lambda_{\mathbf{p}} + \hat{y}_\rho \hat{S}_p) \chi(\calP_{\gamma_1}) + Y^\geq (\lambda_{\mathbf{p}}^2 - 1 + \lambda_{\mathbf{p}}\hat{y}_\rho \hat{S}_p + \hat{y}_\rho^2\hat{Y}_p)\chi(\calP_{\gamma_2}) \label{eq:TwoWaysToSeeNotchedCorner}.
\end{align}

Notice that $Y^\geq = y_\rho$ or $Y^\leq = y_\rho$. This depends on whether $\gamma$ crosses $\sigma_1$ or $\sigma_m$ after crossing $\rho$; if $\gamma$ crosses neither, then $\gamma$ only crosses $\rho$ and the computation is small. For convenience, we focus on the case $Y^\leq = y_\rho$. The other case is similar. 

Notice that $\tau_{j_1}, \tau_{j_2},\ldots,\tau_{j_e}$ must necessarily consist of $\rho$ and some of the spokes $\sigma_i$. Let $l$ be such that the last arc in this list is $\sigma_l$ and let $\alpha$ be the first arc crossed by $\gamma_2$. Below, we draw $\calP_{\gamma_1^{(p)}}$ and $\calP_{\gamma_2^{(p)}}$ on the left and right, respectively. Posets $\calP_{\gamma_1}$ and $\calP_{\gamma_2}$  correspond to removing the chains at the end. 

\begin{center}
\begin{tabular}{c|c}
\begin{tikzpicture}[scale=0.7]
\node(sm) at (0,0){$\sigma_m$};
\node(sm1) at (1,-1){$\sigma_{m-1}$};
\node at (2,-2){$\ddots$};
\node(sk) at (3,-3) {$\sigma_l$};
\node(a) at (4,-2){$\alpha$};
\node at (4.7,-2){$\cdots$};
\draw(sm) -- (sm1);
\draw(sk) -- (a);
\node(s1l) at (-1.5,-2.5){$\sigma_1$};
\node(s2l) at (-1.5,-1.5){$\sigma_2$};
\node at (-1.5,-0.5){$\vdots$};
\node(sml) at (-1.5,0.5){$\sigma_m$};
\node(rho1l) at (-1.5,1.5){$\rho$};
\node(rho2l) at (-1.5,2.5){$\rho$};
\draw(sm) -- (s1l);
\draw(sm) -- (rho2l);
\draw(s1l) -- (s2l);
\draw(sml) -- (rho1l);
\draw(rho1l) -- (rho2l);
\end{tikzpicture}
&
\begin{tikzpicture}[scale=0.7]
\node(a) at (0,0){$\alpha$};
\node at (0.7,0){$\cdots$};
\node(skl) at (-1.5,-3.5){$\sigma_l$};
\node at (-1.5,-2.5){$\vdots$};
\node(sml) at (-1.5,-1.5){$\sigma_m$};
\node(rho1l) at (-1.5,-0.5){$\rho$};
\node(rho2l) at (-1.5,0.5){$\rho$};
\node(s1l) at (-1.5,1.5){$\sigma_1$};
\node at (-1.5,2.5){$\vdots$};
\node(sk-1l) at (-1.5,3.5){$\sigma_{\ell-1}$};
\draw(a) -- (skl);
\draw(a) -- (sk-1l);
\draw(sml) -- (rho1l);
\draw(rho1l) -- (rho2l);
\draw(rho2l) -- (s1l);
\end{tikzpicture}
\end{tabular}
\end{center}

First we rewrite the weight polynomials of the posets $\calP_{\gamma_1^{(p)}}$ and $\calP_{\gamma_2^{(p)}}$. First, for $\calP_{\gamma_1^{(p)}}$, we partition order ideals based on how many elements in the first chain are present.
\[
\mathcal{W}(\calP_{\gamma_1^{(p)}}) = \sum_{\substack{I \in J(\calP_{\gamma_1})\\ \calP_{\gamma_1}(1) \notin I}} w(I) + (\hat{S}_p-1) \mathcal{W}(\calP_{\gamma_1}) + \lambda_{\mathbf{p}}\hat{y}_\rho \hat{Y}_p \mathcal{W}(\calP_{\gamma_1})+ \hat{y}_\rho \hat{Y}_p \hat{Y}^\geq \mathcal{W}(\calP_{\gamma_2})
\]

We proceed similarly  for  $\calP_{\gamma_2^{(p)}}$. We also recognize a subposet which is isomorphic to  $\calP_{\gamma_1}$.

\[
\mathcal{W}(\calP_{\gamma_2^{(p)}}) = \mathcal{W}(\calP_{\gamma_1}) + \hat{Y}^\geq  (\lambda_{\mathbf{p}} + \hat{y}_\rho  + \hat{y}_\rho \hat{y}_{\sigma_1} + \cdots +  \hat{y}_\rho \hat{y}_{\sigma_1}\cdots \hat{y}_{\sigma_{\ell-2}}) \mathcal{W}(\calP_{\gamma_2}) + \hat{y}_\rho^2 \hat{Y}_p \sum_{\substack{I \in J(\calP_{\gamma_2})\\ \alpha \in I}} w(I)
\]

Next, we compare the minimal terms of these four arcs. These have several nice relationships. 

\[
\hat{y}_\rho x_{\min}^{\gamma_2^{(p)}} = y_\rho x_{\min}^{\gamma_1^{(p)}} \qquad \hat{Y}^\geq x_{\min}^{\gamma_1} = Y^\geq x_{\min}^{\gamma_2} \qquad x_\rho x_{\min}^{\gamma_2^{(p)}} = x_{\min}^{\gamma_1}
\]

Now, we use these formulas to refine~\Cref{eq:TwoWaysToSeeNotchedCorner}. We can further sort each side based on the number of factors of $\lambda_{\mathbf{p}}$ and then show each of these smaller polynomials agrees. For instance, the terms on the left of~\Cref{eq:TwoWaysToSeeNotchedCorner} with coefficient $\lambda_{\mathbf{p}}$ are \begin{align*}
&x_{\min}^{\gamma_1}\big( \mathcal{W}(\calP_{\gamma_1}) + \hat{y}_{\rho} \hat{Y}^\geq  (1  + \hat{y}_{\sigma_1} + \cdots +  \hat{y}_{\sigma_1}\cdots \hat{y}_{\sigma_{\ell-2}}) \mathcal{W}(\calP_{\gamma_2}) + \hat{y}_\rho^2 \hat{Y}_p \sum_{\substack{I \in J(\calP_{\gamma_2}) \\ \alpha \in I}} w(I) + \hat{y}_\rho^2 \hat{Y}_p \mathcal{W}(\calP_{\gamma_1} )\big)
\end{align*}

whereas on the right we have \[
x_{\min}^{\gamma_1} \mathcal{W}(\calP_{\gamma_1}) + x_{\min}^{\gamma_2} Y^\geq \hat{y}_\rho \hat{S}_\rho \mathcal{W}(\calP_{\gamma_2}) = x_{\min}^{\gamma_1}(\mathcal{W}(\calP_{\gamma_1}) +  \hat{y}_\rho \hat{Y}^\geq (1 + \hat{y}_{\sigma_1} + \cdots + \hat{y}_{\sigma_1} \cdots \hat{y}_{\sigma_m}) \mathcal{W}(\calP_{\gamma_2}) )
\]
Subtracting these and factoring out $\hat{y}_{\rho}^2 \hat{Y}_p x_{\min}^{\gamma_1}$, we have \begin{align*}
&\mathcal{W}(\calP_{\gamma_1}) +\sum_{\substack{I \in J(\calP_{\gamma_2})\\ \alpha \in I}} w(I) - (1 + \hat{y}_{\sigma_l} + \cdots + \hat{y}_{\sigma_l} \cdots \hat{y}_{\sigma_m})\mathcal{W}(\calP_{\gamma_2})\\
&=\mathcal{W}(\calP_{\gamma_1}) +\sum_{\substack{I \in J(\calP_{\gamma_2})\\ \alpha \in I}} w(I) - \mathcal{W}(\calP_{\gamma_2}) - \bigg(\mathcal{W}(\calP_{\gamma_1}) - \sum_{\substack{I \in J(\calP_{\gamma_2})\\ \alpha \notin I}} w(I)\bigg) = 0
\end{align*}

where we can conclude this equals zero since every order ideal of $\calP_{\gamma_2}$ either contains $\alpha$ or not. The terms with zero factors of $\lambda_{\mathbf{p}}$ can be handled similarly, reducing each side and culminating with an observation about the relationship of sets of order ideals. The terms with two factors of $\lambda_{\mathbf{p}}$ immediately agree.

\end{proof}

Lemma \ref{lem:SkeinRelationWindAtBeginning} can be seen as a skein relation on arcs which have a self-intersection from winding around an orbifold point before crossing any arcs from $T$, as is depicted on the bottom row of Table \ref{table:PuzzlePiecesWinding}.

\begin{remark}
The formula for plain arcs in Lemma \ref{lem:SkeinRelationWindAtBeginning} bears a strong resemblance to the first, second, and fourth equations found in the proof of Proposition 9.4 in \cite{cerulli2015caldero}; see also \cite[Section 12]{labardini2019family}. This similarity is not surprising as the formula for the Caldero-Chapoton function of a module in a string algebra is very similar to our expression $\chi(\mathcal{P}_\gamma)$. For instance, see \cite[Remark 5]{banaian2024skein} which discusses the use of a $\gb$-vector in both formulas.
\end{remark}

\begin{theorem}\label{thm:DoublyNotched}
If $\gamma^{(p,q)}$ is a doubly-notched ordinary arc on an orbifold $\mathcal{O}$ with triangulation $T$, and $\mathcal{O}$ is not a twice-punctured closed orbifold, then
\[
x_{\gamma^{(p,q)}}  = \chi(\mathcal{P}_{\gamma^{(p,q)}}^T) = \chi(\mathcal{G}_{\gamma^{(p,q)},T})=\mathfrak{W}(\gamma^{(p,q)},T)
\]
\end{theorem}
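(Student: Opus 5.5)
We reduce to the singly-notched case, as in the surface setting of \cite{musiker2011positivity}, by means of an exchange-type identity in the cluster algebra that expresses $x_{\gamma^{(p,q)}}$ through arcs whose expansions are already known. By \Cref{thm:MainExpansion}, the graph, poset and $T$-walk expansions coincide. It therefore suffices to prove $x_{\gamma^{(p,q)}} = \chi(\mathcal{P}^T_{\gamma^{(p,q)}})$, and we will work with posets throughout.

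First we fix the relevant identity. Let $\ell_p$ and $\ell_q$ be the loops based at $q$ (resp.\ $p$) cutting out once-punctured monogons around $p$ (resp.\ $q$). In a triangulation containing $\gamma^{(p)}$, $\gamma^{(q)}$, $\gamma^{(p,q)}$ and $\gamma$, one can flip to relate these arcs. Lifting to a covering surface where necessary, as in \Cref{subsec:singlynotched}, the lambda-length identity of \cite{fomin2018cluster} gives
\[
x_\gamma\, x_{\gamma^{(p,q)}} = x_{\gamma^{(p)}}\, x_{\gamma^{(q)}}\quad\text{up to the coefficient correction,}
\]
or, equivalently, the exchange relation obtained by flipping $\gamma^{(p)}$ in the triangulation $\{\gamma,\gamma^{(p)},\gamma^{(q)},\dots\}$. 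The twice-punctured closed case has to be excluded here, because only there does this configuration fail: the arcs $\gamma^{(p)},\gamma^{(q)}$ together with the two sides exhaust the surface. We plan to state the identity precisely with $y$-coefficients, using the shear coordinates from \Cref{lem:FlipAndMutate}.

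The combinatorial side then reduces to a poset identity
\[
x_{\min}^{\gamma} x_{\min}^{\gamma^{(p,q)}}\,\mathcal{W}(\calP_\gamma)\mathcal{W}(\calP_{\gamma^{(p,q)}}) = x_{\min}^{\gamma^{(p)}}x_{\min}^{\gamma^{(q)}}\,\mathcal{W}(\calP_{\gamma^{(p)}})\mathcal{W}(\calP_{\gamma^{(q)}}) \;(+\text{coefficient terms}),
\]
in the style of the poset skein relations of \cite{banaian2024skein}, which are applied locally at the two loops. Combined with \Cref{lem:PlainOnPunctured} and \Cref{prop:SinglyNotchedExpansion}, this yields the claim. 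The point is that the two loops at $p$ and $q$ are independent: each end contributes a factor via $\lloop$ or $\rloop$, and the matrix formulas of \Cref{prop:FormulasAgree} turn the identity into an equality of $2\times 2$ matrix expressions. These can be checked directly in the style of \cite[Section 5]{ezgieminecluster2024}. We also check separately that $\mathbf{g}_{\gamma^{(p,q)}}+\mathbf{g}_\gamma=\mathbf{g}_{\gamma^{(p)}}+\mathbf{g}_{\gamma^{(q)}}$ using \Cref{lem:g}.

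The special cases need separate treatment. When $\gamma^0\in T$, or $\gamma$ is a pending arc, we use the posets of \Cref{ex:doubly notched pending arc} and the definitions of singly-notched pending arcs (\Cref{def:SinglyNotchedPendingArc}). The identity is then verified directly from the explicit formula $1+w_{\gamma^-}\mathcal{W}(\mathcal{Q})+w(I_{\max})$. Notched corner arcs at either end are handled with \Cref{lem:SkeinRelationWindAtBeginning}: we expand both sides using its notched formula, reducing the winding, and then induct on $k$ and on the number of crossings.

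The main obstacle will be matching the $y$-coefficient terms in the product identity. This is especially delicate when the loops at $p$ and $q$ share spokes, and in the pending cases, where Chebyshev weights such as $U_1,U_2$ appear and must cancel via \Cref{lem:ChebyshevLogConcave}. Our first approach is to decompose order ideals by whether they contain the loop elements at each end, as in the proof of \Cref{prop:TWalkPosetNotchedCorner}.
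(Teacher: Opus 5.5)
\textbf{The pending-arc case has a genuine gap.} Your reduction to singly-notched arcs fails exactly in the case that is new on orbifolds: $\gamma$ a pending arc, and more generally any $\gamma$ with $s(\gamma)=t(\gamma)$. There $\gamma^{(p)}$ is not a tagged arc, so there is no cluster variable $x_{\gamma^{(p)}}$, and \Cref{prop:SinglyNotchedExpansion} does not apply (the paper says so explicitly). The polynomial $\chi(\calP_{\gamma^{(p)}})$, or, when $\gamma=\rho\in T$, the polynomials of \Cref{def:SinglyNotchedPendingArc}, are definitions with no a priori meaning in $\mathcal{A}_{\mathcal{O}}$. Checking a product identity among poset polynomials, e.g.\ via $1+w_{\gamma^-}\mathcal{W}(\mathcal{Q})+w(I_{\max})$, therefore tells you nothing about $x_{\gamma^{(p,p)}}$. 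You would also need to know independently that $x_\gamma x_{\gamma^{(p,p)}}$ satisfies the same identity in $\mathcal{A}_{\mathcal{O}}$. That identity is precisely the second formula of \Cref{cor:Thm12.9}, which the paper deduces \emph{from} this theorem, so your pending case is circular as written. Lifting to a cover does not supply it either: the lift of $\gamma^{(p)}$ is notched at one lift of $p$ and plain at another, so it is not the lift of any tagged arc on $\mathcal{O}$. Standard loops with both ends at one puncture have the same issue; the paper handles them with Remarks 12.11--12.12 of \cite{musiker2011positivity}, which your proposal does not invoke.

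\textbf{What is missing, and how the paper supplies it.} Replace the product identity by genuine three-term exchange relations whose other terms are tagged arcs already handled. Suppose the basepoint $p$ of $\gamma$ differs from the basepoint $v$ of the pending arc $\rho\in T$ at the same orbifold point. Choose $T'\ni\rho$ in which flipping $\rho$ gives $\gamma^{(p,p)}$. Then $x_\rho x_{\gamma^{(p,p)}}=x_{\mu^{(p)}}^2+\lambda_{\mathbf{p}}y_\rho x_{\mu^{(p)}}x_{\eta^{(p)}}+y_\rho^2x_{\eta^{(p)}}^2$, where $\mu^{(p)},\eta^{(p)}$ are singly-notched standard arcs. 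On the poset side, the paper applies the poset skein relation of \cite{banaian2024skein} at the central element $\rho^-$ of $\calP_{\gamma^{(p,p)}}$, and evaluates the leftover corner-arc poset with \Cref{lem:SkeinRelationWindAtBeginning}. That is the real role of the lemma. It only covers winding $1$ and $\mathbf{p}-2$, and ordinary arcs only have winding $0$ or $\mathbf{p}-2$, so your proposed induction on $k$ is neither available nor needed.

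If $p=v$, the paper proceeds in two steps:
\begin{enumerate}
\item It first obtains $x_{\rho^{(p,p)}}$ directly. It combines the exchange relations of the enclosed pending arc $\tau$ with $\rho$ and with $\rho^{(p,p)}$, together with the relations for $x_{\alpha^{(p)}}$ and $x_{\beta^{(p)}}$.
\item It then reaches $\gamma^{(p,p)}$ by flipping $\rho^{(p,p)}$, whose neighbours $\eta^{(p,p)},\mu^{(p,p)}$ are doubly-notched standard arcs already handled.
\end{enumerate}

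\textbf{The standard case.} Even here, your source identity is mis-described. $\gamma^{(p)}$ and $\gamma^{(q)}$ disagree at both ends, so no triangulation contains both. Moreover, $x_\gamma x_{\gamma^{(p,q)}}-x_{\gamma^{(p)}}x_{\gamma^{(q)}}=(1-Y_p)(1-Y_q)Y$ is not an exchange relation, so its coefficient term cannot be read off from the shear coordinates of a flip. It is \cite[Theorem~12.9]{musiker2011positivity}, and its use is what forces the twice-punctured exclusion. Its poset counterpart is Corollary~3 of \cite{banaian2024skein}. With that citation, your treatment of standard arcs agrees with the paper's.
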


\begin{proof}
We focus on the first equality since the latter two will follow from \Cref{thm:MainExpansion}. 

We first suppose that $\gamma$ is a standard arc and $p = s(\gamma) \neq t(\gamma) = q$. Theorem 12.9 in \cite{musiker2011positivity} gives a relation between the cluster variable $x_{\gamma^{(p,q)}}$ and the cluster variables resulting in leaving one or both endpoints plain, \begin{equation}
x_{\gamma^{(p,q)}}x_{\gamma} - x_{\gamma^{(p)}}x_{\gamma^{(q)}} = (1-Y_p)(1-Y_q)Y \label{eq:MSWThm12.9}
\end{equation}
where $Y_p,Y_q,$ and $Y$ are $y$-monomials determined by arcs in $T$ incident to $p$ and $q$ and those crossed by $\gamma$. Corollary 3 in \cite{banaian2024skein} shows that the Laurent monomials $\chi(\calP_{\gamma})$ satisfy the same formula. Since we know $x_\gamma = \chi(\calP_{\gamma})$ from \Cref{lem:PlainOnPunctured} and similarly for $\gamma^{(p)}$ and $\gamma^{(q)}$ from \Cref{prop:SinglyNotchedExpansion} , we conclude $x_{\gamma^{(p,q)}} = \chi(\calP_{\gamma^{(p,q)}})$. 

Next, let \( \gamma \) be a standard arc with \( s(\gamma) = t(\gamma) = p \). Remarks 12.11 and 12.12 in~\cite{musiker2011positivity} indicate that $x_{\gamma^{(p,p)}}$ satisfies an analogous equation to~\Cref{eq:MSWThm12.9} , where $x_{\gamma^{(p)}}$ is replaced with $\chi(\calG_{\gamma})$. Corollary 3 in \cite{banaian2024skein} does not depend on distinct endpoints, and therefore, our same proof method holds. 

For the remainder of the proof, we will assume  \( \gamma \) is a pending arc. Let $p =  s(\gamma) = t(\gamma)$. Let $\rho \in T$ be the pending arc incident to the same orbifold point as $\gamma$ and let $v = s(\rho) = t(\rho)$. Let $\alpha$ and $\beta$ be such that $\rho, \alpha,$ and $\beta$ form a triangle in $T$ where $\beta$ immediately follows $\alpha$ when traversing this triangle in the clockwise direction. We will divide our proof into several cases based on how $\gamma$ interacts with the triangle formed by $\rho,\alpha,$ and $\beta$. In each case, we will first describe a relation in $\mathcal{A}_{\mathcal{O}}$ which involves $x_{\gamma^{(p,p)}}$ and $x_\rho$ and then we will show that our poset Laurent polynomials satisfy the same relation. By using relations in which, for all arcs except $\gamma^{(p,p)}$, we know the poset Laurent polynomials match the corresponding cluster variables, we will be able to conclude the same for $\gamma^{(p,p)}$.

\textbf{Case 1) $p \neq v$.} While $\gamma$ could cross the arcs $\rho$ many times, since these are pending arcs incident to the same orbifold point, we know there is a central pair of crossings. Here, we subdivide this case further based on whether $\gamma$ crosses $\alpha$, $\beta$, or neither before and after this pair of crossings. 

\textbf{Subcase 1a) $\gamma$ crosses $\alpha$ immediately before and after central crossings.}

Let $T'$ be a tagged triangulation of $\mathcal{O}$ containing $\rho$ such that $(T' \backslash  \{\rho\}) \cup \{\gamma^{(p,p)}\}$ is another triangulation. Let $\mu^{(p)},\eta^{(p)}$ be the arcs forming a triangle with $\rho$, as depicted below. We draw the arcs in $T'$ in solid black and arcs which were in $T$ but not in $T'$ as dashed lines. 

\begin{figure}[H]
    \centering
    \begin{tikzpicture}[scale=2.5, transform shape]
        
\draw[out=30,in=-30,looseness=1.5] (0,0.3) to (0,1.8);
\draw[out=150,in=-150,looseness=1, densely dotted] (0,0.3) to (0,1.8);
\draw[in=180,out=-45,looseness =1] (-1.2,1) to (0,0.3);
\draw[in=100,out=35,looseness =1] (-1.2,1) to (0.4,1.2);
\draw[in=45,out=-80,looseness=1] (0.4,1.2) to (0,0.3);
\draw[out=90,in=-100,looseness=1, densely dotted] (-.55,0.1) to (-.55,2);
\draw[out=170,in=-85,looseness=1, densely dotted] (0,0.3) to (-.45,2);

\draw[in=0,out=45,looseness =1] (0,0.3) to (0,1.2);
\draw[in=180,out=135,looseness=1] (0,0.3) to (0,1.2);

\draw[in=90,out=10,looseness=1, orange, line width = 1.5] (-1.2,1) to (0.2,1);
\draw[in=-90,out=-10,looseness=1, orange, line width = 1.5] (-1.2,1) to (0.2,1);

\draw[fill=black] (0,0.3) circle [radius=1pt];
\draw[fill=black] (0,1.8) circle [radius=1pt];
\draw[fill=black] (-1.2,1) circle [radius=1pt];

\draw[thick] (0,1) node {$\mathbf{\times}$};

\node[scale=0.4] at (0,0.2) {$v$};
\node[scale=0.4] at (-1.35,1) {$p$};
\node[scale=0.4] at (-0.1,1.65) {$\alpha$};
\node[scale=0.4] at (-0.7,1.6) {$\epsilon$};
\node[scale=0.4, orange] at (0,1.4) {$\gamma^{(p,p)}$};
\node[scale=0.4] at (-1,1.3) {$\eta^{(p)}$};
\node[scale=0.4] at (-1.1,0.65) {$\mu^{(p)}$};
\node[scale=0.4] at (0.7,1) {$\beta$};
\node[scale=0.4] at (0.1,0.6) {$\rho$};

\node[scale=0.5, rotate=120] at (-.8,1.25) {$\bowtie$};
\node[scale=0.5, rotate=60] at (-.85,.65) {$\bowtie$};

\node[scale=0.5, rotate=110, orange, line width = 1.5] at (-.85,1.08) {$\bowtie$};
\node[scale=0.5, rotate=80, orange, line width = 1.5] at (-.85,.92) {$\bowtie$};

\end{tikzpicture}
    \caption{Triangulation of subcase 1a).}
    \label{fig:Case1aT}
\end{figure}

Since flipping $\rho$ in $T'$ yields a triangulation containing $\gamma^{(p,p)}$, we know the cluster variables $x_\rho$ and $x_{\gamma^{(p,p)}}$ are related by an exchange relation. Using Lemma \ref{lem:FlipAndMutate} and Theorem \ref{thm:GeometryOfGenCA}, we have \[
x_\rho x_{\gamma^{(p,p)}} = x_{\mu^{(p)}}^2 + \lambda_{\mathbf{p}} y_\rho x_{\mu^{(p)}} x_{\eta^{(p)}} + y_\rho^2 x_{\eta^{(p)}}^2,
\]
where $\mathbf{p}$ is the order of the orbifold point enclosed by $\gamma$, and the $y$-monomials come from the observation that $b_\rho(L_\rho,T') = 1$ and for any other $\tau \in T \backslash \{\rho\}$, $b_\rho(L_\tau,T') = 0$.

From Theorem \ref{prop:SinglyNotchedExpansion}, we know $x_{\eta^{(p)}}^T = \chi(\calP_{\eta^{(p)}}^T)$ and similarly for $x_{\mu^{(p)}}^T$. Therefore, our goal is to show \begin{align}
x_\rho \chi(\calP_{\gamma^{(p,p)}}^T) &= \chi(\calP_{\mu^{(p)}}^T)^2 +\lambda_{\mathbf{p}} y_\rho \chi(\calP_{\mu^{(p)}}^T)\chi(\calP_{\eta^{(p)}}^T)+ y_\rho^2 \chi(\calP_{\eta^{(p)}}^T)^2. \label{eq:Case1a}
\end{align}

There are two copies of $\calP_{\mu^{(p)}}^T$ and two copies of $\calP_{\eta^{(p)}}^T$ embedded inside $\calP_{\gamma^{(p,p)}}^T$. We illustrate this below, where the overline denotes the poset being drawn in the opposite direction.  We denote the two central, consecutive elements labelled $\rho$ as $\rho^-$ and $\rho^+$ so that we can easily distinguish them. Recall $w_{\rho^-} = \lambda_{\mathbf{p}} \hat{y}_\rho$ and $w_{\rho^+} = \frac{1}{\lambda_{\mathbf{p}}} \hat{y}_\rho$.

\begin{center}
\begin{tabular}{c|c}
\begin{tikzpicture}[scale=0.8]
\node(pm0) at (-4,0){$\boxed{\calP_{\mu^{(p)}}}$};
\node(e0) at (-3,1){$\epsilon$};
\node(dots) at (-2.5,0.5){$\ddots$};
\node(a0) at (-2,0){$\alpha$};
\node(t0) at (-1,-1){$\rho^-$};
\node(t1) at (0,0){$\rho^+$};
\node(a) at (1,1){$\alpha$};
\node(iddots) at (1.5,1.5){$\iddots$};
\node(e) at (2,2){$\epsilon$};
\node(pm) at (3,1){$\boxed{\overline{\calP_{\mu^{(p)}}}}$};
\draw(t1) -- (a);
\draw(e) -- (pm);
\draw(pm0) -- (e0);
\draw(a0) -- (t0);
\draw(t0) -- (t1);
\end{tikzpicture}&
\begin{tikzpicture}[scale=0.8]
\node(pm0) at (-2,0){$\boxed{\calP_{\eta^{(p)}}}$};
\node(t0) at (-1,-1){$\rho^-$};
\node(t1) at (0,0){$\rho^+$};
\node(pm1) at (1,1){$\boxed{\overline{\calP_{\eta^{(p)}}}}$};
\draw(pm1) -- (t1);
\draw(pm0) -- (t0);
\draw(t0) -- (t1);
\end{tikzpicture}
\end{tabular}
\end{center}

We begin by applying \cite[Proposition 12]{banaian2024skein} to the element $\rho^-$ in $\calP_{\gamma^{(p,p)},T}$. 
Since $\gamma^{(p,p)}$ is an ordinary arc, by Lemma \ref{lem:g}, we can replace $x_{\min}^{\gamma^{(p,p)}}$ with $\mathbf{x}^{\mathbf{g}_{\gamma^{(p,p)}}}$. This yields \begin{align}
x_\rho \chi(\calP_{\gamma^{(p,p)}}) = \chi(\calP_{\mu^{(p)}}^T)^2  + \lambda_{\mathbf{p}} y_\rho \chi(\calP_{\eta^{(p)}}^T)\chi(\mathcal{Q})\label{eq:Case1aStep1}
\end{align}

where $\mathcal{Q}$ is the poset below. To compute $\chi(\mathcal{Q})$, define the monomial $\mathbf{g}_{\mathcal{Q}}$ to be the vector resulting from applying the procedure from Lemma \ref{lem:g} to $\mathcal{Q}$. Notice that $\mathcal{Q}$ is the result of taking the poset of a corner arc based at vertex $v$ with winding number $k = 1$  and weighting the initial element with $\frac{1}{\lambda_{\mathbf{p}}} \hat{y}_\rho$ instead of $\lambda_{\mathbf{p}}\hat{y}_\rho$.  We remark that the presence of $\lambda_{\mathbf{p}}$ in~\Cref{eq:Case1aStep1} comes from the fact that the second term on the right-hand side corresponds to all order ideals  which contain $\rho^-$.

\begin{center}
\begin{tikzpicture}
\node(t1) at (0,0){$\rho^+$};
\node(a) at (1,1){$\alpha$};
\node(iddots) at (1.5,1.5){$\iddots$};
\node(e) at (2,2){$\epsilon$};
\node(pm) at (3,1){$\boxed{\overline{\calP_{\mu^{(p)}}}}$};
\draw(t1) -- (a);
\draw(e) -- (pm);
\end{tikzpicture}
\end{center}

We can apply the first part of Lemma \ref{lem:SkeinRelationWindAtBeginning} to rewrite $\chi(\mathcal{Q})$ as \begin{align}
\chi(\mathcal{Q}) =\chi(\calP_{\mu^{(p)}}^T) +  \frac{1}{\lambda_{\mathbf{p}}} y_\rho \chi(\calP_{\eta^{(p)}}^T).\label{eq:Case1aStep2}
\end{align}

Combining \Cref{eq:Case1aStep1} and \Cref{eq:Case1aStep2} finishes the proof of~\Cref{eq:Case1a}, which in turn completes the proof that $x_{\gamma^{(p,p)}}^T = \chi(\calP_{\gamma^{(p,p)}}^T)$.

\textbf{Subcase 1b) $\gamma$ crosses $\beta$ immediately before and after central crossings.}

 In this case, there will be more elementary laminations besides $L_\rho$ which contribute to the $y$-monomial of the exchange relation between $x_\rho$ and $x_{\gamma^{(p,p)}}$. This set of elementary laminations will come from arcs which $\gamma$ crosses before and after crossing $\rho$ which lie counterclockwise from $\rho$. However, this does not change the mechanics of the proof, and we omit further details.

\textbf{Subcase 1c) $\gamma$ only crosses $\rho$. }

This case is akin to case 1a. 

\textbf{Case 2) $p = v$}. Here, we must distinguish between cases when $\gamma^0$ coincides with $\rho$ and  when it is distinct. We begin with the former and then use this result  to address the latter.

\textbf{Subcase 2a) $\gamma^0 = \rho$}.

We begin by assuming that the bigon enclosing $\rho$ in $T$ has two distinct vertices, as below. This implies that $\alpha$ and $\beta$, the two arcs forming a triangle with $\rho$, are standard arcs.  Let $\delta$ be the arc which cuts out a monogon containing $p$ and the orbifold point and forms a triangle with $\alpha$ and $\beta$. Note that $\delta$ need not be in $T$, in which case it crosses all spokes incident to $p$ other than $\alpha, \beta,$ and $\rho$. Let $\tau$ be the pending arc incident to the same orbifold point as $\rho$ and enclosed in the bigon $\alpha,\beta$.

\begin{center}
    
\begin{tabular}{c|c}
    \centering
\begin{tikzpicture}[scale=2, transform shape]

\tikzset{->-/.style={decoration={
  markings,
  mark=at position #1 with {\arrow{>}}},postaction={decorate}}}


\draw[out=30,in=-30,looseness=1.5] (0,0.3) to (0,1.5);
\draw[out=150,in=-150,looseness=1.5] (0,0.3) to (0,1.5);

\draw[in=0,out=45,looseness =1] (0,0.3) to (0,1.2);
\draw[in=180,out=135,looseness=1] (0,0.3) to (0,1.2);

\draw[fill=black] (0,0.3) circle [radius=1pt];
\draw[fill=black] (0,1.5) circle [radius=1pt];
\draw[thick] (0,1) node {$\mathbf{\times}$};

\draw(0,0.3) to (-0.7,-0.2);
\draw(0,0.3) to (0.7,-0.2);

\node[scale=0.4] at (0,0.2) {$p$};
\node[scale=0.4] at (0,1.6) {$q$};
\node[scale=0.4] at (-0.55,1) {$\alpha$};
\node[scale=0.4] at (0.55,1) {$\beta$};
\node[scale=0.4] at (0.27,0.8) {$\rho$};
\node[scale=0.4] at (-0.5,-0.2) {$\sigma_1$};
\node[scale=0.4] at (0.6,0) {$\sigma_m$};
\node[scale=0.4] at (0,-0.1) {$\cdots$};

\node[scale=0.5] at (0,-.5){The triangulation $T$.};
\end{tikzpicture}
  &  
\begin{tikzpicture}[scale=2, transform shape]

\tikzset{->-/.style={decoration={
  markings,
  mark=at position #1 with {\arrow{>}}},postaction={decorate}}}


\draw[out=30,in=-30,looseness=1.5] (0,0.3) to (0,1.5);
\draw[out=150,in=-150,looseness=1.5] (0,0.3) to (0,1.5);

\draw[out=0,in=0,looseness=1.7] (0,1.5) to node[right,scale=0.4]{$\delta$}(0,-.1);
\draw[out=180,in=180,looseness=1.7] (0,1.5) to (0,-.1);

\draw[in=0,out=-45,looseness =1] (0,1.5) to (0,0.8);
\draw[in=-180,out=-135,looseness=1] (0,1.5) to (0,0.8);

\draw[fill=black] (0,0.3) circle [radius=1pt];
\draw[fill=black] (0,1.5) circle [radius=1pt];
\draw[thick] (0,1) node {$\mathbf{\times}$};

\node[scale=0.4] at (0,0.2) {$p$};
\node[scale=0.4] at (-0.5,0.6) {$\alpha^{(p)}$};
\node[scale=0.4] at (0.3,0.7) {$\beta^{(p)}$};
\node[scale=0.4] at (-0.17,1.3) {$\tau$};
\node[scale=0.5, rotate=120] at (0.15,0.4) {$\bowtie$};
\node[scale=0.5, rotate=60] at (-0.15,0.4) {$\bowtie$};
\node[scale=0.5] at (0,-.5){Here, flipping $\tau$ yields $\rho^{(p,p)}$.};
\end{tikzpicture}
\\
     & 
\end{tabular}
\end{center}

We have the following four exchange relations in $\mathcal{A}_{\mathcal{O}}$. 

\[
x_\rho x_\tau = x_\alpha^2 + \lambda_{\mathbf{p}} y_\rho x_\alpha x_\beta + y_\rho^2 x_\beta^2
\qquad \qquad 
x_{\rho^{(p,p)}} x_\tau = x_{\beta^{(p)}}^2 + \lambda_{\mathbf{p}} y_\alpha y_\rho x_{\alpha^{(p)}} x_{\beta^{(p)}} + y_\alpha^2 y_\rho^2 x_{\alpha^{(p)}}^2
\]

\[
x_{\alpha^{(p)}}x_\beta = x_\delta + y_{\sigma_1}y_{\sigma_2} \ldots y_{\sigma_m}y_\beta x_\tau
\qquad \qquad 
x_{\beta^{(p)}}x_\alpha = x_\tau + y_{\alpha}y_{\rho}^2 x_\delta
\]

Let $Y:= y_{\sigma_1}y_{\sigma_2} \ldots y_{\sigma_m}y_\beta$. 
We can combine these relations to give an expression for $x_{\rho^{(p,p)}}$ in terms of arcs in $T$. First, substituting expressions for $x_{\alpha^{(p)}}$ and $x_{\beta^{(p)}}$ into the exchange relation between $x_\tau$ and $x_{\rho^{(p,p)}}$ and sorting terms based on $x_\tau$ gives \begin{align*}
x_{\tau}x_{\rho^{(p,p)}} &= \frac{1}{x_\alpha^2x_\beta^2}\bigg(x_\tau^2\big(x_\beta^2 + \lambda_{\mathbf{p}} y_\alpha y_\rho Yx_\alpha x_\beta + y_\alpha^2 y_\rho^2 Y^2 x_\alpha^2 \big) \\ &+  y_\alpha y_\rho x_\delta x_\tau\big( 2y_\rho (x_\beta^2 + y_\alpha Y x_\alpha^2) + \lambda_{\mathbf{p}}  x_\alpha x_\beta  (1+ y_\alpha y_\rho^2Y) \big) + y_\alpha^2 y_\rho^2 x_\delta^2 \big(x_\alpha^2 + \lambda_{\mathbf{p}} y_\rho x_\alpha x_\beta + y_\rho^2 x_\beta^2 \big)\bigg)
\end{align*}

Now, using the exchange relation between $x_\rho$ and $x_\tau$,  we can divide the right-hand side by $x_\tau$. Then, we can expand $x_\delta$ in terms of $T$ and verify that the resulting Laurent polynomial is exactly $\chi(\calP_{\rho^{(p,p)}}^T)$.  This verification uses the fact that $\mathbf{x}^{\mathbf{g}_{\rho^{(p,p)}}} = \frac{1}{x_{\rho}}$ and the identity $U_2(\lambda_{\mathbf{p}}) + 1 = \lambda_{\mathbf{p}}^2$.

It is possible that one of $\alpha$ or $\beta$ is a pending arc or is in a self-folded triangle; suppose that this is $\alpha$. In such a case, we can follow the same strategy as before after first mutating $\beta$, thereby enclosing $\rho$ again in a genuine bigon. 

\textbf{Subcase 2b) $\gamma \neq \rho$ and $\gamma$ crosses $\alpha$ before and after central crossings.}

From the compatibility rules for tagged triangulations, there is no tagged triangulation $T$ containing $\rho$ such that $T- \{\rho\} \cup \{\gamma^{(p,p)}\}$ is another tagged triangulation. However, there do exist such triangulations for $\rho^{(p,p)}$. We draw a generic example in Figure \ref{fig:Case2bT}, using our assumption that $\gamma$ crosses $\alpha$ before and after its central crossings. As in Case 1, let $\mu^{(p,p)}$ and $\eta^{(p,p)}$ be arcs in $T$ which form a triangle with $\rho^{(p,p)}$.

\begin{figure}[h!]
    \centering
    \begin{tabular}{c | c}
        \begin{tikzpicture}[scale=2.5, transform shape]

            \tikzset{->-/.style={decoration={
                markings,
                mark=at position #1 with {\arrow{>}}},postaction={decorate}}}

            \draw[out=30,in=-30,looseness=1.5] (0,0.3) to (0,1.5);
            \draw[out=150,in=-150,looseness=1.5] (0,0.3) to (0,1.5);

            \draw[in=0,out=45,looseness =1] (0,0.3) to (0,1.2);
            \draw[in=180,out=135,looseness=1] (0,0.3) to (0,1.2);

            \draw[fill=black] (0,0.3) circle [radius=1pt];
            \draw[fill=black] (0,1.5) circle [radius=1pt];
            \draw[thick] (0,1) node {$\mathbf{\times}$};

            \draw(0,0.3) to (-0.7,-0.2);
            \draw(0,0.3) to (0.7,-0.2);

            \node[scale=0.4] at (0,0.2) {$p$};
            \node[scale=0.4] at (0,1.6) {$q$};
            \node[scale=0.4] at (-0.55,1) {$\alpha$};
            \node[scale=0.4] at (0.55,1) {$\beta$};
            \node[scale=0.4] at (0.27,0.8) {$\rho$};
            \node[scale=0.4] at (-0.5,-0.2) {$\sigma_1$};
            \node[scale=0.4] at (0.6,0) {$\sigma_m$};
            \node[scale=0.4] at (0,-0.1) {$\cdots$};
        \end{tikzpicture}
        &
        \begin{tikzpicture}[scale=2.5, transform shape]
            \draw[out=30,in=-30,looseness=1.5] (0,0.3) to (0,1.8);
            \draw[out=150,in=-150,looseness=1, densely dotted] (0,0.3) to (0,1.8);
            \draw[in=-135,out=-150,looseness =1] (0,0.3) to (-1.2,1);

            \draw[in=-135,out=195,looseness=1, orange, line width = 1.2] (0,0.3) to (-1,1);
            \draw[in=90,out=45,looseness=1, orange, line width = 1.2] (-1,1) to (0.1,1);
            \draw[in=-135,out=185,looseness=1, orange, line width = 1.2] (0,0.3) to (-0.9,1);
            \draw[in=-90,out=45,looseness=1, orange, line width = 1.2] (-0.9,1) to (0.1,1);

            \draw[in=100,out=35,looseness =1] (-1.2,1) to (0.4,1.2);
            \draw[in=45,out=-80,looseness=1] (0.4,1.2) to (0,0.3);

            \draw[out=170,in=-85,looseness=1, densely dotted] (0,0.3) to (-.45,2);
            \draw (0,0.3) to (-1.1,-0.4);

            \draw[in=45,out=180,looseness=1.5] (0,0.3) to (-0.85,0.9);
            \draw[in=-135,out=180,looseness=0.8] (0,0.3) to (-0.85,0.9);

            \draw[in=0,out=45,looseness =1] (0,0.3) to (0,1.2);
            \draw[in=180,out=135,looseness=1] (0,0.3) to (0,1.2);
            \node[scale=0.3, rotate=150] at (0.18,0.6) {$\bowtie$};
            \node[scale=0.3, rotate=30] at (-0.18,0.6) {$\bowtie$};
            \node[scale=0.3, rotate=150] at (0.37,0.8) {$\bowtie$};
            \node[scale=0.3, rotate=100] at (-.25,0.23) {$\bowtie$};
            \node[scale=0.3, rotate=130] at (.28,.5) {$\bowtie$};
            \node[scale=0.3, rotate=120] at (-.31,0.1) {$\bowtie$};
            \node[scale=0.2, rotate=60, orange, line width = 1.2] at (-.51,0.4) {$\bowtie$};
            \node[scale=0.25, rotate=60, orange, line width = 1.2] at (-.66,0.4) {$\bowtie$};
            \node[scale=0.2, rotate=60] at (-.4,0.4) {$\bowtie$};
            \node[scale=0.25, rotate=20] at (-.49,0.75) {$\bowtie$};

            \draw[fill=black] (0,0.3) circle [radius=1pt];
            \draw[fill=black] (0,1.8) circle [radius=1pt];

            \draw[thick] (0,1) node {$\mathbf{\times}$};

            \node[scale=0.3] at (0,0.2) {$p$};
            \node[scale=0.3] at (-0.1,1.65) {$\alpha$};
            \node[scale=0.3] at (-0.5,1.6) {$\sigma_l$};
            \node[scale=0.3] at (-1,1.3) {$\eta^{(p,p)}$};
            \node[scale=0.3, orange] at (0,1.35) {$\gamma^{(p,p)}$};
            \node[scale=0.25] at (-0.55,0.6) {$\mu^{(p,p)}$};
            \node[scale=0.3] at (-1,-0.15) {$\sigma_{h+1}^{(p)}$};
            \node[scale=0.3] at (0.7,1) {$\beta^{(p)}$};
            \node[scale=0.3] at (0.1,0.8) {$\rho^{(p,p)}$};

            \clip[draw](-0.7,0.8) circle(0.1);

            \begin{scope}
                \clip (-0.7,0.8) circle(0.1);
                \fill[pattern=north east lines](-0.7,0.8) circle(0.1);
            \end{scope}

        \end{tikzpicture}
    \end{tabular}
    \caption{The two triangulations, $T$(left) and $T'$(right), for Subcase 2b}
    \label{fig:Case2bT}
\end{figure}

We elaborate on our indexing of arcs in $T$ and $T'$ as in Figure \ref{fig:Case2bT}. Suppose that the set of arcs incident to $p$ in $T$ are $\rho,\alpha = \sigma_1,\sigma_2,\ldots,\beta = \sigma_m$, where this list goes in counterclockwise order. Let $1 \leq l \leq m$ be such that $\gamma$ and $\eta$ cross $\sigma_1,\ldots,\sigma_l$. This implies that (up to choice of orientation) the last triangle which $\mu^{(p,p)}$ passes through is bordered by $\sigma_l$ and $\sigma_{l+1}$. Let $h \geq l$ be such that the first triangle which $\mu^{(p,p)}$ passes through is bordered by $\sigma_h$ and $\sigma_{h+1}$. For convenience, we also orient $\eta^{(p,p)}$ so that the first triangle it passes through is bordered by $\sigma_h$ and $\sigma_{h+1}$.

Next, we will calculate the column of $B_{T'}$ associated to $\rho^{(p,p)}$ where $B_{T'}$ is the $B$-matrix indexed by $T'$. From Lemma \ref{lem:FlipAndMutate}, we immediately have $b_{\eta^{(p,p)},\rho^{(p,p)}} = 1$ and $b_{\mu^{(p,p)},\rho^{(p,p)}} = -1$. Now, to compute the shear coordinates for $\rho^{(p,p)}$, we use \cite[Definition 13.1.i]{fomin2018cluster}. This shows that $b_{\rho^{(p,p)}}(L_{\rho},T') = -1$ and for all $1 \leq i \leq l$, $b_{\rho^{(p,p)}}(L_{\sigma_i},T') = -1$. Moreover, for all other elementary laminations $L$, we see $b_{\rho^{(p,p)}}(L,T') = 0$. Therefore, if $Y:= y_\rho y_\alpha y_{\sigma_2} \ldots y_{\sigma_l}$, we have \begin{align}
x_{\rho^{(p,p)}} x_{\gamma^{(p,p)}} = x_{\eta^{(p,p)}}^2 + \lambda_{\mathbf{p}} Y x_{\eta^{(p,p)}}x_{\mu^{(p,p)}} + Y^2 x_{\mu^{(p,p)}}^2.\label{eq:Case2bExchange}
\end{align}

By previous sections of this proof, we already know our combinatorial expansion formula to be valid for all arcs involved in this exchange relation except $\gamma^{(p,p)}$, and we will again be interested in showing that the poset Laurent polynomials satisfy the same equation. We first draw the poset $\calP_{\rho^{(p,p)}}^T$ using the notation here. As we have done before, each element is written as $(\text{label},\text{weight})$.

\begin{center}
\begin{tikzpicture}
\node(0) at (0,0){$(\rho,\lambda_{\mathbf{p}} \hat{y}_\rho)$};
\node(l1) at (-1.5,1){$(\rho,\frac{1}{\lambda_{\mathbf{p}}}\hat{y}_\rho)$};
\node(r1) at (1.5,1){$(\alpha,\hat{y}_\alpha)$};
\node(l2) at (-1.5,2){$(\alpha,\hat{y}_\alpha)$};
\node(r2) at (1.5,2){$(\sigma_2,\hat{y}_{\sigma_2})$};
\node(l3) at (-1.5,2.5){$\vdots$};
\node(r3) at (1.5,2.5){$\vdots$};
\node(l4) at (-1.5,3){$(\sigma_{m-1},\hat{y}_{\sigma_{m-1}})$};
\node(r4) at (1.5,3){$(\rho,\frac{U_2(\lambda_{\mathbf{p}})}{\lambda_{\mathbf{p}}}\hat{y}_\rho)$};
\node(l5) at (-1.5,4){$(\beta,\hat{y}_\beta)$};
\node(r5) at (1.5,4){$(\lambda_{\mathbf{p}} \rho, \frac{1}{U_2(\lambda_{\mathbf{p}})})$};
\node(1) at (0,5){$(\rho,\lambda_{\mathbf{p}} \hat{y}_{\rho})$};
\draw(0) -- (l1);
\draw(l1) -- (l2);
\draw(0) -- (r1);
\draw(r1) -- (r2);
\draw(l4) -- (l5);
\draw(r4) -- (r5);
\draw(l5) -- (1);
\draw(r5) -- (1);
\draw(l1) to[out = 60, in = 210] (r5);
\draw(r1) to[out = 130, in = -30] (l5);
\end{tikzpicture}   
\end{center}

As in Subcase 1a, the poset $\calP_{\gamma^{(p,p)}}^T$ contains two copies of $\calP_{\mu^{(s(\mu))}}^T$  and two copies of $\calP_{\eta^{(s(\eta))}}^T$, as is depicted to the left and right respectively below. Again, the weight of $\rho^-$ is $\lambda_{\mathbf{p}} \hat{y}_\rho$ and the weight of $\rho^+$ is $\frac{1}{\lambda_{\mathbf{p}}} \hat{y}_\rho$. 

\begin{center}
\begin{tabular}{cc}
\begin{tikzpicture}[scale=0.8]
\node(pm0) at (-4,0){$\boxed{\calP_{\mu^{(s(\mu))}}}$};
\node(e0) at (-3,1){$\sigma_k$};
\node(dots) at (-2.5,0.5){$\ddots$};
\node(a0) at (-2,0){$\alpha$};
\node(t0) at (-1,-1){$\rho^-$};
\node(t1) at (0,0){$\rho^+$};
\node(a) at (1,1){$\alpha$};
\node(iddots) at (1.5,1.5){$\iddots$};
\node(e) at (2,2){$\sigma_k$};
\node(pm) at (3,1){$\boxed{\overline{\calP_{\mu^{(s(\mu))}}}}$};
\draw(t1) -- (a);
\draw(e) -- (pm);
\draw(pm0) -- (e0);
\draw(a0) -- (t0);
\draw(t0) -- (t1);
\end{tikzpicture}&
\begin{tikzpicture}[scale=0.8]
\node(pm0) at (-2,0){$\boxed{\calP_{\eta^{(s(\eta))}}}$};
\node(t0) at (-1,-1){$\rho^-$};
\node(t1) at (0,0){$\rho^+$};
\node(pm1) at (1,1){$\boxed{\overline{\calP_{\eta^{(s(\eta))}}}}$};
\draw(pm1) -- (t1);
\draw(pm0) -- (t0);
\draw(t0) -- (t1);
\end{tikzpicture}
\end{tabular}
\end{center}

We apply \cite[Proposition 14]{banaian2024skein} to the element $\rho^-$ in $\calP_{\gamma^{(p,p)}}^T$. This yields \begin{align}
x_{\rho^{(p,p)}}\chi (\calP_{\gamma^{(p,p)}}^T) = \chi(\calP_{\eta^{(p,p)}}^T) \chi(\mathcal{Q'}) + Y^2 \chi(\calP_{\mu^{(p,p)}}^T)^2, \label{eq:case2b1}
\end{align}

where $\mathcal{Q}'$ is the result of taking all elements strictly to the right of $\rho^-$ in $\calP_{\gamma^{(p,p)}}^T$ and including the following loop:  \[
(\rho^+,\frac{1}{\lambda_{\mathbf{p}} \hat{y}_\rho}) \succeq (\rho, \lambda_{\mathbf{p}}\hat{y}_\rho) \preceq (\alpha, \hat{y}_\alpha) \preceq (\sigma_2,\hat{y}_{\sigma_2})\preceq \cdots \preceq (\beta,\hat{y}_\beta) \preceq (\rho,\frac{U_{2}(\lambda_{\mathbf{p}})}{\lambda_{\mathbf{p}}}) \preceq (\lambda_{\mathbf{p}}\rho,\frac{1}{U_{2}(\lambda_{\mathbf{p}})}) \succeq (\rho^+,\frac{1}{\lambda_{\mathbf{p}} \hat{y}_\rho}).
\]

The fact that this is the loop we attach comes from the details of the combinatorial skein relation and the fact that this is a chain in the poset $\mathcal{P}_{\rho^{(p,p)}}^T$
Notice the poset $\mathcal{Q}'$ is also the poset for a notched corner arc based at $p$ with winding number $\mathbf{p}-2$. We can therefore apply Lemma \ref{lem:SkeinRelationWindAtBeginning} to $\chi(\mathcal{Q}')$, which yields the identity \begin{align}
\chi(\mathcal{Q'}) = \chi(\calP_{\eta^{(p,p)}}^T) + \lambda_{\mathbf{p}} Y \chi(\calP_{\mu^{(p,p)}}^T).\label{eq:Case2b2}
\end{align}

Now, combining~\Cref{eq:case2b1} and~\Cref{eq:Case2b2} shows that our poset Laurent polynomials satisfy the same relationship as~\Cref{eq:Case2bExchange}, and we conclude $x_{\gamma^{(p,p)}}^T = \chi(\calP_{\gamma^{(p,p)}}^T)$.

\textbf{Subcase 2c) $\gamma \neq \rho$ and $\gamma$ crosses $\beta$ before and after central crossings.}

This is similar to Subcase 2b, except here $\rho^{(p,p)}$ only has a nontrivial shear coordinate with $L_\rho$. We omit further details.

\textbf{Subcase 2d) $\gamma \neq \rho$ and $\gamma$ does not cross $\alpha$ or $\beta$.}

This is again similar to, and indeed simpler than, Subcase 2b.
\end{proof}

\begin{remark}
Our use of \cite[Theorem 12.9]{musiker2011positivity} forces us to exclude twice-punctured closed orbifolds (with an arbitrary number of orbifold points) from Theorem \ref{thm:DoublyNotched}. By using a geometric argument, Pilaud, Reading, and Schroll gave poset expansion formulas which include this case \cite{pilaud2023posets}. We expect that a suitable analogue of this argument would also extend our result to the case of twice-punctured closed orbifolds, and we will pursue this approach in the sequel \cite{BKK26}.
\end{remark}

\Cref{lem:PlainOnPunctured}, \Cref{prop:SinglyNotchedExpansion}, and \Cref{thm:DoublyNotched} along with \Cref{thm:MainExpansion}  establish \Cref{thm:Correctness}.
Another corollary of~\Cref{thm:DoublyNotched} is an orbifold analogue of Theorem~12.9 in~\cite{musiker2011positivity}. Define $\mathds{1}_{T}(\gamma)$ as the indicator function with $\mathds{1}_{T}(\gamma)= 1$ if $\gamma \in T$ and $\mathds{1}_{T}(\gamma)= 0$ otherwise. Recall $e(\gamma,\gamma')$ denotes the minimal number of crossings of arcs $\gamma$ and $\gamma'$ up to isotopy. Let $e_p(\gamma)$ be the number of ends of $\gamma$ incident to a given puncture $p$. 

\begin{cor}\label{cor:Thm12.9}
    Fix a tagged triangulation $T$ of a punctured orbifold $\mathcal{O}=(S,M,Q)$, and let $\mathcal{A}_{\mathcal{O}}$ be the corresponding generalized cluster algebra with principal coefficients. Let $p$ and $q$ be punctures in $\mathcal{O}$ and let $\gamma$ be a generalized arc between $p$ and $q$. Assume that no tagged arc in $T$ is notched at either $p$ or $q$. If $\gamma$ is a standard arc incident to $p$ and $q$ and $p \neq q$, then
\[x_{\gamma}x_{\gamma^{(p,q)}}-x_{\gamma^{(p)}}x_{\gamma^{(q)}}y_{\gamma}^{\mathds{1}_{T}(\gamma)}=\left(1-\prod_{\mu \in T}y_{\mu}^{e_{p}(\mu)}\right)\left(1-\prod_{\mu \in T}y_{\mu}^{e_{q}(\mu)}\right)\prod_{\mu\in T}y_{\mu}^{e(\mu,\gamma)}\]

and if $\gamma$ is an arc (standard or pending) such that $s(\gamma) = t(\gamma)$, then,  
\[x_{\gamma}x_{\gamma^{(p,q)}}-\chi(\mathcal{P}_{\gamma^{(s(\gamma))}}^T)\chi(\mathcal{P}_{\gamma^{(t(\gamma))}}^T)y_{\gamma}^{\mathds{1}_{T}(\gamma)}=\left(1-\prod_{\mu \in T}y_{\mu}^{e_{p}(\mu)}\right)\left(1-\prod_{\mu \in T}y_{\mu}^{e_{q}(\mu)}\right)\prod_{\mu\in T}y_{\mu}^{e(\mu,\gamma)}\]
\end{cor}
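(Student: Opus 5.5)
The plan is to derive Corollary~\ref{cor:Thm12.9} from the combinatorial identity behind the proof of Theorem~\ref{thm:DoublyNotched}, namely \cite[Corollary 3]{banaian2024skein}. That identity is a statement about the poset Laurent polynomials $\chi(\calP_\cdot^T)$ and holds regardless of orbifold points. Together with Theorem~\ref{thm:Correctness}, it identifies these polynomials with the cluster variables $x_\gamma$, $x_{\gamma^{(p)}}$, $x_{\gamma^{(q)}}$ and $x_{\gamma^{(p,q)}}$.

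\textbf{Step 1.} Assume first that $p\neq q$ and $\gamma$ is standard. I would write \eqref{eq:MSWThm12.9} in poset form, following \cite[Corollary 3]{banaian2024skein}. The proof decomposes the order ideals of the loop fence poset $\calP_{\gamma^{(p,q)}}^T$ according to whether the two loops at $p$ and at $q$ are empty, full, or partial. Partial loops recover the product $\chi(\calP_{\gamma^{(p)}})\chi(\calP_{\gamma^{(q)}})$, and the discrepancy factors as
\[
(1-Y_p)(1-Y_q)Y, \qquad Y_p=\prod_{\mu\in T}y_\mu^{e_p(\mu)},\quad Y_q=\prod_{\mu\in T}y_\mu^{e_q(\mu)},\quad Y=\prod_{\mu\in T}y_\mu^{e(\mu,\gamma)}.
\]
To check these monomials I would use the weights $\hat y$. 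A telescoping product of $\hat y_{\sigma_i}$ around all spokes at $p$ gives $\prod y_{\sigma_i}$ by \cite[Corollary 1]{banaian2024skein}. This is where the hypothesis that no arc of $T$ is notched at $p$ or $q$ is needed: it lets $\Phi$ act as the identity on these factors. A pending spoke counts twice, matching $e_p(\mu)=2$. The product over the fence gives $Y$, and the minimal terms cancel by Lemma~\ref{lem:g}.

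\textbf{Step 2.} Next I would treat the case $\gamma\in T$, where the factor $y_\gamma^{\mathds 1_T(\gamma)}$ appears. In this case $\calP_{\gamma^{(p)}}$ is the hooked poset, and the extra element labelled $\gamma^0$ shifts the $y$-degree of the cross term by exactly $y_\gamma$. I would verify this by comparing the posets of Example~\ref{ex:plainversionintriangulation} with the degenerate doubly-notched posets, in the spirit of \cite{pilaud2023posets}.

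\textbf{Step 3.} For $s(\gamma)=t(\gamma)$, including pending arcs, the singly-notched objects are not cluster variables. I would therefore keep the poset polynomials $\chi(\calP_{\gamma^{(s(\gamma))}}^T)$ and $\chi(\calP_{\gamma^{(t(\gamma))}}^T)$ themselves, which for pending $\gamma\in T$ are given by Definition~\ref{def:SinglyNotchedPendingArc}, and repeat the ideal decomposition. For pending $\gamma\in T$ I would check the identity directly against the poset $\calP_{\rho^{(q,q)}}$ of Example~\ref{ex:doubly notched pending arc}. The relations $\lambda_{\mathbf p}^2=U_2+1$ and Lemma~\ref{lem:ChebyshevLogConcave} should make the $\lambda_{\mathbf p}$-terms cancel, leaving $(1-Y_p)^2Y$. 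Finally, Theorem~\ref{thm:DoublyNotched} and Lemma~\ref{lem:PlainOnPunctured} convert $\chi$ into $x$.

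I expect the main obstacle to be this last case, the pending or loop arc with $p=q$. There the two hooks interact through the shared spoke chain and through the scalar elements $\lambda_{\mathbf p}\rho$. Matching the scalar weights so that no stray $U_k$ survives requires a careful case split on which chain elements an ideal contains, analogous to the computation in Lemma~\ref{lem:SkeinRelationWindAtBeginning}.
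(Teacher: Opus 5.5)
Your proposal follows the paper's own argument. The paper proves this corollary in one line: it invokes the poset versions of the Musiker--Schiffler--Williams identity from \cite{banaian2024skein} (Propositions~4 and~5 there, of which Corollary~3 is the form already used in Theorem~\ref{thm:DoublyNotched}), and converts the poset polynomials into cluster variables via Theorem~\ref{thm:DoublyNotched}, Proposition~\ref{prop:SinglyNotchedExpansion} and Lemma~\ref{lem:PlainOnPunctured}.

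Your extra direct check for a pending $\gamma=\rho\in T$ is something the paper's proof, phrased only for standard $\gamma$, does not spell out, and it goes through. Summing over the ideals of $\calP_{\rho^{(q,q)}}$ and using $U_2(\lambda_{\mathbf p})+1=\lambda_{\mathbf p}^2$, every $\lambda_{\mathbf p}$-term cancels against $y_\rho\,\chi(\calP_{\rho^{(s(\rho))}})\chi(\calP_{\rho^{(t(\rho))}})$, leaving $\bigl(1-\hat y_\rho^{2}\prod_i\hat y_{\sigma_i}\bigr)^2=\bigl(1-\prod_{\mu\in T}y_\mu^{e_q(\mu)}\bigr)^2$.
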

\begin{proof}
    Notice that, since $\gamma$ is standard, it does not wind nontrivially around any orbifold point. This implies that the poset $\calP_{\gamma}$ resembles the posets considered in \cite{banaian2024skein}. Therefore, we can combine Propositions 4 and 5 therein with \Cref{thm:DoublyNotched} to show the desired result.
\end{proof}

 ~\Cref{cor:Thm12.9} can also be proved using the method of~\cite{musiker2011positivity}, by deriving the identities from laminations and shear coordinates on the covering space.

\begin{remark}
Given a generalized arc or a closed curve, our expansion formulas \emph{define} a Laurent polynomial associated to this curve. Here, the way to check that our definition is reasonable is to check that these satisfy skein relations. Based on initial experiments and previous work (see also \cite[Theorem 8.17]{banaian2025cluster}), we expect to verify this in the sequel ~\cite{BKK26}. Indeed, many skein relations will follow from previous work~\cite{banaian2024skein,musiker2013bases}; it only remains to check new combinatorial configurations which arise from the presence of orbifold points. 
\end{remark}

\section{Concluding Discussion}\label{sec:conclusion}

In this section, we discuss our method of constructing the auxiliary tiles and elements. We show how this is a choice and hint at other choices which could be made. Crucially, in \Cref{prop:Invariance}, we show that these choices do not affect the final result. We will also discuss the future direction of this research, which involves exhibiting bases of these algebras, ideally recovering well-known bases from other settings.  

\subsection{Invariance of Formulas}\label{sec:Invariance}

Here, we explain the fundamental reasoning behind some of our construction. Given an orbifold $\mathcal{O}$ with triangulation $T$ and an arc $\gamma$ on $\mathcal{O}$, we construct a local lift of $(\mathcal{O},T,\gamma)$, which is a (possibly punctured) polygon with a dissection. We refer to this dissected polygon as the \emph{tile cover} of $\gamma$, inspired by a similar construction for triangulated surfaces in \cite{pilaud2023posets}, and denote it $(\widetilde{S},T_\gamma)$. Let $\widetilde{\gamma}$ be the lift of $\gamma$ to the tile cover. For this construction, look at only the solid arcs in Table \ref{tab:LiftsForConstruction}. To be specific, the polygons drawn here are $\mathbf{p}$-gons where $\mathbf{p}$ is the order of the orbifold point. The winding number of $\gamma$ informs us how many vertices of the polygon lie to the right of $\widetilde{\gamma}$.  The tile cover sits inside the universal cover of $(\mathcal{O},T)$; see, for example, figures in \cite{Chekhov-Shapiro}.

\begin{table}[H]
    \centering
    \begin{tabular}{c|c|c|c}
    \begin{tikzpicture}[scale=1, transform shape]
        \draw[out=60,in=0,looseness=1.2] (0,-3) to (0,-1);
        \draw[out=120,in=180,looseness=1.2] (0,-3) to (0,-1);
        \filldraw (0,-3) circle (3pt);
        \node[scale=2, thick] at (0,-1.5) {$\times$};
        \draw[out=-20,in=-100,looseness=1.5,line width = 1.2, orange] (-1,-1.5) to (0.4,-1.5);
        \draw[out=90,in=90,looseness=1.8,line width = 1.2,  orange] (0.4,-1.5) to (-0.4,-1.5);
        \draw[out=-90,in=-90,looseness=1.5,line width = 1.2,  orange] (-0.4,-1.5) to (0.3,-1.5);
        \draw[out=90,in=90,looseness=1.5,line width = 1.2,  orange] (0.3,-1.5) to (-0.3,-1.5);
        \draw[out=-90,in=180,looseness=1.5, line width = 1.2, orange] (-0.3,-1.5) to (0,-1.7);
        \draw[out=0,in=160,looseness=1,line width = 1.2,  orange, ->] (0,-1.7) to (1,-2);
        \node[scale=0.8] at (0.3,-0.8) {$\rho$};

    \end{tikzpicture}
    &
    \begin{tikzpicture}[scale=0.8, transform shape]
        \draw(0,-0.3) to (1,-1.3);
        \draw (0,-0.3) to (0,1);
        \draw (0,1) to (1,2);
        \draw (2,2) to (3,1);
        \draw(3,1) to (3,-0.3);
        \draw(3,-0.3) to (2,-1.3);
        \draw[dashed] (0,1) to (3,1);
        \draw[dashed](0,1) to (3,-0.3);
        \draw[dashed] (0,-0.3) to (3,-0.3);
        \node[] at (1.5,2){$\cdots$};
        \node[] at (1.5,-1.3){$\cdots$};

        \draw[line width = 1.2, orange, ->] (-0.5,0.3) to (3.5,0.3);
        \node at (-0.3,0.8) {$\rho$};
        \node at (3.3,0.8) {$\rho$};
        \node[scale=0.8] at (1.6,0.65) {$U_k\rho$};
        \node[scale=0.8] at (1.5,-0.5) {$U_{k-1}\rho$};
        \node[scale=0.8] at (1.5,1.3) {$U_{k+1}\rho$};
    \end{tikzpicture}
    &\begin{tikzpicture}[scale=1, transform shape]
        \draw[out=60,in=0,looseness=1.2] (0,-3) to (0,-1);
        \draw[out=120,in=180,looseness=1.2] (0,-3) to (0,-1);
        \filldraw (0,-3) circle (3pt);
        \node[scale=2, thick] at (0,-1.5) {$\times$};
        \draw[out=80,in=-90,looseness=1, line width = 1.2, orange] (0,-3) to (0.4,-1.5);
        \draw[out=90,in=90,looseness=1.5, line width = 1.2, orange] (0.4,-1.5) to (-0.4,-1.5);
        \draw[out=-90,in=-90,looseness=2, line width = 1.2,  orange] (-0.4,-1.5) to (0.3,-1.5);
        \draw[out=90,in=90,looseness=1.5, line width = 1.2, orange] (0.3,-1.5) to (-0.3,-1.5);
        \draw[out=-90,in=-100,looseness=1.5, line width = 1.2, orange] (-0.3,-1.5) to (0.2,-1.5);
        \draw[out=80,in=-140,looseness=1, line width = 1.2, orange, ->] (0.2,-1.5) to (1,-0.5);
        \node at (-0.8,-0.8) {$\rho=\tau_{i_{1}}$};

    \end{tikzpicture}
    &
    \begin{tikzpicture}[scale=0.8, transform shape]
        \draw(0,-0.3) to (1,-1.3);
        \draw(3,-0.3) to (2,-1.3);
        \draw (0,-0.3) to (0,1);
        \draw (0,1) to (1,2);
       \node[] at (1.5,2){$\cdots$};
       \node[] at (1.5,-1.3){$\cdots$};
        \draw (2,2) to (3,1);
        \draw (3,1) to (3,-0.3);
        \draw[dashed] (0,1) to (3,1);
        \draw[dashed] (0,1) to (3,-0.3);
        \draw[line width = 1.2, orange, ->] (0,1) to (3.5,0.3);
        \node at (-0.3,0.4) {$\rho$};
        \node at (3.3,0.8) {$\rho$};
        \node at (1.4,0) {$U_{k-1}\rho$};
        \node at (1.5,1.3) {$U_{k}\rho$};
        \node[orange] at (2.4,0.3) {$\gamma$};
    \end{tikzpicture}\\
     \begin{tikzpicture}[scale=1, transform shape]

    \draw (0,0) to (1,1);
    \draw (0,0) to (1,-1);
    \draw (1,1) to (1,-1);
    \draw[out=150,in=90,looseness=1.5] (0,0) to (-1,0);
    \draw[out=-150,in=-90,looseness=1.5] (0,0) to (-1,0);
    \draw[orange,line width = 1.2, ->] (0,0) to (2,0);
    \node at (0.4,0.8) {$\sigma_1$};
    \node at (0.4,-0.8) {$\sigma_m$};
    \node at (-0.7,0.5) {$\sigma_n$};
    \node at (1.3,0.3) {$\tau_{i_1}$};
    \node at (-0.7,0) {$\times$};
    \node at (-0.1,0.7) {$\iddots$};
    \node at (-0.3,-0.5) {$\ddots$};

    \node[orange] at (0.3,0) {\rotatebox[origin=c]{-90}{$\bowtie$}};
    \end{tikzpicture}
    &
    \begin{tikzpicture}[scale=0.8, transform shape]

    \draw (0,0) to (1,1);
    \draw (0,0) to (1,-1);
    \draw (1,1) to (1,-1);
    \draw (0,0) to (-0.5,-1);
    \draw (0,0) to (-0.5,1);
    \draw[dashed] (-0.5,-1) to (-0.5,1);
    \draw (-2,1) to (-0.5,1);
    \draw (-2,-1) to (-0.5,-1);
    \node[scale=1.4] at (-2,0){$\vdots$};

    \draw[line width = 1.2, orange,->] (0,0) to (2,0);
    \node at (0.4,0.7) {$\sigma_1$};
    \node at (0.4,-0.7) {$\sigma_m$};
    \node at (-0.2,-0.9) {$\sigma_n$};
    \node at (-0.2,0.9) {$\sigma_n$};
    \node at (-1,0) {$\lambda_{\mathbf{p}} \sigma_n$};
    \node at (1.3,0.3) {$\tau_{1}$};
    \node[orange] at (0.3,0) {\rotatebox[origin=c]{-90}{$\bowtie$}};

    \end{tikzpicture}
    &
 \begin{tikzpicture}[scale=1, transform shape]
        \draw[out=60,in=0,looseness=1.2] (0,-3) to (0,-1);
        \draw[out=50,in=0,line width = 1.2, looseness=1.3, orange] (0,-3) to (0,-0.8);
        \draw[out=120,in=180,looseness=1.2] (0,-3) to (0,-1);
        \draw[out=130,in=180,line width = 1.2, looseness=1.3, orange] (0,-3) to (0,-0.8);
        \filldraw (0,-3) circle (3pt);
        \node[scale=2, thick] at (0,-1.5) {$\times$};
        \node[orange, rotate=-25] at (0.29,-2.6){$\bowtie$};
        \node[orange, rotate=25] at (-0.29,-2.6){$\bowtie$};
        \node[scale=0.8] at (0.2,-2.3) {$\rho$};
    \end{tikzpicture}
         &  
 \begin{tikzpicture}[scale=0.8, transform shape]
        \draw(0,-0.5) to (1,-1.3);
        \draw (0,-0.5) to (0,1.2);
        \draw (0,1.2) to (1,2);
        \draw[dashed](0,-0.5) to node[right, yshift = -25pt, xshift = -10pt]{$\lambda_{\mathbf{p}} \rho$} (1,2);
        \draw[dashed] (1,-1.3) to node[right]{ $U_2 \rho$} (1,2);
        \draw(1,-1.3) -- (2,-1.3);
        \draw(1,2) -- (2,2);
        \draw (2,-1.3) -- (3,-0.5);
        \draw(2,2) -- (3,1.2);
        \node[] at (3,.35){$\vdots$};
        \draw[line width = 1.2, orange] (0,-0.5) to [out = 110, in = 250, looseness=1.2] (0,1.2);
        \node[left, orange, xshift = -5pt] at (0,.35){$\gamma$};
        \node[orange, rotate=25] at (-0.12,-0.25){$\bowtie$};
        \node[orange, rotate=-25] at (-0.12,0.9){$\bowtie$};
    \end{tikzpicture}
    \\
    \end{tabular}
    \caption{By gluing these pieces together along with the ordinary and self-folded triangles which $\gamma$ passes through triangles, one can construct a triangulated polygon $(\widetilde{S},\widetilde{T})$ with a diagonal $\widetilde{\gamma}$. In the case of a notched corner arc, we combine the top right and bottom left tiles. Our graphs with auxiliary tiles and posets come from the ordinary constructions with respect to $(\widetilde{S},\widetilde{T},\widetilde{\gamma})$. }
    \label{tab:LiftsForConstruction}
\end{table}

We regard every $\mathbf{p}$-gon with $\mathbf{p} > 3$ formed by this dissection as regular. Next, we complete this dissection to a triangulation by adding diagonals to these $\mathbf{p}$-gons. We call this a \emph{completion of the tile cover} and denote the additional arcs by $T^{\mathrm{aux}}$. In particular, $\widetilde{T}:= T_\gamma \cup T^{\mathrm{aux}}$ forms a triangulation of $\widetilde{S}$. Table \ref{tab:LiftsForConstruction} describes our conventional completion. This choice of completion reflects earlier conventions we have established (see Remark \ref{rmk:WhatCCWGivesUs}) and was partially established to minimize the number of auxiliary tiles, or equivalently, the number of additional arcs crossed by the lifted arc $\widetilde{\gamma}$. 

The following is evident from the construction. 

\begin{lemma}\label{lem:SnakeGraphFromLift}
The loop or band graph $\mathcal{G}_{\gamma,T}$ is equivalent as a weighted graph to the loop or band graph $\mathcal{G}_{\widetilde{\gamma},\widetilde{T}}$. Moreover, if $\widetilde{\chi}(\mathcal{G}_{\widetilde{\gamma},\widetilde{T}})$ is the result of setting all $y_\tau = 1$  in $\chi(\mathcal{G}_{\widetilde{\gamma},\widetilde{T}})$ for all $\tau \in T^{\mathrm{aux}}$, then $\chi(\mathcal{G}_{\gamma,T}) = \widetilde{\chi}(\mathcal{G}_{\widetilde{\gamma},\widetilde{T}})$.
\end{lemma}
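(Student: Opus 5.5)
The plan is to compare the two graphs tile by tile, and then compare the two expansions matching by matching. First, the underlying labelled graphs. By construction, $\mathcal{G}_{\widetilde{\gamma},\widetilde{T}}$ is the ordinary (Musiker--Schiffler--Williams / Wilson) loop or band graph of the arc $\widetilde{\gamma}$ on the triangulated polygon $(\widetilde{S},\widetilde{T})$. Its tiles are indexed by the crossings of $\widetilde{\gamma}$ with $\widetilde{T}=T_\gamma\cup T^{\mathrm{aux}}$. Crossings with $T_\gamma$ are lifts of crossings of $\gamma$ with $T$; crossings with $T^{\mathrm{aux}}$ occur only inside the lifted $\mathbf{p}$-gons. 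I will check, piece by piece from Table \ref{tab:LiftsForConstruction}, that the tiles, their orientations and the gluing edges agree with the puzzle pieces of Section \ref{subsec:aux}:
\begin{itemize}
\item ordinary and self-folded triangles lift isomorphically, so they give identical tiles;
\item a pending arc crossed once without winding lifts to a $\mathbf{p}$-gon crossed through two consecutive sides, giving the tile in the middle row of Table \ref{table:PuzzlePiecesWinding};
\item a winding pair of crossings lifts to $\widetilde{\gamma}$ crossing $\rho$, the diagonal $U_k\rho$, and $\rho$ again. The triangles it passes through have sides labelled $\rho$ and $U_{k\pm1}\rho$, by Lemma \ref{lem:ChebyshevFact}, so we recover exactly the three tiles of Table \ref{table:PuzzlePiecesWindingAuxiliary};
\item the notched and corner pieces are checked the same way, including the combined piece for notched corner arcs.
\end{itemize}
Next, the gluing of successive tiles is determined by the third side $\tau_{[j]}$ of the triangle between consecutive crossings in both constructions. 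For hooks and band closures, the identified edges are the same by definition. Since edge weights are lambda lengths of the corresponding sides, the two graphs are equal as weighted graphs, after specializing each auxiliary diagonal's variable to $U_k(\lambda_\mathbf{p})x_\rho$ as in Lemma \ref{lem:ChebyshevFact}.

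Given the equality of weighted graphs, the good matchings coincide, and so does $x(M)$, including the $\Phi$-specialization at self-folded triangles, which lift unchanged. The cross monomials also agree, because $\mathrm{cross}(\mathcal{G}_{\gamma,T})$ was defined as the product over tile labels, auxiliary labels included. For the $y$-part, the minimal matching $M_-$ is characterized by Definition \ref{def:MinimalMatching1} purely in terms of the graph and the positive orientation of the first tile. It is therefore the same matching in both graphs, and the symmetric differences $M\ominus M_-$ enclose the same tiles. The height monomial in $\mathcal{G}_{\widetilde{\gamma},\widetilde{T}}$ then equals $h(M)$ times factors $h_\tau$ for enclosed tiles with $\tau\in T^{\mathrm{aux}}$. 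Setting these $y_\tau=1$ recovers $y(M)$, since tiles labelled $U_k\rho$ do not contribute by definition. Summing over good matchings gives $\chi(\mathcal{G}_{\gamma,T})=\widetilde{\chi}(\mathcal{G}_{\widetilde{\gamma},\widetilde{T}})$.

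The main obstacle is the orientation and weight bookkeeping in the winding pieces, in particular the notched corner arc piece. There I must confirm that the counterclockwise-winding convention (Remark \ref{rmk:WhatCCWGivesUs}) places $U_{k-1}\rho$ and $U_{k+1}\rho$ on the correct sides of the lifted polygon, and that the hook's two auxiliary tiles $U_{k-1}\rho$, $U_k\rho$ match Example \ref{ex:NotchCorner}. I would verify this by drawing the lifted $\mathbf{p}$-gon with $\widetilde{\gamma}$ having $k$ vertices on its right and reading off the diagonals that bound each triangle it traverses.
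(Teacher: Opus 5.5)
Your proposal is correct and follows the paper's approach. The paper offers no argument beyond saying the lemma is evident from the construction of the tile cover and its completion, and your comparison (tiles, gluings and weights via Lemma~\ref{lem:ChebyshevFact}, then good matchings, the crossing monomial, $M_-$ and the height monomials) is exactly the unpacking of that claim, with one small wording fix: the middle row of Table~\ref{table:PuzzlePiecesWinding} comes from a zero-winding pass through a pending triangle, i.e.\ two consecutive crossings of $\rho$ (two consecutive sides of the lifted $\mathbf{p}$-gon), not from a single crossing.
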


From Lemma \ref{lem:SnakeGraphFromLift}, we can recover $\chi(\mathcal{G}_{\gamma,T})$ from $\mathcal{G}_{\widetilde{\gamma},\widetilde{T}}$ by appropriately adjusting the $y$-monomial; a priori, $\mathcal{G}_{\widetilde{\gamma},\widetilde{T}}$ has no auxiliary tiles because $\widetilde{\gamma}$ is simply a diagonal in a triangulated polygon.

While the tile cover is fixed by $(T,\gamma)$, the completion of the tile cover involves a set of choices. In this section, we highlight that our choices do not affect the resulting Laurent polynomials. We begin with an example.

\begin{example}\label{ex:tricover}
Let $\gamma$ be the arc in item (I) of Table \ref{table:whole} and let the order of the orbifold point be $\mathbf{p}=5$. Let $\phi = 2\cos(\pi/5)$ and recall $\phi^2 - 1 = \phi$. As there is only one orbifold point, we can easily draw the universal cover of the triangulated orbifold; this is given by the solid arcs in Figure \ref{fig:liftExamples}. The tile cover would consist of trimming the triangles which $\gamma$ does not pass through. Our choice of completion to a triangulation,  $\widetilde{T}$, is the leftmost triangulation (i.e., including dashed and solid arcs).  Two other ways to complete the dissection are also drawn in Figure \ref{fig:liftExamples} (call these $\widetilde{T}',\widetilde{T}''$), and the three corresponding snake graphs are provided in Figure \ref{fig:liftExamplesSnakeGraphs}. 

The fact that $\widetilde{\chi}(\mathcal{G}_{\gamma,\widetilde{T}}) = \widetilde{\chi}(\mathcal{G}_{\gamma,\widetilde{T}'})$ is a consequence of Proposition \ref{prop:NonzeroWindingAuxiliary}, where $\widetilde{\chi}$ is as in Lemma \ref{lem:SnakeGraphFromLift}.

Next, we compare $\widetilde{\chi}(\mathcal{G}_{\gamma,\widetilde{T}})$ and $\widetilde{\chi}(\mathcal{G}_{\gamma,\widetilde{T}''})$.
\begin{align*}
\widetilde{\chi}(\mathcal{G}_{\gamma, \widetilde{T}})& = \frac{1}{\phi x_ax_\rho^3}\bigg( \phi^2 x_\rho^3x_b x_c  +  \phi^2 y_a x_\rho^2x_b^2x_d + \phi y_\rho x_ax_\rho^3x_c+ (\phi^2+\phi) y_ay_\rho  x_ax_\rho^2x_bx_d \\
&+(\phi+1) y_ay_\rho^2 \phi x_a^2x_\rho^2x_d \bigg)
\end{align*}

\begin{align*}
\widetilde{\chi}(\mathcal{G}_{\gamma, \widetilde{T}''})& = \frac{1}{\phi^2 x_ax_\rho^4}\bigg((2\phi+1) x_\rho^4 x_bx_c+ (2\phi+1) y_a x_\rho^3x_b^2x_d + \phi^2 y_\rho  x_ax_\rho^4 x_c   \\
&+ (2\phi^2 + \phi) y_ay_\rho  x_ax_\rho^3 x_d+ \phi^3 y_ay_\rho^2 x_a^2 x_\rho^3 x_d\bigg)
\end{align*}

After repeatedly using the identity $\phi^2 = \phi + 1$, both expansions can be shown to reduce to the following
\[
\frac{1}{ x_ax_\rho}\bigg(\phi x_\rho x_bx_c+ \phi y_a x_b^2 x_d  + y_\rho x_ax_\rho x_c + \phi^2 y_ay_\rho  x_ax_bx_d  +\phi y_ay_\rho^2 x_a^2 x_d\bigg),
\]
which is also the Laurent expansion associated to the snake graph with hexagonal tile in Table \ref{table:whole}. 
\end{example}

\begin{figure}[H]
    \centering
\begin{tabular}{ccc}
\begin{tikzpicture}[scale=0.6, transform shape]

    \def\radius{3}
    
    \draw[thick] (0,0) circle (\radius);

    \draw[in=180,out=30,looseness=1, orange, thick,  ->] (84+144:\radius) to (290:0.45*\radius); %
    \draw[in=200,out=0,looseness=1, orange, thick] (290:0.45*\radius) to (276+72:\radius);

    \node[orange, scale=1.5] at (340: 0.7*\radius) {$\gamma$};

    \foreach \i in {12,36,...,348} {
        \filldraw[black] (\i:\radius) circle (2pt);
    }

    \foreach \i in {12,84,84+72,84+144,84+144+72} {
        \node[scale=1.5] at (\i:\radius + 0.4) {$v_3$};
    }
    \foreach \i in {36,36+72,36+144,36+144+72, 36+288} {
        \node[scale=1.5] at (\i:\radius + 0.4) {$v_1$};
    }
    \foreach \i in {60,132,204,276,276+72} {
        \node[scale=1.5] at (\i:\radius + 0.4) {$v_2$};
    }

    \foreach \i in {0,72,144,216,288} {
        \node[scale=1] at (\i:\radius + 0.3) {$c$};
    }

    \foreach \i in {24,96,168,240,312} {
        \node[scale=1] at (\i:\radius + 0.3) {$b$};
    }
    \foreach \i in {48,120,192,264,264+72} {
        \node[scale=1] at (\i:\radius + 0.3) {$d$};
    }

    \foreach \i/\j in {36/108, 108/180, 180/252, 252/324, 324/36+360} {
        \draw[thick] (\i:\radius) -- (\j:\radius);
        \node at ({(\i+\j)/2}:\radius*0.7) {$\rho$};
    }

    \draw[thick] (36:\radius) -- (36+48:\radius);
    \node at (72:\radius*0.88) {$a$};
    
    \draw[thick] (108:\radius) -- (108+48:\radius);
    \node at (144:\radius*0.88) {$a$};

    \draw[thick] (180:\radius) -- (180+48:\radius);
    \node at (216:\radius*0.88) {$a$};

    \draw[thick] (252:\radius) -- (252+48:\radius);
    \node at (288:\radius*0.88) {$a$};

    \draw[thick] (324:\radius) -- (324+48:\radius);
    \node at (0:\radius*0.88) {$a$};

    \draw[dashed] (180:\radius) to node[left, yshift = 5pt]{$\phi \rho$} (180-144:\radius);
    \draw[dashed] (180:\radius) to node[right, yshift = 5pt]{$\phi \rho$} (180-144-72:\radius);
\end{tikzpicture}
&
\begin{tikzpicture}[scale=0.6, transform shape]

    \def\radius{3}
    
    \draw[thick] (0,0) circle (\radius);

    \draw[in=180,out=30,looseness=1, orange, thick,  ->] (84+144:\radius) to (290:0.45*\radius); %
    \draw[in=200,out=0,looseness=1, orange, thick] (290:0.45*\radius) to (276+72:\radius);

    \node[orange, scale=1.5] at (340: 0.7*\radius) {$\gamma$};

    \foreach \i in {12,36,...,348} {
        \filldraw[black] (\i:\radius) circle (2pt);
    }

    \foreach \i in {12,84,84+72,84+144,84+144+72} {
        \node[scale=1.5] at (\i:\radius + 0.4) {$v_3$};
    }
    \foreach \i in {36,36+72,36+144,36+144+72, 36+288} {
        \node[scale=1.5] at (\i:\radius + 0.4) {$v_1$};
    }
    \foreach \i in {60,132,204,276,276+72} {
        \node[scale=1.5] at (\i:\radius + 0.4) {$v_2$};
    }

    \foreach \i in {0,72,144,216,288} {
        \node[scale=1] at (\i:\radius + 0.3) {$c$};
    }

    \foreach \i in {24,96,168,240,312} {
        \node[scale=1] at (\i:\radius + 0.3) {$b$};
    }
    \foreach \i in {48,120,192,264,264+72} {
        \node[scale=1] at (\i:\radius + 0.3) {$d$};
    }

    \foreach \i/\j in {36/108, 108/180, 180/252, 252/324, 324/36+360} {
        \draw[thick] (\i:\radius) -- (\j:\radius);
        \node at ({(\i+\j)/2}:\radius*0.7) {$\rho$};
    }

    \draw[thick] (36:\radius) -- (36+48:\radius);
    \node at (72:\radius*0.88) {$a$};
    
    \draw[thick] (108:\radius) -- (108+48:\radius);
    \node at (144:\radius*0.88) {$a$};

    \draw[thick] (180:\radius) -- (180+48:\radius);
    \node at (216:\radius*0.88) {$a$};

    \draw[thick] (252:\radius) -- (252+48:\radius);
    \node at (288:\radius*0.88) {$a$};

    \draw[thick] (324:\radius) -- (324+48:\radius);
    \node at (0:\radius*0.88) {$a$};

    \draw[dashed] (180:\radius) to node[left, yshift = 5pt]{$\phi \rho$} (36:\radius);
    \draw[dashed] (252:\radius) to node[right, xshift = 5pt]{$\phi \rho$} (36:\radius);
\end{tikzpicture}
&
\begin{tikzpicture}[scale=0.6, transform shape]

    \def\radius{3}
    
    \draw[thick] (0,0) circle (\radius);

    \draw[in=180,out=30,looseness=1, orange, thick,  ->] (84+144:\radius) to (290:0.45*\radius); %
    \draw[in=200,out=0,looseness=1, orange, thick] (290:0.45*\radius) to (276+72:\radius);

    \node[orange, scale=1.5] at (340: 0.7*\radius) {$\gamma$};

    \foreach \i in {12,36,...,348} {
        \filldraw[black] (\i:\radius) circle (2pt);
    }

    \foreach \i in {12,84,84+72,84+144,84+144+72} {
        \node[scale=1.5] at (\i:\radius + 0.4) {$v_3$};
    }
    \foreach \i in {36,36+72,36+144,36+144+72, 36+288} {
        \node[scale=1.5] at (\i:\radius + 0.4) {$v_1$};
    }
    \foreach \i in {60,132,204,276,276+72} {
        \node[scale=1.5] at (\i:\radius + 0.4) {$v_2$};
    }

    \foreach \i in {0,72,144,216,288} {
        \node[scale=1] at (\i:\radius + 0.3) {$c$};
    }

    \foreach \i in {24,96,168,240,312} {
        \node[scale=1] at (\i:\radius + 0.3) {$b$};
    }
    \foreach \i in {48,120,192,264,264+72} {
        \node[scale=1] at (\i:\radius + 0.3) {$d$};
    }

    \foreach \i/\j in {36/108, 108/180, 180/252, 252/324, 324/36+360} {
        \draw[thick] (\i:\radius) -- (\j:\radius);
        \node at ({(\i+\j)/2}:\radius*0.7) {$\rho$};
    }

    \draw[thick] (36:\radius) -- (36+48:\radius);
    \node at (72:\radius*0.88) {$a$};
    
    \draw[thick] (108:\radius) -- (108+48:\radius);
    \node at (144:\radius*0.88) {$a$};

    \draw[thick] (180:\radius) -- (180+48:\radius);
    \node at (216:\radius*0.88) {$a$};

    \draw[thick] (252:\radius) -- (252+48:\radius);
    \node at (288:\radius*0.88) {$a$};

    \draw[thick] (324:\radius) -- (324+48:\radius);
    \node at (0:\radius*0.88) {$a$};

    \draw[dashed] (252:\radius) to node[left, yshift = 5pt]{$\phi \rho$} (252-144:\radius);
    \draw[dashed] (252:\radius) to node[right, xshift = 5pt]{$\phi \rho$} (252-144-72:\radius);
\end{tikzpicture}\end{tabular}
\caption{The triangulated polygon on the left is the tile cover of the arc $\gamma$ from Example \ref{ex:1poset} with orbifold point of order 5 using our conventions as in Table \ref{tab:LiftsForConstruction}. The other two triangulations consist of other choices of completion of the tile cover. Here, $\phi=\lambda_5 = 2\cos(\pi/5)$ denotes the golden ratio. Note $U_2(\phi) = \phi^2 - 1 = \phi$.}\label{fig:liftExamples}
\end{figure}

\begin{figure}[h]
    \centering
\begin{tabular}{ccc}
\begin{tikzpicture}[scale = 1]
\draw[thick] (0,0) to node[below, scale = 0.75]{$b$} (1,0) to node[below, scale = 0.75]{$\rho$} (2,0) to node[below, scale = 0.75]{$\phi \rho$} (3,0) to node[right, scale = 0.75]{$a$} (3,1) to node[right, scale = 0.75]{$c$} (3,2) to node[above, scale = 0.75]{$d$} (2,2) to node[left, scale = 0.75]{$\rho$} (2,1) to node[above, scale = 0.75]{$\rho$} (1,1) to node[above, scale = 0.75]{$\phi \rho$} (0,1) to node[left, scale = 0.75]{$a$} (0,0);
\draw[thick] (1,0) to node[right, yshift = -10pt, scale = 0.75]{$\rho$} (1,1);
\draw[thick] (2,0) to node[right, yshift = -10pt, scale = 0.75]{$\phi \rho$} (2,1) to node[above, xshift = -10pt, scale = 0.75]{$b$}(3,1);
\draw[gray,dashed] (0,1) to node[right, scale = 0.75]{$\rho$} (1,0);
\draw[gray,dashed] (1,1) to node[right, scale = 0.75]{$\phi \rho$} (2,0);
\draw[gray,dashed] (2,1) to node[right, scale = 0.75]{$\rho$} (3,0);
\draw[gray,dashed] (2,2) to node[right, scale = 0.75]{$a$} (3,1);
\end{tikzpicture}&
\begin{tikzpicture}[scale=1, every node/.style={scale=0.75}]
\draw[thick] (0,0) to node[below]{$b$} (1,0) to node[right]{$\phi \rho$} (1,1) to node[right] {$\rho$} (1,2) to node[right]{$a$} (1,3) to node[right]{$c$}(1,4) to node[above]{$d$} (0,4) to node[left]{$\rho$} (0,3) to node[left]{$\phi \rho$} (0,2) to node[left]{$\rho$} (0,1) to node[left]{$a$}(0,0);
\draw[thick] (0,1) to node[above, xshift = -10pt]{$\phi \rho$} (1,1);
\draw[thick] (0,2) to node[above, xshift = -10pt]{$\rho$} (1,2);
\draw[thick] (0,3) to node[above, xshift = -10pt]{$b$} (1,3);
\draw[gray,dashed] (0,1) to node[right]{$\rho$} (1,0);
\draw[gray,dashed] (0,2) to node[right]{$\phi \rho$} (1,1);
\draw[gray,dashed] (0,3) to node[right]{$\rho$} (1,2);
\draw[gray,dashed] (0,4) to node[right]{$a$} (1,3);
\end{tikzpicture}&
\begin{tikzpicture}[scale=1, every node/.style={scale=0.75}]
\draw[thick] (0,0) to node[below]{$b$} (1,0) to node[right]{$\phi \rho$} (1,1) to node[below]{$\phi \rho$} (2,1) to node[below]{$\phi \rho$} (3,1) to node[below]{$\rho$} (4,1) to node[right]{$d$} (4,2) to node[above]{$c$} (3,2) to node[above]{$a$} (2,2) to node[above]{$\rho$} (1,2) to node[above]{$\phi \rho$} (0,2) to node[left]{$\rho$} (0,1) to node[left]{$a$} (0,0);
\draw[thick] (0,1) to node[above, xshift = -10pt]{$\rho$}(1,1) to node[right, yshift = -10pt]{$\rho$} (1,2);
\draw[thick] (2,1) to node[right, yshift = -10pt]{$\rho$} (2,2);
\draw[thick] (3,1) to node[right, yshift = -10pt]{$b$} (3,2);
\draw[gray,dashed] (0,1) to node[right]{$\rho$} (1,0);
\draw[gray,dashed] (0,2) to node[right]{$\phi \rho$} (1,1);
\draw[gray,dashed] (1,2) to node[right]{$\phi \rho$} (2,1);
\draw[gray,dashed] (2,2) to node[right]{$\rho$} (3,1);
\draw[gray,dashed] (3,2) to node[right]{$a$} (4,1);
\end{tikzpicture}
\end{tabular}
    \caption{The three snake graphs associated to the arc $\gamma$ with respect to the three auxiliary triangulations in Figure \ref{fig:liftExamples}.}
    \label{fig:liftExamplesSnakeGraphs}
\end{figure}

\begin{prop}\label{prop:Invariance}
Let $\gamma$ be a (possibly generalized) arc or closed curve on an orbifold $\mathcal{O}$ with triangulation $T$. Let $\widetilde{T_1}$ and $\widetilde{T_2}$ be two completions of the tile cover of $(\mathcal{O},T,\gamma)$. Let $\widetilde{\chi}$ be as in Lemma \ref{lem:SnakeGraphFromLift}. If $\mathcal{G}_{\widetilde{\gamma},\widetilde{T_1}}$ and $\mathcal{G}_{\widetilde{\gamma},\widetilde{T_2}}$ are the two associated loop or band graphs, then $\widetilde{\chi}(\mathcal{G}_{\widetilde{\gamma},\widetilde{T_1}}) = \widetilde{\chi}(\mathcal{G}_{\widetilde{\gamma},\widetilde{T_2}})$.
\end{prop}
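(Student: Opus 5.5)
The plan is to reduce the comparison of two completions to a sequence of local moves and to check that each move leaves $\widetilde{\chi}$ unchanged. The tile cover $(\widetilde{S},T_\gamma)$ is fixed by $(T,\gamma)$; two completions $\widetilde{T_1}$ and $\widetilde{T_2}$ differ only inside the regular $\mathbf{p}$-gons that $\widetilde{\gamma}$ crosses. Since these polygons are separated by arcs of $T_\gamma$, we may treat them one at a time. Inside a single $\mathbf{p}$-gon, any two triangulations are connected by a sequence of diagonal flips. It therefore suffices to show that $\widetilde{\chi}$ is unchanged when one auxiliary diagonal $\sigma \in T^{\mathrm{aux}}$ is flipped to another diagonal $\sigma'$ of the same $\mathbf{p}$-gon.

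For a single flip, let $\sigma$ be the diagonal of a quadrilateral $Q$ inside the $\mathbf{p}$-gon, with sides $e_1,e_2,e_3,e_4$ (arcs of $T_\gamma\cup T^{\mathrm{aux}}$). If $\widetilde{\gamma}$ crosses neither $\sigma$ nor $\sigma'$, the graph is unchanged. Otherwise, the graphs $\mathcal{G}_{\widetilde{\gamma},\widetilde{T_1}}$ and $\mathcal{G}_{\widetilde{\gamma},\widetilde{T_2}}$ agree outside the tiles coming from crossings inside $Q$. The key input is that all edge weights are specialized lengths in a regular polygon (Lemma \ref{lem:ChebyshevFact}). Hence Ptolemy's relation $x_\sigma x_{\sigma'} = x_{e_1}x_{e_3}+x_{e_2}x_{e_4}$ holds as an identity of scalars times $x_\rho^2$ (equivalently, the Chebyshev identities such as Lemma \ref{lem:ChebyshevLogConcave} and $U_iU_j = \sum U_{i+j-2l}$). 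Moreover, the $y$-variables of auxiliary arcs are set to $1$ in $\widetilde{\chi}$.

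Under these two facts, a flip of an uncoefficiented arc is exactly the situation of Proposition \ref{prop:NonzeroWindingAuxiliary}, which is the model case where a hexagonal tile and a three-tile piece with an auxiliary middle tile are compared via $x_{B'}x_{C'}+x_\rho^2 = x_\sigma^2$. I would generalize its proof as follows:
\begin{itemize}
\item Partition good matchings of each graph according to their restriction to the tiles inside $Q$, as in the proof of Proposition \ref{prop:NonzeroWindingAuxiliary}, using the local structure of Lemma \ref{lem:HexagonMatch} and Lemma \ref{lem:BreakUpPMsOnHexagon}.
\item Match the classes bijectively, or two-to-one where Ptolemy merges two terms, while keeping track of the $h$-monomials.
\item Observe that the crossing monomials differ by $x_\sigma/x_{\sigma'}$, and that this ratio is absorbed by the Ptolemy identity.
\end{itemize}
Alternatively, and more cleanly, the snake graph expansion for the polygon $(\widetilde{S},\widetilde{T})$ computes the cluster variable $x_{\widetilde{\gamma}}$ of a type $A$ (or loop/band) cluster algebra with principal coefficients. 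Flipping $\sigma$ is a mutation, and the expansion of $x_{\widetilde{\gamma}}$ in the new cluster is obtained by substituting $x_\sigma = (x_{e_1}x_{e_3}y_\sigma + x_{e_2}x_{e_4})/x_{\sigma'}$ (with the appropriate coefficient placement). Specializing $y_\sigma=y_{\sigma'}=1$ and all auxiliary lengths to their regular-polygon values makes this substitution an identity, since both sides equal the same real length. The coefficient mutation rule for the remaining $y_\tau$, with $\tau\in T_\gamma$, must also be checked to be trivial after setting $y_\sigma=1$. Indeed, $y_\tau' = y_\tau y_\sigma^{[b]_+}(1\oplus y_\sigma)^{-b}$ reduces to $y_\tau$ when $y_\sigma = 1$ in the tropical semifield.

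For loop and band graphs (notched arcs and closed curves), the same argument applies to the unglued graph $\widetilde{\mathcal{G}}$. Good matchings are restrictions of perfect matchings of $\widetilde{\mathcal{G}}$, and the gluing happens away from the $\mathbf{p}$-gons except in the notched corner case. There the bottom-left piece of Table \ref{tab:LiftsForConstruction} places the auxiliary diagonals $\lambda_{\mathbf{p}}\rho$ and $U_2\rho$ adjacent to the hook. In that case I would instead invoke the $T$-walk side: $\mathfrak{W}(\gamma,T)$ is defined without any completion, and Theorem \ref{prop:TWalkPosetNotchedCorner} together with Propositions \ref{prop:AuxAndPoset} and \ref{prop:CanChooseAuxiliaryTiles} and Theorem \ref{thm:SnakeTWalks} identify $\widetilde{\chi}$ for any completion with it, provided the proofs of those results used only the Chebyshev/Ptolemy relations of the completion and not its specific shape.

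The main obstacle is the coefficient bookkeeping. Specializing $y_\sigma=1$ must commute with the mutation, which requires that the minimal matching and the height monomials of non-auxiliary tiles be preserved. This is where I expect most of the careful casework, especially in the notched corner arc, where the auxiliary diagonals touch the loop.
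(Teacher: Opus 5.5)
Your second, ``cleaner'' route is correct, but it is not the argument the paper gives.

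The paper never flips diagonals. It rewrites both sides as sums over $T$-walks on the tile cover and reduces to two completions differing in a single $\mathbf{p}$-gon. It then sorts the walks into four classes according to the orientations of the two lifts $\rho_L,\rho_R$ bounding that $\mathbf{p}$-gon. Each class factors as the product of the walk sums for $\gamma_L^1$, for a diagonal $a$ of the $\mathbf{p}$-gon, and for $\gamma_R^1$, divided by $x_\rho^2$. The outer factors do not see the completion. The middle factor carries no $y$-variables, since no arc of $T_\gamma$ lies inside the $\mathbf{p}$-gon. By the frieze property \cite[Lemma 2.2]{holm2020p}, it equals the regular-polygon length $U_\ell(\lambda_{\mathbf{p}})x_\rho$ of $a$ for every completion.

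You instead read each $\chi(\mathcal{G}_{\widetilde{\gamma},\widetilde{T_i}})$ as the expansion of a single element of the tile-cover cluster algebra, and you connect the completions by flips. You then use two facts: an auxiliary exchange loses its principal coefficient once auxiliary $y$'s are set to $1$, and the values $U_\ell(\lambda_{\mathbf{p}})x_\rho$ satisfy Ptolemy. Both proofs rest on the same frieze fact.

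The paper's localization makes coefficients irrelevant and needs no flip sequences. Yours treats snake, loop and band graphs uniformly, at the price of invoking the surface expansion theorems for every triangulation of the tile cover. In those triangulations, the sides of the $\mathbf{p}$-gon other than $\rho_L,\rho_R$ are boundary segments that must carry weight $x_\rho$. So you need the version of those theorems with frozen boundary variables; the paper's proof leaves this implicit too.

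The coefficient issue you single out as the main obstacle needs no tracking of minimal matchings or height monomials. Setting the auxiliary $y$'s to $1$ simply deletes their principal rows from the extended exchange matrix. Frozen rows mutate independently of one another. The principal rows of arcs of $T_\gamma$ vanish in auxiliary columns, so auxiliary mutations fix them. Hence the specialized patterns with principal coefficients at $\widetilde{T_1}$ and at $\widetilde{T_2}$ share their seed at $\widetilde{T_2}$, and therefore coincide.

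Three side remarks in your proposal should be dropped or repaired.
\begin{enumerate}
\item \Cref{prop:NonzeroWindingAuxiliary} only compares completions in which $\widetilde{\gamma}$ crosses exactly one auxiliary diagonal. It does not reach $\widetilde{T}''$ in \Cref{ex:tricover}, so your matching-level route is not a proof.
\item The unglued graph is the snake graph of a hooked curve, which is not an arc, so the cluster-algebra argument cannot be run on it. Run it instead on the loop graph, which is a tagged arc in the punctured tile cover. For closed curves, run it on the band graph, using triangulation-independence of band graph expansions on surfaces.
\item Your fallback for notched corner arcs is a genuine gap as written. \Cref{prop:CanChooseAuxiliaryTiles} and \Cref{thm:SnakeTWalks} exclude notched corner arcs, and \Cref{prop:AuxAndPoset} and \Cref{prop:TWalkPosetNotchedCorner} concern only the conventional completion. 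Your proviso is therefore exactly what would need proof. It is also unnecessary: flips inside a $\mathbf{p}$-gon having the puncture as a vertex never create self-folded triangles, so the mutation argument applies there unchanged.
\end{enumerate}
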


\begin{proof}

Thanks to Theorem \ref{thm:MainExpansion}, it is equivalent to show \begin{equation}
\sum_{\alpha \in TW(T_1,\gamma)} x(\alpha)\widetilde{y}(\alpha) = \sum_{\alpha \in TW(T_2,\gamma)} x(\alpha)\widetilde{y}(\alpha) \label{eq:SumOverTWalksEqual}
\end{equation}
where $\widetilde{y}(\alpha)$ is the result of setting to 1 all $y$-monomials associated to arcs in $\widetilde{T}_i \backslash T_\gamma$. 

It is sufficient to assume that $\widetilde{T_1}$ and $\widetilde{T_2}$ only differ in one subgon formed by the tile cover of $(\mathcal{O},T,\gamma)$. Assume first that this subgon is formed as in the top left of Table  \ref{tab:LiftsForConstruction}. That is, there is a pending arc $\rho$ which $\gamma$ crosses twice, contributing a $\mathbf{p}$-gon to the tile cover where $\mathbf{p}$ is the order of the enclosed orbifold point. For convenience, we label the two lifts of $\rho$ as $\rho_L$ and $\rho_R$ where $\gamma$ crosses $\rho_L$ before $\rho_R$.

Let us set certain subsets of $TW(\widetilde{T_1},\widetilde{\gamma})$ as follows:

\begin{itemize}
    \item[(i)] $A$ consists of all $T$-walks which travel along $\rho_L$ and $\rho_R$ in the positive direction;
    \item[(ii)] $B$ consists of all $T$-walks which travel along $\rho_L$ in the positive direction and $\rho_R$ in the negative direction;
    \item[(iii)] $C$ consists of all $T$-walks which travel along $\rho_L$ in the negative direction and $\rho_R$ in the positive direction;
    \item[(iv)] $D$ consists of all $T$-walks which travel along both $\rho_L$ and $\rho_R$ in the negative direction.
\end{itemize} 

See below for a diagram of a generic $T$-walk in $A$. Clearly, $A,B,C,$ and $D$ partition $TW(\widetilde{T_1},\widetilde{\gamma})$. Define $A',B',C',$ and $D'$ to be the parallel sets but with respect to $\widetilde{T_2}$.

\begin{center}
\begin{tikzpicture}[scale=0.8, transform shape]
        \draw(0,-0.3) to (1,-1.3);
        \draw[->, red] (0,-0.3) to (0,0.35);
        \draw[red](0,0.35) -- (0,1);
        \draw (0,1) to (1,2);
        \draw (2,2) to (3,1);
        \draw[->, red] (3,-0.3) to (3,0.35);
        \draw[red](3,0.35) -- (3,1);
        \draw(3,-0.3) to (2,-1.3);
        \node[] at (1.5,2){$\cdots$};
        \node[] at (1.5,-1.3){$\cdots$};

        \draw[blue, ->] (-1,0.3) to node[below]{$\gamma_L^1$} (0,-0.3);
        \draw[blue,->] (3,1) to node[above]{$\gamma_R^1$} (4,0.3);
        \draw[blue,->] (0,1) to node[above]{$a$} (3,-0.3);
        \node at (-0.3,0.5) {$\rho_L$};
        \node at (3.3,0.3) {$\rho_R$};
    \end{tikzpicture}
\end{center}

Define arcs $\gamma_L^1, a, $ and $\gamma_R^1$ as in above figure.  Summing over $T$-walks in $A$ is equivalent to concatenating all possible $T$-walks for $\gamma_L^1$, $a$, and $\gamma_R^1$: \[
\sum_{\alpha \in A} x(\alpha)\widetilde{y}(\alpha) = \frac{1}{x_\rho^2}\bigg(\sum_{\alpha_1 \in TW(\widetilde{T_1},\gamma_L^1)} x(\alpha_1)\widetilde{y}(\alpha_1)\bigg) \bigg(\sum_{\alpha_2 \in TW(\widetilde{T_1},a)} x(\alpha_2)\widetilde{y}(\alpha_2)\bigg) \bigg(\sum_{\alpha_3 \in TW(\widetilde{T_1},\gamma_R^1)} x(\alpha_3)\widetilde{y}(\alpha_3)\bigg) 
\]

Note that $\widetilde{y}(\alpha_2) = 1$ for all $\alpha_2 \in TW(\widetilde{T}_1,a)$ since by construction there are no arcs from $T_\gamma$ inside this polygon.  We can view the expression $\sum_{\alpha_2 \in TW(\widetilde{T_1},a)} x(\alpha_2)$ as returning the result of taking the coefficient-free cluster variable associated to $a$ with respect to the cluster given by the restriction of $\widetilde{T_1}$ to the polygon, and specializing each initial cluster variable to a term $U_\ell(\lambda_\mathbf{p}) x_\rho$ for appropriate $\ell$ (as in Lemma \ref{lem:ChebyshevFact}). Such a specialization is a \emph{frieze}; that is, it respects all relations in the original cluster variable.  This was shown in \cite[Lemma 2.2]{holm2020p}. In particular, this shows $\sum_{\alpha_2 \in TW(\widetilde{T_1},a)} x(\alpha_2)\widetilde{y}(\alpha_2) = U_k(\lambda_\mathbf{p})x_\rho$ if $a$ is a $k$-diagonal. We conclude \[
\sum_{\alpha \in A} x(\alpha)\widetilde{y}(\alpha) = \frac{U_k(\lambda_\mathbf{p})}{x_\rho}\bigg(\sum_{\alpha_1 \in TW(\widetilde{T_1},\gamma_L^1)} x(\alpha_1)\widetilde{y}(\alpha_1)\bigg) \bigg(\sum_{\alpha_3 \in TW(\widetilde{T_1},\gamma_R^1)} x(\alpha_3)\widetilde{y}(\alpha_3)\bigg) .
\]

Notice that $\sum_{\alpha_1 \in TW(\widetilde{T_1},\gamma_L^1)} x(\alpha_1)\widetilde{y}(\alpha_1) = \sum_{\alpha_1 \in TW(\widetilde{T_2},\gamma_L^1)} x(\alpha_1)\widetilde{y}(\alpha_1)$ since $T_1$ and $T_2$ are identical outside of a single $\mathbf{p}$-gon and similarly for $\gamma_R^1$. Therefore, we find $\sum_{\alpha \in A} x(\alpha)\widetilde{y}(\alpha) = \sum_{\alpha' \in A'} x(\alpha')\widetilde{y}(\alpha')$. Moreover, we can repeat this idea with $B,C,$ and $D$.  We conclude that~\Cref{eq:SumOverTWalksEqual} is true in this case. The proof in other cases is similar.
\end{proof}

One can combine Propositions \ref{prop:AuxAndPoset} and \ref{prop:Invariance} to show a similar statement regarding posets from these completions of tile lifts.
However, such a result is not necessary for loop or band graphs with hexagonal tiles nor for $T$-walks as these do not depend on choices. Notice that the $T$-walks used in the proof of Proposition \ref{prop:Invariance} are slightly different from the $T$-walks we use for arcs on orbifolds. Indeed, on the tile cover, our $T$-walks would function as \emph{weak $T$-paths}, as in \cite{ccanakcci2021friezes}.

\subsection{Future Directions}\label{subsec:future}

The combinatorial framework developed in this paper provides tools for investigating the structural properties of orbifold-type generalized cluster algebras.

A natural first step, which we have already begun working on~\cite{BKK26}, is to expand the techniques developed in \cite{banaian2024skein} to derive explicit formulas for skein relations in these generalized cluster algebras using our combinatorial constructions. The presence of orbifold points allows several new types of scenarios to appear.

Our larger goal is to define analogues of the \emph{bangle, band, and bracelet} bases in the orbifold setting, as from \cite{musiker2013bases,Thurston14}. Skein relations will provide a critical step in the proof that these truly are bases. Natural follow-up questions include: Will the generalized bracelet basis coincide with the \emph{theta basis} of the generalized cluster algebra defined in~\cite{BLM25}? This was recently shown in the surface case by Mandel and Qin  ~\cite{MandelQin2023}. Similarly, will the generalized bangle basis coincide with the \emph{generic basis}~\cite{geiss2020generic, geiss2022schemes}? The proof of this fact in the surface case was given by Gei{\ss}, Labardini-Fragoso, and Wilson~\cite{geiss2023bangle}. Progress towards this claim in the unpunctured orbifold setting was given in~\cite{banaian2025snake}.

\printbibliography

\medskip

Institute for Mathematics, Paderborn University, Paderborn, Germany
\\
\textit{Email address: }\href{mailto:Esther.Banaian@math.uni-paderborn.de}{\texttt{Esther.Banaian@math.uni-paderborn.de}}\newline

International Center for Mathematical Sciences, Institute of Mathematics and Informatics, Bulgarian Academy of Sciences, Acad. G. Bonchev Str., Bl. 8, Sofia 1113, Bulgaria
\\
\textit{Email address: }\href{mailto:wonk@math.bas.bg}{\texttt{wonk@math.bas.bg}}\newline

David and Judi Proctor Department of Mathematics, University of Oklahoma, OK, USA
\\
\textit{Email address: }\href{mailto:elizabethkelley@ou.edu}{\texttt{elizabethkelley@ou.edu}}\newline

Department of Mathematics, Galatasaray University, Istanbul, Türkiye
\\
\textit{Email address: }\href{mailto:ezgikantarcioguz@gmail.com}{\texttt{ezgikantarcioguz@gmail.com}}\newline

International Center for Mathematical Sciences, Institute of Mathematics and Informatics, Bulgarian Academy of Sciences, Acad. G. Bonchev Str., Bl. 8, Sofia 1113, Bulgaria
\\
\textit{Email address: }\href{mailto:e.yildirim@math.bas.bg}{\texttt{e.yildirim@math.bas.bg}}

\end{document}